\titleformat{\subsection}[hang]
{\filcenter\bf}
{\thesubsection.}
{1pt}
{}
\titleformat{\subsubsection}[hang]
{\filcenter\bf}
{\thesubsubsection.}
{1pt}
{}
\declaretheoremstyle[bodyfont=\normalfont]{normalbody}
\declaretheorem[numberwithin=section,name=Theorem]{theorem}
\declaretheorem[sibling=theorem,style=normalbody,name=Definition]{definition}
\declaretheorem[sibling=theorem,name=Corollary]{corollary}
\declaretheorem[sibling=theorem,name=Lemma]{lemma}
\declaretheorem[sibling=theorem,name=Proposition]{proposition}
\declaretheorem[sibling=theorem,style=normalbody,name=Example]{example}
\declaretheorem[sibling=theorem,style=normalbody,name=Remark]{remark}
\newcommand{\Z}{\mathbb{Z}}
\newcommand{\N}{\mathbb{N}}
\newcommand{\Q}{\mathbb{Q}}
\newcommand{\R}{\mathbb{R}}
\newcommand{\C}{\mathbb{C}}
\renewcommand{\P}{\mathbb{P}}
\newcommand{\G}{\mathbb{G}}
\newcommand{\End}{\operatorname{End}}
\newcommand{\Hom}{\operatorname{Hom}}
\newcommand{\Aut}{\operatorname{Aut}}
\newcommand{\Gal}{\operatorname{Gal}}
\newcommand{\Div}{\mathrm{Div}}
\newcommand{\divr}{\mathrm{div}}
\newcommand{\id}{\mathrm{id}}
\newcommand{\Spec}{\mathrm{Spec}}
\newcommand{\A}{\mathbb{A}}
\newcommand{\PPDiv}{\mathrm{PPDiv}}
\newcommand{\D}{\mathfrak{D}}
\newcommand{\cadiv}{\mathrm{CaDiv}}
\newcommand{\cone}{\mathrm{cone}}
\newcommand{\relint}{\mathrm{relint}}
\newcommand{\SAut}{\mathrm{SAut}}
\newcommand{\face}{\mathrm{face}}
\newcommand{\Xss}{X^{\mathrm{ss}}}
\newcommand{\sep}[1]{#1^{\mathrm{sep}}}
\newcommand{\aff}[1]{#1_{\mathrm{aff}}}
\newcommand{\red}[1]{\textcolor{red}{#1}}
\newcommand{\blue}[1]{\textcolor{blue}{#1}}
\newcommand{\can}{\mathrm{can}}
\renewcommand{\H}{\mathrm{H}}
\newcommand{\Addresses}{{
  \bigskip
  \footnotesize

  G.~Martínez-Núñez, \textsc{Departamento de Matemáticas, Facultad de Ciencias, Universidad de Chile}\par\nopagebreak \textit{e-mail address}, G.~Martínez-Núñez: \texttt{gary.martinez@ug.uchile.cl}

}}
\DeclareFontFamily{U}{wncy}{}
\DeclareFontShape{U}{wncy}{m}{n}{<->wncyr10}{}
\DeclareSymbolFont{mcy}{U}{wncy}{m}{n}
\DeclareMathSymbol{\Sh}{\mathord}{mcy}{"58}
\DeclareFontFamily{U}{wncy}{}
\DeclareFontShape{U}{wncy}{m}{n}{<->wncyr10}{}
\DeclareSymbolFont{mcy}{U}{wncy}{m}{n}
\DeclareMathSymbol{\Ch}{\mathord}{mcy}{"51}
\title{Polyhedral divisors and algebraic torus actions over arbitrary fields}
\author[]{Gary Martínez-Núnez}
\begin{document}

\maketitle

\begin{abstract}
We provide an algebro-geometric combinatorial description of affine normal varieties endowed with an effective action of an algebraic torus over arbitrary fields. This description is achieved in terms of proper polyhedral divisors endowed with a Galois semilinear action.
\end{abstract}

		\mbox{\hspace{1.6em} \textbf{ Keywords:} affine varieties, torus actions, Galois descent.}

\vspace{1em}
		\mbox{\hspace{1.6em} \textbf{ MSC codes (2020):} 14L24, 14L30.} 
		\mbox{\hspace{1.3em} }

\tableofcontents

%
%
%
%

\section{Introduction}

\indent 

Since the work of Demazure in \cite{Dem70}, normal varieties endowed with an effective torus action have been extensively studied. In that work, toric varieties naturally emerged, and the author provided a combinatorial description of the smooth ones. At that time, these varieties were referred to as \textit{toroidal embeddings}, as seen in \cite{KKMSD73} and \cite{Oda78}. A foundational survey based on earlier works was presented by Danilov in \cite{Dan78}, where these varieties were referred to as \textit{toric varieties} for the first time.\footnote{Danilov called them \textit{toral} in russian, but in the English translation they appeared as \textit{toric}. See \cite[Appendix A]{CLS11} for a brief historical overview of toric varieties.}

In general, normal toric varieties can be understood in terms of \textit{cones} or \textit{fans} (for modern references, see \cite{CLS11} or \cite{Ful93}). In the words of Fulton, \textit{toric varieties have provided a remarkably fertile testing ground for general theories}. Furthermore, toric varieties have found numerous applications in physics and computational fields of study.

Throughout this century, new results have emerged regarding toric varieties. Almost all the works mentioned above were developed over algebraically closed fields, as all algebraic tori are \textit{split} in that context. For non-split toric varieties, i.e. when the algebraic torus acting is non-split, obtaining a combinatorial
description is not possible in terms of cocharacters only. However, since every algebraic torus splits over a finite Galois extension, it is possible to obtain a combinatorial description accompanied by a Galois action (see \cite{Hur11} and \cite{ELFST14}). Moreover, various taxonomies can be applied to classify non-split toric varieties, depending on the definition of toric varieties and the types of morphisms considered \cite{Dun16}.

A toric variety contains a \emph{torsor} of an algebraic torus as a dense open subvariety, and their dimensions coincide. For a variety endowed with an effective torus action (referred to as a $T$-variety for brevity), the \textit{complexity} is defined as the difference between the dimensions of the variety and the torus. Thus, a toric variety is a $T$-variety of complexity zero.

For normal $T$-varieties of complexity one, Mumford \cite{KKMSD73} provided a description in terms of \textit{toroidal fans}\footnote{This is modern terminology. In \cite{KKMSD73}, what we now call fans were referred to as \textit{finite rational partial polyhedral decompositions}.}. Unfortunately, such a combinatorial description does not extend to higher complexities, even for complexity two. Furthermore, the works of Pinkham \cite{Pin77} and Flenner and Zaidenberg \cite{FZ03}, both focused on complexity one surfaces and restricted to the complex numbers.

It was not until 2006 that an \emph{algebro-geometric} combinatorial description for affine normal $T$-varieties over algebraically closed fields of characteristic zero was achieved for arbitrary complexity. The object encoding the data of an affine normal $T$-variety was called a \textit{proper polyhedral divisor} by Altmann and Hausen \cite{AH06}.

Let $k$ be an algebraically closed field of characteristic zero and $Y$ be a normal semi-projective variety over $k$ (see \cref{section: semi-projective varieties} for the notion of semi-projectivity). Let $N$ be a lattice and $\omega \subset N_{\Q}$ be a pointed cone. Denote $M := \Hom_{\Z}(N, \Z)$. A proper polyhedral divisor (abbreviated as pp-divisor) is a finite sum
\[
\D := \sum \Delta_{D} \otimes D,
\]
where the $\Delta_{D}$'s are polyhedra in $N_{\Q}$ with tail cone $\omega$, and the $D$'s are prime divisors in $\Div(Y)$, the group of Weil divisors, satisfying suitable conditions (cf. \cref{definitionppdiv}).

Given a pp-divisor $\D$ over a normal semi-projective variety $Y$ over $k$, we can associate with it a \textit{piecewise linear map} $\mathfrak{h}_{\D}: \omega^{\vee} \cap M \to \cadiv_{\Q}(Y)$, where $\cadiv_{\Q}(Y)$ stands for the group of rational Weil divisors $D$ such that $nD$ is Cartier for some $n\in \N$. Altmann and Hausen associated with the pp-divisor $\D$, via the piecewise linear map $\mathfrak{h}_{\D}$, the following $M$-graded $k$-algebra:
\[
A[Y, \D] := \bigoplus_{m \in \omega^{\vee} \cap M} \H^{0}(Y, \mathscr{O}_{Y}(\mathfrak{h}_{\D}(m))) \subset k(Y)[M],
\]
and proved that it is finitely generated. Besides, they also proved that the scheme $X(\D) := \Spec(A[Y, \D])$ is an affine normal variety over $k$ endowed with an effective action of $T := \Spec(k[M])$. Moreover, they showed that every affine normal $T$-variety arises in this manner.

    \begin{theorem}\cite[Theorems 3.1 and 3.4]{AH06}\label{theoremmainaltmannhausen}
        Let $k$ be an algebraically closed field of characteristic zero.
        \begin{enumerate}[i)]
            \item The scheme $X(\D)$ is a normal $k$-variety with an effective action of $T$.
            \item Let $X$ be an affine normal $k$-variety with an effective $T$-action. Then, there exists a pp-divisor $\D$ such that $X\cong X(\D)$ as $T$-varieties.
        \end{enumerate}
    \end{theorem}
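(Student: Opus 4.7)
The plan is to handle the two parts separately: (i) is a construction-and-verification problem, while (ii) is a reconstruction problem.

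For part (i), I would proceed in four steps. First, $A[Y,\D]$ is a graded $k$-subalgebra of the integral domain $k(Y)[M]$, hence itself a domain. Second, the $M$-grading defines a $T$-action on $\Spec(A[Y,\D])$, whose effectivity follows because the weight semigroup $\omega^{\vee}\cap M$ generates $M$ as a group. Third, normality: each homogeneous piece $H^{0}(Y,\mathscr{O}_Y(\mathfrak{h}_{\D}(m)))$ is cut out inside $k(Y)$ by divisorial conditions, so the whole ring is an intersection of DVRs coming from prime divisors on $Y$ together with the valuations associated to the rays of $\omega^{\vee}$, and any such intersection inside its fraction field is integrally closed. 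Fourth, and most laborious, finite generation: here one uses that $Y$ is semiprojective and $\omega$ is pointed, together with the common tail cone of all the $\Delta_D$, to reduce to finite generation of the section ring of a coherent sheaf over a projective base.

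For part (ii), I would start with a normal affine $T$-variety $X=\Spec(A)$ and decompose $A=\bigoplus_{m\in M}A_m$. Effectivity and finite generation force the weight cone $\omega^{\vee}\subset M_{\Q}$ to be a pointed, full-dimensional rational polyhedral cone. The idea is to construct a single semiprojective normal variety $Y$ that simultaneously hosts divisorial data describing every $A_m$. Concretely, for each nonzero weight $m$ one picks $f_m\in A_m$ and identifies $A_m$ with global sections of a divisorial sheaf on an appropriate birational model of the rational $T$-quotient; assembling these models yields $Y$. The coefficient polyhedron $\Delta_D\subset N_{\Q}$ attached to a prime divisor $D\subset Y$ is then recovered from the rule that the support function $m\mapsto\min_{v\in\Delta_D}\langle m,v\rangle$ reproduces the order of vanishing of the $f_m$ along $D$, and the common tail cone of every $\Delta_D$ equals $\omega$ by construction.

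The main obstacle is the construction of $Y$ in part (ii). The categorical quotient $X/\!/T$ generally does not exist as a variety, and different GIT linearizations produce different quotients over different chambers of the weight cone; one instead needs a single birational model dominating all of them (a Chow-quotient-type construction) and one must verify that it is normal, semiprojective, and supports the pp-divisor with the correct universal property. A secondary difficulty in part (i) is finite generation when $Y$ is only semiprojective rather than projective, which forces careful control of the support of $\D$ across the entire weight cone.
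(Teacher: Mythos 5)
Your proposal follows essentially the same route as the paper (which itself follows Altmann--Hausen): for part (i) the graded-domain, weight-semigroup and semiampleness/section-ring arguments, and for part (ii) the GIT-chamber analysis with a single birational model dominating all the quotients $\Xss(m)\sslash T$ (the inverse-limit/Chow-quotient construction) together with the support-function recovery of the coefficient polyhedra. One small correction: effectivity forces the weight cone $\omega^{\vee}$ to be full-dimensional, hence its dual $\omega\subset N_{\Q}$ to be pointed, but $\omega^{\vee}$ itself need not be pointed (for the torus acting on itself it is all of $M_{\Q}$), and the monomial valuations entering your normality argument are attached to the rays of $\omega$, not of $\omega^{\vee}$.
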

    
     Vollmert \cite{Vol10} makes a correspondence between Mumford's toroidal fans and pp-divisors for complexity one affine normal $T$-varieties.
   
          When $k$ is no longer algebraically closed, the combinatorial framework disappears for non-split algebraic tori over $k$, similar to what happens in toric geometry. However, when the algebraic torus is split, the combinatorial structure reappears. Specifically, \cref{theoremmainaltmannhausen} holds over any field $k$ of characteristic zero and affine normal $T$-varieties, provided that $T$ is split over $k$, as demonstrated by Gillard in \cite[Proposition 4.10]{Gil22b}. Moreover, it is also valid for complexity one normal $T$-varieties over arbitrary fields, provided that $T$ is a split torus, as proved by Langlois in \cite[Theorem 0.2]{Lan15}.

\subsection*{Main results}
\indent

When $k$ is any field and $T$ is a split algebraic torus over $k$, we prove that every affine normal $T$-variety over $k$ arises from a pp-divisor over $k$ by adapting the arguments of Altmann and Hausen. The following result generalizes \cref{theoremmainaltmannhausen}, \cite[Proposition 4.10]{Gil22b} and \cite[Theorem 0.2]{Lan15}.
     
      \begin{theorem}\label{theoremmainaltmannhausensplitintro}
        Let $k$ be a field.
        \begin{enumerate}[i)]
            \item\label{theoremmainaltmannhausensplitintro part a} The scheme $X(\D)$ is a normal variety over $k$ with an effective action of $T:=\Spec(k[M])$.
            \item\label{theoremmainaltmannhausensplitintro part b} Let $X$ be an affine normal variety over $k$ with an effective action of a split algebraic torus $T$. Then, there exists a pp-divisor $\D$ such that $X\cong X(\D)$ as $T$-varieties.
        \end{enumerate}
    \end{theorem}

Recall that every algebraic torus over a field $k$ splits over a finite Galois extension $L/k$. Consequently, the combinatorial framework exists over such extensions, and Galois descent theory provides a mechanism to \emph{bring it back} to the ground field. In other words, with additional data describing the combinatorial structure over the extension, it is possible to describe the variety over the ground field. To classify normal $T$-varieties over a nonalgebraically closed field, we need to develop an appropriate language: the notion of \textit{Galois semilinear action on a pp-divisor} (see \cref{section: semilinear pp-div}). Let us denote by $\mathfrak{PPDiv}(\Gamma_{L})$ the category of pairs $(\D,g)$, where $\D$ is a pp-divisor defined over $L$ and $g$ is a Galois semilinear action on it. Thus, we prove the following result, which is the main theorem of this work, and generalizes \cite[Theorem A]{Gil22b} and \cite[Theorem 5.10]{Lan15}.

    \begin{theorem}\label{maintheoremofpaper}
Let $k$ be a field and $L/k$ be a finite Galois extension with Galois group $\Gamma_{L}$.
	\begin{enumerate}[a)]
            \item\label{theorem main affine minimal part i} Let $(\D_{L},g)$ be an object in $\mathfrak{PPDiv}(\Gamma_{L})$. Then, $X(\D_{L},g)$ is an affine normal variety endowed with an effective action of an algebraic torus $T$ over $k$ such that $T$ splits over $L$ and $X(\D_{L},g)_{L}\cong X(\D_{L})$ as $T_{\D_{L}}$-varieties over $L$.
            \item\label{theorem main affine minimal part ii} Let $X$ be an affine normal variety over $k$ endowed with an effective $T$-action such that $T_{L}$ is split. Then, there exists an object $(\D_{L},g)$ in $\mathfrak{PPDiv}(\Gamma_{L})$ such that $X \cong X(\D_{L},g)$ as $T$-varieties.
        \end{enumerate}
	\end{theorem}

\paragraph{About nonaffine normal $T$-varieties.}Altmann, Hausen and Süß \cite{AHS08} give a complete description in terms of \emph{divisorial fans} when the ground field is algebraically closed and is of characteristic zero. It is worth mentioning that, for a complexity zero normal $T$-variety, such a divisorial fan is the usual fan from toric geometry. The author, in a subsequent work \cite{MN25b}, generalizes \cite{AHS08} to arbitrary fields and arbitrary torus actions. 

\paragraph{Representability of $\underline{\Aut}^{T}(X)$ and counting real forms.} A natural question shows up, which cannot be avoided, and it is: How many non-isomorphic $k$-forms does an affine normal $T$-variety $X$ have? 

An answer to this question was given in a recent work by Lucchini-Arteche and Terpereau \cite{LAT26} in a more general framework. They study the number of $k$-forms of a complexity one normal $G$-variety, where $G$ is a reductive algebraic group. Recall that the complexity for a $G$-variety is the codimension of a general $B$-orbit, where $B$ is any Borel subgroup of $G$ over an algebraic closure.

As a first step, the authors study the representability of the group sheaf $\underline{\Aut}^{G}(X)$ defined, over the small site over $k$, by
\begin{align*} \{\textrm{smooth schemes over $k$}\} &\to \{\textrm{groups}\} \\ S &\mapsto \Aut_{S}^{G_{S}}(X_{S}), \end{align*}
where $X$ is a normal $G$-variety. In the particular case when $G=T$ is an algebraic torus and $X$ is an affine normal variety, the authors apply some results from our work to prove the representability of $\underline{\Aut}^{T}(X)$ by a smooth $k$-group scheme $\Aut^{T}(X)$ that is locally of finite type (see \cite[Theorem 1.3]{LAT26}).

Regarding the number of $(\R,T)$-forms of $X$ when $k=\R$, i.e. the number of real $T$-varieties $X'$ such that $X_{\C}\cong X_{\C}$ as $T_{\C}$ varieties, the authors prove that $X$ admits only finitely many of them (see \cite[Theorem 1.6]{LAT26}).

\paragraph{Conventions and settings.}Throughout this paper, $k$ stands for an arbitrary field. We fix an algebraic closure $\bar{k}$ and, therefore, we set $\sep{k}\subset\bar{k}$ for the separable closure of $k$ inside of the algebraic closure. Besides, $k\subset L\subset \sep{k}$ stands for a finite Galois extension and we denote by $\Gamma_{L}$ its Galois group.

By a \emph{variety} over $k$ (or simply $k$-variety) we mean a geometrically integral separated scheme of finite type over $k$ and $T$ stands for an algebraic torus over $k$. Besides, by a \emph{$T$-variety} over $k$ we mean a variety endowed with an effective action of $T$. 

For any extension $K/k$, we denote by $X_{K}:=X\times_{k} K$ the respective base change.

\paragraph{Layout of the paper.} In \cref{chapter convex geometry and toric varieties}, some basic notions concerning algebraic tori are provided. Besides, a separate section on convex geometry is also included, where the notions of \emph{cones}, \emph{fans}, and \emph{polyhedra} are recalled and also some of their properties.

 In \cref{section Polyhedral Divisors}, the category of \emph{proper polyhedral divisors} over arbitrary fields is presented. This category was originally introduced by Altmann and Hausen in \cite{AH06} over algebraically closed fields of characteristic zero. In this section, the behavior of such a category under base change of field extensions is also studied.

 \cref{Section Normal varieties and split tori} is devoted to the proof of \cref{theoremmainaltmannhausensplitintro}. It begins with a brief exposition about the \emph{affinization} of a scheme \cref{section: affinization}, a notion that appears in the proof of \cref{theoremmainaltmannhausensplitintro}~\eqref{theoremmainaltmannhausensplitintro part a} in \cref{section: from pp-divisors to varieties} that involves the construction of a non-affine normal $T$-variety whose affinization is an affine normal $T$-variety. Hence, in \cref{section: semi-projective varieties} a discussion about \emph{semi-projective} varieties is founded, where several properties of them are exposed. Semi-projective varieties are one of the central notions in this work, because affine normal $T$-varieties arise from pp-divisors defined over semi-projective varieties. Finally, \cref{theoremmainaltmannhausensplitintro}~\eqref{theoremmainaltmannhausensplitintro part b} is proved in \cref{section: from affine to pp-divisors}. The section begins with the construction of a quotient given an affine normal $T$-variety. This construction is followed by the proof of \cref{theoremmainaltmannhausensplitintro}~\eqref{theoremmainaltmannhausensplitintro part b} by constructing a pp-divisor over such a quotient. The algebro-geometric combinatorial data is not unique and the notion of \emph{minimal} pp-divisor is presented, which plays an important role in the subsequent sections. At the end of \cref{Section Normal varieties and split tori}, there is a brief discussion on $G$-normal varieties and how the theory of pp-divisors may help to understand them.

 \cref{chapter semilinear morphisms} begins with the study of the \emph{functoriality} of the category of pp-divisors \cref{section: functoriality} towards the category of affine normal varieties endowed with an effective action of a split torus. In \cref{section: semilinear varieties} and \cref{section: semilinear pp-div} \emph{semilinear} morphisms are introduced and a larger category is presented. This category has exactly the same objects but contains more morphisms; it englobes also the semilinear ones. It is also proved that functoriality remains valid. However, it does not yield an equivalence of categories. In \cref{section: semilinear equivariant morphisms}, the main result of the entire section shows up. It is possible, up to a modification of one of the normal semi-projective varieties, to translate any dominant equivariant morphism of affine normal $T$-varieties into a morphism of pp-divisors (cf \cref{theorem semilinear automorphisms}). In the particular case of minimal pp-divisors, no modification is needed at all. \cref{theorem semilinear automorphisms} plays an important role in the last section.

 \cref{chapter equivariant automorphisms affine} starts with the translation of the classical language of Galois descent into the language of semilinear morphisms \cref{section: Galois descent tools}, all this in the category of varieties. In \cref{section: affine minimal}, a proof of \cref{maintheoremofpaper} is given and it begins with introducing the notion of Galois semilinear action on a pp-divisor. Also, a brief explanation of how \cite[Theorem A]{Gil22b} is recovered from ours is given. Finally, in \cref{section: The other T-variety}, it can be found a discussion on the \emph{other $T$-variety} arising from a pp-divisor, which has been used to study the resolution of singularities in characteristic zero.

\subsection*{Acknowledgements}

I would like to thank Giancarlo Lucchini-Arteche, Álvaro Liendo, Adrien Dubouloz, Ronan Terpereau and Michel Brion for many enriching conversations. Also, I would like to thank the anonymous referee for the useful comments. This work was partially supported by ANID via Beca de Doctorado Nacional 2021 Folio 21211482 and ECOS-ANID Project ECOS230044.

\section{Convex geometry and toric varieties}\label{chapter convex geometry and toric varieties}
\indent

This section is devoted to summarize some known facts about convex geometry and toric varieties. We start with \textit{algebraic tori} and some of their properties. We continue with convex geometry, recalling the definitions of \textit{cones} and \textit{fans}. We present also the notion of \textit{polyhedra}. This section is split into two subparts. The first one is about \textit{split} toric varieties and the last one is about non split toric varieties.

\subsection{ Algebraic tori}\label{Section algebraic torus}
\indent

 As a reference for this topic see for instance \cite{Wat79} or \cite{DG70}. An \textit{algebraic torus over $k$} is a linear algebraic group $T$ over $k$ such that 
\[T_{\sep{k}}:=T\times_{\Spec(k)}\Spec(\sep{k})\cong \left(\G_{\mathrm{m},\sep{k}}\right)^{n},\] where $n=\dim(T)$. If this isomorphism holds over $k$, we say that the algebraic $k$-torus is \textit{split}. It is known that there exists a finite Galois extension $k\subset L\subset \sep{k}$ such that $T_{L}$ is split.  A way to construct a split algebraic torus of dimension $n$ over an arbitrary field $k$ is the following: Let $M$ be a free $\Z$-module of rank $n$. The group algebra $k[M]$ is a finitely generated $k$-algebra, which is isomorphic to 
\[k[x_{1},y_{1},x_{2},y_{2},\dots,x_{n},y_{n}]/(x_{1}y_{1}-1,x_{2}y_{2}-1,\dots,x_{n}y_{n}-1)\]
as $k$-algebras. Then, we have the following $k$-algebra isomorphism
\[k[M]\cong k[x_{1},y_{1}]/(x_{1}y_{1}-1)\otimes k[x_{2},y_{2}]/(x_{2}y_{2}-1)\otimes \cdots \otimes k[x_{n},y_{n}]/(x_{n}y_{n}-1).\]
Hence, by taking the spectrum it follows that
\[\Spec(k[M])\cong \G_{\mathrm{m},k}\times \G_{\mathrm{m},k} \times \cdots \times \G_{\mathrm{m},k}\cong \G_{\mathrm{m},k}^{n}.\]
This gives the variety structure.

The group structure of $\Spec(k[M])$ is induced by the \emph{Hopf algebra structure} given by the following morphisms of $k$-algebras:
	\begin{itemize}
		\item \textbf{Comultiplication:} 
			\begin{align*}
				k[M] &\to k[M]\otimes_{k} k[M] \\
				x_{i} &\mapsto x_{i}\otimes x_{i}.
			\end{align*}
		\item \textbf{Counit:} 
			\begin{align*}
				k[M] &\to k \\
				x_{i} &\mapsto 1.
			\end{align*}
		\item \textbf{Antipode:} 
			\begin{align*}
				k[M] &\to k[M] \\
				x_{i} &\mapsto y_{i}.
			\end{align*}
	\end{itemize}

Any split algebraic torus over $k$ arises this way. Let us introduce some notation: the \emph{group of characters} of a split torus $T$ is defined as  
\[\chi^{*}\left( T \right):=\{ \chi:T\to \G_{\mathrm{m},k} \mid \chi\textrm{ is a $k$-group homomorphism} \},\] 
which will be denoted as $M$, and its group of cocharacters is defined as
\[\chi_{*}\left( T \right):=\{ \lambda: \G_{\mathrm{m},k}\to T \mid \lambda\textrm{ is a $k$-group homomorphism} \},\] 
which will be denoted by $N$. Both, the group of characters and the \emph{group of cocharacters} of a split algebraic $k$-torus, are free $\Z$-modules of finite rank. Notice that if we compose $\chi\in M$ and $\lambda\in N$, we get a $k$-group morphism $\chi\circ\lambda:\G_{\mathrm{m},k}\to \G_{\mathrm{m},k}$. Given that $\End_{gr}(\G_{\mathrm{m},k})\cong \Z$, we have a map 
\begin{align*}
\langle,\rangle:M\times N &\to \Z, \\ 
(\chi,\lambda) &\mapsto \chi\circ\lambda,
\end{align*}
which defines a perfect pairing, as stated in the following result.

\begin{proposition}
Let $T$ be a split algebraic torus over $k$ of dimension $n$. Then,
\begin{enumerate}
	\item $M:=\chi^{*}(T)\cong \Z^{n}$,
	\item $N:=\chi_{*}(T)\cong \Hom_{\Z}(\chi^{*}(T),\Z)\cong \Z^{n}$ and
	\item $T\cong \Spec(k[M])$ as algebraic groups.
\end{enumerate}
\end{proposition}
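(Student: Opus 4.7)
The plan is to reduce everything to the standard torus $(\G_{m,k})^n$ by fixing an isomorphism coming from the splitting hypothesis, then compute the characters, cocharacters, and the pairing directly using Hopf algebras. Since $T$ is split, there is a $k$-isomorphism $T\cong(\G_{m,k})^n$, which induces isomorphisms $\chi^*(T)\cong\chi^*((\G_{m,k})^n)$ and $\chi_*(T)\cong\chi_*((\G_{m,k})^n)$ of abelian groups, so it suffices to compute these for $(\G_{m,k})^n=\Spec(k[x_1^{\pm1},\dots,x_n^{\pm1}])$.

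For part (1), I would translate the definition of a character into a Hopf algebra statement: a $k$-group homomorphism $\chi\colon(\G_{m,k})^n\to\G_{m,k}$ corresponds to a Hopf algebra map $k[x,x^{-1}]\to k[x_1^{\pm1},\dots,x_n^{\pm1}]$, equivalently to a grouplike unit $u$ in the latter ring (i.e.\ $\Delta(u)=u\otimes u$). A short direct computation on coefficients $u=\sum c_a x^a$ shows that the relations $c_ac_b=0$ for $a\neq b$ and $c_a^2=c_a$ force $u$ to be a Laurent monomial $x_1^{a_1}\cdots x_n^{a_n}$, giving a bijection $M\cong\Z^n$. For the cocharacter group in part (2), a $k$-group homomorphism $\lambda\colon\G_{m,k}\to(\G_{m,k})^n$ is the same as an $n$-tuple of group homomorphisms $\G_{m,k}\to\G_{m,k}$, each of which is determined by an integer by the same grouplike computation in $k[y^{\pm1}]$; hence $N\cong\Z^n$.

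For the duality statement in part (2), I would evaluate the pairing $\langle\cdot,\cdot\rangle\colon M\times N\to\Z$. Under the identifications above, a character $\chi$ corresponds to $a\in\Z^n$ and a cocharacter $\lambda$ to $b\in\Z^n$, and the composition sends $t\mapsto\prod_i t^{b_i a_i}=t^{\sum a_ib_i}$. This is the standard bilinear form on $\Z^n$, which is nondegenerate, so the induced map $N\to\Hom_\Z(M,\Z)$ is an isomorphism. Finally, for part (3), I would use the natural evaluation map $k[M]\to\mathcal{O}(T)$ sending a character $\chi\in M$ to the regular function $t\mapsto\chi(t)$; this is a Hopf algebra map which, under the fixed splitting and the identification $M\cong\Z^n$, becomes the identity on $k[x_1^{\pm1},\dots,x_n^{\pm1}]$, hence is an isomorphism. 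Passing to $\Spec$ yields $T\cong\Spec(k[M])$ as algebraic groups.

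The only real work is the grouplike computation in $k[\Z^n]$, but this is a standard finite manipulation of coefficients rather than a genuine obstacle; the remaining steps are formal consequences of the splitting and the universal property of the group algebra. The subtlest conceptual point is making sure the isomorphism in part (3) is canonical (via the evaluation pairing) and not merely existential from the choice of splitting, but the evaluation construction takes care of this uniformly.
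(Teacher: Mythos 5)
Your argument is correct: the reduction to $(\G_{m,k})^n$ via a splitting, the grouplike-element computation in $k[\Z^n]$ identifying characters with Laurent monomials, the evaluation of the pairing as the standard dot product, and the canonical evaluation map $k[M]\to\mathcal{O}(T)$ for part (3) are all sound. The paper states this proposition as a standard fact without proof, and what you have written is precisely the standard Hopf-algebraic argument one would supply, so there is nothing to compare beyond noting that your treatment of the canonicity of the isomorphism in (3) is a welcome extra care.
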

 It is worth mentioning that the morphisms appearing in the latter proposition are canonical, up to those involving $\mathbb{Z}^{n}$.

\subsection{ Preliminaries on Convex geometry}
\indent

Let $N$ be a lattice and $N_{\Q}:=N\otimes_{\Z}\Q$ be the $\Q$-vector space associated to $N$ by scalar extension. Let $M:=\Hom_{\Z}(N,\Z)$ be the dual lattice of $N$, which has the same rank as $N$. The vector space $M_{\Q}$ is canonically isomorphic to $\Hom_{\Q}(N_{\Q},\Q)$, the dual of $N_{\Q}$ as a $\Q$-vector space. The lattices $N$ and $M$ can be considered contained in $N_{\Q}$ and $M_{\Q}$ respectively.

The natural morphism $\langle ,\rangle:M\times N\to \Z$, given by $\langle m,u\rangle:=m(u)$, defines a perfect pairing between $N$ and $M$.  This morphism extends to a perfect pairing $\langle,\rangle:M_{\Q}\times N_{\Q}\to \Q$.

\subsubsection{Cones and fans}\label{Section cones and fans}
\indent

The definition and results presented in this section can be found in \cite{Ful93} and \cite{CLS11}, for instance.

\begin{definition}
Let $N$ be a lattice and $M$ be its dual lattice. A \textit{convex polyhedral cone in $N_{\Q}$} is a subset $\omega$ of $N_{\Q}$ of the form 
\[ \omega=\cone(v_{1},\dots,v_{l})=\left\{ \sum_{i=1}^{l}r_{i}v_{i} \mid r_{i}\in \Q_{\geq 0}  \right\}, \]
for some $v_{1},\dots,v_{l}\in N_{\Q}$.
\end{definition}

Notice that convex polyhedral cones are closed convex sets. The \textit{dimension of $\omega$}, denoted by $\dim(\omega)$, is the dimension of the smallest subspace $V\subset N_{\Q}$ containing $\omega$. The \emph{dual cone} $\omega^{\vee}$ of $\omega$ is the cone 
\[\omega^{\vee}:=\{m\in M_{\Q} \mid \langle m,v \rangle \geq 0\,\, \textrm{for all }v\in \omega\},\]
which is also a convex polyhedral cone.

\begin{definition}
Let $N$ be a lattice and $M:=\Hom_{\Z}(N,\Z)$ be its dual lattice. A \textit{face of a convex polyhedral cone} $\omega\subset N_{\Q}$ is a subset $\tau$ of $\omega$ of the form 
\[ \tau= \omega\cap m^{\perp} = \{ v\in\omega  \mid \langle m,v \rangle =0 \} ,\]
with $m\in \omega^{\vee}\cap M_{\Q}$. The face relation is denoted by $\tau\preceq\omega$.
\end{definition}
Notice that for any convex polyhedral cone $\omega\subset N_{\Q}$ we have $\omega\preceq \omega$. A face $\tau$ of $\omega$ is called \textit{proper} when $\tau\neq \omega$. Every face of a convex polyhedral cone is a convex polyhedral cone and the intersection of two faces of a convex polyhedral cone is also a face. A further important
property is that the face relation is transitive.

\begin{definition}
Let $N$ be a lattice and $M:=\Hom_{\Z}(N,\Z)$ be its dual lattice. A polyhedral cone in $N_{\Q}$ is said to be \emph{pointed} if it contains no line.
\end{definition}

From now on, by a \emph{cone} in $N$ we mean a convex polyhedral cone in $N_{\Q}$.\footnote{In classical references, we mean \cite{Ful93} and \cite{CLS11}, we ask for rationality on the cones, but this is due to the definition being given over real vector spaces.}

\begin{definition}
Let $N$ be a lattice. A \textit{fan} in $N_{\Q}$ is a finite set $\Sigma$ of pointed cones in $N_{\Q}$ such that, for any $\omega\in\Sigma$, if $\tau\preceq\omega$ we have $\tau\in\Sigma$ and, for any pair $\omega,\omega'\in \Sigma$, the intersection $\omega\cap\omega'$ is in $\Sigma$ and $\omega\cap\omega'\preceq\omega,\omega'$. If the cones on $\Sigma$ are not necessarily pointed, then we say that $\Sigma$ is a \textit{quasifan}.
\end{definition}

\subsubsection{Polyhedra}\label{Section polyhedra}
\indent

This section is based in \cite[Section 1]{AH06}. A convex polyhedron in $N_{\Q}$ is the intersection of finitely many closed affine half spaces in $N_{\Q}$. In particular, a cone is a convex polyhedron. The set of all polyhedra in $N_{\Q}$ comes with a natural monoidal structure under the \textit{Minkowski sum}: for any pair of polyhedra $\Delta_{1}$ and $\Delta_{2}$ in $N_{\Q}$ 
\[\Delta_{1}+\Delta_{2}:=\{v_{1}+v_{2} \mid v_{i}\in\Delta_{i}\}.\]
 A \textit{polytope} $\Pi\subset N_{\Q}$ is the convex hull of finitely many points. Every polyhedron $\Delta$ in $N_{\Q}$ has a Minkowski decomposition $\Delta=\Pi+\omega$, with $\Pi$ a polytope in $N_{\Q}$ and $\omega$ a cone in $N_{\Q}$. This cone is called the \textit{tail cone} of $\Delta$, or \textit{recession cone} of $\Delta$, and is given by 
 \[\omega=\{v\in N_{\Q} \mid v'+tv\in\Delta \textrm{ for all }v'\in\Delta\textrm{ and }t\in\Q_{\geq 0}\}.\] 

\begin{definition}
    Let $\omega$ be a cone in $N_{\Q}$.
    \begin{enumerate}
        \item A $\omega$-\textit{tailed polyhedron} (or $\omega$-\textit{polyhedron} for short) in $N_{\Q}$, is a polyhedron $\Delta$ in $N_{\Q}$ having tail cone $\omega$. The set of all $\omega$-polyhedra in $N_{\Q}$ is denoted by $\mathrm{Pol}_{\omega}^{+}(N_{\Q})$.
        \item $\Delta\in\mathrm{Pol}_{\omega}^{+}(N_{\Q})$ is called \textit{integral} if $\Delta=\Pi+\omega$ holds with a polytope $\Pi\subset N_{\Q}$ having its vertices in $N$. The set of all integral $\omega$-polyhedra in $N_{\Q}$ in denoted by $\mathrm{Pol}_{\omega}^{+}(N)$. 
    \end{enumerate}
\end{definition}

The Minkowski sum of two $\omega$-polyhedra is also an $\omega$-polyhedron, then $\mathrm{Pol}_{\omega}^{+}(N_{\Q})$ is a monoid having $\omega\in\mathrm{Pol}_{\omega}^{+}(N_{\Q})$ as neutral element. This holds also for $\mathrm{Pol}_{\omega}^{+}(N_{})$, because the sum of two integral $\omega$-polyhedra is an integral $\omega$-polyhedron. Denote by $\mathrm{Pol}_{\omega}(N_{\Q})$ and $\mathrm{Pol}_{\omega}^{}(N_{})$ their respective Grothendieck groups.

Recall that the \textit{support function} associated to a convex set $\Delta\subset N_{\Q}$ is given by 
\begin{align*}
h_{\Delta}:M_{\Q} &\to \Q\cup\{-\infty\}, \\
m &\mapsto \inf_{v\in\Delta}\langle m,v \rangle 
\end{align*} 
and its support is $\mathrm{Supp}(h_{\Delta}):=\{m\in M_{\Q} \mid h_{\Delta}(m)>-\infty \}$. For an $\omega$-polyhedron $\Delta$ and $m\in M_{\Q}$, we define
\[\lambda_{m}:=\{m'\in M_{\Q}\mid h_{\Delta}(m+m')=h_{\Delta}(m)+h_{\Delta}(m')\},\]
which is a cone. The set $\lambda_{\Delta}:=\{\lambda_{m} \mid m\in M_{\Q}\}$ is finite. Define $\Lambda(\Delta)$ as the set generated by all the finite intersections of elements in $\lambda_{\Delta}$. Each element in $\Lambda(\Delta)$ is a cone, not necessarily pointed. The set $\Lambda(\Delta)$ is called \textit{the normal quasifan of $\Delta$}.

In the following we present some properties that can be found in \cite[Section 1]{AH06}.

\begin{lemma}\label{Lemma 1.4 AH06}
Let $\omega\in N_{\Q}$ a pointed cone, $\Delta\in\mathrm{Pol}_{\omega}^{+}(N_{\Q})$ and $h_{\Delta}:M_{\Q}\to \Q\cup\{-\infty\}$ its respective support function. Then, the following hold.
\begin{enumerate}[i)]
	\item The support of $h_{\Delta}$ is $\omega^{\vee}$ and $h_{\Delta}$ is linear on each cone of the normal quasifan $\Lambda(\Delta)$.
	\item The function $h_{\Delta}$ is convex, i.e. for every $m_{1}$ and $m_{2}$ in $M_{\Q}$ we have 
	\[h_{\Delta}(m_{1}+m_{2})\leq h_{\Delta}(m_{1})+h_{\Delta}(m_{2}).\]
	Moreover, the strict inequality holds if and only if  $m_{1}$ and $m_{2}$ do not belong to the same maximal cone of $\Lambda(\Delta)$.
\end{enumerate}
\end{lemma}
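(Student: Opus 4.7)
The plan is to reduce everything to the polytope part of $\Delta$ via the Minkowski decomposition $\Delta = \Pi + \omega$. Writing an arbitrary $v \in \Delta$ as $v = p + w$ with $p \in \Pi$, $w \in \omega$ and splitting the infimum yields the additive decomposition
\[
h_{\Delta}(m) = h_{\Pi}(m) + h_{\omega}(m)
\]
on all of $M_{\Q}$. Both parts of the lemma then boil down to understanding $h_{\Pi}$ on $\omega^{\vee}$.

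For the support assertion in (i), I would argue that $h_{\omega}(m)=0$ when $m\in\omega^{\vee}$ (since $\langle m,w\rangle\geq 0$ on $\omega$ and $0\in\omega$) and $h_{\omega}(m)=-\infty$ otherwise (take $w_{0}\in\omega$ with $\langle m,w_{0}\rangle<0$; the ray $\Q_{\geq 0}w_{0}\subset\omega$ forces the infimum to $-\infty$). Since $\Pi$ is compact, $h_{\Pi}$ is everywhere finite, and by standard polytope theory the infimum is attained at a vertex. Combining these observations gives $\mathrm{Supp}(h_{\Delta})=\omega^{\vee}$ with $h_{\Delta}|_{\omega^{\vee}}=h_{\Pi}|_{\omega^{\vee}}$. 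For the linearity statement I would identify $\Lambda(\Delta)$ with the trace on $\omega^{\vee}$ of the inner normal fan of $\Pi$, whose maximal cones are $N_{v}:=\{m\in M_{\Q}\mid\langle m,v\rangle=h_{\Pi}(m)\}$ for $v$ a vertex of $\Pi$. The main computation is to unpack the definition of $\lambda_{m}$: the equality $h_{\Delta}(m+m')=h_{\Delta}(m)+h_{\Delta}(m')$ is equivalent to the existence of a single vertex $v\in\Pi$ minimising both $\langle m,\cdot\rangle$ and $\langle m',\cdot\rangle$ simultaneously. Consequently $\lambda_{m}$ is the union of the $N_{v}\cap\omega^{\vee}$ containing $m$, and taking all finite intersections of such sets recovers every face of every $N_{v}\cap\omega^{\vee}$. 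On each maximal cone $N_{v}\cap\omega^{\vee}$ the support function is then literally $m\mapsto\langle m,v\rangle$, hence linear.

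Part (ii) follows directly from the definition. For any $v^{*}\in\Delta$ realising the infimum for $m_{1}+m_{2}$,
\[
h_{\Delta}(m_{1}+m_{2})=\langle m_{1},v^{*}\rangle+\langle m_{2},v^{*}\rangle,
\]
and each term is bounded below by $h_{\Delta}(m_{i})=\inf_{v\in\Delta}\langle m_{i},v\rangle$, which yields the stated comparison between $h_{\Delta}(m_{1}+m_{2})$ and $h_{\Delta}(m_{1})+h_{\Delta}(m_{2})$. Equality amounts to the existence of one $v^{*}$ that simultaneously minimises $\langle m_{1},\cdot\rangle$, $\langle m_{2},\cdot\rangle$ and $\langle m_{1}+m_{2},\cdot\rangle$, which by the vertex description of the normal cones is equivalent to $m_{1}$ and $m_{2}$ lying in a common maximal cone $N_{v}\cap\omega^{\vee}$ of $\Lambda(\Delta)$.

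The main obstacle is the combinatorial identification of $\Lambda(\Delta)$ with the refinement of the normal fan of $\Pi$ inside $\omega^{\vee}$. Each step is elementary, but one must take care with the possibility that the cones in $\Lambda(\Delta)$ are not pointed — hence the term \emph{quasifan} — arising from directions in the lineality space of $h_{\Pi}$. Once this identification is made rigorous, the linearity in (i) and the equality analysis in (ii) both follow immediately from the vertex-minimisation description of $h_{\Pi}$.
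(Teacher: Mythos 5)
The paper does not actually prove this lemma; it is imported verbatim from Altmann--Hausen (Section~1 of \cite{altmannhausen}), so there is no internal proof to compare against. Your reduction via the Minkowski decomposition $\Delta=\Pi+\omega$, the splitting $h_{\Delta}=h_{\Pi}+h_{\omega}$, and the identification of $\Lambda(\Delta)$ with the common refinement of the normal fan of $\Pi$ and the faces of $\omega^{\vee}$ is exactly the standard argument, and the individual steps (support of $h_{\omega}$, attainment of the infimum at a vertex of $\Pi$, equality in the additivity relation being equivalent to the existence of a common minimising vertex) are all correct. Your acknowledged gap --- making the identification of $\Lambda(\Delta)$ with that refinement rigorous, including the non-pointed cones coming from the lineality space --- is genuinely the only place where work remains, and it is routine.

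One point you should not gloss over: with $h_{\Delta}(m)=\inf_{v\in\Delta}\langle m,v\rangle$, the computation you give (pick a minimiser $v^{*}$ for $m_{1}+m_{2}$ and bound each term below by the respective infimum) yields
\[
h_{\Delta}(m_{1})+h_{\Delta}(m_{2})\leq h_{\Delta}(m_{1}+m_{2}),
\]
which is the \emph{reverse} of the inequality printed in part (ii) of the statement. Your phrase ``which yields the stated comparison'' hides this. The printed inequality is a typo in the statement --- it contradicts the paper's own definition of convexity for maps in $\mathrm{CPL}_{\Q}(Y,\omega)$, namely $\mathfrak{h}(m)+\mathfrak{h}(m')\leq\mathfrak{h}(m+m')$, and the original Lemma~1.4(ii) of \cite{altmannhausen} states the superadditive version you actually prove --- so your derivation is the correct one, but you must say explicitly which inequality you are proving and why the statement's direction cannot be right under the infimum convention. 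Relatedly, the ``strict inequality iff not in a common maximal cone'' clause only makes sense for $m_{1},m_{2}\in\omega^{\vee}$ (otherwise both sides are $-\infty$); your vertex analysis handles the interesting case correctly, but the restriction should be stated.
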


A function $h:M_{\Q}\to \Q\cup\{-\infty\}$, having $\omega^{\vee}$ as support, is said to be \textit{piecewise linear} if there exists a quasifan $\Lambda$ with support $\omega^{\vee}$ such that $h$ is linear on each $\lambda\in\Lambda$. Denote $\mathrm{CPL}_{\Q}(\omega)$ the set of convex piecewise linear functions $h:M_{\Q}\to \Q\cup\{-\infty\}$ having $\omega^{\vee}$ as its support. For any $\Delta\in \mathrm{Pol}_{\omega}^{+}(N_{\Q})$, the support function $h_{\Delta}$ is piecewise linear and has $\omega^{\vee}$ as its support. The assignment $\Delta\mapsto h_{\Delta}$ defines a map $\mathrm{Pol}_{\omega}^{+}(N_{\Q}) \to \mathrm{CPL}_{\Q}(\omega)$, which turns to be a monoid isomorphism.

\begin{proposition}\label{Proposition 1.5 AH06}
Let $\omega\subset N_{\Q}$ a pointed cone. The set $\mathrm{CPL}_{\Q}(\omega)$ is a monoid and the map
\begin{align*}
\mathrm{Pol}_{\omega}^{+}(N_{\Q}) &\to \mathrm{CPL}_{\Q}(\omega), \\ 
\Delta &\mapsto h_{\Delta}
\end{align*}
is a monoid isomorphism.
\end{proposition}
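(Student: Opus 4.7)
The plan is to verify four items in turn: that $\mathrm{CPL}_{\Q}(\omega)$ is closed under pointwise addition, that $\Delta\mapsto h_{\Delta}$ lands in $\mathrm{CPL}_{\Q}(\omega)$ and is additive, that the map is injective, and that it is surjective. Closure of $\mathrm{CPL}_{\Q}(\omega)$ under pointwise addition is formal: two convex functions sum to a convex function, two piecewise linear functions are piecewise linear on any common refinement of their defining quasifans, and the support of the sum is the intersection of the supports, here $\omega^{\vee}$. Well-definedness of $\Delta\mapsto h_{\Delta}$ is the content of \cref{Lemma 1.4 AH06}; additivity,
\[
h_{\Delta_{1}+\Delta_{2}}(m) \;=\; \inf_{v_{1}\in\Delta_{1},\,v_{2}\in\Delta_{2}}\langle m,v_{1}+v_{2}\rangle \;=\; h_{\Delta_{1}}(m)+h_{\Delta_{2}}(m),
\]
follows from the independence of the two infima.

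For injectivity I would use the reconstruction formula
\[
\Delta \;=\; \{\,v\in N_{\Q} \mid \langle m,v\rangle \geq h_{\Delta}(m)\text{ for all } m\in \omega^{\vee}\cap M_{\Q}\,\}.
\]
This is a consequence of Hahn--Banach separation: any point $v_{0}\notin\Delta$ can be separated from $\Delta$ by a hyperplane whose defining linear functional, after suitable translation and rescaling, lies in $\omega^{\vee}\cap M_{\Q}$ and witnesses the failure of the inequality at $v_{0}$. The formula exhibits $\Delta$ as a function of $h_{\Delta}$ alone.

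For surjectivity, given $h\in\mathrm{CPL}_{\Q}(\omega)$ and a defining quasifan $\Lambda$, I would set
\[
\Delta_{h} \;:=\; \{\,v\in N_{\Q} \mid \langle m,v\rangle \geq h(m)\text{ for all } m\in \omega^{\vee}\,\}.
\]
Choosing finite generators $m_{1},\dots,m_{r}$ of the maximal cones of $\Lambda$ and invoking the linearity of $h$ on each, the infinite family of inequalities defining $\Delta_{h}$ collapses to the finite subsystem $\langle m_{i},v\rangle\geq h(m_{i})$, so $\Delta_{h}$ is a polyhedron. Its tail cone, computed by zeroing the right-hand sides, is $\{v\mid\langle m_{i},v\rangle\geq 0\text{ for all } i\}=(\omega^{\vee})^{\vee}=\omega$, so $\Delta_{h}\in\mathrm{Pol}_{\omega}^{+}(N_{\Q})$ provided it is nonempty.

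The main obstacle is to show that $\Delta_{h}$ is nonempty and that $h_{\Delta_{h}}=h$. My plan is to argue cone by cone: for each maximal cone $\lambda$ of $\Lambda$, the restriction $h|_{\lambda}$ extends uniquely to a linear functional on $M_{\Q}$, which by the pairing between $M$ and $N$ takes the form $m\mapsto\langle m,v_{\lambda}\rangle$ for a unique $v_{\lambda}\in N_{\Q}$. The convexity of $h$ provided by \cref{Lemma 1.4 AH06} should imply that this linear extension dominates $h$ throughout $\omega^{\vee}$: if $\langle m',v_{\lambda}\rangle<h(m')$ held for some $m'\in\omega^{\vee}\setminus\lambda$, pairing $m'$ with an $m$ in the relative interior of $\lambda$ and working in the maximal cone of $\Lambda$ containing $m+m'$ would contradict the convexity relation. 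Hence $v_{\lambda}\in\Delta_{h}$, witnessing nonemptyness and giving $h_{\Delta_{h}}(m)\leq\langle m,v_{\lambda}\rangle=h(m)$ for $m\in\lambda$; the reverse inequality is tautological from the definition of $\Delta_{h}$. Since the maximal cones of $\Lambda$ cover $\omega^{\vee}$, this yields $h_{\Delta_{h}}=h$ on all of $\omega^{\vee}$, completing the proof.
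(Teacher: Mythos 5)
The paper does not actually prove this statement; it is quoted from \cite[Section 1]{altmannhausen}, so your argument should be judged against the standard proof there, which it essentially reproduces: support-function duality for polyhedra with fixed recession cone. Your additivity, injectivity (reconstruction of $\Delta$ as the intersection of its supporting half-spaces with normals in $\omega^{\vee}$), and the reduction of $\Delta_{h}$ to a finite system of inequalities via the generators of the maximal cones are all correct. One step, however, does not close as literally written: to show that the linear functional $\langle\,\cdot\,,v_{\lambda}\rangle$ dominates $h$ on all of $\omega^{\vee}$, ``working in the maximal cone containing $m+m'$'' gives you $h(m)+h(m')\le h(m+m')=\langle m+m',v_{\lambda''}\rangle$ for some \emph{other} cone $\lambda''$, which says nothing about $v_{\lambda}$. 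The standard repair is the scaling trick: for $m\in\mathrm{relint}(\lambda)$ and any $m'\in\omega^{\vee}$ one has $tm+m'\in\lambda$ for $t\gg 0$, whence superadditivity and homogeneity give $t\,h(m)+h(m')\le h(tm+m')=t\langle m,v_{\lambda}\rangle+\langle m',v_{\lambda}\rangle$, i.e. $h(m')\le\langle m',v_{\lambda}\rangle$. Two further small points: the convexity you invoke is part of the definition of $\mathrm{CPL}_{\Q}(\omega)$ rather than a consequence of \cref{Lemma 1.4 AH06} (which concerns $h_{\Delta}$ for a given polyhedron), and the inequality you need is the superadditive one $h(m)+h(m')\le h(m+m')$ --- note that \cref{Lemma 1.4 AH06} as printed in this paper states the reverse inequality, which is a typo relative to \cite{altmannhausen} and to the definition of convexity used for $\mathrm{CPL}_{\Q}(Y,\omega)$ later on.
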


\begin{proposition}\label{Proposition 1.7 AH06}
Let $\omega\in N_{\Q}$ a pointed cone. Then, the following statements hold.
\begin{enumerate}[i)]
	\item\label{Proposition 1.7 AH06 part i} There is a commutative diagram of canonical injective homomorphisms of monoids
	\[\xymatrix{ \mathrm{Pol}_{\omega}^{+}(N) \ar[r] \ar[d] & \mathrm{Pol}_{\omega}^{+}(N_{\Q}) \ar[d] \\ \mathrm{Pol}_{\omega}^{}(N) \ar[r] & \mathrm{Pol}_{\omega}^{}(N_{\Q}). }\]
	\item\label{Proposition 1.7 AH06 part ii} The multiplication of an element $\Delta\in\mathrm{Pol}_{\omega}^{+}(N_{\Q})$ by a positive rational number $\alpha\in\Q^{+}$ is defined as 
	\[\alpha\cdot \Delta:=\{\alpha v\mid v\in\Delta\}\]
	extends to a unique $\Q$-action on $\mathrm{Pol}_{\omega}(N_{\Q})$.
	\item\label{Proposition 1.7 AH06 part iii} The group $\mathrm{Pol}_{\omega}(N)$ of integral $\omega$-polyhedra is a free abelian group and we have a canonical isomorphism 
	\[\mathrm{Pol}_{\omega}^{}(N_{\Q})\cong \Q\otimes_{\Z}\mathrm{Pol}_{\omega}^{}(N).\]
	\item\label{Proposition 1.7 AH06 part iv} For every element $m\in \omega^{\vee}$, there is a unique linear evaluation functional $\mathrm{eval}_{m}:\mathrm{Pol}_{\omega}^{}(N_{\Q})\to \Q$ satisfying 
	\[ \mathrm{eval}_{m}(\Delta)=\min_{v\in\Delta}\langle m,v \rangle, \]
	for $\Delta\in\mathrm{Pol}_{\omega}^{+}(N)$.
	\item\label{Proposition 1.7 AH06 part v} Two elements $\Delta_{1}$ and $\Delta_{2}$ in $\mathrm{Pol}_{\omega}^{}(N_{\Q})$ coincide if and only if $\mathrm{eval}_{m}(\Delta_{1})=\mathrm{eval}_{m}(\Delta_{2})$ holds for every $m\in\omega^{\vee}$.
	\item\label{Proposition 1.7 AH06 part vi} An element $\Delta\in \mathrm{Pol}_{\omega}^{}(N_{\Q})$ is integral if and only if $\mathrm{eval}_{m}(\Delta)\in\Z$ for every $m\in\omega^{\vee}\cap M$.
\end{enumerate}
\end{proposition}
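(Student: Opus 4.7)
The plan is to transport everything through the semigroup isomorphism $\Delta \mapsto h_\Delta$ of \cref{Proposition 1.5 AH06}, which identifies $\mathrm{Pol}_\omega^+(N_\Q)$ with the sub-semigroup $\mathrm{CPL}_\Q(\omega)$ of the $\Q$-vector space of all functions $\omega^\vee \to \Q$. Since every submonoid of a vector space is cancellative, both $\mathrm{Pol}_\omega^+(N_\Q)$ and its integral analog $\mathrm{Pol}_\omega^+(N)$ embed into their Grothendieck groups. The horizontal arrows in Part (i) are the inclusions of integral polyhedra into all polyhedra, and the vertical ones are these Grothendieck embeddings, yielding the commutative diagram.

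For Part (ii), positive rational scaling corresponds to $h_\Delta \mapsto \alpha h_\Delta$ on support functions, which preserves both convexity and piecewise linearity; the uniqueness of the extension to a $\Q$-action on $\mathrm{Pol}_\omega(N_\Q)$ is then forced by the group axioms together with additivity on generators. Part (iii) splits into the freeness of $\mathrm{Pol}_\omega(N)$ and the isomorphism with $\Q \otimes_\Z \mathrm{Pol}_\omega(N)$. The latter follows from the observation that a rational CPL function becomes integer-valued after multiplication by a suitable integer $N$, chosen to clear denominators on a common refinement of normal quasifans. For freeness, one picks, for each integral $\omega$-polyhedron, a simplicial refinement of its normal quasifan; integer-valued piecewise linear functions on a fixed simplicial fan form a free abelian group generated by their values on the rays, and passing to the colimit over refinements preserves freeness because the transition maps are split inclusions.

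Parts (iv) and (v) are essentially about the additivity $h_{\Delta_1+\Delta_2}(m) = h_{\Delta_1}(m) + h_{\Delta_2}(m)$, which is immediate from the definition of Minkowski sum. This turns $\mathrm{eval}_m$ into a monoid homomorphism and hence, by the universal property of Grothendieck groups, into a linear functional on $\mathrm{Pol}_\omega(N_\Q)$; $\Q$-linearity follows from Part (ii). Part (v) then follows because a convex set is determined by its support function: $\mathrm{eval}_m(\Delta_1) = \mathrm{eval}_m(\Delta_2)$ for all $m\in\omega^\vee$ gives $h_{\Delta_1} = h_{\Delta_2}$ on $\omega^\vee$, hence $\Delta_1 = \Delta_2$ by \cref{Proposition 1.5 AH06}, which lifts to equality in the Grothendieck group by cancellativity. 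For Part (vi), the minimum $\min_{v\in\Delta}\langle m,v\rangle$ for $m\in\omega^\vee$ is attained at a vertex of the polytope $\Pi$ in the Minkowski decomposition $\Delta = \Pi + \omega$, so $\Delta$ is integral if and only if every such minimum lies in $\Z$ for all $m\in\omega^\vee\cap M$ -- the forward direction is clear, and the reverse uses that the set of vertices of $\Pi$ is determined, via normal-fan duality, by enough evaluations $\mathrm{eval}_m$ to force each vertex into $N$.

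The main obstacle I expect is the freeness claim in Part (iii): torsion-freeness comes essentially for free from embedding into the ambient vector space, but producing a genuine $\Z$-basis requires a careful common-refinement argument together with a coherent identification of integer CPL functions on different fans. Making this step technically clean, rather than arguing informally by colimits, is the crux of the proof; once it is in hand, the remaining assertions assemble quickly from \cref{Lemma 1.4 AH06} and \cref{Proposition 1.5 AH06}.
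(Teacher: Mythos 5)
This proposition is stated in the paper purely as a recollection from Altmann--Hausen (\cite[Section 1]{altmannhausen}, their Proposition~1.7); the paper gives no proof of its own, so there is nothing in-text to compare your argument against. Judged on its own terms, your sketch follows the standard route (transport everything through the support-function isomorphism of \cref{Proposition 1.5 AH06} and use cancellativity plus the universal property of the Grothendieck group), and parts (i), (ii), (iv) and (v) are essentially complete as written.

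Two points deserve more care. First, in part (iii) your colimit argument is workable but fragile as stated: a directed colimit of free groups along split inclusions is free when the index set admits a countable cofinal chain (so that the colimit decomposes as a direct sum of complements), and you should say why the poset of rational quasifan refinements of $\omega^{\vee}$ has this property, and also why the transition maps are split (torsion-freeness of the quotient of integral piecewise linear function groups). A cleaner alternative is to note that $\mathrm{Pol}_{\omega}(N)$ is countable and embeds via $(\mathrm{eval}_{m})_{m\in\omega^{\vee}\cap M}$ into $\Z^{\omega^{\vee}\cap M}$, and then invoke the theorem that countable subgroups of the Baer--Specker group are free; beware that "subgroup of a free abelian group is free" does not apply directly to the full product $\Z^{\omega^{\vee}\cap M}$, which is itself not free. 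Second, part (vi) concerns arbitrary elements of the Grothendieck group $\mathrm{Pol}_{\omega}(N_{\Q})$, whereas your argument (minima attained at vertices of $\Pi$, full-dimensional subcones of $\omega^{\vee}\cap M$ generating $M$, hence vertices in $N$) only treats honest polyhedra. For a formal difference $D=\Delta_{1}-\Delta_{2}$ with all evaluations in $\Z$ you still need to exhibit $D$ as a difference of \emph{integral} polyhedra; the usual fix is to add the support function of a sufficiently large integral polytope so that $\mathrm{eval}_{\bullet}(D)+h_{P}$ becomes convex, and then apply your vertex argument to the resulting genuine polyhedron. With these two repairs the proof is sound.
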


\section{The category of proper polyhedral divisors}\label{section Polyhedral Divisors}
\indent

It is known that split affine toric varieties over $k$ arise from cones in $N_{\Q}$. The main goal of this section is to present the objects that generalize cones for any affine normal variety over $k$ endowed with an effective action of a split algebraic torus over $k$. These objects were introduced by Altmann and Hausen \cite{AH06} for algebraically closed fields of characteristic zero. However, the definitions work over any field.

\subsection{ Proper polyhedral divisors}
\indent

Let $N$ be a lattice and $\omega\subset N_{\Q}$ be a pointed cone. As stated in \cref{Section polyhedra}, the set of all $\omega$-tailed polyhedra $\mathrm{Pol}_{\omega}^{+}(N_{\Q})$ is a monoid, whose neutral element is $\omega$. The same holds for the set of integral $\omega$-tailed polyhedra $\mathrm{Pol}_{\omega}^{+}(N)\subset \mathrm{Pol}_{\omega}^{+}(N_{\Q})$. Moreover, both admit the construction of a Grothendieck group, denoted by $\mathrm{Pol}_{\omega}(N_{\Q})$ and $\mathrm{Pol}_{\omega}(N)$ respectively. These groups are abelian. 

Let $Y$ be a variety over $k$. Given that $\mathrm{Pol}_{\omega}(N_{\Q})$ and $\mathrm{Pol}_{\omega}(N)$ are abelian groups, we can take the tensor products 
\[\mathrm{Pol}_{\omega}(N_{\Q})\otimes_{\Z}\cadiv(Y)\quad\mathrm{ and }\quad\mathrm{Pol}_{\omega}(N_{\Q})\otimes_{\Z}\cadiv(Y),\]
where $\cadiv(Y)$ stands for the group of Cartier divisors of $Y$. 
 Besides, if $Y$ is normal, we can also consider $\mathrm{Pol}_{\omega}(N_{\Q})\otimes_{\Z}\Div(Y)$ and $\mathrm{Pol}_{\omega}(N_{\Q})\otimes_{\Z}\Div(Y)$, where $\Div(Y)$ stands for the group of Weil divisors of $Y$. These groups are called the group of \textit{rational (resp. integral) polyhedral Cartier divisors} and the group of \textit{rational (resp. integral) Weil divisors}.

\begin{definition}
    Let $Y$ be a normal variety over $k$, $N$ be a lattice and $\omega\subset N_{\Q}$ be a pointed cone:
    \begin{enumerate}
        \item The group of \textit{rational polyhedral Weil divisors} and \textit{rational polyhedral Cartier divisors} of $Y$ with respect to $\omega\subset N_{\Q}$ are 
        \[\Div_{\Q}(Y,\omega):=\mathrm{Pol}_{\omega}(N_{\Q})\otimes_{\Z}\Div(Y),\] \[\cadiv_{\Q}(Y,\omega):=\mathrm{Pol}_{\omega}(N_{\Q})\otimes_{\Z}\cadiv(Y).\]
        \item The group of \textit{integral polyhedral Weil divisors} and \textit{integral polyhedral Cartier divisors} of $Y$ with respect to $\omega\subset N_{\Q}$ are \[\Div_{}(Y,\omega):=\mathrm{Pol}_{\omega}(N_{})\otimes_{\Z}\Div(Y),\] \[\cadiv_{}(Y,\omega):=\mathrm{Pol}_{\omega}(N_{})\otimes_{\Z}\cadiv(Y).\]
    \end{enumerate}
\end{definition}

Recall that, for a normal variety $Y$ over $k$ there is a canonical embedding 
\[\cadiv(Y)\to \mathrm{Div}(Y),\] 
which allows us to consider $\cadiv(Y)\subset\mathrm{Div}(Y)$ and, therefore, 
\[\cadiv_{\Q}(Y,\omega)\subset\Div_{\Q}(Y,\omega)\] 
for any cone $\omega\subset N_{\Q}$. Therefore, we can ask $D\in\cadiv(Y)$ to be prime. This being said, note that we can always write an element in any of these groups as $\D=\sum_D \Delta_{D}\otimes D$, where the sum runs through the prime divisors $D$ of $Y$ and the $\Delta_{D}$'s are elements in $\mathrm{Pol}_{\omega}(N_{})$ or $\mathrm{Pol}_{\omega}(N_{\Q})$.\\

We are now ready to introduce the objects of the category of proper polyhedral divisors. In the following, by a \textit{polyhedral divisor} we mean a rational one. 

The sheaf of sections $\mathscr{O}_{Y}(D)$ of a rational Weil divisor $D$ on a normal variety $Y$ over $k$ is defined as, for every open $U\subset Y$,
\[\mathscr{O}_{Y}(D)(U):=\{f\in k(Y) \mid \mathrm{div}(f|_{U})+\lfloor D\rfloor |_{U}\geq 0\},\]
where $\lfloor D\rfloor$ is the \emph{round down} of $D$. The next definition corresponds to \cite[Definition 2.2]{AH06}.

\begin{definition}\label{definitionppdiv}
    Let $Y$ be a normal $k$-variety, $N$ be a lattice and $\omega\subset N_{\Q}$ a pointed cone. A polyhedral divisor $\D=\sum_D \Delta_{D}\otimes D\in\mathrm{CaDiv}_{\Q}(Y,\omega)$ is called \textit{proper} if
    \begin{enumerate}
        \item all the $D\in\Div(Y)$ are prime divisors and the $\Delta_{D}$ are in $\mathrm{Pol}_{\omega}^{+}(N_{\Q})$;
        \item for every $m\in\mathrm{relint}(\omega^{\vee})\cap M$, i.e. $m$ is not contained in a proper face of $\omega^{\vee}$, the evaluation 
        \[\D(m):=\sum h_{\Delta_{D}}(m)D\in\mathrm{CaDiv}_{\Q}(Y)\] is a big divisor on $Y$, i.e. if for some $n\in\N$ there exists a section 
        \[f\in\H^{0}(Y,\mathscr{O}_{Y}(n\D(m)))\]
         such that $Y_{f}$ is affine; 
        \item for every $m\in\omega^{\vee}\cap M$, the evaluation $\D(m)\in\cadiv_{\Q}(Y)$ is semiample, i.e. it admits a base-point-free positive multiple. Otherwise stated, for some $n\in\N$ the sets $Y_{f}$ cover $Y$, where $f\in\H^{0}(Y,\mathscr{O}_{Y}(n\D(m)))$.
    \end{enumerate}
\end{definition}
    
    \begin{remark}
    	Over a projective variety, the definition of big divisor given \cref{definitionppdiv} turns to be equivalent to the following one: the divisor $\D(m)$ is big if for some $n\in\N$ $n\D(m)$ is a Cartier divisor and its respective line bundle $\mathscr{O}_{Y}(n\D(m))$ is of maximal Iitaka dimension (see \cite[Lemma 2.60]{KM08}).  
    \end{remark}
    
   Let $Y$ be a normal variety over $k$ and $\omega$ be a cone. The set of proper polyhedral divisors (pp-divisors for short) over $Y$ and with respect to $\omega$ has a monoidal structure having $\D_{\omega}:=\sum \omega\otimes D$ as neutral element. This monoid is denoted by $\PPDiv_{\Q}(Y,\omega)$. The monoid is partially ordered as follows: if $\D=\sum_D \Delta_{D}\otimes D$ and $\D'=\sum_D \Delta_{D}'\otimes D$, then $\D'\leq \D$ if and only if $\Delta_{D}\subset \Delta_{D}'$ for every $D$.

\begin{definition}
Let $Y$ be a normal variety over $k$. Let $\D\in\PPDiv_{\Q}(Y,\omega)$ be a pp-divisor. For $y\in Y$, we define the \textit{fiber polyhedron at} $y$ as 
        \[\Delta_{y}:=\sum_{y\in D}\Delta_{D}.\]
\end{definition}

As mentioned before, a pp-divisor $\D\in\PPDiv_{\Q}(Y,\omega)$ defines a map $\mathfrak{h}_{\D}:\omega^{\vee}\to \cadiv_{\Q}(Y)$ given by $\mathfrak{h}_{\D}(m):=\D(m)$. This map satisfies certain properties summarized in the following definition.

\begin{definition}
    Let $Y$ be a normal $k$-variety; let $M$ be a lattice, and let $\omega^{\vee}\subset M_{\Q}$ be a cone of full dimension. We say that a map $\mathfrak{h}:\omega^{\vee}\to \cadiv_{\Q}(Y)$ is 
    \begin{enumerate}[i)]
        \item \textit{convex} if $\mathfrak{h}(m)+\mathfrak{h}(m')\leq \mathfrak{h}(m+m')$ holds for any two elements $m,m'\in\omega^{\vee}$,
        \item \textit{piecewise linear} if there is a quasifan $\Lambda$ in $M_{\Q}$ having $\omega^{\vee}$ as its support such that $\mathfrak{h}$ is linear on the cones of $\Lambda$,
        \item \textit{strictly semiample} if $\mathfrak{h}(m)$ is semiample for all $m\in\omega^{\vee}$ and if for all $m\in\mathrm{relint}(\omega^{\vee})$ is big.
    \end{enumerate}
    The set of all convex, piecewise linear and strictly semiample maps $\mathfrak{h}:\omega^{\vee}\to \cadiv_{\Q}(Y)$ is denoted by $\mathrm{CPL}_{\Q}(Y,\omega)$. 
\end{definition}

For each $\D\in\PPDiv_{\Q}(Y,\omega)$ we can associate with it a convex, piecewise linear and strictly semiample map $\mathfrak{h}_{\D}\in\mathrm{CPL}_{\Q}(Y,\omega)$. Thus, we have a natural map 
\begin{align*}
        \PPDiv_{\Q}(Y,\omega) &\to\mathrm{CPL}_{\Q}(Y,\omega), \\ \D &\mapsto \mathfrak{h}_{\D}.
    \end{align*}

The following result corresponds to \cite[Proposition 2.11]{AH06}, which holds over any field. The proof adapts straightforwardly to this context.

\begin{proposition}\label{proposition ppdiv cplmaps}
    Let $Y$ be a normal $k$-variety, $N$ be a lattice, and $\omega\subset N_{\Q}$ be a pointed cone. Then the set $\mathrm{CPL}_{\Q}(Y,\omega)$ is a monoid and the canonical map
    $\PPDiv_{\Q}(Y,\omega) \to\mathrm{CPL}_{\Q}(Y,\omega)$ given by $ \D \mapsto \mathfrak{h}_{\D}$ is an isomorphism. Moreover, the integral polyhedral divisors correspond to the maps $\mathfrak{h}:\omega^{\vee}\to \cadiv_{\Q}(Y)$ such that $\mathfrak{h}(\omega^{\vee}\cap M)\subset\cadiv(Y)$.
\end{proposition}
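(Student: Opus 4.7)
The plan is to reduce the claim divisor-by-divisor to \cref{Proposition 1.5 AH06}, which already establishes the analogous isomorphism $\mathrm{Pol}_\omega^+(N_\Q)\cong\mathrm{CPL}_\Q(\omega)$ at the level of a single polyhedron. The proof of \cite[Proposition 2.11]{altmannhausen} transplants to an arbitrary field because it uses only the convex-geometric facts about support functions and general properties of (semi)ample and big Cartier divisors on normal varieties, none of which require $k$ to be algebraically closed or of characteristic zero.

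First I would check that $\mathrm{CPL}_\Q(Y,\omega)$ is a semigroup under pointwise addition. Convexity and piecewise linearity are stable under sums (take a common refinement of two quasifans), and strict semiampleness is preserved because a sum of semiample Cartier divisors is semiample and on $\relint(\omega^\vee)$ a sum in which one summand is big remains big. Next, for a pp-divisor $\D=\sum_D\Delta_D\otimes D$, I would verify that $\mathfrak{h}_\D\in\mathrm{CPL}_\Q(Y,\omega)$: each $h_{\Delta_D}$ is convex and piecewise linear with support $\omega^\vee$ by \cref{Lemma 1.4 AH06}, hence so is $\mathfrak{h}_\D$ after summing; strict semiampleness is precisely built into conditions (2) and (3) of \cref{definitionppdiv}. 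The map $\D\mapsto\mathfrak{h}_\D$ is then a semigroup morphism by bilinearity of the construction.

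For injectivity, if $\mathfrak{h}_\D=\mathfrak{h}_{\D'}$, comparing coefficients at each prime divisor $D$ (via the Weil-divisor structure of $\cadiv_\Q(Y)$ on the normal variety $Y$) yields $h_{\Delta_D}=h_{\Delta'_D}$, and \cref{Proposition 1.5 AH06} gives $\Delta_D=\Delta'_D$. For surjectivity, given $\mathfrak{h}\in\mathrm{CPL}_\Q(Y,\omega)$, decompose $\mathfrak{h}(m)=\sum_D\mathfrak{h}_D(m)D$ by taking the coefficient at each prime $D$; piecewise linearity of $\mathfrak{h}$ on a quasifan refining $\omega^\vee$ passes to each $\mathfrak{h}_D$, which is then a convex piecewise linear function on $\omega^\vee$, and \cref{Proposition 1.5 AH06} produces a unique $\Delta_D\in\mathrm{Pol}_\omega^+(N_\Q)$ with $h_{\Delta_D}=\mathfrak{h}_D$. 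Setting $\D:=\sum_D\Delta_D\otimes D$ recovers $\mathfrak{h}$, the polyhedral Cartier condition is inherited from $\mathfrak{h}(m)\in\cadiv_\Q(Y)$, and the pp-divisor axioms translate back from strict semiampleness of $\mathfrak{h}$.

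The integrality statement follows at once: $\D$ is integral iff each $\Delta_D$ is integral, which by \cref{Proposition 1.7 AH06}~\ref{Proposition 1.7 AH06 part vi} is equivalent to $\mathrm{eval}_m(\Delta_D)=h_{\Delta_D}(m)=\mathfrak{h}_D(m)\in\Z$ for every $m\in\omega^\vee\cap M$, i.e.\ to $\mathfrak{h}(m)\in\cadiv(Y)$ for every such $m$. The main technical nuisance I expect is the global finiteness in the surjectivity step: one must ensure that only finitely many $\Delta_D\neq\omega$ appear, not merely for each fixed $m$. I would handle this by picking finitely many lattice generators $m_1,\dots,m_r$ of the maximal cones of the piecewise-linear quasifan for $\mathfrak{h}$ and noting that each $\mathfrak{h}(m_i)\in\cadiv_\Q(Y)$ has finite support, so their union does too; any prime $D$ outside this union has $\mathfrak{h}_D\equiv 0$, forcing $\Delta_D=\omega$.
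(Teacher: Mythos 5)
Your proposal is correct and follows essentially the same route as the paper: both reduce the statement componentwise over the prime divisors of $Y$ to the single-polyhedron isomorphism of \cref{Proposition 1.5 AH06}, with injectivity ultimately resting on the fact that support functions (equivalently, the evaluation functionals of \cref{Proposition 1.7 AH06}) separate $\omega$-polyhedra. The only differences are cosmetic: you additionally spell out the finiteness-of-support point in the surjectivity step and supply the argument for the integrality clause via \cref{Proposition 1.7 AH06}~\ref{Proposition 1.7 AH06 part vi}, both of which the paper's proof leaves implicit.
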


\subsection{ Morphisms of proper polyhedral divisors}
\indent

In the previous section, we have introduced algebro-geometric combinatorial objects called pp-divisors. In order to construct a category, we need to expose how the objects are related. The morphisms are given by three pieces of data. Among them, there is one called \textit{plurifunction}, whose definition is given below.

\begin{definition}\cite[Definition 8.2]{AH06}\label{defpluri}
    Let $Y$ be a normal $k$-variety, $N$ be a lattice and $\omega\subset N_{\Q}$ be a pointed cone.
    \begin{enumerate}[a)]
        \item A \textit{plurifunction} with respect to the lattice $N$ is an element of 
        \[k(Y,N)^{*}:=N\otimes_{\Z}k(Y)^{*}.\]
        \item \label{defpluri part b} For $m\in M:=\Hom(N,\Z)$, the \textit{evaluation} of a plurifunction $\mathfrak{f}=\sum v_{i}\otimes f_{i}$ with respect to $N$ is 
        \[\mathfrak{f}(m):=\prod f_{i}^{\langle m,v_{i} \rangle}\in k(Y)^{*}.\]
        \item The \textit{polyhedral principal divisor} with respect to $\omega\subset N_{\Q}$ of a plurifunction $\mathfrak{f}=\sum v_{i}\otimes f_{i}$ with respect to $N$ is 
        \[\mathrm{div}(\mathfrak{f}):=\sum(v_{i}+\omega)\otimes\mathrm{div}(f_{i})\in\cadiv(Y,\omega).\]
    \end{enumerate}
\end{definition}

\begin{remark}\label{remark inverse of polyhedral principal divisor}
Notice that the map $k(N,Y)^{*}\to \cadiv_{}(Y,\omega)$, given by $\mathfrak{f}\mapsto\mathrm{div}(\mathfrak{f})$, is a group homomorphism. For a plurifunction $\mathfrak{f}:=\sum v_{i}\otimes f_{i}$, the inverse of $\mathrm{div}(\mathfrak{f})$ is $\mathrm{div}(\sum -v_{i}\otimes f_{i})$.
\end{remark}

A homomorphism $F:N\to N'$ induces a homomorphism $F_{*}:k(N,Y)^{*}\to k(N',Y)^{*}$ given by 
\[F_{*}\left(\sum v_{i}\otimes f_{i}\right):=\sum F(v_{i})\otimes f_{i}.\] 
A morphism $\psi:Y\to Y'$ induces a morphism $\psi^{*}:k(N,Y')^{*}\to k(N,Y)^{*}$ given by 
\[\psi^{*}\left(\sum v_{i}\otimes f_{i}\right):=\sum v_{i}\otimes \psi^{*}(f_{i}).\]

\begin{remark}\label{remark: isomorphisms plurifunction}
The assignment $\mathfrak{f} := \sum v_{i}\otimes f_{i}\mapsto \prod f_{i}^{\langle v_{i},\bullet \rangle}$ defines an isomorphism $k(Y,N)^{*}\cong \Hom_{\Z}(M,k(Y)^{*})$. Thus, a plurifunction can be understood as a homomorphism $\mathfrak{f}:M\to k(Y)^{*}$. We have then the former homomorphism that sends $F$ to $F_{*}$ and induces a homomorphism $\Hom_{\Z}(N,N')\to \Hom_{\Z}(k(Y,N)^{*},k(Y,N')^{*})$.
\end{remark}

Recall that $\PPDiv_{\Q}(Y,\omega)$ is a partially ordered monoid with $\D'\leq \D$ if and only if $\Delta_{D}\subset \Delta_{D}'$ for every $D$.

\begin{definition}\label{def83}\cite[Definition 8.3]{AH06}
    Let $Y$ and $Y'$ be normal $k$-varieties, $N$ and $N'$ be lattices and $\omega\subset N$ and $\omega'\subset N'$ be pointed cones. Let us consider \[\D=\sum \Delta_{i}\otimes D_{i}\in\PPDiv_{\Q}(Y,\omega)\, \textrm{ and }\, \D'=\sum \Delta_{i}'\otimes D_{i}'\in\PPDiv_{\Q}(Y',\omega')\] two pp-divisors.
    \begin{enumerate}[a)]
        \item For morphisms $\psi:Y\to Y'$ such that none of the supports $\mathrm{Supp}(D_{i}')$ contains $\psi(Y)$, we define the (not necessarily proper) $\textit{polyhedral pullback}$ as \[\psi^{*}(\D'):=\sum \Delta_{i}'\otimes \psi^{*}(D_{i}')\in\cadiv_{\Q}(Y,\omega').\]
        \item\label{def83b} For linear maps $F:N\to N'$ with $F(\omega)\subset \omega'$, we define the (not necessarily proper) \textit{polyhedral pushforward} as \[F_{*}(\D):=\sum (F(\Delta_{i})+\omega')\otimes D_{i}'\in \cadiv_{\Q}(Y,\omega').\]
        \item \label{defmappdiv} A \emph{morphism of pp-divisors} is a triple $(\psi,F,\mathfrak{f}):\D\to \D'$, where $\psi:Y\to Y'$ is a \emph{dominant} morphism of varieties, $F$ is a linear map as in \ref{def83b}) and $\mathfrak{f}\in k(Y,N')^{*}$ is a plurifunction, such that \[\psi^{*}(\D')\leq F_{*}(\D)+\divr(\mathfrak{f}).\]
    \end{enumerate}
\end{definition}

 The identity map $\D\to\D$ for a pp-divisor is the triple $(\id,\id_{N},1)$. The composition of two morphisms of pp-divisors $(\psi,F,\mathfrak{f})$ and $(\psi',F',\mathfrak{f}')$ is defined as \[(\psi',F',\mathfrak{f}')\circ(\psi,F,\mathfrak{f})=(\psi'\circ\psi,F'\circ F,F_{*}'(\mathfrak{f})\cdot \psi^{*}(\mathfrak{f}')).\]
 The composition of two morphisms of pp-divisors is a morphism of pp-divisors. Thus, we have the following result.
 
 \begin{proposition}
 The proper polyhedral divisors over normal semi-projective $k$-varieties with the morphisms of pp-divisors form a category denoted by $\mathfrak{PPDiv}$. 
 \end{proposition}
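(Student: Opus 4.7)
The plan is to verify the three category axioms: (i) the composition $(\psi',F',\mathfrak{f}')\circ(\psi,F,\mathfrak{f})$ is again a morphism of pp-divisors, (ii) composition is associative, and (iii) $(\id_Y,\id_N,1)$ serves as a two-sided identity. As already noted in the paragraph preceding the statement, (i) is the substantive content; (ii) and (iii) become formal once the right functoriality identities are in place.

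For (i), I would first dispose of the easy structural data: $\psi'\circ\psi$ is dominant as a composition of dominant morphisms, $(F'\circ F)(\omega)\subset\omega''$ since $F(\omega)\subset\omega'$ and $F'(\omega')\subset\omega''$, the plurifunction $F'_*(\mathfrak{f})\cdot\psi^*(\mathfrak{f}')$ lies in $k(Y,N'')^*$, and $(\psi'\circ\psi)^*(\D'')$ is well defined because the dense image of $\psi'\circ\psi$ cannot lie in the support of any divisor appearing in $\D''$. The core calculation is the inequality
\[
(\psi'\circ\psi)^*(\D'') \;\leq\; (F'\circ F)_*(\D) + \divr\bigl(F'_*(\mathfrak{f})\cdot\psi^*(\mathfrak{f}')\bigr).
\]
My strategy is to pull back $(\psi')^*(\D'')\leq F'_*(\D')+\divr(\mathfrak{f}')$ along $\psi^*$, then push $\psi^*(\D')\leq F_*(\D)+\divr(\mathfrak{f})$ along $F'_*$, and substitute. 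This requires five compatibility identities, each a direct unfolding of \cref{def83} and \cref{defpluri}: functoriality $(\psi'\circ\psi)^*=\psi^*\circ(\psi')^*$ and $(F'\circ F)_*=F'_*\circ F_*$ on both polyhedral divisors and plurifunctions (the latter using $F'(\omega')+\omega''=\omega''$), commutativity $\psi^*\circ F'_*=F'_*\circ\psi^*$ (the polyhedral and divisor tensor factors are acted on independently), and compatibility with principal divisors $\psi^*(\divr(\mathfrak{f}'))=\divr(\psi^*(\mathfrak{f}'))$, $F'_*(\divr(\mathfrak{f}))=\divr(F'_*(\mathfrak{f}))$. Combined with monotonicity of $\psi^*$ and $F'_*$ for the partial order on $\cadiv_{\Q}(Y,\cdot)$, which follows from monotonicity of Minkowski sums together with the effectivity of $\psi^*(D)$ for effective $D$, a single chain of inequalities gives the claim.

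Associativity then follows by expanding both parenthesizations of $(\psi'',F'',\mathfrak{f}'')\circ(\psi',F',\mathfrak{f}')\circ(\psi,F,\mathfrak{f})$ via the composition rule; the morphism and linear-map coordinates are associative in the usual sense, and the plurifunction coordinate collapses on both sides to $F''_*(F'_*(\mathfrak{f}))\cdot F''_*(\psi^*(\mathfrak{f}'))\cdot\psi^*((\psi')^*(\mathfrak{f}''))\in k(Y,N''')^*$ by exactly the same functoriality identities used in step (i). The identity laws are immediate from $F_*(1)=1$ and $\psi^*(1)=1$, where $1$ denotes the neutral plurifunction $0\in N\otimes_{\Z}k(Y)^*$. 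The only real obstacle throughout is bookkeeping, namely tracking which lattice and which variety each plurifunction and each polyhedral divisor lives over as morphisms are pushed and pulled; once this is laid out carefully, every identity reduces to a short computation from the definitions.
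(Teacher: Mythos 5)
Your proposal is correct and follows the only natural route: the paper itself gives no written proof, merely asserting in the preceding sentence that the composition of two morphisms of pp-divisors is again a morphism, so your verification (pull back one defining inequality, push forward the other, and combine via the functoriality and commutation identities $\psi^*F'_*=F'_*\psi^*$, $\psi^*\divr(\mathfrak{f}')=\divr(\psi^*\mathfrak{f}')$, $F'_*\divr(\mathfrak{f})=\divr(F'_*\mathfrak{f})$, the last two using $F'(\omega')+\omega''=\omega''$) is exactly the argument being implicitly invoked. The remaining points you raise — dominance guaranteeing the pullback $(\psi'\circ\psi)^*(\D'')$ is defined, associativity and the identity laws reducing to the same identities — are handled correctly, so nothing is missing.
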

 
 Recall that every proper polyhedral divisor $\D$ in $\mathfrak{PPDiv}$ has a tail cone defined on some $N_{\Q}$, with $N$ a lattice. Furthermore, by fixing a lattice we are fixing a split $k$-torus, as stated in \cref{Section algebraic torus}.
 
 \begin{definition}
 Let $N$ be a lattice. We denote by $\mathfrak{PPDiv}_{N}$ the full subcategory of $\mathfrak{PPDiv}$ whose objects are the proper polyhedral divisors $\D$ such that $\mathrm{Tail}(\D)$ is defined on $N_{\Q}$.
 \end{definition}
 
 \subsection{ Base change for proper polyhedral divisors}\label{Section base change}
 \indent
 
Let $Y$ be a normal variety over $k$. Recall that there is a canonical map $\Div(Y)\to \Div(Y_{\sep{k}})$, which induces a canonical map 
\begin{align*}
    \cadiv_{\Q}(Y,\omega) &\to \cadiv_{\Q}(Y_{\sep{k}},\omega); \\ \D=\sum\Delta_{D}\otimes D &\mapsto \D_{\sep{k}}:=\sum\Delta_{D}\otimes D_{\sep{k}}. 
\end{align*}
Since every divisor $D$ in the left-hand sum is a prime divisor, it follows that the divisor $D_{\sep{k}}$ is a multiplicity-free sum of prime divisors.

This map turns out to be a group monomorphism. In particular, every pp-divisor on $Y$ induces a rational polyhedral divisor on $Y_{\sep{k}}$, which is a pp-divisor.

\begin{lemma}\label{Lemma pp-divisors stable under base change}
    Let $N$ be a lattice, $\omega\subset N_{\Q}$ be a pointed cone, $Y$ be a normal variety over $k$. If $\D\in \PPDiv_{\Q}(Y,\omega)$, then $\D_{\sep{k}}\in \PPDiv_{\Q}(Y_{\sep{k}},\omega)$.
\end{lemma}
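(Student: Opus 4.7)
The plan is to verify each of the three conditions in \cref{definitionppdiv} for $\D_{\sep{k}}$ directly, using that $Y_{\sep{k}}\to Y$ is a (faithfully) flat base change by a field extension, and that the basic geometric properties involved (effectivity, affineness, bigness, semiampleness) behave well under such a base change.

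\textbf{Step 1 (Structural form).} Writing $\D=\sum_D \Delta_D\otimes D$, we first check that $\D_{\sep{k}}$ can be put in the standard form required by \cref{definitionppdiv}. Since $Y$ is geometrically integral and geometrically normal, for each irreducible effective divisor $D$ on $Y$ the base change $D_{\sep{k}}$ decomposes as $\sum_i m_{D,i} D_i'$ with $m_{D,i}\in\Z_{>0}$ and $D_i'$ irreducible effective divisors on $Y_{\sep{k}}$. By bilinearity,
\[
\D_{\sep{k}} \;=\; \sum_{D,i} (m_{D,i}\Delta_D)\otimes D_i',
\]
and by \cref{Proposition 1.7 AH06}\eqref{Proposition 1.7 AH06 part ii} each $m_{D,i}\Delta_D$ lies in $\mathrm{Pol}_\omega^+(N_\Q)$ (positive integer dilations of an $\omega$-polyhedron are again $\omega$-polyhedra, as $\omega$ is a cone). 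Collecting terms sharing the same $D_i'$ via Minkowski sums, we obtain the required expression $\D_{\sep{k}}=\sum_{D'}\Delta'_{D'}\otimes D'$ with $\Delta'_{D'}\in\mathrm{Pol}_\omega^+(N_\Q)$ and $D'$ irreducible effective.

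\textbf{Step 2 (Evaluation commutes with base change).} From the support-function formula and the linearity of base change on $\cadiv_\Q(Y)$, one verifies
\[
\D_{\sep{k}}(m) \;=\; \sum_D h_{\Delta_D}(m)\,D_{\sep{k}} \;=\; \Bigl(\sum_D h_{\Delta_D}(m)\,D\Bigr)_{\sep{k}} \;=\; \D(m)_{\sep{k}}
\]
for every $m\in\omega^\vee\cap M$. Thus the bigness/semiampleness properties of $\D_{\sep{k}}(m)$ reduce to the corresponding properties of $\D(m)$ after base change by $\sep{k}/k$.

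\textbf{Step 3 (Bigness and semiampleness survive base change).} If $m\in\mathrm{relint}(\omega^\vee)\cap M$, then by hypothesis there exist $n\in\N$ and $f\in H^0(Y,\mathscr{O}(n\D(m)))$ with $Y_f$ affine; its base change $f_{\sep{k}}\in H^0(Y_{\sep{k}},\mathscr{O}(n\D_{\sep{k}}(m)))$ satisfies $(Y_f)_{\sep{k}}=(Y_{\sep{k}})_{f_{\sep{k}}}$, which remains affine since affineness is stable under base change. Hence $\D_{\sep{k}}(m)$ is big. For semiampleness, given $m\in\omega^\vee\cap M$ we pick $n\in\N$ such that the open sets $Y_f$, with $f$ ranging over a generating family of $H^0(Y,\mathscr{O}(n\D(m)))$, cover $Y$; pulling these sections back to $\sep{k}$-linear combinations yields an analogous cover of $Y_{\sep{k}}$ by sets of the form $(Y_{\sep{k}})_{f_{\sep{k}}}$, proving semiampleness of $\D_{\sep{k}}(m)$.

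\textbf{Main obstacle.} The only delicate point is the bookkeeping in Step 1: an irreducible $D$ on $Y$ may fail to be geometrically irreducible, so $\D_{\sep{k}}$ must be rewritten on the irreducible components of $Y_{\sep{k}}$ before it fits the format of \cref{definitionppdiv}. Once one observes that the resulting coefficients are still $\omega$-polyhedra (thanks to \cref{Proposition 1.7 AH06}\eqref{Proposition 1.7 AH06 part ii}) and that evaluation commutes with base change, the bigness and semiampleness conditions transfer for free from the standard behavior of affineness and global sections under flat field extensions.
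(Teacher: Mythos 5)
Your proof is correct and follows essentially the same route as the paper: both arguments reduce conditions (2) and (3) of the definition to the behavior of the maps $\varphi_{n}:H^{0}(Y,\mathscr{O}(n\D(m)))\to H^{0}(Y_{\sep{k}},\mathscr{O}(n\D_{\sep{k}}(m)))$ together with the identity $(Y_{f})_{\sep{k}}=(Y_{\sep{k}})_{f_{\sep{k}}}$, which transfers bigness and semiampleness for free. Your Step 1 is in fact slightly more careful than the paper, which only records effectivity of the $D_{\sep{k}}$ and does not comment on the fact that an irreducible $D$ may split into several components over $\sep{k}$ (note that, the extension being separable and $Y$ geometrically normal, all your multiplicities $m_{D,i}$ are equal to $1$, so the dilation step is not actually needed).
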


\begin{proof}
    Let $\D\in \PPDiv_{\Q}(Y,\omega)$ with $\D=\sum\Delta_{D}\otimes D$ and $\D_{\sep{k}}=\sum\Delta_{D}\otimes D_{\sep{k}}\in\cadiv_{\Q}(Y_{\sep{k}},\omega)$ as above.
    
    Let $m\in\omega^{\vee}\cap M$ and $n\in \N$. The morphisms $Y_{\sep{k}}\to Y$ and $\cadiv_{\Q}(Y,\omega)\to \cadiv_{\Q}(Y_{\sep{k}},\omega)$ define a morphism 
\[\varphi_{n}:\H^{0}(Y,\mathscr{O}(n\D(m)))\to\H^{0}(Y_{\sep{k}},\mathscr{O}(n\D_{\sep{k}}(m))).\]
 This implies that $\D_{\sep{k}}(m)$ is semiample, because $\D(m)$ is semiample. Indeed, there exists $n\in \N$ such that $Y_{f}$ cover $Y$ where $f\in\H^{0}(Y,\mathscr{O}(n\D(m)))$. Thus, the $(Y_{\sep{k}})_{\varphi_{n}(f)}$ cover $Y_{\sep{k}}$. Therefore, the $(Y_{\sep{k}})_{f}$ cover $Y_{\sep{k}}$ for $f\in\H^{0}(Y_{\sep{k}},\mathscr{O}(n\D_{\sep{k}}(m)))$. Hence, $\D_{\sep{k}}(m)$ is semiample for $m\in\omega^{\vee}\cap M$.
    
    If $m\in\relint(\omega^{\vee})$, by definition $\D(m)$ is big. Then, for some $n\in\N$ there exists a section $f\in\H^{0}(Y,\mathscr{O}(n\D(m)))$ such that $Y_{f}$ is affine. Given that $(Y_{\sep{k}})_{\varphi_{n}(f)}=(Y_{f})_{\sep{k}}$, we have that $\varphi_{n}(f)$ has an affine non-vanishing locus. Hence, $\D_{\sep{k}}(m)$ is big for every $m\in\relint(\omega^{\vee})$. This proves that $\D_{\sep{k}}\in\PPDiv_{\Q}(Y_{\sep{k}},\omega)$.
\end{proof}

The group homomorphism $\cadiv_{\Q}(Y,\omega) \to \cadiv_{\Q}(Y_{\sep{k}},\omega)$ induces a monoid homomorphism
\[\PPDiv_{\Q}(Y,\omega) \to \PPDiv_{\Q}(Y_{\sep{k}},\omega).\]
Clearly, unless $k$ is separably closed, this map is not surjective.

First, given that $\Div(Y_{\sep{k}})$ has a natural action of $\Gamma:=\Gal(\sep{k}/k)$, then the monoid $\PPDiv_{\Q}(Y_{\sep{k}},\omega)$ has a natural structure of continuous $\Gamma$-module. Therefore, the image of $\PPDiv_{\Q}(Y,\omega) \to \PPDiv_{\Q}(Y_{\sep{k}},\omega)$ lies on $\PPDiv_{\Q}(Y_{\sep{k}},\omega)^{\Gamma}$. Actually, the image of $\PPDiv_{\Q}(Y,\omega) \to \PPDiv_{\Q}(Y_{\sep{k}},\omega)$ lies in $\PPDiv_{\Q}(Y_{\sep{k}},\omega)^{\Gamma}$. Before proving this claim, we prove the following lemma.

\begin{lemma}\label{lemma: descent of semiampleness}
	Let $Y$ be a variety over $k$ and $D\in \cadiv(Y)$. Then, $D_{\sep{k}}$ is semiample if and only if $D$ is semiample.
\end{lemma}

\begin{proof}
Assume first that $\D$ is semiample. Then, there exists $n\in \N$ such that $\mathscr{O}_{Y}(nD)$ is generated by its global sections. This directly implies that $\mathscr{O}_{Y_{\sep{k}}}(D_{\sep{k}})$ is generated by its global sections and, consequently, $D_{\sep{k}}$ is semiample.

Assume now that $D_{\sep{k}}$ is semiample. First, notice that there exists a finite Galois extension $L$ of degree $d$ such that $D_{L}$ is semiample. Denote by $\Gamma_{L}$ the corresponding Galois group and by $\pi_{L}:Y_{L}\to Y$ the base change map. Let $y\in Y_{L}$ be a closed point. Since $D_{L}$ is semiample, there exists $n\in\N$ such that $\mathscr{O}_{Y_{L}}(nD_{L})$ is generated by its global sections. It is possible to find a section $f\in \H^{0}(Y_{L},\mathscr{O}_{Y_{L}}(n D_{L}))$ that does not vanish on the Galois orbit of $y$. Hence, $\Gamma_{L}(f):=\prod_{\gamma\in\Gamma_{L}} \gamma(f)\in\H^{0}(Y_{L},\mathscr{O}_{Y_{L}}(dn D_{L}))$ is a $\Gamma_{L}$-stable section such that $y\in (Y_{L})_{\Gamma_{L}(f)}$. Thus, it descends to a section $g\in\H^{0}(Y,\mathscr{O}_{Y}(dn D))$ such that $\pi_{L}(y)\in Y_{g}$. This holds for every $y\in Y_{L}$. Consequently, it follows that $\mathscr{O}_{Y}(dnD)$ is generated by its global sections, since $\pi_{L}$ is finite and surjective. Thus, the assertion holds.
\end{proof}

\begin{proposition}
Let $Y$ be a normal variety over $k$. Let $N$ be a lattice and $\omega\subset N_{\Q}$ be a pointed cone. Then, the image of $\PPDiv_{\Q}(Y,\omega) \to \PPDiv_{\Q}(Y_{\sep{k}},\omega)$ is $\PPDiv_{\Q}(Y_{\sep{k}},\omega)^{\Gamma}$.
\end{proposition}

\begin{proof}
Clearly, the image of $\PPDiv_{\Q}(Y,\omega) \to \PPDiv_{\Q}(Y_{\sep{k}},\omega)$ is contained in the set of $\Gamma$-stable pp-divisors $\PPDiv_{\Q}(Y_{\sep{k}},\omega)^{\Gamma}$. Let us prove the other inclusion. Let 
\[\tilde{\D}:=\sum \Delta_{\tilde{D}}\otimes \tilde{D}\]
 in $\PPDiv_{\Q}(Y_{\sep{k}},\omega)^{\Gamma}$. Given that the pp-divisor is Galois invariant, we have that $\Delta_{\tilde{D}}=\Delta_{\gamma(\tilde{D})}$ for every $\tilde{D}$ appearing in $\D$ and $\gamma\in\Gamma$. Therefore, for each $\tilde{D}$ appearing in $\tilde{\D}$, we have that 
 \[Z'_{\tilde{D}}:=\bigcup_{\Delta_{\tilde{D}}=\Delta_{\tilde{D}'}} \mathrm{supp}(\tilde{D}')\]
 is a Galois stable closed subvariety of $Y_{\sep{k}}$. Therefore, it descends to a closed subvariety $Z_{\tilde{D}}\subset Y$. Thus, by taking the irreducible components of $Z_{\tilde{D}}$ for every $\tilde{D}$, we can construct a polyhedral divisor 
 \[\D:=\sum \Delta_{D}\otimes D\in\cadiv_{\Q}(Y,\omega)\]
 such that $\Delta_{D}=\Delta_{\tilde{D}}$ when $\mathrm{supp}(D)\subset Z_{\tilde{D}}$. In order to prove that $\D$ is a pp-divisor, we need to prove that the $\D(m)$ is semiample for every $m\in\omega^{\vee}\cap M$ and big for $m\in\relint(\omega^{\vee})\cap M$. First notice that $\D_{\sep{k}}(m)=\tilde{\D}(m)$ and recall that the morphism $Y_{\sep{k}}\to Y$ induces morphisms 
 \[\varphi_{n}:\H^{0}(Y,\mathscr{O}(n\D(m)))\to\H^{0}(Y_{\sep{k}},\mathscr{O}(n\D_{\sep{k}}(m))),\] 
 for every $n\in\N$. 
 
Given that $\D_{\sep{k}}(m)$ is big, for $m\in\relint(\omega^{\vee})\cap M$, there exist $n\in\N$ and $f\in\H^{0}(Y_{\sep{k}},\mathscr{O}(n\D_{\sep{k}}(m)))$ such that $(Y_{\sep{k}})_{f}$ is affine. The Galois group $\Gamma$ acts on the global sections $\H^{0}(Y_{\sep{k}},\mathscr{O}(n\D_{\sep{k}}(m)))$, because the divisor is Galois stable. Hence, we can consider the orbit of $f$ in $\H^{0}(Y_{\sep{k}},\mathscr{O}(n\D_{\sep{k}}(m)))$, which is finite. Denote by $\prod_{\Gamma}(f):=f_{1}\cdots f_{l}$, the product of the elements in the orbit of $f$. Thus, for $n'=l\cdot n$, we have that $\prod_{\Gamma}(f)\in\H^{0}(Y_{\sep{k}},\mathscr{O}(n'\D_{\sep{k}}(m)))$. Given that $\prod_{\Gamma}(f)$ is Galois stable, there exists $g\in\H^{0}(Y,\mathscr{O}(n'\D(m)))$ such that $\varphi_{n'}(g)=\prod_{\Gamma}(f)$. We claim that $Y_{g}$ is affine. On the one hand, for every $i\in\{1,\dots, l\}$, there exists $\gamma_{i}\in\Gamma$ such that $\gamma_{i}((Y_{\sep{k}})_{f})=(Y_{\sep{k}})_{f_{i}}$. This implies that each $(Y_{\sep{k}})_{f_{i}}$ is affine. Thus, the non-zero locus of $\prod_{\Gamma}(f)$ is affine because it is the intersection of finitely many affine open subvarieties over $\sep{k}$
 \[(Y_{\sep{k}})_{\prod_{\Gamma}(f)}= \bigcap_{i=1}^{l}(Y_{\sep{k}})_{f_{i}}.\]
 On the other hand, $(Y_{g})_{\sep{k}}=(Y_{\sep{k}})_{\prod_{\Gamma}(f)}$ is affine. Then, $Y_{g}$ is affine. This implies that $\D(m)$ is big for every $m\in\relint(\omega^{\vee})\cap M$. 
 
Finally, semiampleness of each $\D(m)$ follows from \cref{lemma: descent of semiampleness}, since $n\in \N$ can be chosen big enough such that $\D(nm)\in\cadiv(Y)$. This proves the assertion.

 \end{proof}

     Denote by $\mathfrak{PPDiv}(k)$ (resp. $\mathfrak{PPDiv}(\sep{k})$) the category of pp-divisors over $k$ (resp. $\sep{k}$). Let $\D$ and $\D'$ be objects in $\mathfrak{PPDiv}(k)$ and $(\psi,F,\mathfrak{f}):\D'\to\D$ a morphism in $\mathfrak{PPDiv}(k)$. By base change we have a morphism of pp-divisors $(\psi_{\sep{k}},F,\mathfrak{f}_{\sep{k}}):\D_{\sep{k}}'\to\D_{\sep{k}}$ in $\mathfrak{PPDiv}(\sep{k})$. This construction is compatible with the composition law defined above. Thus, this data and the one given by $\D\mapsto\D_{\sep{k}}$ define a covariant functor $\mathfrak{PPDiv}(k)\to \mathfrak{PPDiv}(\sep{k})$. 
   
\begin{proposition}\label{proposition covariant base change}
The functor $\mathfrak{PPDiv}(k)\to\mathfrak{PPDiv}(\sep{k})$ is faithful.
\end{proposition}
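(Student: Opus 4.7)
The plan is to unpack the definition of faithfulness: given two parallel morphisms $(\psi_1,F_1,\mathfrak{f}_1),(\psi_2,F_2,\mathfrak{f}_2)\colon \D'\to \D$ in $\mathfrak{PPDiv}(k)$ whose base changes to $\sep{k}$ agree, we need to conclude that they already agree over $k$. Since the datum of a morphism has three components, I would handle each one separately.

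First I would observe that the linear map $F$ between lattices is unchanged under base change, so the equality $F_1=F_2$ is automatic once the base-changed morphisms coincide. Next I would treat the dominant morphism $\psi\colon Y\to Y'$ of $k$-varieties. Since $Y_{\sep{k}}\to Y$ is faithfully flat (it is a base change along the faithfully flat extension $k\to \sep{k}$), morphisms from $Y$ to the separated $k$-scheme $Y'$ are determined by their pullbacks to $Y_{\sep{k}}$. Concretely, $\psi_1$ and $\psi_2$ induce the same morphism after base change, and one may equalize them by noting that the equalizer of $\psi_1$ and $\psi_2$ is a closed subscheme of $Y$ whose base change equals $Y_{\sep{k}}$; by faithful flatness it must equal $Y$. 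Hence $\psi_1=\psi_2$.

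For the plurifunction $\mathfrak{f}\in k(Y,N')^{*}=N'\otimes_{\Z}k(Y)^{*}$, the key point is that $Y$ is geometrically integral, so the field extension $k(Y)\hookrightarrow \sep{k}(Y_{\sep{k}})$ is injective and hence the induced map on unit groups $k(Y)^{*}\to \sep{k}(Y_{\sep{k}})^{*}$ is injective as well. Because $N'$ is a free $\Z$-module, tensoring with $N'$ preserves injectivity, so
\[
N'\otimes_{\Z}k(Y)^{*}\longrightarrow N'\otimes_{\Z}\sep{k}(Y_{\sep{k}})^{*}
\]
is injective. This is precisely the base-change map $k(Y,N')^{*}\to \sep{k}(Y_{\sep{k}},N')^{*}$ on plurifunctions, so $\mathfrak{f}_{1,\sep{k}}=\mathfrak{f}_{2,\sep{k}}$ forces $\mathfrak{f}_1=\mathfrak{f}_2$. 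Combining the three components gives $(\psi_1,F_1,\mathfrak{f}_1)=(\psi_2,F_2,\mathfrak{f}_2)$, which is the faithfulness claim.

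I do not expect a serious obstacle here: each of the three ingredients reduces to a standard faithfully flat descent statement. The only point that requires mild care is ensuring that geometric integrality of $Y$ is genuinely used to get the injection $k(Y)\hookrightarrow \sep{k}(Y_{\sep{k}})$, and that freeness of $N'$ is used to pass the injectivity through the tensor product; both hypotheses are built into the setup of $\mathfrak{PPDiv}(k)$.
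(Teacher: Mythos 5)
Your proof is correct, and it uses the same three-component decomposition as the paper (the lattice map, the morphism of varieties, and the plurifunction, with the lattice map immediate), but the arguments for the latter two components are genuinely different. For $\psi$, the paper simply asserts that $\psi_{\sep{k}}=\psi'_{\sep{k}}$ forces agreement on the Galois-stable points of $Y_{\sep{k}}$, hence $\psi=\psi'$; your equalizer-plus-faithfully-flat-descent argument (using separatedness of the target to make the equalizer a closed subscheme) establishes the same conclusion in a more standard and airtight way. For the plurifunction, the paper compares polyhedral principal divisors, deduces $f_i'=c_if_i$ with $c_i\in k^{*}$, and then kills the constants by evaluating at every $m\in M$; you instead note that $N'\otimes_{\Z}k(Y)^{*}\to N'\otimes_{\Z}\sep{k}(Y_{\sep{k}})^{*}$ is injective because $N'$ is free and $k(Y)^{*}\hookrightarrow \sep{k}(Y_{\sep{k}})^{*}$ is injective. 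Your route is shorter and sidesteps a delicate point in the paper's version: passing from $\divr(\mathfrak{f})=\divr(\mathfrak{f}')$ to $f_i'=c_if_i$ with $c_i\in k^{*}$ implicitly uses that a rational function with trivial divisor on the semiprojective (possibly affine) base $Y$ is a constant, which requires justification in general. The hypotheses you invoke — geometric integrality of $Y$, so that $k(Y)$ embeds in $\sep{k}(Y_{\sep{k}})$, and freeness of $N'$ — are indeed part of the setup, so nothing is missing.
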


\begin{proof}
Let $\D$ and $\D'$ be objects in $\mathfrak{PPDiv}_{}(k)$. Let $(\psi,F,\mathfrak{f})$ and $(\psi',F',\mathfrak{f}')$ be morphisms in $\mathrm{Mor}_{\mathfrak{PPDiv}_{}(k)}(\D',\D)$ such that $(\psi_{\sep{k}},F_{\sep{k}},\mathfrak{f}_{\sep{k}})=(\psi'_{\sep{k}},F'_{\sep{k}},\mathfrak{f}'_{\sep{k}})$. After the base change, we have $F=F_{\sep{k}}$ and $F'=F'_{\sep{k}}$. Then $F=F'$. Given that $\psi_{\sep{k}}=\psi_{\sep{k}}'$, they coincide in a Galois stable open subvariety of $Y_{\sep{k}}$ and therefore $\psi=\psi'$. If $\mathfrak{f}_{\sep{k}}=\mathfrak{f}_{\sep{k}}'$, then $\mathfrak{f}_{\sep{k}}(m)=\mathfrak{f}_{\sep{k}}'(m)$ for every $m\in M$. This implies that $\mathfrak{f}(m)=\mathfrak{f}'(m)$, for every $m\in M$ and, therefore, $\mathfrak{f}=\mathfrak{f}'$ (see \cref{remark: isomorphisms plurifunction}). Then, we have that the functor $\mathfrak{PPDiv}_{}(k)\to \mathfrak{PPDiv}_{}(\sep{k})$ is faithful.
\end{proof}

For a fixed lattice $N$, we denote by $\mathfrak{PPDiv}_{N}(k)$ (resp. $\mathfrak{PPDiv}_{N}(\sep{k})$) the full subcategory of $\mathfrak{PPDiv}(k)$ (resp. $\mathfrak{PPDiv}_{N}(\sep{k})$) consisting of those pp-divisors whose tail cone is defined in $N_{\Q}$.

\begin{corollary}\label{Corollary covariant base change}
The induced functor $\mathfrak{PPDiv}_{N}(k)\to\mathfrak{PPDiv}_{N}(\sep{k})$ is faithful.
\end{corollary}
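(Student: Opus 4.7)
The plan is to reduce the corollary directly to \cref{proposition covariant base change}, using that $\mathfrak{PPDiv}_{N}$ is a full subcategory. First, I would verify that the base change functor $\mathfrak{PPDiv}(k)\to\mathfrak{PPDiv}(\sep{k})$ restricts to a functor $\mathfrak{PPDiv}_{N}(k)\to\mathfrak{PPDiv}_{N}(\sep{k})$. If $\D=\sum \Delta_{D}\otimes D\in\PPDiv_{\Q}(Y,\omega)$ with $\omega\subset N_{\Q}$, then by construction $\D_{\sep{k}}=\sum \Delta_{D}\otimes D_{\sep{k}}\in\PPDiv_{\Q}(Y_{\sep{k}},\omega)$; the polyhedra and the tail cone $\omega$ are unchanged by base change, only the divisors $D$ get extended. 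Hence $\mathrm{Tail}(\D_{\sep{k}})=\omega$ is still defined on $N_{\Q}$, so $\D_{\sep{k}}\in\mathfrak{PPDiv}_{N}(\sep{k})$.

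Second, I would invoke the fullness of both subcategories. By definition $\mathfrak{PPDiv}_{N}(k)$ is the full subcategory of $\mathfrak{PPDiv}(k)$ spanned by the pp-divisors whose tail cone lies in $N_{\Q}$, and likewise on the $\sep{k}$-side. Consequently, for any $\D,\D'\in\mathfrak{PPDiv}_{N}(k)$ we have an equality of morphism sets
\[
\mathrm{Mor}_{\mathfrak{PPDiv}_{N}(k)}(\D',\D)=\mathrm{Mor}_{\mathfrak{PPDiv}(k)}(\D',\D),
\]
and similarly over $\sep{k}$ for $\D_{\sep{k}}, \D'_{\sep{k}}$. Under these identifications, the induced functor of the corollary coincides with the restriction of the functor of \cref{proposition covariant base change} to a full subcategory.

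The conclusion follows because faithfulness is inherited by restriction to full subcategories. Given two morphisms $(\psi,F,\mathfrak{f}),(\psi',F',\mathfrak{f}')\in\mathrm{Mor}_{\mathfrak{PPDiv}_{N}(k)}(\D',\D)$ whose images in $\mathfrak{PPDiv}_{N}(\sep{k})$ agree, their images in $\mathfrak{PPDiv}(\sep{k})$ agree, and \cref{proposition covariant base change} forces them to be equal in $\mathrm{Mor}_{\mathfrak{PPDiv}(k)}(\D',\D)$, hence in $\mathrm{Mor}_{\mathfrak{PPDiv}_{N}(k)}(\D',\D)$. There is no substantive obstacle; the only point requiring care is the bookkeeping check that the tail lattice is preserved under base change, which is immediate from the explicit formula $\D_{\sep{k}}=\sum \Delta_{D}\otimes D_{\sep{k}}$.
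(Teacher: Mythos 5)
Your proof is correct and matches the paper's (implicit) argument: the corollary is stated as an immediate consequence of \cref{proposition covariant base change}, obtained by restricting the faithful base-change functor to the full subcategory $\mathfrak{PPDiv}_{N}$, exactly as you do. The only cosmetic remark is that fullness of the subcategory is not actually needed for faithfulness to be inherited — restriction of a faithful functor to any subcategory on which it is defined remains faithful — but invoking fullness does no harm and your bookkeeping check that the tail cone is preserved under base change is the right point to verify.
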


\section{Split torus actions: affine normal varieties and pp-divisors}\label{Section Normal varieties and split tori}
\indent

When $k=\sep{k}$ and $\mathrm{char}(k)=0$, Altmann and Hausen proved that any affine normal variety endowed with an effective action of an algebraic torus over $k$ arises from a pp-divisor over a normal semi-projective variety over $k$ (cf. \cref{theoremmainaltmannhausen}). In the first part of this section we generalize such a result by proving the following.

\begin{theorem}\label{theorem Altmann hausen split}
    Let $T$ be a split $k$-torus and $N$ be its cocharacter lattice.
        \begin{enumerate}[i)]
            \item \label{theorem Altmann hausen split part a} Let $\D\in\mathfrak{PPDiv}_{N}(k)$ be a pp-divisor over a normal semi-projective variety $Y$ over $k$, then the scheme $X[Y,\D]:=\Spec(A[Y,\D])$ is a normal $k$-variety with an effective $T$-action.
            \item \label{theorem Altmann hausen split part b} Let $X$ be an affine normal $k$-variety with an effective $T$-action. Then, there exists $\D\in\mathfrak{PPDiv}_{N}(k)$ over a normal semi-projective variety $Y$ over $k$ such that $X\cong X[Y,\D]$ as $T$-varieties.
        \end{enumerate}
\end{theorem}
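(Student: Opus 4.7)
The plan is to reduce both assertions to the separably closed case via base change, using \cref{theoremmainaltmannhausen} (whose conclusion holds over $\sep{k}$ in the split-torus setting in any characteristic, the original argument going through once $T$ is split) together with the base-change machinery already developed in this section: \cref{Lemma pp-divisors stable under base change}, the proposition identifying the image of $\PPDiv_{\Q}(Y,\omega) \to \PPDiv_{\Q}(Y_{\sep{k}},\omega)$ with $\PPDiv_{\Q}(Y_{\sep{k}},\omega)^{\Gamma}$, and \cref{Corollary covariant base change}.

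For part \ref{theorem Altmann hausen split part a}, the starting observation is the identification
\[A[Y,\D]\otimes_{k}\sep{k} \;\cong\; A[Y_{\sep{k}},\D_{\sep{k}}]\]
of $M$-graded $\sep{k}$-algebras, obtained from flat base change for the global sections $H^{0}(Y,\mathscr{O}(\D(m)))$ on the semiprojective variety $Y$. By \cref{Lemma pp-divisors stable under base change}, $\D_{\sep{k}}$ is a pp-divisor, so applying \cref{theoremmainaltmannhausen} over $\sep{k}$ shows that $\Spec(A[Y_{\sep{k}},\D_{\sep{k}}])$ is an integral normal affine $\sep{k}$-variety with an effective $T_{\sep{k}}$-action; in particular $A[Y,\D]$ is a finitely generated $k$-algebra and $X[Y,\D]$ is geometrically integral and geometrically normal. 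The $T$-action on $X[Y,\D]$ is induced by the comorphism $a_{m}\mapsto \chi^{m}\otimes a_{m}$ coming from the $M$-grading, and is effective because its base change is.

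For part \ref{theorem Altmann hausen split part b}, apply \cref{theoremmainaltmannhausen} to $X_{\sep{k}}$ to obtain a pp-divisor $\tilde{\D}$ on a normal semiprojective $\sep{k}$-variety $Y'$ with $X_{\sep{k}}\cong X[Y',\tilde{\D}]$ as $T_{\sep{k}}$-varieties. The task reduces to choosing $(Y',\tilde{\D})$ so that $Y'\cong Y_{\sep{k}}$ for some semiprojective $Y/k$ and so that $\tilde{\D}\in \PPDiv_{\Q}(Y_{\sep{k}},\omega)^{\Gamma}$: the proposition on Galois invariants then yields $\D\in\PPDiv_{\Q}(Y,\omega)$ with $\D_{\sep{k}}=\tilde{\D}$, and the isomorphism $X\cong X[Y,\D]$ descends from $X_{\sep{k}}\cong X[Y_{\sep{k}},\D_{\sep{k}}]$ by \cref{Corollary covariant base change}. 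The base $Y'$ is built in Altmann--Hausen from a $T$-equivariant completion of $X_{\sep{k}}$ and a Chow-quotient type construction, both of which admit $\Gamma$-stable refinements because $X$ and its $T$-action are already defined over $k$; the polyhedral coefficients $\Delta_{D}$ are read off from orders of vanishing of $M$-homogeneous rational functions on $X_{\sep{k}}$ along prime divisors of $Y'$, and these transform equivariantly under $\Gamma$ since $k(X)$ together with its $M$-graduation descends to $k$.

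The main obstacle is the $\Gamma$-equivariant realization of $(Y',\tilde{\D})$ in part \ref{theorem Altmann hausen split part b}. Altmann--Hausen's construction involves several non-canonical choices (an equivariant completion, a semiample polarization, and the selection of prime divisors at which to measure the polyhedra), and I must show that each such choice can be made $\Gamma$-stably without losing semiprojectivity of $Y'$ or the pp-divisor conditions on $\tilde{\D}$. The guiding principle is that every intermediate $T_{\sep{k}}$-equivariant object appearing in the construction arises by base change from a $T$-equivariant object over $k$, so equivariant averaging over $\Gamma$ (or a direct canonical choice, e.g.\ via a suitable Chow quotient that is defined over $k$) produces the required descent. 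Once $\tilde{\D}$ is known to be $\Gamma$-invariant the remaining verifications are formal.
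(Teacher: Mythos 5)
Your overall route is genuinely different from the paper's: you propose to prove everything over a separably closed field and then descend, whereas the paper re-runs the Altmann--Hausen construction directly over $k$ (linear reductivity of split tori in any characteristic via Haboush/Nagata, finite generation via Schr\"oer's semiampleness theorem in \cref{proposition D semiample}, the GIT-equivalence statement \cref{proposition GIT-equivalence}, and the quotient $Y$ built as an inverse limit of $\mathrm{Proj}$'s over $k$). The descent route is attractive in principle, but as written it has two genuine gaps.

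First, the input you invoke over the closure does not exist yet. \cref{theoremmainaltmannhausen} is stated for algebraically closed fields of characteristic zero; over $\sep{k}$ (or $\bar{k}$) in positive characteristic the assertion ``the original argument goes through once $T$ is split'' is precisely the substantive content being proved, and the points where the characteristic-zero proof could break (linear versus geometric reductivity for the existence of the GIT quotients, finite generation of $\bigoplus_n H^0(Y,\mathscr{O}(nD))$ for a semiample $D$) must be addressed; your proposal never does so, so the base case of your reduction is missing. There is also a smaller but real issue with using $\sep{k}$ rather than $\bar{k}$: normality of $X_{\sep{k}}$ does not imply geometric normality over an imperfect field, so the reduction must be run over $\bar{k}$ (where, however, there is no Galois group to descend along, forcing you back to $\sep{k}$ for the descent step and to $\bar{k}$ for the geometric properties --- this needs to be disentangled explicitly).

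Second, in part \ref{theorem Altmann hausen split part b} the $\Gamma$-invariance of $(Y',\tilde{\D})$ is asserted, not proved, and the proposed mechanism does not work as stated. The base $Y'$ does descend canonically (the $Y_m=\mathrm{Proj}(\bigoplus_n A_{nm})$ and their inverse limit are built from the $k$-algebra $A$), but the pp-divisor itself depends on the choice of a multiplicative section $s:M\to \mathrm{Quot}(A_{\sep{k}})^{*}$ of the degree map, and two choices differ by $\mathrm{div}(\mathfrak{f})$ for a plurifunction; if $s$ is chosen carelessly over $\sep{k}$ the resulting $\tilde{\D}$ is not $\Gamma$-invariant. ``Equivariant averaging over $\Gamma$'' is not available here because $s$ is a multiplicative, not additive, datum (a norm would multiply degrees by $[L:k]$). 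The correct fix --- choose $s$ with values in $\mathrm{Quot}(A)^{*}$, which is possible because the weight monoid $\{m\mid A_m\neq 0\}$ is already defined over $k$ and generates $M$ by effectivity --- is exactly what the paper's \cref{proposition existence ah-datum split} does by working over $k$ from the start, and it is the missing step in your argument. (A minor further point: Altmann--Hausen build $Y'$ from inverse limits of GIT quotients, not from an equivariant completion and a Chow quotient, so the objects you would need to make $\Gamma$-stable are not quite the ones you list.)
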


We introduce first the notions of \textit{affinization of a scheme} and its respective \textit{affinization morphism} (see \cite[Chapter III Section 3.8]{DG70} and $\mathrm{SGA\, VI_{B},\, Section\, 11}$). This is because \cref{theorem Altmann hausen split}~\eqref{theorem Altmann hausen split part a} is a consequence of a more extensive result involving another $T$-variety, which is not affine and whose respective affinization is the required $T$-variety.

\subsection{ Affinization}\label{section: affinization}
\indent

 Let $S$ be a scheme, its \emph{affinization} is defined as $S_{\mathrm{aff}}:=\Spec(\H^{0}(S,\mathscr{O}_{S}))$. This scheme comes with a natural morphism called the \emph{affinization morphism} $r_{S}:S\to S_{\mathrm{aff}}$, which is defined by gluing the morphisms $U\overset{\cong}{\to}\Spec(\H^{0}(U,\mathscr{O}_{S}))\to S_{\mathrm{aff}}$ for $U\in \mathcal{U}$, an affine open covering of $S$. The affinization morphism $r_{S}:S\to S_{\mathrm{aff}}$ does not depend on the affine open covering. Recall that, for any affine scheme $S'$, there is a bijection $\Hom_{\mathrm{Sch}}(S,S')\cong\Hom_{}(\mathscr{O}(S'),\mathscr{O}(S))$ (see for instance \cite[Chapter I Section 1 Corollary 4.3]{DG70}). Hence, for $S'=\aff{S}$, the affinization morphism $r_{S}$ corresponds to the identity $\mathscr{O}(\aff{S})=\mathscr{O}(S)\to \mathscr{O}(S)$. This construction has some nice functorial properties.

\begin{lemma}
Let $S$ and $S'$ be two schemes. If $f:S\to S'$ is a morphism of schemes, then there exists a canonical morphism $f_{\mathrm{aff}}:S_{\mathrm{aff}}\to S'_{\mathrm{aff}}$ that fits into the following commutative diagram
\[\xymatrix{ S \ar[r]^{r_{S}} \ar[d]_{f} & S_{\mathrm{aff}} \ar[d]^{f_{\mathrm{aff}}} \\ S' \ar[r]_{r_{S'}} & S'_{\mathrm{aff}}  .}\]
\end{lemma}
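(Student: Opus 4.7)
The plan is to invoke the functoriality of the global sections functor together with the universal property of affine schemes. Given the morphism $f: X \to S$ (I read the statement with $X$ playing the role of the unnamed domain), pullback of functions yields a ring homomorphism $f^{\#}: H^{0}(S, \mathscr{O}_{S}) \to H^{0}(X, \mathscr{O}_{X})$. Applying $\Spec$ produces a canonical morphism of affine schemes
\[
f_{\mathrm{aff}} := \Spec(f^{\#}): X_{\mathrm{aff}} = \Spec(H^{0}(X, \mathscr{O}_{X})) \longrightarrow \Spec(H^{0}(S, \mathscr{O}_{S})) = S_{\mathrm{aff}}.
\]
Canonicity follows from the functoriality of $H^{0}(-, \mathscr{O}_{-})$ and of $\Spec$.

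To verify commutativity of the square, I would use the adjunction $\Hom_{\mathrm{Sch}}(Y, \Spec A) = \Hom_{\mathrm{Ring}}(A, H^{0}(Y, \mathscr{O}_{Y}))$ applied with $Y = X$ and $A = H^{0}(S, \mathscr{O}_{S})$. Both compositions $f_{\mathrm{aff}} \circ r_{X}$ and $r_{S} \circ f$ are morphisms $X \to S_{\mathrm{aff}}$, hence each is determined by the ring map it induces on global sections. By construction, the affinization morphism $r_{T}: T \to T_{\mathrm{aff}}$ induces the identity on $H^{0}(T, \mathscr{O}_{T})$ (this is how $r_{T}$ is defined by gluing the maps $U \to \Spec H^{0}(U, \mathscr{O}_{S})$ coming from affine open subschemes $U \subset T$, cf.\ the preceding paragraph of the text). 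Therefore both compositions correspond to the same ring homomorphism $f^{\#}: H^{0}(S, \mathscr{O}_{S}) \to H^{0}(X, \mathscr{O}_{X})$, and the square commutes.

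This is a routine functoriality argument with no substantive obstacle; the only point that requires a word of care is confirming that $r_{T}$ induces the identity on global sections, which reduces to a direct check on each affine open in the gluing construction. If one prefers, one can instead invoke directly the universal property characterizing $r_{T}: T \to T_{\mathrm{aff}}$ as the initial morphism from $T$ to an affine scheme and then uniqueness of the comparison map gives both canonicity of $f_{\mathrm{aff}}$ and commutativity of the square in one stroke.
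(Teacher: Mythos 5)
Your proposal is correct and follows the same route as the paper: the paper's proof simply observes that the canonical map on global sections induces $f_{\mathrm{aff}}$ via $\Spec$ and asserts the commutativity of the square. You supply the verification of commutativity (via the adjunction $\Hom_{\mathrm{Sch}}(Y,\Spec A)\cong\Hom_{\mathrm{Ring}}(A,H^{0}(Y,\mathscr{O}_{Y}))$ and the fact that the affinization morphism induces the identity on global sections) that the paper leaves implicit, which is a welcome addition rather than a divergence.
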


\begin{proof}
From $f:S\to S'$ we have a canonical map $f^{*}:\H^{0}(S,\mathscr{O}_{S})\to\H^{0}(S',\mathscr{O}_{S'})$, which induces a morphism $f_{\mathrm{aff}}:S_{\mathrm{aff}}\to S'_{\mathrm{aff}}$ that fits into the following commutative diagram
\[\xymatrix{ S \ar[r]^{r_{S}} \ar[d]_{f} & S_{\mathrm{aff}} \ar[d]^{f_{\mathrm{aff}}} \\ S' \ar[r]_{r_{S'}} & S_{\mathrm{aff}}  .}\]
\end{proof}

Also, for quasi-compact and quasi-separated schemes over a field $k$, the affinization of a finite product is isomorphic to the product of the respective affinizations. The following result corresponds to \cite[Chapter I Section 2 Proposition 2.6]{DG70}.

\begin{proposition}\label{proposition affinization of a finite product}
	Let $S_{1},\dots S_{r}$ be quasi-compact and quasi-separated schemes over a field $k$. Then, there is a canonical isomorphism 
	\[\left(\prod S_{i}\right)_{\mathrm{aff}}\cong \prod \left( S_{i} \right)_{\mathrm{aff}}.\]
\end{proposition}

In these terms, a scheme over $k$ is said to be \emph{semi-projective} if its affinization morphism is projective and its affinization is of finite type over $k$. 

\begin{lemma}\label{Proposition properties of affinization}
Let $S$ be a scheme over $k$. Then, the following hold
\begin{enumerate}[a)]
	\item \label{Proposition properties of affinization part i} If $S$ is integral, then $S_{\mathrm{aff}}$ is integral.
	\item \label{Proposition properties of affinization part ii} If $S$ is normal, then $S_{\mathrm{aff}}$ is normal.
	\item \label{Proposition properties of affinization part iii} If $S$ is semi-projective, then $S_{}$ is a separated scheme of finite type over $k$.
\end{enumerate}
\end{lemma}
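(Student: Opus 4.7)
The plan is to treat the three parts independently, as each reduces to a standard scheme-theoretic fact about the ring of global sections and the affinization morphism.

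For part \ref{Proposition properties of affinization part i}), since $S$ is integral, a global section $f \in H^{0}(S, \mathscr{O}_{S})$ is determined by its image at the generic point $\eta$: a section vanishing on any non-empty open subset of an integral scheme vanishes on all of $S$. Hence the restriction $H^{0}(S, \mathscr{O}_{S}) \hookrightarrow \mathscr{O}_{S, \eta} = K(S)$ embeds the ring into a field, so $H^{0}(S, \mathscr{O}_{S})$ is a domain and $S_{\mathrm{aff}}$ is integral.

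For part \ref{Proposition properties of affinization part ii}), I would first reduce to the integral case: in the locally noetherian setting that concerns us a normal scheme decomposes as a disjoint union of its integral normal irreducible components, so $H^{0}(S, \mathscr{O}_{S})$ splits as a product of rings and a finite product of normal domains is a normal ring. Assuming then that $S$ is integral and normal, I would invoke the identification
\[
H^{0}(S, \mathscr{O}_{S}) = \bigcap_{x \in S} \mathscr{O}_{S,x}
\]
inside $K(S)$, which follows from the sheaf axiom together with integrality. Each $\mathscr{O}_{S,x}$ is integrally closed in $K(S)$, and an intersection of integrally closed subrings of a common field is itself integrally closed in its fraction field: any element of $\mathrm{Frac}(A)$ (with $A := H^{0}(S, \mathscr{O}_{S})$) integral over $A$ is integral over each $\mathscr{O}_{S,x}$ and lies in $K(S)$, hence lies in each $\mathscr{O}_{S,x}$, hence in $A$. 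Therefore $A$ is normal and $S_{\mathrm{aff}}$ is normal.

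For part \ref{Proposition properties of affinization part iii}), this is purely an unwinding of definitions. By hypothesis $H^{0}(S, \mathscr{O}_{S})$ is a finitely generated $k$-algebra, so $S_{\mathrm{aff}}$ is of finite type over $k$, hence separated and noetherian. The affinization morphism $r_{S} : S \to S_{\mathrm{aff}}$ is projective by hypothesis, and projective morphisms are separated and of finite type. Composing $S \xrightarrow{r_{S}} S_{\mathrm{aff}} \to \Spec(k)$ and using that separatedness and finite type are stable under composition yields that $S \to \Spec(k)$ is separated and of finite type; being of finite type over a field, $S$ is noetherian. None of the three parts presents a serious obstacle; the only mildly delicate point is the reduction to the integral case in part \ref{Proposition properties of affinization part ii}), which is painless in the noetherian framework relevant to this paper.
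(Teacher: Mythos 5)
Your proof is correct and follows essentially the same route as the paper: part a) via injectivity of restriction to the generic point, part c) by unwinding the definition of semiprojectivity and composing with the projective affinization morphism. For part b) the paper simply cites \cite[Proposition 4.1.5]{LiuQing}; your argument via $H^{0}(S,\mathscr{O}_{S})=\bigcap_{x}\mathscr{O}_{S,x}$ is exactly the standard proof of that citation (and your explicit reduction to the integral case is, if anything, more careful than the paper's implicit one).
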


\begin{proof}
Given that $S$ is integral, then $\mathscr{O}_{S}(S)$ is an integral domain. This implies that the affinization $\Spec(\mathscr{O}_{S}(S))$ is integral, which proves \eqref{Proposition properties of affinization part i}. Now, by \cite[Proposition 4.1.5]{Liu02}, $\mathscr{O}_{S}(S)$ is a normal domain. Thus, the affinization $\Spec(\mathscr{O}_{S}(S))$ is a normal integral scheme. This proves \eqref{Proposition properties of affinization part ii}. Finally, if $S$ is semi-projective, then $r_{S}:S\to S_{\mathrm{aff}}$ is of finite type and $\aff{S}$ is of finite type by definition (see \cref{section: semi-projective varieties}). Since $S_{\mathrm{aff}}\to\Spec(k)$ is separated, we have that $S$ is separated. This proves \eqref{Proposition properties of affinization part iii}. Thus, the assertion holds.
\end{proof}

\begin{remark}\label{proposition S finite type if Saff is}
Let $S$ be a scheme over $k$. Notice that whenever $S_{\mathrm{aff}}$ is of finite type and $r_{S}:S\to S_{\mathrm{aff}}$ is of finite type, then $S$ is of finite type.
\end{remark}

\begin{proposition}\label{proposition affinization separated and finite type}
Let $S$ be a semi-projective scheme over $k$. If $X$ is an affine scheme over $S$, then $X$ is quasi-compact and the affinization morphism $r_{X}:X\to X_{\mathrm{aff}}$ is separated and quasi-compact. Moreover, if $X$ is of finite type over $S$, then $r_{X}$ is of finite type and $X_{\mathrm{aff}}$ is of finite type.
\end{proposition}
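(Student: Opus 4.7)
The plan is to prove the four assertions in sequence, leveraging that by \cref{Proposition properties of affinization} the scheme $S$ is noetherian, separated, and of finite type over $k$, with $\pi\colon S\to S_{\mathrm{aff}}$ projective (since semiaffine plus quasi-projective over $S_{\mathrm{aff}}$ is projective).

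For the first three assertions: $X$ is quasi-compact because $S$ is quasi-compact and $X\to S$ is affine, hence quasi-compact. The structural morphism $S\to\Spec(k)$ is separated, factoring as the proper morphism $\pi$ followed by the affine morphism $S_{\mathrm{aff}}\to\Spec(k)$; composing with the affine (hence separated) morphism $X\to S$ makes $X$ separated over $k$. Since $X_{\mathrm{aff}}$ is also separated over $k$ (being affine), the standard diagonal argument then shows $r_X$ is separated. Quasi-compactness of $r_X$ is immediate from $X$ being quasi-compact and $X_{\mathrm{aff}}$ being affine (the preimage of the single affine open $X_{\mathrm{aff}}$ is $X$ itself).

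For the finite type part, assume $X\to S$ is of finite type. I would take a finite affine open cover $\{V_i=\Spec(A_i)\}$ of $S$; projectivity of $\pi$ makes each $A_i$ a finitely generated $H^0(S_{\mathrm{aff}},\mathscr{O})$-algebra. Pulling back to $X$ yields an affine open cover $\{U_i=X\times_S V_i=\Spec(B_i)\}$ with each $B_i$ finitely generated over $A_i$, and hence over $H^0(S_{\mathrm{aff}},\mathscr{O})\subseteq H^0(X,\mathscr{O}_X)$. This shows $r_X$ is locally of finite type; combined with its quasi-compactness established above, $r_X$ is of finite type.

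The main obstacle is showing $X_{\mathrm{aff}}$ is of finite type over $k$, equivalently that $H^0(X,\mathscr{O}_X)$ is a finitely generated $k$-algebra. The plan is to exploit the projectivity of $\pi$: writing $X=\Spec_{S}\mathcal{A}$ for a finite-type quasi-coherent $\mathscr{O}_S$-algebra $\mathcal{A}$ and using noetherianity of $S$ to select a coherent subsheaf $\mathcal{F}\subset\mathcal{A}$ generating $\mathcal{A}$ as an algebra, one obtains a closed $S$-immersion $X\hookrightarrow \Spec_{S}\mathrm{Sym}(\mathcal{F})$. Applying the proper direct image theorem to $\pi$ on the graded pieces $\mathrm{Sym}^n(\mathcal{F})$ yields coherent sheaves $\pi_*\mathrm{Sym}^n(\mathcal{F})$ on the noetherian affine base $S_{\mathrm{aff}}$, and one then extracts finite generation of $H^0(X,\mathscr{O}_X)=H^0(S,\mathcal{A})$ over $H^0(S_{\mathrm{aff}},\mathscr{O})$ (a finitely generated $k$-algebra) by carefully using the algebra structure on $\mathcal{A}$ rather than treating the graded pieces independently. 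This last step is the principal technical difficulty, since naive pushforward of a non-coherent quasi-coherent algebra need not preserve finite generation; the argument essentially requires the full semiprojective structure of $S$ together with the relative affine finite-type hypothesis on $X$.
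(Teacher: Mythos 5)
Your handling of the first four claims is correct and essentially the paper's argument. Quasi-compactness of $X$ and of $r_X$ is identical; for separatedness you cancel against $X_{\mathrm{aff}}\to\Spec(k)$ where the paper cancels against $f_{\mathrm{aff}}$ in the square $f_{\mathrm{aff}}\circ r_X=r_S\circ f$, which is the same lemma; and for finite type of $r_X$ you work with explicit affine covers where the paper again just cancels ($f_{\mathrm{aff}}\circ r_X=r_S\circ f$ is of finite type, then \cite[Proposition 3.2.4]{LiuQing}). Your cover argument is longer but sound.

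The genuine problem is the final clause, that $X_{\mathrm{aff}}$ is of finite type, which you rightly single out as the hard point and do not close. Be aware, first, that the paper's own proof also stops after establishing that $r_X$ is of finite type and never proves this clause; and second, that your proposed strategy (pushing forward the graded pieces $\mathrm{Sym}^n(\mathcal{F})$ along the projective morphism $S\to S_{\mathrm{aff}}$ and assembling finite generation of $H^0(X,\mathscr{O}_X)$) cannot be completed, because the clause fails at this level of generality. Take $S$ a smooth projective surface with $H^0(S,\mathscr{O}_S)=k$, so that $S$ is semiprojective with $S_{\mathrm{aff}}=\Spec(k)$, and let $D$ be a divisor whose section ring $R(S,D)=\bigoplus_{n\geq 0}H^0(S,\mathscr{O}_S(nD))$ is not finitely generated (Zariski's classical examples). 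Then $X:=\Spec_{S}(\mathrm{Sym}(\mathscr{O}_S(D)))$ is the total space of a line bundle, hence affine and of finite type over $S$, yet $H^0(X,\mathscr{O}_X)=R(S,D)$ is not a finitely generated $k$-algebra and $X_{\mathrm{aff}}$ is not of finite type. Some positivity input is indispensable: in the paper, finite generation of the global sections of the relevant relative spectra $\tilde{X}=\Spec_{Y}(\mathscr{A})$ is obtained separately, in \cref{proposition D semiample} and \cref{corollary fg algebra}, from semiampleness of the evaluations $\D(m)$ via Schr\"oer's theorem. So your instinct that the naive pushforward step fails is exactly right; the fix is not a better argument but an additional hypothesis.
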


\begin{proof}
By \cref{Proposition properties of affinization}, we have that $S$ is of finite type. Then, given that $X$ is affine over $S$, we have that $X$ is quasi-compact. This implies that $r_{X}$ is quasi-compact. Now, we have the canonical morphism $f_{\mathrm{aff}}:X_{\mathrm{aff}}\to S_{\mathrm{aff}}$ that fits into the following commutative diagram
\[\xymatrix{ X \ar[r]^{r_{X}} \ar[d]_{f} & X_{\mathrm{aff}} \ar[d]^{f_{\mathrm{aff}}} \\ S \ar[r]_{r_{S}} & S_{\mathrm{aff}}  .}\]
Thus, given that $r_{S}$, $f_{\mathrm{aff}}$ and $f$ are separated, we have that $r_{X}$ is separated. 

If $f$ is of finite type, then $r_{S}\circ f=\aff{f}\circ r_{X}$ is of finite type. Then, by \cite[Proposition 3.2.4]{Liu02}, we have that $r_{X}$ is of finite type. 
\end{proof}

By Nagata's compactification Theorem \cite{Nag62}, a scheme of finite type over a noetherian ring has a compactification, i.e. a quasi-compact open immersion into a proper scheme. This result allows us to construct schemes with proper affinization morphisms. Notice that the affinization of a scheme and of a compactification are not necessarily isomorphic. For example, the affinization of the affine space $\mathbb{A}_{k}^{n}$ is itself, and the affinization of $\mathbb{P}_{k}^{n}$ is $\Spec(k)$.

\begin{proposition}
Let $S$ be an integral scheme of finite type over a noetherian ring $A$ and $r_{S}:S\to S_{\mathrm{aff}}$ be its affinization morphism. If $\bar{S}$ is a compactification of $S$ over $A$, then $r_{\bar{S}}:\bar{S}\to \bar{S}_{\mathrm{aff}}$ is proper. 
\end{proposition}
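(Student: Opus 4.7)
The plan is to split the statement into two independent pieces, each reducible to standard generalities. For the properness of $r_{\bar{S}}:\bar{S}\to\bar{S}_{\mathrm{aff}}$, recall that by definition the Nagata compactification is equipped with a proper structural morphism $p:\bar{S}\to\Spec(A)$. Since $\Spec(A)$ is affine, the universal property of the affinization gives a unique factorization $p=q\circ r_{\bar{S}}$, where $q:\bar{S}_{\mathrm{aff}}\to\Spec(A)$ is the morphism of affine schemes induced by the $A$-algebra structure on $H^{0}(\bar{S},\mathscr{O}_{\bar{S}})$. Being a morphism of affine schemes, $q$ is separated, so the standard fact that $g\circ f$ proper and $g$ separated imply $f$ proper (see \cite[\href{https://stacks.math.columbia.edu/tag/01W6}{Tag 01W6}]{stacks-project}) yields at once the properness of $r_{\bar{S}}$.

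For the isomorphism, the strategy is to compare global sections on $\bar{S}$ and on $S$. Write $\alpha:A\to H^{0}(\bar{S},\mathscr{O}_{\bar{S}})$ for the structural map attached to $p$, and $\rho:H^{0}(\bar{S},\mathscr{O}_{\bar{S}})\to H^{0}(S,\mathscr{O}_{S})=A$ for the restriction map induced by the open immersion $S\hookrightarrow\bar{S}$. Compatibility of the $A$-algebra structures on the two rings, together with the assumption $S_{\mathrm{aff}}=\Spec(A)$, gives $\rho\circ\alpha=\id_{A}$, so $\alpha$ is a split monomorphism. If one shows in addition that $\rho$ is injective, then $\rho$ is an isomorphism, hence so is $\alpha$, and applying $\Spec$ yields $\bar{S}_{\mathrm{aff}}\cong\Spec(A)=S_{\mathrm{aff}}$. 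Injectivity of $\rho$ follows as soon as $S$ is schematically dense in $\bar{S}$, which can be arranged by replacing $\bar{S}$ with the scheme-theoretic closure of $S$ inside the original Nagata compactification: this closure is still proper over $A$ as a closed subscheme of a proper $A$-scheme, and it retains $S$ as a (schematically) dense open subscheme.

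The main obstacle is precisely the injectivity of $\rho$: without some reducedness/density hypothesis, a non-zero global section of $\mathscr{O}_{\bar{S}}$ could vanish on $S$ and still be non-zero on $\bar{S}\setminus S$. The closure trick above handles this, and in the paper's intended applications $S=Y$ is geometrically integral and normal, so one may in fact take $\bar{S}$ itself to be integral, making the density automatic. Everything else reduces to standard properties of the affinization morphism and to the general interplay between properness and separatedness.
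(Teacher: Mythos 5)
Your proof of the properness of $r_{\bar{S}}$ is exactly the paper's argument: factor the proper structural morphism $p:\bar{S}\to\Spec(A)$ as $\beta\circ r_{\bar{S}}$ through the affinization and use that a composite which is proper, with separated second factor, forces the first factor to be proper. For the second assertion the paper merely records that $\beta\circ\alpha=\id_{S_{\mathrm{aff}}}$ and declares that $\alpha$ and $\beta$ are therefore isomorphisms; as you correctly observe, a one-sided retraction alone does not yield this, and one needs the restriction map $H^{0}(\bar{S},\mathscr{O}_{\bar{S}})\to H^{0}(S,\mathscr{O}_{S})$ to be injective. Your extra step — replacing $\bar{S}$ by the schematic closure of $S$ in the compactification, which is still proper over $A$ and makes $S$ schematically dense — supplies exactly the missing justification, so your proposal follows the same route as the paper but is more complete on this point.
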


\begin{proof}
From the hypothesis, we have the commutative diagram 
\[\xymatrix{ S \ar[d]_{r_{S}} \ar[r]^{\iota} & \bar{S} \ar[d]_{r_{\bar{S}}} \ar[r]^{p}  & \Spec(A) \ar[d]^{\id} \\ S_{\mathrm{aff}} \ar[r]_{\alpha} & \bar{S}_{\mathrm{aff}} \ar[r]_{\beta} & \Spec(A)\, . }\]
Given that $p=\beta\circ r_{\bar{S}}$ is proper and $\beta$ is separated, we have that $r_{\bar{S}}$ is proper. 
\end{proof}

A case where the affinization is preserved is under blow-ups on normal schemes.

\begin{proposition}\label{proposition affinization blow up}
Let $S$ be a normal noetherian scheme and $\mathscr{I}$ be a coherent sheaf of ideals of $S$. Let $\mathscr{S}:=\oplus_{d\geq 0}\mathscr{I}^{d}$, where $\mathscr{I}^{d}$ is the $d$th power of the ideal $\mathscr{I}$ and $\mathscr{I}^{0}=\mathscr{O}_{S}$. If $S':=\mathrm{Proj} \mathscr{S}$ is the blow-up of $S$ with respect to the coherent sheaf of ideals $\mathscr{I}$, then $S'_{\mathrm{aff}}=S_{\mathrm{aff}}$.
\end{proposition}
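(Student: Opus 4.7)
The plan is to prove $\pi_* \mathscr{O}_{S'} = \mathscr{O}_S$ for the blow-up morphism $\pi: S' \to S$; the conclusion $S'_{\mathrm{aff}} = S_{\mathrm{aff}}$ then follows by taking global sections, since $H^0(S', \mathscr{O}_{S'}) = H^0(S, \pi_* \mathscr{O}_{S'})$ by the Leray spectral sequence. The sheaf statement is local on $S$, so I would reduce to the case $S = \Spec A$ with $A$ noetherian and $\mathscr{I} = \tilde{I}$ for an ideal $I \subset A$, and prove $H^0(S', \mathscr{O}_{S'}) = A$ directly.

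First, I would choose generators $f_{0}, \dots, f_{n}$ of $I$ and cover $S'$ by the standard affine opens $U_{i} = \Spec A_{i}$, where $A_{i}$ is the degree-zero piece of the homogeneous localization $\mathrm{Rees}(I)[(f_{i}t)^{-1}]$; concretely, $A_{i}$ consists of elements of the form $y/f_{i}^{d}$ with $y\in I^{d}$ (viewed in an appropriate common ambient ring). A global section is a compatible family $(a_{i})_{i} \in \prod_{i} A_{i}$ whose restrictions agree on the overlaps $U_{i}\cap U_{j}$. The inclusion $A \hookrightarrow H^{0}(S', \mathscr{O}_{S'})$ is clear, so the real content is the reverse inclusion.

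From the description of $A_{i}$, each $a_{i}$ gives $f_{i}^{d_{i}} a_{i} \in I^{d_{i}}$. The compatibility conditions force all the $a_{i}$'s to coincide in a common localization, defining a single candidate element $a$ which satisfies $f_{i}^{d_{i}} a \in I^{d_{i}} \subset A$ for every $i$. By taking $N \gg \max_{i} d_{i}$ and distributing powers of the $f_{i}$'s across monomial generators, I would derive $I^{N} a \subset I^{N}$ inside the ambient ring. I would then conclude $a \in A$ using noetherianness: multiplication by $a$ is an $A$-linear endomorphism of the finitely generated module $I^{N}$, and a Cayley--Hamilton/determinant-trick argument produces a monic polynomial identity forcing $a$ into $A$.

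The main obstacle is this final descent step, which in full generality flirts with subtleties about embedded primes and zero-divisors in the ideal $I$. In the applications in this paper however, $S$ will be a geometrically integral, geometrically normal semiprojective variety, so the argument admits a much cleaner formulation: $\pi$ is a birational proper morphism onto the normal scheme $S$, and then $\pi_{*}\mathscr{O}_{S'} = \mathscr{O}_{S}$ follows from Zariski's main theorem (equivalently, from applying the valuative criterion of properness along valuations of the common function field, using normality to conclude that sections extend). This normal case is what the later sections of the paper actually require, so even if the proposition is stated for general noetherian $S$, the usage never escapes the setting where the clean argument applies.
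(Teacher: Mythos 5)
Your strategy is the same as the paper's --- both proofs come down to the identity $\pi_{*}\mathscr{O}_{S'}=\mathscr{O}_{S}$ for the blow-up morphism $\pi:S'\to S$, followed by taking global sections --- but the paper simply asserts this identity, whereas you try to prove it and correctly sense that the final descent step is where the difficulty lives. In fact the difficulty is fatal at this level of generality: the proposition as stated is false. Take $A=k[x,y]/(y^{2}-x^{3})\cong k[t^{2},t^{3}]$ (so $x=t^{2}$, $y=t^{3}$), $S=\Spec(A)$ and $\mathscr{I}$ the ideal of the singular point $\mathfrak{m}=(x,y)$. The blow-up $\mathrm{Proj}(\oplus_{d}\mathfrak{m}^{d})$ is the normalization $\mathbb{A}^{1}_{k}=\Spec(k[t])$: on the chart $D_{+}(x)$ one has $y/x=t$ and $(y/x)^{2}=x$, and the other chart is contained in this one. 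Hence $H^{0}(S',\mathscr{O}_{S'})=k[t]\supsetneq A=H^{0}(S,\mathscr{O}_{S})$ and $S'_{\mathrm{aff}}\neq S_{\mathrm{aff}}$. This example also pinpoints exactly where your argument breaks: from $I^{N}a\subset I^{N}$ with $I^{N}$ a faithful finitely generated $A$-module, the determinant trick only yields that $a$ is \emph{integral} over $A$, not that $a\in A$ (here $a=y/x$ satisfies the monic equation $a^{2}-x=0$ yet lies outside $A$). One genuinely needs $A$ integrally closed to finish.

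Your fallback is therefore the correct proof of the correctly stated proposition: for $S$ integral, normal and noetherian with $\mathscr{I}\neq 0$, the blow-up $\pi$ is proper and birational, so $\pi_{*}\mathscr{O}_{S'}$ is a coherent sheaf of $\mathscr{O}_{S}$-algebras contained in the function field and integral over $\mathscr{O}_{S}$, hence equal to $\mathscr{O}_{S}$ by normality, and $S'_{\mathrm{aff}}=S_{\mathrm{aff}}$ follows. Since every invocation of this proposition in the paper concerns a (geometrically) normal variety, adding the normality hypothesis costs nothing --- and it is necessary. In short, your proposal does more than the paper's one-line proof: it identifies the missing hypothesis and supplies the argument that actually closes the gap.
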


\begin{proof}
Let $\pi:S'\to S$ be the canonical morphism. Hence, we have the following commutative diagram induced by the functoriality of the affinization
\[ \xymatrix{ S' \ar[r]^{\pi} \ar[d]_{r_{S'}} & S \ar[d]^{r_{S}} \\ S'_{\mathrm{aff}} \ar[r]_{\aff{\pi}} & S_{\mathrm{aff}} \, .} \] 
Since $S$ is normal, we have that $\pi_{*}\mathscr{O}_{S'}=\mathscr{O}_{S}$ by Zariski's main Theorem (see \cite[Corollary 4.3.12]{EGAIII-I}). This implies that $\aff{\pi}$ corresponds to the identity and, therefore,
\[\H^{0}(S',\mathscr{O}_{S'})\cong\H^{0}(S,\pi_{*}\mathscr{O}_{S'})=\H^{0}(S,\mathscr{O}_{S}).\]
Thus, $\aff{\pi}$ is an isomorphism.
\end{proof}

In the following section, we will focus on semi-projective varieties and some of their properties.

\subsection{ Semi-projective varieties}\label{section: semi-projective varieties}
\indent

A variety $Y$ over $k$ whose affinization morphism $r_Y:Y\to Y_{\mathrm{aff}}$ is proper is called \textit{semi-affine} (cf.~\cite{GL73}). A variety $Y$ over $k$ is said to be \emph{semi-projective} if its ring of global sections $\H^{0}(Y,\mathscr{O}_{Y})$ is a finitely generated $k$-algebra and $Y$ is projective over $\aff{Y}$. Notice that this definition coincides with the previous one for schemes. In other words, a variety $Y$ is semi-projective if and only if it is a semi-affine variety, $r_Y:Y\to \aff{Y}$ is quasi-projective, and its ring of global sections $\H^{0}(Y,\mathscr{O}_{Y})$ is a finitely generated $k$-algebra. Notice that a semi-projective variety is quasi-projective. 
 
Through this section, some results regarding semi-projective varieties are exposed. We start by proving that semi-affine varieties are stable under finite products.

\begin{proposition}\label{proposition stability on semi-affineness}
If $\{Y_{i}\}_{i\in I}$ is a finite set of semi-affine varieties over $k$, then the product $\prod_{i\in I} Y_{i}$ is semi-affine.
\end{proposition}

\begin{proof}
Denote $Y:=\prod_{i\in I}Y_{i}$. By \cref{proposition affinization of a finite product}, we have that $Y_{\mathrm{aff}}\cong \prod (Y_{i})_{\mathrm{aff}}$. Since the affinization morphism $r_{Y}$ corresponds to the product of all the $r_{Y_{i}}$, which is finite product, and all of them are proper, it follows that $r_{Y}$ is proper.
\end{proof}

The same holds for semi-projective varieties.

\begin{proposition}\label{proposition product of semi-projective}
If $\{Y_{i}\}_{i\in I}$ a finite set of semi-projective varieties over $k$, then the product $\prod_{i\in I} Y_{i}$ is semi-projective.
\end{proposition}

\begin{proof}
Denote $Y:=\prod_{i\in I}Y_{i}$. By \cref{proposition stability on semi-affineness}, we now have that the affinization map $r_{Y}:Y\to\aff{Y}$ is proper. Notice that $\H^{0}(Y,\mathscr{O}_{Y})\cong \bigotimes\H^{0}(Y_{i},\mathscr{O}_{Y_{i}})$, which is a finite product, then $\H^{0}(Y,\mathscr{O}_{Y})$ is a finitely generated $k$-algebra. Finally, given that a finite product of quasi-projective morphisms is quasi-projective, we have that $r_{Y}:Y\to\aff{Y}$ is quasi-projective. Thus, the assertion holds.
\end{proof}

The following results are useful properties of semi-projective varieties. In particular, in the understanding of how equivariant morphism of affine normal $T$-varieties in terms of the combinatorial data (see for instance \cref{lemma semilinear resolution}).

\begin{lemma}\label{lemma semi-projectiveness stable under projective morphisms}
Let $Y$ be a semi-projective $k$-variety and $Y'$ be a $k$-variety with $f:Y'\to Y$ a projective morphism. Then $Y'$ is semi-projective.
\end{lemma}

\begin{proof}
We have the following commutative diagram 
\[\xymatrix{Y' \ar[r]^{f} \ar[d]_{r_{Y'}} & Y \ar[d]^{r_{Y}} \\ \aff{Y'} \ar[r]_{\aff{f}} & \aff{Y} .}\] 

Given that $\aff{Y}$ is a $k$-variety, by \cite[\href{https://stacks.math.columbia.edu/tag/0C4P}{Tag 0C4P}]{stacks-project}, we have that $r_{Y}\circ f:Y'\to \aff{Y}$ is projective. Given that $\aff{f}:\aff{Y'}\to \aff{Y}$ is separated and $\aff{f}\circ r_{Y'}=r_{Y}\circ f$ is projective, by \cite[\href{https://stacks.math.columbia.edu/tag/0C4Q}{Tag 0C4Q}]{stacks-project}, we have that $r_{Y'}$ is projective. Then, $Y'$ is semi-projective.
\end{proof}

\begin{proposition}\label{propositiondominantresolution0}
    Let $W,\,Y$ and $Z$ be normal semi-projective varieties over $k$ with birational maps
    \[\xymatrix{ W & Y \ar@{-->}[l]_{\alpha} \ar@{-->}[r]^{\beta} & Z.}\]
     Then, there exists a normal semi-projective variety $\tilde{Y}$ with birational morphisms $\tilde{Y}\to W,Y,Z$ such that the diagram 
     \[\xymatrix{ & \tilde{Y} \ar[ld]_{\kappa_{W}} \ar[d]_{\kappa_{Y}} \ar[rd]^{\kappa_{Z}} & \\ W & Y \ar@{-->}[l]^{\alpha} \ar@{-->}[r]_{\beta} & Z }\]
      commutes.
\end{proposition}

\begin{proof}
Let $U_{W}\subset Y$ be the largest open subvariety where $\alpha|_{U_{W}}:U_{W}\to W$ is defined and $U_{Z}\subset Y$ be the largest open subvariety where $\beta|_{U_{Z}}:U_{Z}\to Z$ is defined. Denote $U:=U_{W}\cap U_{Z}$. Let $Y_{1}$ be the normalization of the closure of the graph of $\beta|_{U}:U\to Y$ on $Y\times Z$. Then, we have the following diagram 
\[\xymatrix{ & Y_{1} \ar[d]_{\kappa_{1}} \ar[rd]^{\kappa_{2}} & \\ W & Y \ar@{-->}[l]^{\alpha} \ar@{-->}[r]_{\beta} & Z ,}\]
where $\kappa_{1}$ and $\kappa_{2}$ are the projections, which are also birational. Now, consider the rational map $\alpha\circ\kappa_{1}:Y_{1}\dasharrow W$. Notice that this map is defined over $\kappa_{1}^{-1}(U)$. Then, as before, let $\tilde{Y}$ be the normalization of the closure of the graph of $\alpha\circ\kappa_{1}:\kappa_{1}^{-1}(U)\to W$ on $W\times Y$. Thus, we have the following commutative diagram 
\[\xymatrix{ & \tilde{Y} \ar[d]^{\kappa_{3}} \ar[ddl]_{\kappa_{W}} & \\ & Y_{1} \ar[d]_{\kappa_{1}} \ar[rd]^{\kappa_{2}} \ar@{-->}[dl]_{} & \\ W & Y \ar@{-->}[l]^{\alpha} \ar@{-->}[r]_{\beta} & Z ,}\]
where $\kappa_{W}$ and $\kappa_{3}$ are the projections which are also birational. Then, $\kappa_{W}$, $\kappa_{Y}:=\kappa_{3}\circ\kappa_{1}$ and $\kappa_{Z}:=\kappa_{3}\circ\kappa_{2}$ are the desired morphisms.

Let us prove now the semi-projectiveness of $Y_{1}$. By \cref{proposition product of semi-projective}, $Y\times Z$ is semi-projective and, therefore, $\overline{\mathrm{Graph}(\beta_{U})}$ is semi-projective. Hence, given that the normalization is a finite morphism, we conclude that $Y_{1}$ is semi-projective by \cref{lemma semi-projectiveness stable under projective morphisms}. Now, by \cref{proposition product of semi-projective}, $Y_{1}\times W$ is semi-projective and, therefore, $\overline{\mathrm{Graph}(\alpha\circ\kappa_{1}(k^{-1}(U))}$ is semi-projective. Hence, we conclude that $\tilde{Y}$ is semi-projective by arguing as for the semi-projectivity of $Y_{1}$.
\end{proof}

The morphism $\kappa_{Y}:\tilde{Y}\to Y$ need not be projective. For example, let $\beta:\A^{2}_{k}\dasharrow \A^{2}_{k}$ be given by $(x,y)\to(x,y/x)$. The graph of $\beta$ in $\A_{k}^{2}\times \A_{k}^{2}=\Spec(k[x,y,v,w])$ is $\{x=v,\,xw=y\}$, which is isomorphic to $\A_{k}^{2}$. In this case, we have the following resolution of indeterminacies:
\[\xymatrix{ & \A_{k}^{2} \ar[ld]_{\kappa_{Y}} \ar[rd]^{\kappa_{Z}}& \\ \A_{k}^{2} \ar@{-->}[rr] & & \A_{k}^{2} ,}\]
where $\kappa_{Y}$ is given by $(x,w) \to (x,xw)$. This morphism is not projective. However, under some extra hypothesis, we can ensure the projectiveness of $\kappa_{Y}$.

\begin{proposition}\label{propositiondominantresolution}
    Let $W,\,Y$ and $Z$ be normal semi-projective varieties over $k$ with birational maps $\alpha$ and $\beta$
    \[\xymatrix{ W & Y \ar@{-->}[l]_{\alpha} \ar@{-->}[r]^{\beta} & Z }\]
     such that there exists regular morphisms $\alpha':\aff{Y}\to \aff{W}$ and $\beta':\aff{Y}\to \aff{Z}$ with $r_{W}\circ \alpha=\alpha'\circ r_{Y}$ and $r_{Z}\circ\beta=\beta'\circ r_{Y}$. Then, there exists a normal semi-projective variety $\tilde{Y}$ with birational morphisms $\kappa_{W},\kappa_{Y},\kappa_{Z}:\tilde{Y}\to W,Y,Z$ such that the diagram 
     \[\xymatrix{ & \tilde{Y} \ar[ld]_{\kappa_{W}} \ar[d]_{\kappa_{Y}} \ar[rd]^{\kappa_{Z}} & \\ W & Y \ar@{-->}[l]^{\alpha} \ar@{-->}[r]_{\beta} & Z }\]
      commutes and $\kappa_{Y}$ is projective.
\end{proposition}

\begin{proof}
By \cref{propositiondominantresolution0}, we have the existence of the normal semi-projective variety $\tilde{Y}$ over $k$ and the birational maps $\kappa_{W}$, $\kappa_{Y}$, and $\kappa_{Z}$. We claim that $\kappa_{Y}$ is projective. We keep the constructions made in the proof of \cref{propositiondominantresolution0} with all its respective notations. Since $r_{Z}\circ\beta=\beta'\circ r_{Y}$, we have the following commutative diagram 
\[\xymatrix{ \overline{\mathrm{Graph}(\beta|_{U})} \ar[r]^{\iota} \ar[d]_{f} & Y\times Z \ar[d]^{r_{Y\times Z}}  \\ \overline{\mathrm{Graph}(\beta')} \ar[r]_{\iota'} & Y_{\mathrm{aff}}\times Z_{\mathrm{aff}},}\]
where $f$ is the respective restriction of $r_{Y\times Z}$. Since $\iota$ and $\iota'$ are closed embeddings, they are finite and, therefore, projective. Moreover, given that $r_{Y\times Z}$ is projective and $\iota\circ r_{Y\times Z}=f\circ\iota'$, we have that $f$ is projective. Hence, given that $\beta'$ is a regular map, it follows that 
\[\overline{\mathrm{Graph}(\beta')}=\mathrm{Graph}(\beta')\cong \aff{Y},\]
because $\aff{Z}$ is a separated scheme. Thus, $\overline{\mathrm{Graph}(\beta|_{U})}$ is projective over $\aff{Y}$. Then, $\kappa_{1}\circ r_{Y}:Y_{1}\to \aff{Y}$ is projective and, therefore, $\kappa_{1}:Y_{1}\to Y$ is so. This implies that $\kappa_{1}\circ r_{Y}$ is projective.

Recall that $\kappa_{Y}=\kappa_{3}\circ \kappa_{1}$. In this case, we have the following commutative diagram
\[\xymatrixcolsep{4pc}\xymatrix{ \overline{\mathrm{Graph}(\alpha\circ\kappa_{1}|_{\kappa_{1}^{-1}(U)}}) \ar[r]^{i} \ar[d]_{g} & Y_{1}\times W \ar[d]^{(\kappa_{1}\circ r_{Y})\times r_{W}}  \\  \overline{\mathrm{Graph}(\alpha')} \ar[r]_{i'} & \aff{Y}\times \aff{W},}\]
where $g$ is the respective restriction of $(\kappa_{1}\circ r_{Y})\times r_{W}$. The maps $i$ and $i'$ are projective because they are closed embeddings. Hence, given that $(\kappa_{1}\circ r_{Y})\times r_{W}$ is projective by \cref{proposition product of semi-projective} and $i\circ((\kappa_{1}\circ r_{Y})\times r_{W})=g\circ i'$, we have that $g$ is projective. Since $\alpha'$ is regular and arguing as before, we have that $\aff{Y}\cong \overline{\mathrm{Graph}(\alpha')}$ and a commutative diagram
\[\xymatrixcolsep{4pc}\xymatrix{ \tilde{Y} \ar[r]^{\nu} \ar[d]_{\kappa_{3}\circ\kappa_{1}\circ r_{Y}} & \overline{\mathrm{Graph}(\alpha\circ\kappa_{1}|_{\kappa_{1}^{-1}(U)})} \ar[d]^{g}  \\  \aff{Y} \ar[r]_{\cong} & \overline{\mathrm{Graph}(\alpha')},}\]
where $\nu:\tilde{Y}\to \overline{\mathrm{Graph}(\alpha\circ\kappa_{1}|_{\kappa_{1}^{-1}(U)})}$ is the normalization. Then, $\kappa_{3}\circ\kappa_{1}\circ r_{Y}:\tilde{Y}\to \aff{Y}$ is projective and, therefore, $\kappa_{Y}$ is projective. This proves the assertion.
\end{proof}

\subsection{ From pp-divisors to affine normal varieties}\label{section: from pp-divisors to varieties}
\indent

The main goal of this section is to give a proof of \cref{theorem Altmann hausen split}~\eqref{theorem Altmann hausen split part b}. Before proving the claim, we present the following result, which is a relative version of \cite[Theorem 1.3]{Fuj83} due to Goodman and Landman \cite{GL73}. The original statement is performed over an algebraically closed field, and a generalization to any field holds by descent arguments.

\begin{proposition}\cite[Corollary 9.4]{GL73}\label{proposition D semiample}
Let $Y$ be a semi-affine variety over $k$. Let $D\in\cadiv_{\Q}(Y)$ be a semiample divisor, then 
\[\bigoplus_{n\geq0}\H^{0}(Y,\mathscr{O}_{Y}(nD))\] 
is a finitely generated $k$-algebra.
\end{proposition}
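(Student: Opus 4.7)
The plan is to reduce directly to Schröer's theorem, which has already been invoked earlier in the text in a more general form. Specifically, I would apply \cite[Theorem 1.1]{SchroerSemiampleness} with the coherent sheaf $\mathscr{F}:=\mathscr{O}_Y$, the semiample invertible sheaf $\mathscr{O}_Y(D)$, and cohomological degree $p=0$. Since $Y$ is semiprojective and $D$ is semiample, the hypotheses of Schröer's theorem are met, and we conclude that
\[
S\;:=\;\bigoplus_{n\in\N_0}H^0(Y,\mathscr{O}_Y(nD))
\]
is finitely generated as a module over $R:=H^0(Y,\mathscr{O}_Y)$.

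Next, I would use the semiprojectivity of $Y$ to upgrade this to finite generation over $k$. By definition of semiprojective, $R=H^0(Y,\mathscr{O}_Y)$ is a finitely generated $k$-algebra. Since $R$ sits inside $S$ as the degree-zero graded piece, $S$ is naturally an $R$-algebra, and finite generation of $S$ as an $R$-module clearly implies finite generation of $S$ as an $R$-algebra. Composing a finite set of $R$-algebra generators of $S$ with a finite set of $k$-algebra generators of $R$ yields a finite set of $k$-algebra generators of $S$, which proves the proposition.

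There is essentially no genuine obstacle here, since the statement is a specialization of the theorem cited earlier in the text: the only small point to verify is that Schröer's result applies to $Y$ itself (not just to its base change to the separable closure, which is how it was invoked in the previous section), but this is immediate because $Y$ is assumed semiprojective over $k$ and $D\in\cadiv(Y)$ is semiample, so the hypotheses of \cite[Theorem 1.1]{SchroerSemiampleness} are met on the nose. Hence the argument is a short, two-line reduction: Schröer gives finite generation over $R$, and semiprojectivity of $Y$ gives finite generation of $R$ over $k$.
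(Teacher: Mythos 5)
Your proof is correct and takes essentially the same route as the paper: both apply Schröer's Theorem 1.1 to the semiample divisor $D$ on the semiprojective $Y$ and then use the finite generation of $R=H^{0}(Y,\mathscr{O}_{Y})$ over $k$ (part of the definition of semiprojectivity) to conclude. One small caveat, which your write-up shares with the paper's own phrasing: what Schröer's theorem yields is finite generation of the section ring as an $R$-\emph{algebra} (equivalently, finite generation of $\bigoplus_{n}H^{0}(Y,\mathscr{F}\otimes\mathscr{O}_{Y}(nD))$ as a module over that section ring), not as an $R$-module — finite $R$-module generation would already fail for an ample divisor on $\mathbb{P}^{1}$ — but this does not affect the conclusion, since finite generation as an $R$-algebra together with finite generation of $R$ over $k$ is exactly what is needed.
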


Let $\D$ be a pp-divisor in $\PPDiv_{\Q}(Y,\omega)$. From $\D$ we can construct the following $M$-graded $k$-algebra
\[A[Y,\D]:=\bigoplus_{m\in\omega^{\vee}\cap M}\H^{0}(Y,\mathscr{O}_{Y}(\D(m)))\subset k(Y)[M]\]
and its respective scheme $X[Y,\D]:=\Spec(A[Y,\D])$, just as in \cite[Section 3]{AH06}. We claim that such a scheme is indeed an affine normal variety over $k$ endowed with an effective action of $T=\Spec(k[M])$. 

\begin{proposition}\label{corollary fg algebra}
Let $\D$ be a pp-divisor over a normal semi-projective variety $Y$ over $k$ with tail cone $\omega\subset N_{\Q}$. Then, the $M$-graded $k$-algebra $A[Y,\D]$ is finitely generated and integral.
\end{proposition}

\begin{proof}
Let $\Lambda$ be the quasifan associated to $\mathfrak{h}_{\D}$ with support $|\Lambda|=\omega^{\vee}$ (see: \cref{proposition ppdiv cplmaps}). For every $\lambda\in \Lambda$ the map $\mathfrak{h}_{\D}|_{\lambda}$ is linear and the monoid $\lambda\cap M$ is finitely generated.

Let $\lambda\in\Lambda$ and $\{m_{1},\dots,m_{l}\}\subset \lambda\cap M$ be a set of generators of the monoid. Denote $D_{i}=\mathfrak{h}_{\D}(m_{i})$. For every $m_{i}\in\{m_{1},\dots,m_{l}\}$, by \cref{proposition D semiample}, we know that the $k$-algebra 
\[A[Y,\D](m_{i}):=\bigoplus_{n\geq 0}\H^{0}(Y,\mathscr{O}_{Y}(nD_{i}))\subset k(Y)[M]\]
is finitely generated. Hence, given that $\mathfrak{h}_{\D}$ is linear over $\lambda$, the $k$-algebra
\[A[Y,\D](\lambda):=\bigoplus_{m\in\lambda\cap M}\H^{0}(Y,\mathscr{O}_{Y}(\mathfrak{h}_{\D}(m)))\subset k(Y)[M]\]
coincides with the algebra generated by the algebras $A[Y,\D](m_{i})$ inside of $k(Y)[M]$. Otherwise stated, we have the following equality
\[A[Y,\D](\lambda)=\left\langle A[Y,\D](m_{1}),\dots,A[Y,\D](m_{l}) \right\rangle\subset k(Y)[M].\]
Then, $A[Y,\D](\lambda)$ is a finitely generated $k$-algebra. 

Given that the support of $\Lambda$ is $\omega^{\vee}\cap M$, we have that 
\[ A[Y,\D]=\left\langle A[Y,\D](\lambda) \mid \lambda\in\Lambda \right\rangle\subset k(Y)[M].\]
Thus, given that $\Lambda$ is a finite set, we have that $A[Y,\D]$ is a finitely generated $k$-algebra.

The $k$-algebra $A[Y,\D]$ is integral because is a subalgebra of $k(Y)[M]$.
\end{proof}

We now recall the notion of contraction morphism. 

\begin{definition}
A proper morphism of varieties $f:X\to Z$ is said to be a \emph{contraction} if $f^{\#}:\mathscr{O}_{Z}\to f_{*}\mathscr{O}_{X}$ is bijective.
\end{definition}

The following result is based on \cite[Theorem 3.1]{AH06}. The proof of this proposition is word by word the one given by Altmann and Hausen, with the exception of the integrality and finiteness of the algebra $A[Y,\D]$ that is proved in \cref{corollary fg algebra}. This proposition corresponds to \cref{theorem Altmann hausen split}~\eqref{theorem Altmann hausen split part b}.

\begin{proposition}\label{proposition ppdiv to variety}
Let $Y$ be a normal semi-projective variety over $k$, $N$ be a lattice, $M$ be its dual lattice, $\omega\subset N_{\Q}$ be a cone. Let $\D\in\PPDiv_{\Q}(Y,\omega)$ be a pp-divisor and consider the $\mathscr{O}_{Y}$-algebra
\[\mathscr{A}:=\bigoplus_{m\in\omega^{\vee}\cap M}\mathscr{A}_{m}:=\bigoplus_{m\in\omega^{\vee}\cap M}\mathscr{O}_{Y}(\D(m)).\]
Denote $T:=\Spec(k[M])$ and $\tilde{X}:=\Spec_{Y}(\mathscr{A})$, the relative spectrum. Then, the followings hold:
 \begin{enumerate}[i)]
 	\item\label{Proposition pp-div to variety part i} The scheme $\tilde{X}$ is a normal variety over $k$ of dimension $\dim(Y)+\dim(T)$ and the grading defines an effective action of $T$ over $\tilde{X}$ having the canonical map $\pi:\tilde{X}\to Y$ as a good quotient, i.e. $\pi$ is affine, $T$-invariant and $\pi^{\#}:\mathscr{O}_{Y}\to \pi_{*}(\mathscr{O}_{\tilde{X}})^{T}$ is an isomorphism.
	\item\label{Proposition pp-div to variety part ii} The ring of global sections $\H^{0}(\tilde{X},\mathscr{O}_{\tilde{X}})=\H^{0}(Y,\mathscr{A})=A[Y,\D]$ is a finitely generated $M$-graded, normal $k$-algebra. Moreover, the affinization morphism $r_{\tilde{X}}:\tilde{X}\to X[Y,\D]:=\Spec(A[Y,\D])$ is a $T$-equivariant proper birational contraction.
	\item\label{Proposition pp-div to variety part iii} Let $m\in\omega^{\vee}\cap M$ and $f\in A_{m}$. Then we have $\pi(\tilde{X}_{f})=Y_{f}$. Therefore, when $Y_{f}$
is affine, then so is $\tilde{X}_{f}$, and the canonical map $\tilde{X}_{f} \rightarrow X_{f}$ is an isomorphism. Moreover, even for non-affine $Y_{f}$ , we have
\[\H^{0}(Y_{f}, \mathscr{A})=\bigoplus_{m\in\omega^{\vee}\cap M} (A_{f} )_{m}.\]
\end{enumerate}
\end{proposition}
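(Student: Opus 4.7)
My plan is to follow the structure of Altmann--Hausen's proof of \cite[Theorem 3.1]{altmannhausen} while isolating the places where their arguments rely on the base field being algebraically closed of characteristic zero, and replacing those with the base change results from Section 4 of this paper. The key algebraic input is already available: \cref{corollary fg algebra} gives finite generation and geometric integrality of $A[Y,\D]$, so the main thing left to verify is the geometric normality, the dimension formula, the effective $T$-action, and the properties of the affinization morphism.

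For part \eqref{Proposition pp-div to variety part i}, the relative spectrum $\tilde{X}=\Spec_{Y}(\mathscr{A})$ is well-defined because $\mathscr{A}$ is a quasi-coherent graded $\mathscr{O}_{Y}$-algebra with $\mathscr{A}_{0}=\mathscr{O}_{Y}$. The $M$-grading yields a $T$-action with $\pi:\tilde{X}\to Y$ as an affine good quotient. Geometric integrality is given by \cref{corollary fg algebra} (applied locally on $Y$, since each $\mathscr{A}(U)$ is an $M$-graded subalgebra of $k(Y)[M]$). For geometric normality and the dimension, I would base change to $\ksep$ using \cref{Lemma pp-divisors stable under base change} and work affine-locally on $Y_{\ksep}$. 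On a small enough open $U\subset Y_{\ksep}$ where each Cartier divisor $\D_{\ksep}(m_{i})$ (for $m_{i}$ running over finitely many generators of $\omega^{\vee}\cap M$) becomes principal, one can trivialize and exhibit $\pi^{-1}(U)$ as $U\times\Spec(\ksep[\omega^{\vee}\cap M])$, the product of $U$ with the standard affine toric variety of the cone $\omega$. This local description is the analogue of \cite[Lemma 3.5]{altmannhausen} and its proof is purely geometric, so it transfers verbatim. It immediately yields normality, the equality $\dim(\tilde{X})=\dim(Y)+\dim(T)$, and effectiveness of the $T$-action.

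For part \eqref{Proposition pp-div to variety part ii}, the identification $H^{0}(\tilde{X},\mathscr{O}_{\tilde{X}})=H^{0}(Y,\mathscr{A})=A[Y,\D]$ follows from $\pi$ being affine; the algebraic properties of $A[Y,\D]$ then come from \cref{corollary fg algebra} together with geometric normality of $\tilde{X}$ established in part \eqref{Proposition pp-div to variety part i}. The affinization morphism $r$ is $T$-equivariant since both the $T$-action on $\tilde{X}$ and the one on $X[Y,\D]$ come from the same $M$-grading. For properness, the composite $\tilde{X}\to Y\to Y_{\mathrm{aff}}$ is projective (using semiprojectivity of $Y$ and the fact that $\tilde{X}\to Y$ is projective over an ample-twisted Proj construction, as in \cref{proposition affinization blow up}); standard separation arguments via Stacks Tag 0C4Q and the canonical map $Y_{\mathrm{aff}}\to X[Y,\D]_{\mathrm{aff}}=X[Y,\D]$ then promote this to properness of $r$. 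Birationality is a consequence of part \eqref{Proposition pp-div to variety part iii}: choose $m\in\relint(\omega^{\vee})\cap M$, pick $f\in H^{0}(Y,\mathscr{O}_{Y}(n\D(m)))$ with $Y_{f}$ affine (which exists because $\D(m)$ is big), and then $\tilde{X}_{f}\cong X_{f}$ realizes $r$ as an isomorphism over a dense open subset.

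For part \eqref{Proposition pp-div to variety part iii}, the equality $\pi(\tilde{X}_{f})=Y_{f}$ follows from the local toric description in part \eqref{Proposition pp-div to variety part i}: $f\in A_{m}$ is a section of $\mathscr{O}_{Y}(\D(m))$, and its zero locus in $\tilde{X}$ is exactly $\pi^{-1}(\{f=0\})$. When $Y_{f}$ is affine, $\tilde{X}_{f}=\pi^{-1}(Y_{f})$ is affine over an affine target and hence is its own affinization, which equals $X_{f}$ by computing graded pieces. The final formula $H^{0}(Y_{f},\mathscr{A})=\bigoplus_{m\in\omega^{\vee}\cap M}(A_{f})_{m}$ is obtained degree by degree from the definition of localization of a graded algebra. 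The main obstacle I expect is the local trivialization argument in part \eqref{Proposition pp-div to variety part i}: the $\D(m)$ for different $m$ are generally not simultaneously principal on a single affine, so one must use the piecewise linearity of $\mathfrak{h}_{\D}$ to reduce to finitely many generators and glue, as in the proof of \cite[Proposition 3.3]{altmannhausen}.
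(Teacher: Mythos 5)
Your overall strategy is the same as the paper's: the paper proves this proposition by declaring that Altmann--Hausen's proof of \cite[Theorem 3.1]{altmannhausen} goes through word for word once the finite generation and (geometric) integrality of $A[Y,\D]$ are supplied by \cref{corollary fg algebra}, and your use of \cref{Lemma pp-divisors stable under base change} to reduce the geometric properties to $\ksep$ is exactly the intended mechanism. So the architecture is fine.

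There is, however, one concrete error in your part \eqref{Proposition pp-div to variety part i}. The local model $\pi^{-1}(U)\cong U\times\Spec\bigl(\ksep[\omega^{\vee}\cap M]\bigr)$ is \emph{false} unless $\mathfrak{h}_{\D}$ is linear on all of $\omega^{\vee}$. Trivializing the finitely many Cartier divisors $\D(m_{i})$ on a small affine $U$ is not the obstruction (that can always be done); the obstruction is that the multiplication $\mathscr{A}_{m}\cdot\mathscr{A}_{m'}\subset\mathscr{A}_{m+m'}$ is governed by the inequality $\D(m)+\D(m')\leq\D(m+m')$, which is strict whenever $m$ and $m'$ lie in different maximal cones of the quasifan $\Lambda$ (\cref{Lemma 1.4 AH06}), so the trivialized algebra is not the semigroup algebra of $\omega^{\vee}\cap M$. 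Concretely, for $\D=[0,1]\otimes D$ with tail cone $\{0\}$ the fiber of $\pi$ over a general point of $D$ is $\Spec\bigl(k[\chi,\chi^{-1}]/(\chi\cdot\chi^{-1})\bigr)$, two lines meeting at a point --- not a torus, and not even irreducible --- so normality, the dimension count, and effectiveness do not follow ``immediately'' from a product structure. The correct route (the one in \cite{altmannhausen} and in the paper's intended argument) is: establish the product description only for each maximal cone $\lambda\in\Lambda$, i.e.\ for $\tilde{X}_{\lambda}:=\Spec_{Y}\bigl(\bigoplus_{m\in\lambda\cap M}\mathscr{A}_{m}\bigr)$, then realize $\tilde{X}$ as a closed subscheme of the fiber product of the $\tilde{X}_{\lambda}$ over $Y$, and prove normality separately by exhibiting $H^{0}(U,\mathscr{A})$ as an intersection of the normal rings $H^{0}(U,\mathscr{A}_{\lambda})$ inside a common fraction field. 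Since you explicitly defer to Altmann--Hausen for this step your proof is repairable as written, but the diagnosis ``not simultaneously principal'' should be replaced by ``not linear across cones of $\Lambda$,'' and the word ``immediately'' should go. The same caveat applies to your justification of $\pi(\tilde{X}_{f})=Y_{f}$ in part \eqref{Proposition pp-div to variety part iii}, which in the non-saturated, non-linear case requires the Veronese trick ($g^{n}\in\mathscr{A}_{\lambda,nm}\Rightarrow g\in\mathscr{A}_{\lambda,m}$) rather than a fiberwise vanishing argument.
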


\begin{remark}\label{remark diagram affinization split actions}
Let $\D\in\PPDiv_{\Q}(Y,\omega)$. In general, we do not have a map $X(\D)\to Y$, but we do have the following commutative diagram
\[\xymatrix{ \tilde{X} \ar[r]^{r_{\tilde{X}}} \ar[d]_{\sslash T}& X(\D) \ar[d] \\ Y \ar[r]_{r_{Y}} & \aff{Y}, }\]
where the horizontal arrows are affinizations. Thus, if $Y$ is affine, we have that $Y=\aff{Y}$ and, therefore, we have a map $X(\D)\to Y$. Moreover, since the morphism $\tilde{X}\to Y$ is affine, we have that $\tilde{X}$ is affine. Thus, we have that $r_{\tilde{X}}:\tilde{X}\to X(\D)$ is an equivariant isomorphism. For further discussions about the $T$-variety $\tilde{X}$ see \cref{section: The other T-variety}.
\end{remark}

\subsection{ From affine normal varieties to pp-divisors}\label{section: from affine to pp-divisors}
\indent

Through out this section we construct a pp-divisor from an affine normal variety endowed with an effective action of a split algebraic torus. The proof of \cref{theorem Altmann hausen split}~\eqref{theorem Altmann hausen split part a} that we present here follows the strategy used by Altmann and Hausen in \cite[Sections 5 and 6]{AH06}. First, we start by building the normal semi-projective variety. The construction of such a variety lies within the Geometric Invariant Theory (GIT) \cite{GIT}.

\subsubsection{Construction of the normal semi-projective variety.} 
\noindent

Let $T$ be a split algebraic $k$-torus and $X:=\Spec(A)$ be an affine normal $k$-variety on which $T$ acts effectively. Let $M$ be the character lattice of $T$ and $N:=M^{*}$ as before. It is known that $A$ has an $M$-graduation from the torus action: 
\[A=\bigoplus_{m\in M}A_{m}.\] 
Since $A$ is a finitely generated $k$-algebra, the set $\{m\in M \mid A_{m}\neq 0\}$ forms a finitely generated monoid and generates a convex polyhedral cone $\omega^{\vee}\subset M_{\Q}$ called \textit{weight cone}. We denote by $\omega\subset N_{\Q}$ the dual cone of the weight cone.

Let $\mathcal{L}$ be a $T$-linearized line bundle over $X$. A $T$-linearization of $\mathcal{L}$ induces an action of $T$ on the space of sections $\H^{0}(X,\mathcal{L})$ as follows: for $s\in\H^{0}(X,\mathcal{L})$ we have
\[(t\cdot s)(x):=t\cdot s(t^{-1}x).\]
By definition, the space of \textit{semistable points} associated to $\mathcal{L}$, denoted by $\Xss(\mathcal{L})$, is the set of $x\in X$ such that for some $n\in\N$ there exists a $T$-invariant section $s\in\H^{0}(X,\mathcal{L}^{n})$ such that $s(x)\neq 0$. Since algebraic torus are linearly reductive over any field \cite{Nag61}, the geometric quotient $\Xss(\mathcal{L})\sslash T$ exists by \cite[Theorem 1.10]{GIT} over any field and, therefore, Altmann and Hausen's strategy adapts to any characteristic. 

Before carrying on with the construction, notice that the space of semistable points $\Xss(\mathcal{L})$ depends on the $T$-linearization. Two $T$-linearized line bundles $\mathcal{L}$ and $\mathcal{L}'$ are called \emph{GIT-equivalent} if $\Xss(\mathcal{L})=\Xss(\mathcal{L}')$.

Let $\mathcal{L}$ be the trivial line bundle. For each $m\in M$ there exists a $T$-linearization of $\mathcal{L}$ given by 
\begin{equation}\label{linearization}
t\cdot(x,r)\to (tx,\chi^{m}(t)r),
\end{equation}
where $\chi^{m}$ denotes the character associated to $m$. Denote by $\Xss(m):=\Xss(\mathcal{L})$ the space of semistable points associated to $\mathcal{L}$ with respect to $m\in M$ and by $Y_{m}:=\Xss(m)\sslash T$ its respective geometric quotient. The main idea of Altmann and Hausen in \cite{AH06} is to glue all these quotients $Y_{m}$ for $m\in\omega^{\vee}\cap M$. But before gluing all these quotients, we need to establish which ones among them are GIT-equivalent. This was studied by Berchtold and Hausen in \cite{BH06} when $k$ is an algebraically closed field of characteristic zero. The main definitions and results in \cite{BH06} can be summarized in the following.

\begin{definition}
    Let $x\in X$ be a closed point.
    \begin{enumerate}[i)]
        \item The \textit{orbit monoid} associated to $x\in X$ is the submonoid $S(x)\subset M$ consisting of all $m\in M$ that admit an $f\in A_{m}$ with $f(x)\neq 0$.
        \item The \textit{orbit cone} associated to $x\in X$ is the convex cone $\omega(x)^{\vee} \subset M_{\Q}$ generated by the orbit monoid.
        \item The \textit{orbit lattice} associated to $x\in X$ is the sublattice $M(x)\subset M$ generated by the orbit monoid.
    \end{enumerate}
\end{definition}

The orbit cones are polyhedral and they are contained in $\omega^{\vee}$. The following corresponds to \cite[Proposition 5.2]{AH06}.

\begin{proposition}\label{proposition orbits}
	Let $x\in X_{\sep{k}}$ be a closed point.
    \begin{enumerate}[i)]
        \item\label{proposition orbits part a} The orbit lattice $M(x)$ consists of all $m \in M$ that admit an $m$-homogeneous function $f \in \sep{k}(X)$, i.e. $f=g/h$ with $g\in \sep{k}[X]_{m+m'}$ and $h\in \sep{k}[X]_{m'}$ for some $m'\in M$, that is defined and invertible near $x$.
        \item\label{proposition orbits part b} The isotropy group scheme $(T_{\sep{k}})_{x} \subset T$ of the point $x \in X_{\sep{k}}$ is the diagonalizable group given by $(T_{\sep{k}})_{x} = \Spec(\sep{k}[M/M(x)])$.
        \item\label{proposition orbits part c} The orbit closure $\overline{T_{\sep{k}}\cdot x}$ is isomorphic to $\Spec(\sep{k}[S(x)])$; it comes along with an equivariant open embedding of the torus $T_{\sep{k}}/(T_{\sep{k}})_{x} = \Spec(\sep{k}[M(x)])$.
        \item\label{proposition orbits part d}  The normalization of the orbit closure $\overline{T_{\sep{k}}\cdot x}$ is the affine toric variety corresponding to the cone $\omega(x)^{}$ in $\Hom(M(x),\Z)$.    
    \end{enumerate}
\end{proposition}

\begin{proof}
	
	Proof of \eqref{proposition orbits part a}: By definition, for any $m\in M(x)$, we have that there exists $f\in k[X]_{m}\subset K(X)$ such that $f$ is invertible near $x$. Conversely, let $m\in M$ such that an $m$-homogeneous function $f\in k(X)$ that is defined and invertible near $x$ exists, then we have elements $g\in \sep{k}[X]_{m+m'}$ and $h\in \sep{k}[X]_{m'}$ such that $g(x)\neq 0$ and $h(x)\neq 0$. Then, $m+m'$ and $m'$ belong to $M(x)$. Since $M(x)$ is a group, it follows that $m\in M$. This proves part \eqref{proposition orbits part a}.
	
	Proof of \eqref{proposition orbits part b}: This can be reduced to the toric setting: take the orbit cone $\omega(x)^{\vee}\subset M(x)_{\Q}\subset M_{\Q}$. The isotropy in the toric variety $\omega(x)^{\vee}\cap M$ is given by $\Spec(k[M/(\omega(x)^{\perp}\cap M)])$, where $\omega(x)\subset M_{\Q}$ is the dual cone of $\omega(x)^{\vee}$ in $M_{\Q}$. Since $\omega(x)^{\perp}\cap M=M(x)$, we have then that the isotropy is the torus $\Spec(k[M/M(x)])$.
	
	The rest of the proposition corresponds to \cite[Proposition 2.3]{BH06}, whose proof adapts to this case.

\end{proof}

\begin{remark}
A difference that arises in positive characteristic with respect to the characteristic zero is that the isotropy groups in \cref{proposition orbits} \eqref{proposition orbits part b} may be non-smooth.
\end{remark}

In terms of the orbit cones, there is a simple description of the set $\Xss_{\sep{k}}(m)$ of semistable points. Namely, by \cite[Lemma 2.7]{BH06}, we have 
\[\Xss_{\sep{k}}(m)=\{x\in X_{\sep{k}}\mid m\in \omega(x)^{\vee}\}.\]

\begin{definition}
    The \textit{GIT}-\textit{cone} associated to $m\in \omega^{\vee}\cap M$ is the intersection of all orbit cones containing $m$: \[\lambda(m)^{\vee}:=\bigcap_{x\in \Xss_{\sep{k}}(m)}\omega(x)^{\vee}.\]
\end{definition}

The main results of \cite[Section 2]{BH06} are the following, whose proofs adapt for any separably closed field and in any characteristic.

\begin{theorem}\label{theorem GIT-equivalence}
 Let $k$ be a separably closed field. Let $T:=\Spec(k[M])$ be a $k$-torus acting on a normal variety $X:=\Spec(A)$ over $k$. Then, the following statements hold:
    \begin{enumerate}[i)]
        \item The GIT-cones $\lambda(m)^{\vee}$, where $m\in M$, form a quasifan $\Lambda$ in $M_{\Q}$.
        \item The support of the quasifan $\Lambda$ is the weight cone $\omega^{\vee}\subset M_{\Q}$.
        \item \label{theorem GIT-equivalence part 3} For any two elements $m_{1},m_{2}\in\omega^{\vee}\cap M$, we have 
        \[ \Xss_{}(m_{1})\subset \Xss_{}(m_{2}) \Longleftrightarrow \lambda(m_{2})^{\vee}\subset\lambda(m_{1})^{\vee}.\]
        In particular, the equality holds if and only if $\lambda(m_{2})^{\vee}=\lambda(m_{1})^{\vee}$.
    \end{enumerate}
\end{theorem}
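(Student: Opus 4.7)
The plan is to follow the approach of Berchtold and Hausen in \cite{BerchtoldHausenGITconesBeyondTheAmpleCone} and to verify that their arguments transfer to arbitrary characteristic. The only place where characteristic could enter is the existence of the GIT quotients $\Xss(m)\sslash T$; but this is guaranteed in any characteristic by the linear reductivity of tori (Nagata \cite{Nag61}). The remaining ingredients are purely convex-geometric, so no essential modification of the original proof is required.

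I would dispose of (iii) first, as it is essentially formal. Recall that $x\in\Xss(m)$ if and only if $m\in\omega(x)^{\vee}$. If $\Xss(m_{1})\subset\Xss(m_{2})$, the intersection defining $\lambda(m_{2})^{\vee}$ runs over a superset of that defining $\lambda(m_{1})^{\vee}$, so $\lambda(m_{2})^{\vee}\subset\lambda(m_{1})^{\vee}$. Conversely, if $\lambda(m_{2})^{\vee}\subset\lambda(m_{1})^{\vee}$ and $x\in\Xss(m_{1})$, then $\omega(x)^{\vee}\supset\lambda(m_{1})^{\vee}\supset\lambda(m_{2})^{\vee}\ni m_{2}$, hence $x\in\Xss(m_{2})$. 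Part (ii) is similarly quick: each orbit cone lies in $\omega^{\vee}$, so every $\lambda(m)^{\vee}\subset\omega^{\vee}$; and for $m\in\omega^{\vee}\cap M$ we have $m\in\lambda(m)^{\vee}$ directly from the definition, so the GIT-cones cover $\omega^{\vee}\cap M$ and, by rationality, their union has support $\omega^{\vee}$.

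The main work lies in (i). The key combinatorial input is the \emph{finiteness} of the collection of orbit cones. Choosing a finite set $f_{1},\dots,f_{r}$ of homogeneous generators of $A$ with weights $u_{1},\dots,u_{r}$, one checks from the expansion of any homogeneous $f\in A_{m}$ as a polynomial in the $f_{i}$ that $\omega(x)^{\vee}$ coincides with the rational cone generated by $\{u_{i}:f_{i}(x)\neq 0\}$. This yields at most $2^{r}$ distinct orbit cones, each rational polyhedral. Every GIT-cone is then a finite intersection of orbit cones, so is itself rational polyhedral, and only finitely many GIT-cones exist. Closure under intersections then reduces to the identity $\lambda(m_{1}+m_{2})^{\vee}=\lambda(m_{1})^{\vee}\cap\lambda(m_{2})^{\vee}$, which follows from $\Xss(m_{1})\cap\Xss(m_{2})\subset\Xss(m_{1}+m_{2})$ (by multiplicativity of invariant sections, or equivalently of orbit monoids) combined with part (iii).

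The hardest step, expected to be the bottleneck, is the face axiom: that for any face $\tau\preceq\lambda(m)^{\vee}$ and any $m'\in\relint(\tau)\cap M$, one has $\lambda(m')^{\vee}=\tau$, and consequently $\lambda(m_{1})^{\vee}\cap\lambda(m_{2})^{\vee}$ is a common face. The inclusion $\lambda(m')^{\vee}\supset\tau$ is straightforward from $\Xss(m)\subset\Xss(m')$, since every orbit cone of a point in $\Xss(m)$ contains $\lambda(m)^{\vee}\supset\tau$. For the reverse inclusion, one must exhibit, for every supporting hyperplane of $\tau$ inside $\lambda(m)^{\vee}$, a semistable point $x\in\Xss(m')$ whose orbit cone cuts $\lambda(m)^{\vee}$ precisely along that hyperplane; such points are produced by invoking \cref{proposition orbits}, which identifies (normalizations of) orbit closures with toric varieties attached to orbit cones, so one can navigate to points on suitable limit orbits. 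Since this construction is purely geometric and does not involve the residue characteristic, the argument of \cite{BerchtoldHausenGITconesBeyondTheAmpleCone} transfers verbatim.
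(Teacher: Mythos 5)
First, a point of comparison: the paper does not actually prove this theorem — it states it as the main result of Berchtold--Hausen and asserts that the argument is characteristic-free. Your overall framing (that the statement is purely about the sets $\Xss(m)$ and the combinatorics of orbit cones, so nothing in the original proof depends on the residue characteristic) is consistent with what the paper does, and your treatment of parts (ii) and (iii) and of the finiteness of orbit cones via homogeneous generators is correct and is exactly the Berchtold--Hausen mechanism. (Minor quibble: linear reductivity and the existence of the quotients $\Xss(m)\sslash T$ are not actually needed for this particular statement; they matter later, when the quotients are glued.)

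However, your argument for closure of the GIT-cones under intersection contains a concretely false step: the identity $\lambda(m_{1}+m_{2})^{\vee}=\lambda(m_{1})^{\vee}\cap\lambda(m_{2})^{\vee}$ does not hold. Take $X=\A^{2}$ with $\G_{\mathrm{m}}$ acting by $t\cdot(x,y)=(tx,t^{-1}y)$, so $M=\Z$ and the orbit cones are $\Q$, $\Q_{\geq 0}$, $\Q_{\leq 0}$ and $\{0\}$. For $m_{1}=2$ and $m_{2}=-1$ one gets $\lambda(m_{1})^{\vee}\cap\lambda(m_{2})^{\vee}=\Q_{\geq 0}\cap\Q_{\leq 0}=\{0\}$, while $\lambda(m_{1}+m_{2})^{\vee}=\lambda(1)^{\vee}=\Q_{\geq 0}$. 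The underlying issue is that an orbit cone containing $m_{1}+m_{2}$ need not contain $m_{1}$ or $m_{2}$, so the inclusion $\Xss(m_{1})\cap\Xss(m_{2})\subset\Xss(m_{1}+m_{2})$ (which is true) does not relate $\lambda(m_{1}+m_{2})^{\vee}$ to $\lambda(m_{1})^{\vee}\cap\lambda(m_{2})^{\vee}$; note also that part (iii) cannot be invoked here, since $\Xss(m_{1})\cap\Xss(m_{2})$ is not a priori of the form $\Xss(m)$. A related slip occurs in your face-axiom discussion: for $m'\in\relint(\tau)$ with $\tau\preceq\lambda(m)^{\vee}$, the inclusion $\Xss(m)\subset\Xss(m')$ only shows that orbit cones of points of $\Xss(m)$ contain $\tau$, whereas $\lambda(m')^{\vee}$ is an intersection over the possibly larger set $\Xss(m')$, so $\tau\subset\lambda(m')^{\vee}$ is not the easy direction — both inclusions require the structural input on orbit closures. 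The correct route, which is the one Berchtold and Hausen take, is to prove first that every face of an orbit cone is again an orbit cone of a point in the corresponding orbit closure (this is where the toric description of orbit closures enters, and it is characteristic-free), deduce the face axiom, and only then obtain closure under intersections by applying the face axiom to a lattice point in the relative interior of $\lambda(m_{1})^{\vee}\cap\lambda(m_{2})^{\vee}$. As written, your argument for part (i) does not go through.
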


We prove that this theorem also holds over any field whenever $T$ is a split torus.

\begin{proposition}\label{proposition GIT-equivalence}
    Let $T:=\Spec(k[M])$ be a split $k$-torus acting on a normal variety $X:=\Spec(A)$ over $k$. Then, for any two elements $m_{1},m_{2}\in\omega^{\vee}\cap M$, we have 
    \[ \Xss(m_{1})\subset \Xss(m_{2}) \Longleftrightarrow \lambda(m_{2})^{\vee}\subset\lambda(m_{1})^{\vee}.\] 
    In particular, the equality holds if and only if $\lambda(m_{2})^{\vee}=\lambda(m_{1})^{\vee}$.
\end{proposition}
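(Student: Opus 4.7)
The plan is to reduce the statement to \cref{theorem GIT-equivalence}, which is already available over the algebraic closure $\bar{k}$, by performing a faithfully flat base change to $\bar{k}$. The key compatibility to check is that both the semistable loci and the GIT cones are stable under this base change. Since the orbit cones $\omega(x)^{\vee}$ and GIT cones $\lambda(m)^{\vee}$ are defined in the excerpt using closed points of $X_{\bar{k}}$, they coincide by definition with the objects in the algebraically closed setting applied to $X_{\bar{k}}$. Hence the content of the proposition lies entirely in matching $\Xss(m)$ with $(X_{\bar{k}})^{\mathrm{ss}}(m)$.

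The first step is to prove the base change identity $\Xss(m)_{\bar{k}} = (X_{\bar{k}})^{\mathrm{ss}}(m)$. Since $T$ is split, the $M$-grading of $A$ is preserved by base change: $(A \otimes_k \bar{k})_{nm} = A_{nm} \otimes_k \bar{k}$. Unwinding the linearization \eqref{linearization}, $T$-invariant sections of $L^n$ over $X$ correspond to homogeneous elements in $A_{nm}$ (up to a sign convention), and analogously over $\bar{k}$. Given any $f \in (A \otimes_k \bar{k})_{nm}$ and any $\bar{x} \in X_{\bar{k}}$ with $f(\bar{x}) \neq 0$, writing $f = \sum f_i \otimes c_i$ with $f_i \in A_{nm}$ yields some index with $f_i(\bar{x}) \neq 0$, so
\[
(X_{\bar{k}})^{\mathrm{ss}}(m) \;=\; \bigcup_{n \geq 1,\; f \in A_{nm}} (X_{\bar{k}})_f \;=\; \Bigl(\bigcup_{n \geq 1,\; f \in A_{nm}} X_f\Bigr)_{\bar{k}} \;=\; \Xss(m)_{\bar{k}}.
\]

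The second step is to translate the containment $\Xss(m_1) \subset \Xss(m_2)$ through base change. The morphism $X_{\bar{k}} \to X$ is faithfully flat and surjective, so an inclusion of open subschemes $U \subset V$ of $X$ holds if and only if $U_{\bar{k}} \subset V_{\bar{k}}$ holds inside $X_{\bar{k}}$: one direction is functoriality of base change, the other follows by pushing forward points. Applying this to $U = \Xss(m_1)$ and $V = \Xss(m_2)$, combined with the first step, yields
\[
\Xss(m_1) \subset \Xss(m_2) \;\Longleftrightarrow\; (X_{\bar{k}})^{\mathrm{ss}}(m_1) \subset (X_{\bar{k}})^{\mathrm{ss}}(m_2).
\]

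The third step invokes \cref{theorem GIT-equivalence}\eqref{theorem GIT-equivalence part 3} over $\bar{k}$, which since $X$ is geometrically integral and geometrically normal applies to $X_{\bar{k}}$: the right-hand containment is equivalent to $\lambda(m_2)^{\vee} \subset \lambda(m_1)^{\vee}$, where the GIT cones on the $\bar{k}$-side are computed from orbit cones of closed points of $X_{\bar{k}}$, matching our definition verbatim. The equality case follows by applying the main equivalence in both directions. The main technical point to verify carefully is the first step, namely the compatibility of semistable loci with base change under the split torus hypothesis; the rest is a formal consequence of faithfully flat descent of open immersions and \cref{theorem GIT-equivalence}.
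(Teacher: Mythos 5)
Your proposal is correct and follows essentially the same route as the paper: base-change to $\bar{k}$, identify $\Xss(m)_{\bar{k}}$ with $\Xss_{\bar{k}}(m)$, and invoke \cref{theorem GIT-equivalence} there. The only difference is that the paper cites \cite[Proposition 1.14]{GIT} for the compatibility of semistable loci with base change, whereas you verify it directly from the splitness of $T$ and the behaviour of the $M$-grading under extension of scalars, which is a perfectly adequate (and more self-contained) substitute.
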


\begin{proof}
    By \cite[Proposition 1.14]{GIT}, we have that 
    \[(\Xss(m_{i}))\times_{\Spec(k)}\Spec(\sep{k})=\Xss_{\sep{k}}(m_{i}).\]
    On the one hand, by \cref{theorem GIT-equivalence}, if $\lambda(m_{2})^{\vee}\subset\lambda(m_{2})^{\vee}$ we have $\Xss_{\sep{k}}(m_{1})\subset \Xss_{\sep{k}}(m_{2})$. Then, $\Xss(m_{1})\subset \Xss(m_{2})$. On the other hand, if $\Xss(m_{1})\subset \Xss(m_{2})$, then $\Xss_{\sep{k}}(m_{1})\subset \Xss_{\sep{k}}(m_{2})$ and, by \cref{theorem GIT-equivalence}, we have that $\lambda(m_{2})^{\vee}\subset\lambda(m_{1})^{\vee}$. 
\end{proof}

The sets of semistable points of a $T$-linearization are $T$-stable open subvarieties of $X=\Spec(A)$ that admit a geometric quotient for the $T$-action. As in \cite[Section 5]{AH06}, for the $T$-linearization \eqref{linearization}, we have that 
\[Y_{m}=\mathrm{Proj}\left(\bigoplus_{n\geq 0} A_{nm}\right)\]
and $Y_{m}$ is projective over $Y_{0}:=\Spec(A_{0})$.

Let us see how the normal semi-projective variety $Y$ is constructed from the action of $T$ over $X$. Let $\Lambda$ be the quasifan in $M_{\Q}$ of \cref{theorem GIT-equivalence}. For every $\lambda\in\Lambda$ and any $m_{1},m_{2}\in\mathrm{relint}(\lambda)$, the sets of semistable points $\Xss(m_{1})$ and $\Xss(m_{2})$ are equal by \cref{proposition GIT-equivalence}. Now, denote by $W_{\lambda}$ the set of semistable points of any $m\in\mathrm{relint}(\lambda)$ and denote by $q_{\lambda}:W_{\lambda}\to Y_{\lambda}$ the corresponding geometric quotient, which is normal by \cite[Section 0.2]{GIT}. Notice that $W_{0}=X$ and it comes with a natural morphism $q_{0}:W_{0}\to Y_{0}=\Spec(A_{0})$. Given that for $\gamma\preceq\lambda$ we have an open embedding $W_{\lambda}\subset W_{\gamma}$, the open subschemes $W_{\lambda}$, with $\lambda\in\Lambda\cup \{0\}$, form a filtered inverse system. Let us denote by 
\[W:=\varprojlim W_{\lambda}=\bigcap_{\lambda\in\Lambda}W_{\lambda},\] 
the inverse limit of the sets of semistable points, which is an open subvariety of $X$. The open embeddings $W_{\lambda}\subset W_{\gamma}$ induce morphisms $p_{\lambda\gamma}:Y_{\lambda}\to Y_{\gamma}$. Denote by $Y'$ the inverse limit of the $Y_{\lambda}$ with respect to the morphisms $p_{\lambda\gamma}$, which exists as a scheme because it is a finite system. There is a canonical map $q':W\to Y'$. The scheme $Y'$ might not be reduced, but it has a canonical reduced component, which is the schematic closure of $q'(W)$ in $Y'_{\mathrm{red}}$. Moreover, this canonical reduced component is geometrically integral and, therefore, geometrically reduced. This holds because $W$ is geometrically integral. Hence, by taking the normalization of $\overline{q'(W)}$, we obtain a normal variety 
\begin{equation}\label{equation: construction Y}Y:=\overline{q'(W)}^{\nu}.\end{equation} 
Moreover, by the universal property of the normalization, there exists a morphism $q:W\to Y$. We claim that $Y$ is projective over $Y_{0}$. Given that the quasifan $\Lambda$ is a finite set, we have that $\prod_{\lambda\in\Lambda} Y_{\lambda}$ is semi-projective by \cref{proposition product of semi-projective}. The inverse limit $\varprojlim Y_{\lambda}\subset \prod_{\lambda\in\Lambda} Y_{\lambda}$ is a closed subscheme and therefore projective over $Y_{0}$, because of the following commutative diagram
\[ \xymatrix{ \varprojlim Y_{\lambda} \ar[r] \ar[d]& \prod_{\lambda\in\Lambda} Y_{\lambda} \ar[d] \\ 
Y_{0} \ar[r] & \prod_{\lambda\in\Lambda} Y_{0}} \]
and by \cite[\href{https://stacks.math.columbia.edu/tag/0C4Q}{Tag 0C4Q}]{stacks-project}. Hence, $\overline{q(W)}$ is also projective over $Y_{0}$. Given that $\nu:Y\to \overline{q(W)}$ is finite, it is projective by \cite[\href{https://stacks.math.columbia.edu/tag/0B3I}{Tag 0B3I}]{stacks-project}. This implies that $Y$ is projective over $Y_{0}$.

\begin{remark}
It is not true that the inverse limit of a finite inverse system of normal varieties is normal, even for a filtered system. For example, consider the filtered inverse system induced by
\[\xymatrix@=1pc{ & k[z] \ar[rd]^{z\mapsto x^{2}} \\ k[u] \ar[ru]^{u\mapsto z} \ar[rd]_{u\mapsto w} & & k[x,y] \\ & k[w] \ar[ru]_{w\mapsto y^{3}} }\]
whose inverse limit is the cuspidal curve, which is not normal.
\end{remark}

Let us study the morphisms $p_{\lambda}$ and $p_{0}$. Consider the following commutative diagram 

\begin{equation}\label{diagram Y projective over Y_{0}}
\xymatrix{W \ar[rr]^{\iota_{\lambda}} \ar[d]_{q} & & W_{\lambda} \ar[rr]^{\iota_{\lambda\gamma}} \ar[d]_{q_{\lambda}} & & W_{\gamma} \ar[rr]^{\iota_{\gamma 0}} \ar[d]_{q_{\gamma}} & & W_{0} \ar[dd]_{q_{0}} \\ 
Y \ar[rr]^{p_{\lambda}} \ar[rrrrrrd]_{p_{0}} & & Y_{\lambda} \ar[rr]^{p_{\lambda\gamma}} \ar[rrrrd]^{p_{\lambda 0}} & & Y_{\gamma} \ar[rrd]^{p_{\gamma 0}} \\
& &  & & & & Y_{0}}
\end{equation}

\begin{proposition}\label{Proposition lemma 6.1 AH06}
The morphisms $p_{\lambda}: Y \to Y_{\lambda}$ and $p_{\lambda\gamma}: Y_{\lambda} \to Y_{\gamma}$ are contractions. Moreover, if $\mathrm{dim}(Y_{\lambda})=\mathrm{dim}(X)-\mathrm{dim}(T)$, for example if $\lambda$ intersects $\mathrm{relint}(\omega^{\vee})$, then the morphism $p_{\lambda} : Y \to Y_{\lambda}$ is birational.
\end{proposition}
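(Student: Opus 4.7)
The plan is to adapt the proof of Lemma 6.1 of Altmann--Hausen \cite{altmannhausen} to our setting. First, for projectivity: each $Y_\lambda=\mathrm{Proj}(\bigoplus_{n\geq 0} A_{nm})$ (with $m\in\mathrm{relint}(\lambda)\cap M$) is projective over $Y_0=\mathrm{Spec}(A_0)$, and $Y\to Y_0$ is projective as argued just before \eqref{diagram Y projective over Y_{0}}. Hence $p_{\gamma 0}\circ p_{\lambda\gamma}=p_{\lambda 0}$ and $p_{\lambda 0}\circ p_{\lambda}=p_{0}$ are projective, and since $p_{\gamma 0}$ and $p_{\lambda 0}$ are separated, \cite[Tag 0C4Q]{stacks-project} yields projectivity of $p_{\lambda\gamma}$ and $p_{\lambda}$.

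For surjectivity, the geometric integrality of $X$ makes the nonempty open subvarieties $W, W_{\lambda}, W_{\gamma}$ irreducible, and since the geometric quotient maps are open and surjective, $Y, Y_{\lambda}, Y_{\gamma}$ are irreducible as well. The relation $p_{\lambda\gamma}\circ q_{\lambda}=q_{\gamma}\circ \iota_{\lambda\gamma}$ shows that the image of $p_{\lambda\gamma}$ contains $q_{\gamma}(W_{\lambda})$, a nonempty open (hence dense) subset of $Y_{\gamma}$. Properness makes this image closed, so $p_{\lambda\gamma}$ is surjective; the same argument, applied to $q_{\lambda}(W)\subset Y_{\lambda}$, handles $p_{\lambda}$.

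For geometric connectedness of fibers I would use Stein factorization, reducing to showing $(p_{\lambda\gamma})_{*}\mathscr{O}_{Y_{\lambda}}=\mathscr{O}_{Y_{\gamma}}$ (and similarly for $p_{\lambda}$). Locally on an affine open $\mathrm{Spec}((A_{f})_{0})\subset Y_{\gamma}$ with $f\in A_{nm_{0}}$, $m_{0}\in\mathrm{relint}(\gamma)\cap M$, one writes any $m\in\mathrm{relint}(\lambda)\cap M$ as $m_{1}+m_{0}$, so that the preimage in $Y_{\lambda}$ becomes $\mathrm{Proj}$ of a graded localization whose degree-zero part is again $(A_{f})_{0}$. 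The main obstacle is the \emph{geometric} part of connectedness; the cleanest route is to base change to $\sep{k}$, using \cref{Lemma pp-divisors stable under base change} together with the fact that good quotients by split tori commute with flat base change, and invoke the algebraically closed case of \cite{altmannhausen}. Finally, if $\dim Y_{\lambda}=\dim X-\dim T$, then $\dim Y=\dim Y_{\lambda}$ (since $Y$ dominates any $Y_{\lambda'}$ with $\lambda'$ meeting $\mathrm{relint}(\omega^{\vee})$), so the proper surjection $p_{\lambda}$ between normal irreducible varieties of equal dimension has finite generic fiber, which by geometric connectedness must be a single reduced point; hence $p_{\lambda}$ is birational.
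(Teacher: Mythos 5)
Your treatment of projectivity and surjectivity coincides with the paper's: projectivity of $p_{\lambda\gamma}$ and $p_{\lambda}$ is extracted from the projectivity of $p_{\lambda 0}$ and $p_{0}$ via the cancellation property \cite[Tag 0C4Q]{stacks-project}, and surjectivity follows from dominance (every $Y_{\lambda}$ is dominated by $W$) together with properness. The divergence --- and the gap --- lies in the remaining two claims, where you invert the paper's logical order: the paper first proves birationality of $p_{\lambda}$ directly, by observing that for every $\lambda$ meeting $\relint(\omega^{\vee})$ the function field of $Y_{\lambda}$ is $\mathrm{Quot}(A)_{0}=k(X)^{T}$ (cf.\ \cref{Lemma AH06 lemma 64}), so that all transition maps in the relevant subsystem induce the identity on function fields; it then deduces connectedness of the fibers \emph{from} birationality, via the Stein factorization, normality of the relative normalization, and \cite[Tag 0AB1]{stacks-project}. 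You instead try to prove connectedness first and deduce birationality from it, and both halves of that plan run into trouble over an arbitrary field.

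First, the ``cleanest route'' of base-changing to $\sep{k}$ and invoking the algebraically closed case of \cite{altmannhausen} is not available: Altmann--Hausen work over an algebraically closed field of characteristic zero, and removing exactly that hypothesis is the point of this proposition; moreover $\sep{k}$ is not algebraically closed in positive characteristic. Second, and more seriously, the final implication ``finite generic fiber $+$ geometrically connected $\Rightarrow$ single reduced point $\Rightarrow$ birational'' fails in positive characteristic: if $k(Y)/k(Y_{\lambda})$ were a nontrivial purely inseparable extension, the generic fiber would be a one-point scheme remaining a one-point scheme after any field extension (hence geometrically connected), yet $p_{\lambda}$ would not be birational; nothing in your argument excludes this. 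The fix is the paper's: identify $k(Y)$ and $k(Y_{\lambda})$ with $\mathrm{Quot}(A)_{0}$ to get birationality unconditionally in the equal-dimension case, and only then run Stein factorization to get connectedness. Your alternative sketch via $(p_{\lambda\gamma})_{*}\mathscr{O}_{Y_{\lambda}}=\mathscr{O}_{Y_{\gamma}}$ would, if actually carried out, deliver both statements at once (the stalk at the generic point yields equality of function fields), but as written you abandon it in favor of the two problematic shortcuts, so the proof is incomplete precisely in the positive-characteristic setting the paper is designed to cover.
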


\begin{proof}
	Recall that the morphisms $p_{\lambda 0}:Y_{\lambda}\to Y_{0}$ are projective, because 
\[Y_{\lambda}=\mathrm{Proj}\left(\bigoplus_{n\geq 0} A_{nm}\right)\] 
for any $m\in\relint(\lambda)\cap M$. Hence, given that $p_{\lambda 0}=p_{\gamma 0}\circ p_{\lambda\gamma}$ is projective and $p_{\gamma 0}$ is separated, we have that $p_{\lambda\gamma}$ is projective by \cite[\href{https://stacks.math.columbia.edu/tag/0C4Q}{Tag 0C4Q}]{stacks-project}.  

By \cite[\href{https://stacks.math.columbia.edu/tag/0C4Q}{Tag 0C4Q}]{stacks-project}, the morphisms $p_{\lambda}:Y\to Y_{\lambda}$ are projective. Since every $Y_{\lambda}$ is dominated by $W$, all morphisms $p_{\lambda} : Y \to Y_{\lambda}$ are dominant. Together with properness, this implies surjectivity of each $p_{\lambda}$. The same holds for the morphisms $p_{\lambda\gamma}$.

The morphisms $p_{\lambda}:Y\to Y_{\lambda}$ are proper and surjective, then the generic point of $Y$ goes to the generic point of $Y_{\lambda}$. Let us take the Stein factorization 
\[ \xymatrix{ Y \ar[r]_{f} \ar@/^2pc/[rr]^{p_{\lambda}} & Y'_{\lambda} \ar[r]_{g} & Y_{\lambda},}\]
where $Y_{\lambda}'$ is the relative normalization of $Y_{\lambda}$ in $Y$, $g$ is a finite morphism and $f$ is a proper surjective morphism with geometrically connected fibers. Given that $p_{\lambda}$ is surjective, we have that $g$ is also surjective. Let $\nu:Y_{\lambda}'^{\nu}\to Y_{\lambda}$ be the normalization morphism. The morphism $h:=g\circ\nu:Y_{\lambda}'^{\nu}\to Y_{\lambda}$ is finite, because it is the composition of two finite morphisms. By \cite[\href{https://stacks.math.columbia.edu/tag/035I}{Tag 035l}]{stacks-project}, there exists a morphism $r:Y'_{\lambda}\to Y_{\lambda}'^{\nu}$ that fits into the following commutative diagram
\[\xymatrix{ Y \ar[r]^{f^{\nu}} \ar[d]_{f} & Y_{\lambda}'^{\nu} \ar[d]^{h:=g\circ \nu} \ar@/^/[ld]^{\nu}  \\ Y_{\lambda}' \ar[r]_{g} \ar@/^/[ur]^{r} & Y_{\lambda} }\]
and is the normalization of $Y'_{\lambda}$ in $Y_{\lambda}$. Thus, $Y_{\lambda}'=Y_{\lambda}'^{\nu}$ is normal and $g:Y_{\lambda}'\to Y_{\lambda}$ is a finite (integral) morphism. Given that $p_{\lambda}$ is birational and surjective, then $g$ is birational. By \cite[\href{https://stacks.math.columbia.edu/tag/0AB1}{Tag 0AB1}]{stacks-project}, we have that $g$ is an isomorphism. Thus, it follows that $p_{\lambda}$ has geometrically connected fibers. Since $p_{\lambda}:Y\to Y_{\lambda}$ is projective surjection with geometrically connected fibers and $Y_{\lambda}$ is a normal variety, it follows that it is a contraction by Stein factorization. The same holds for the morphisms $p_{\lambda\gamma}:Y_{\lambda}\to Y_{\gamma}$, since $p_{\gamma}=p_{\lambda}\circ p_{\lambda\gamma}$ and $Y_{\gamma}$ are normal varieties.
    
    Let $\lambda$ and $\gamma$ in $\Lambda$ such that $\dim(Y_{\lambda})=\dim(Y_{\gamma})=\dim(X)-\dim(T)$. If $\gamma\preceq\lambda$, then $p_{\lambda\gamma}:Y_{\lambda}\to Y_{\gamma}$ is birational and, therefore, induces the identity between the fields of rational functions $k(Y_{\lambda})=k(X)^{T}=k(Y_{\gamma})$. Given that $Y$ can be constructed just by taking the subsystem $Y_{\lambda}$ with $\lambda\cap \relint(\omega^{\vee})\neq \emptyset$, where all the morphisms $p_{\lambda\gamma}$ are birational, we have that $p_{\lambda}$ is birational.

\end{proof}

\subsubsection{Construction of a pp-divisor and proof of \cref{theorem Altmann hausen split}~\eqref{theorem Altmann hausen split part a}.} 
\noindent

The construction above tells us how to construct a normal semi-projective $k$-variety from an affine normal $k$-variety $X$ endowed with an effective action of a split $k$-torus $T$. In the following, we present some results that will help us to construct a pp-divisor $\D\in\PPDiv_{\Q}(Y,\omega)$, where $\omega^{\vee}\subset M_{\Q}$ is the weight cone associated to the $T$-action on $X$. 

Roughly speaking, the strategy is the following: we construct, for every $m \in \omega^{\vee} \cap M$, a sheaf $\mathscr{A}_m$ over $Y$. This allows us to obtain the unique $\Q$-Cartier divisor $\mathfrak{h}(m)$ such that $\mathscr{O}_Y(\mathfrak{h}(m)) = \frac{1}{s(m)}\mathscr{A}_m$, where $s(m)\in k(Y)^{*}$. Consequently, we prove that $\mathfrak{h}$ defines a convex piecewise linear strictly semiample and we conclude by \cref{proposition ppdiv cplmaps}.

Let us give some context before. Recall that, for $\Lambda$ the quasifan associated to $\omega^{\vee}$ as in \cref{theorem GIT-equivalence} and for $\lambda\in\Lambda$, we have  
\[Y_{\lambda}=\mathrm{Proj}(A_{(m)})\textrm{, where }A_{(m)}:=\bigoplus_{n\in\N}A_{nm}\]
and $m$ is any element in $\relint(\lambda)\cap M$. Thus, we can associate to $m$ a sheaf $\mathscr{A}_{\lambda,m}$ on $Y_{\lambda}$ given by 
\[\mathscr{A}_{\lambda,m}:=(q_{\lambda})_{*}(\mathscr{O}_{W_{\lambda}})_{m},\]
where $(\mathscr{O}_{W_{\lambda}})_{m}$ denotes the sheaf of semiinvariants of weight $m$ with respect to the $T$-linearization. The following results are in \cite[Section 6]{AH06} and their proofs adapt directly to this context. 

\begin{lemma}\cite[Lemma 6.3]{AH06}\label{Lemma AH06 63}
Let $\lambda\in\Lambda$ and $m\in\relint(\lambda)\cap M$. For $f\in A_{nm}$, let $Y_{\lambda,f}:=q_{\lambda}(X_{f})$ be the corresponding affine chart of $Y_{\lambda}$.
	\begin{enumerate}[i)]
		\item On $Y_{\lambda,f}$, the sheaf $\mathscr{A}_{\lambda,m}$ is the coherent $\mathscr{O}_{Y_{\lambda}}$-module associated to the $(A_{f})_{0}$-module $(A_{f})_{m}$.
		\item If $m$ is saturated, i.e. the ring $A_{(m)}$ is generated in degree one, then $\mathscr{A}_{\lambda,m}$ is an ample invertible sheaf on $Y_{\lambda}$. On the charts $Y_{\lambda,f}$, where $f\in A_{m}$, we have 
		\[ \mathscr{A}_{\lambda,m} =f\cdot(A_{f})_{0}=f\cdot\mathscr{O}_{Y_{\lambda}}.\]
		\item If $g\in\mathrm{Quot}(A)$ and $n\in\N$, then $g^{n}\in\mathscr{A}_{\lambda,nm}$ implies $g\in\mathscr{A}_{\lambda,m}$.
		\item The space of global sections of $\mathscr{A}_{\lambda,m}$ is $\H^{0}(Y_{\lambda},\mathscr{A}_{\lambda,m})=A_{m}$.
	\end{enumerate}
\end{lemma}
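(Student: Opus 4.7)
For part i), the plan is to unwind the definition $\mathscr{A}_{\lambda,m}:=(q_{\lambda})_{*}(\mathscr{O}_{W_{\lambda}})_{m}$ as the pushforward of the sheaf of $m$-semiinvariant functions on $W_{\lambda}$ along the affine good quotient $q_{\lambda}$. The affine chart $Y_{\lambda,f}$ is isomorphic to $\Spec((A_{f})_{0})$, and its preimage under $q_{\lambda}$ is the $T$-stable affine open $X_{f}\subset W_{\lambda}$ with coordinate ring $A_{f}$. Since the $m$-graded piece of $A_{f}$ as an $(A_{f})_{0}$-module is $(A_{f})_{m}$, the restriction $\mathscr{A}_{\lambda,m}|_{Y_{\lambda,f}}$ is the quasi-coherent sheaf associated to $(A_{f})_{m}$.

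For part ii), when $A_{(m)}$ is generated in degree one, the charts $Y_{\lambda,f}$ with $f\in A_{m}$ already cover $Y_{\lambda}$. On such a chart, $f$ is a homogeneous unit of $A_{f}$ of degree $m$, so multiplication by $f$ gives an isomorphism $(A_{f})_{0}\xrightarrow{\sim}(A_{f})_{m}$. Together with part i), this shows $\mathscr{A}_{\lambda,m}$ is invertible and locally generated by $f$, with the displayed local expression. Ampleness follows by identifying $\mathscr{A}_{\lambda,m}$ with the twisting sheaf $\mathscr{O}(1)$ on $\mathrm{Proj}(A_{(m)})=Y_{\lambda}$.

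For part iii), I would argue chart by chart: on $Y_{\lambda,f}$ one has $g\in \mathrm{Quot}(A)=\mathrm{Quot}(A_{f})$ and $g^{n}\in(A_{f})_{nm}$, so $g$ is a root of the monic polynomial $x^{n}-g^{n}\in A_{f}[x]$. Normality of $X$ (and hence of $X_{f}$) forces $g\in A_{f}$, and since $M$ is torsion-free and $g^{n}$ is homogeneous of degree $nm$, the element $g$ must be homogeneous of degree $m$, giving $g\in(A_{f})_{m}$. Glueing over $f$ yields $g\in\mathscr{A}_{\lambda,m}$. Part iv) then follows by combining the pushforward identity $H^{0}(Y_{\lambda},\mathscr{A}_{\lambda,m})=H^{0}(W_{\lambda},\mathscr{O}_{W_{\lambda}})_{m}$ with the equality $H^{0}(W_{\lambda},\mathscr{O}_{W_{\lambda}})=A$: indeed, every divisorial component $D\subset X\smallsetminus W_{\lambda}$ would have a generic point $\eta_{D}$ whose orbit cone $\omega(\eta_{D})^{\vee}$ fails to contain any integer multiple of $m\in\relint(\lambda)$, contradicting that such multiples produce $f\in A_{nm}$ with $D\subset V(f)$ arbitrarily; so $X\smallsetminus W_{\lambda}$ has codimension at least two, and normality of $X$ extends sections from $W_{\lambda}$ to $X$. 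Taking the $m$-graded component gives $A_{m}$.

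The main obstacle I anticipate is the codimension argument in part iv), i.e.\ ruling out divisorial components of $X\smallsetminus W_{\lambda}$ that could obstruct Hartogs-type extension. The clean way is to use \cref{proposition orbits} together with the description $W_{\lambda}=X^{\mathrm{ss}}(m)=\{x\mid m\in\omega(x)^{\vee}\}$ to reduce the statement to the fact that a divisor whose generic orbit cone misses $\relint(\lambda)$ would force all $f\in\bigcup_{n}A_{nm}$ to vanish on it, which is impossible since $A_{(m)}$ is non-trivial and generates enough sections to separate the generic point of any such $D$.
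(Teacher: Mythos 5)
Your parts i)--iii) are correct and follow the standard Altmann--Hausen route; note that the paper itself offers no proof of this lemma at all, stating only that the proofs in \cite[Section 6]{altmannhausen} ``follow directly in this context,'' so the only substantive comparison is with that original argument.

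Part iv) has a genuine gap. Your argument rests on the claims that $X\smallsetminus W_{\lambda}$ has codimension at least two and hence $H^{0}(W_{\lambda},\mathscr{O}_{W_{\lambda}})=A$. Both are false in general: the unstable locus of a linearization is very often divisorial. Already for $X=\A^{1}_{k}$ with $\G_{\mathrm{m},k}$ acting by scaling and $m=1$ one has $W_{\lambda}=\A^{1}\smallsetminus\{0\}$ and $H^{0}(W_{\lambda},\mathscr{O})=k[t,t^{-1}]\neq k[t]$; for $\A^{2}$ with $t\cdot(x,y)=(tx,y)$ the unstable locus is the divisor $\{x=0\}$. Your heuristic that a divisor whose generic orbit cone misses $\relint(\lambda)$ is ``impossible'' is exactly backwards: such divisors do exist, and by the very definition of semistability every $f\in\bigcup_{n}A_{nm}$ vanishes on them. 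What is true, and what must be proved, is only the graded statement $H^{0}(W_{\lambda},\mathscr{O}_{W_{\lambda}})_{m}=A_{m}$, and this genuinely uses the degree. One correct argument: by part iii) reduce to $m$ saturated (if $h$ is a global section of $\mathscr{A}_{\lambda,m}$ then $h^{n}$ is one of $\mathscr{A}_{\lambda,nm}$, and $h^{n}\in A_{nm}$ forces $h\in A_{m}$ by normality and homogeneity). For $m$ saturated, suppose $h$ had a pole of order $e>0$ along a prime divisor $D$, and set $c:=\min\{v_{D}(f)\mid f\in A_{m}\setminus\{0\}\}$. If $c=0$, pick $f$ with $v_{D}(f)=0$; then $h=(hf^{k})/f^{k}\in A_{\mathfrak{p}_{D}}$, a contradiction. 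If $c\geq 1$, then since $A_{(k+1)m}$ is spanned by $A_{0}$-multiples of products of $k+1$ elements of $A_{m}$, every element of $A_{(k+1)m}$ has $v_{D}\geq(k+1)c$; taking $f_{0}\in A_{m}$ with $v_{D}(f_{0})=c$ and $k$ with $hf_{0}^{k}\in A_{(k+1)m}$ gives $-e+kc\geq(k+1)c$, i.e.\ $-e\geq c\geq 1$, again a contradiction. Hence $v_{D}(h)\geq0$ for every prime divisor, and normality plus homogeneity yield $h\in A_{m}$. Without an argument of this kind, part iv) remains unproved.
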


For each $\lambda\in\Lambda$ and $m\in\relint(\lambda)$, we have a coherent sheaf $p_{\lambda}^{*}\mathscr{A}_{\lambda,m}$ with $p_{\lambda}:Y\to Y_{\lambda}$. Since every $m\in \mathrm{Supp}(\Lambda)$ lies in $m\in\relint(\lambda)$ for a unique $\lambda\in \Lambda$, we can set the sheaf as $\mathscr{A}_{m}:=p_{\lambda}^{*}\mathscr{A}_{\lambda,m}$. Thus, for each $m\in\omega^{\vee}\cap M$, we have the coherent sheaf $\mathscr{A}_{m}$ over $Y$. These sheaves satisfy the following.

\begin{lemma}\cite[Lemma 6.4]{AH06}\label{Lemma AH06 lemma 64}
	Let $m,m'\in\omega^{\vee}\cap M$.
	\begin{enumerate}[i)]
		\item We have $k(Y)=\mathrm{Quot}(A)_{0}$, and the natural transformation $p_{\lambda}^{*}q_{\lambda *} \to q_{*}j_{\lambda}^{*}$ sends $\mathscr{A}_{m}$ into $\mathrm{Quot}(A)_{m}$.
		\item Let $m$ be saturated. Then $\mathscr{A}_{m}$ is a globally generated invertible sheaf. On the (not necessarily affine) sets $Y_{f}:=p_{\lambda}^{-1}(Y_{\lambda,f})$ with $f\in A_{m}$, we have 
		\[ \mathscr{A}_{m}=f\cdot \mathscr{O}_{Y}\subset f\cdot k(Y)=\mathrm{Quot}(A)_{m}.  \]
		Moreover, for the global sections of $\mathscr{A}_{m}$, we obtain $\H^{0}(Y,\mathscr{A}_{m})=A_{m}$.
		\item If $m$, $m'$ and $m+m'$ are saturated, then $\mathscr{A}_{m}\mathscr{A}_{m'}\subset \mathscr{A}_{m+m'}$. If, moreover, $m$ and $m'$ lie in a common cone of $\Lambda$, then the equality holds.
	\end{enumerate}
\end{lemma}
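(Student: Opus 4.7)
The plan is to follow the strategy of Altmann--Hausen \cite[Lemma 6.4]{altmannhausen} for the three parts in order, since the underlying ingredients have all been adapted to the present setting in the preceding material: the local description of $\mathscr{A}_{\lambda,m}$ from \cref{Lemma AH06 lemma 64}'s predecessor, and the projectivity, surjectivity, and \emph{geometric} connectedness of the fibers of the morphisms $p_\lambda$ and $p_{\lambda\gamma}$ from \cref{Proposition lemma 6.1 AH06}. The three parts split naturally: (i) is a birational/functorial computation, (ii) is a pullback of an ample invertible sheaf under a proper morphism with geometrically connected fibers, and (iii) is a local check using the graded multiplication in $A$.

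For part (i), to see that $k(Y) = \mathrm{Quot}(A)_0$, I would choose a cone $\lambda \in \Lambda$ meeting $\relint(\omega^{\vee})$; then by \cref{Proposition lemma 6.1 AH06} the projection $p_\lambda : Y \to Y_\lambda$ is birational, so $k(Y) = k(Y_\lambda)$. Since $T$ is a split torus, the $M$-grading on $A$ yields $k(X)^T = \mathrm{Quot}(A)_0$, and $Y_\lambda$ is the geometric quotient of the $T$-stable open $W_\lambda \subset X$, whence $k(Y_\lambda) = \mathrm{Quot}(A)_0$. The natural transformation $p_\lambda^{*}q_{\lambda *} \to q_{*}j_\lambda^{*}$ attached to the base-change square formed by $W \hookrightarrow W_\lambda$ and $p_\lambda : Y \to Y_\lambda$ sends $\mathscr{A}_m = p_\lambda^{*}(q_{\lambda *}\mathscr{O}_{W_\lambda})_m$ into $q_{*}(\mathscr{O}_W)_m$, whose local sections are by definition $m$-homogeneous rational functions in $\mathrm{Quot}(A)$.

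For part (ii), saturatedness of $m$ yields, via the preceding lemma, that $\mathscr{A}_{\lambda,m}$ is an ample invertible sheaf on $Y_\lambda$ with the explicit description $\mathscr{A}_{\lambda,m}|_{Y_{\lambda,f}} = f \cdot \mathscr{O}_{Y_\lambda}$ for each $f \in A_m$. Pulling back along $p_\lambda$, the sheaf $\mathscr{A}_m$ is invertible and the local description transports to the covering $\{Y_f\}_{f \in A_m}$ of $Y$, where $Y_f := p_\lambda^{-1}(Y_{\lambda,f})$; global generation is preserved under pullback. For the global sections I would apply the projection formula together with the identity $(p_\lambda)_{*}\mathscr{O}_Y = \mathscr{O}_{Y_\lambda}$, which is a Stein-factorization consequence of properness of $p_\lambda$, normality of $Y$ and $Y_\lambda$, and \emph{geometric} connectedness of the fibers (all three supplied by \cref{Proposition lemma 6.1 AH06}). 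Combined with $H^0(Y_\lambda, \mathscr{A}_{\lambda,m}) = A_m$ from the preceding lemma, this gives $H^0(Y, \mathscr{A}_m) = A_m$.

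For part (iii), the inclusion $\mathscr{A}_m \mathscr{A}_{m'} \subset \mathscr{A}_{m+m'}$ is local: on $Y_f \cap Y_{f'}$ with $f \in A_m$ and $f' \in A_{m'}$, the product $ff' \in A_{m+m'}$ generates $\mathscr{A}_m \mathscr{A}_{m'}$ and lies in $\mathscr{A}_{m+m'}$. When $m$ and $m'$ share a common cone $\lambda$ of $\Lambda$, the semistable loci used to build $\mathscr{A}_m$, $\mathscr{A}_{m'}$, and $\mathscr{A}_{m+m'}$ all coincide with $W_\lambda$, and linearity of $\mathfrak{h}_{\D}$ on $\lambda$ promotes the inclusion to an equality. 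The only point at which care is required compared to the algebraically closed case is the appeal to $(p_\lambda)_{*}\mathscr{O}_Y = \mathscr{O}_{Y_\lambda}$ in part (ii); this is precisely why \cref{Proposition lemma 6.1 AH06} was formulated with geometrically connected fibers rather than merely connected ones, and it is there that I expect whatever extra work is needed beyond transcribing the original proof.
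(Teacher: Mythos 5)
Your proposal is correct and takes essentially the same route as the paper, which gives no proof of this lemma at all: it simply states that the results of \cite[Section 6]{altmannhausen} ``follow directly in this context,'' i.e.\ it defers to the Altmann--Hausen argument you have reconstructed. You also correctly isolate the one genuine adaptation point, namely that $(p_{\lambda})_{*}\mathscr{O}_{Y}=\mathscr{O}_{Y_{\lambda}}$ now rests on the \emph{geometric} connectedness of fibers supplied by \cref{Proposition lemma 6.1 AH06}.
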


\begin{remark}
The proof of the latter lemma relies on \cref{Proposition lemma 6.1 AH06} and \cref{Lemma AH06 63}.
\end{remark}

Now we are ready to prove \cite[Theorem 3.4]{AH06} for every affine normal $k$-variety endowed with an effective action of a split $k$-torus over any field $k$.

We summarized here some of the previous construction: given an affine normal $T$-variety $X:=\Spec(A)$, with $T:=\Spec(k[M])$, its respective $k$-algebra comes with a $M$-graduation
\[
A=\bigoplus_{\omega^{\vee}\cap M} A_{m},
\] 
weight cone $\omega^{\vee}\subset M_{\Q}$ related to the $M$-graduation and . Besides, we have constructed a quotient $Y$ as in \cref{equation: construction Y} and a family of sheaf $\mathscr{A}_{m}$ over $Y$, where $m$ runs over $\omega^{\vee}\cap M$. 

Since $T$ acts effectively on $X$, there exists an homomorphism $s:M\to \mathrm{Quot}(A)^{*}$ such that $s(m)$ is homogeneous of degree $m$ for any $m\in M$. Such a choice is non-canonical and it corresponds to a birational section of the quotient $X\dasharrow Y$.

\begin{proposition}\label{proposition existence ah-datum split}
    Let $T:=\Spec(k[M])$ be a split $k$-torus and $X:=\Spec(A)$ be an affine normal $T$-variety over $k$. Let $s:M\to \mathrm{Quot}(A)^{*}$ be a homomorphism such that $s(m)$ is homogeneous of degree $m$ for any $m\in M$. Then, there exists a pp-divisor $\D$ in $\mathfrak{PPDiv}_{N}(k)$ such that $s(m)\cdot \H^{0}(Y,\mathscr{O}_{Y}(\D(m)))=A_{m}$ and $X\cong X[Y,\D]$ as $T$-varieties.
\end{proposition}

\begin{proof}
    The construction of the pp-divisor, as in \cite[Section 6]{AH06}, follows from a construction of a map $\mathfrak{h}\in\mathrm{CPL}_{\Q}(Y,\omega)$. For each saturated $m\in \omega^{\vee}\cap M$, there exists a unique Cartier divisor $\mathfrak{h}(m)\in\cadiv(Y)$ such that 
    \[\mathscr{O}_{Y}(\mathfrak{h}(m))=\dfrac{1}{s(m)}\cdot\mathscr{A}_{m}\subset k(Y),\]
whose local equation on $Y_{f}$, for $f\in A_{m}$, is $s(m)/f$. If $m\in\omega^{\vee}\cap M$ is not saturated, choose a saturated multiple $nm$ (such a saturated multiple always exists by \cite[Proposition III.1.3]{Bou06}) and define 
\[\mathfrak{h}(m):=\dfrac{1}{n}\cdot\mathfrak{h}(nm)\in\cadiv_{\Q}(Y).\]
This definition does not depend on the choice of $n\in\N$. 

Let $\Lambda$ be the quasifan of \cref{theorem GIT-equivalence}. By \cref{Lemma AH06 lemma 64}, the map is convex and piecewise linear on $\Lambda$. Moreover, given that for $m\in\relint(\lambda)\cap M$ the sheaves $\mathscr{A}_{m}$ are big, then the $\mathfrak{h}(m)$ are big. Therefore, $\mathfrak{h}\in\mathrm{CPL}_{\Q}(Y,\omega)$ and, by \cref{proposition ppdiv cplmaps}, there exists a pp-divisor $\D\in\PPDiv_{\Q}(Y,\omega)$ such that $\mathfrak{h}_{\D}=\mathfrak{h}$. By \cref{Lemma AH06 lemma 64}, we have that $\H^{0}\left(Y,\mathscr{A}_{m} \right)=A_{m}$, therefore
\[s(m)\cdot\H^{0}(Y,\mathscr{O}_{Y}(\D(m)))= \H^{0}\left(Y,\mathscr{A}_{m} \right)=A_{m},\]
if $m\in\omega^{\vee}\cap M$ is saturated. Otherwise, if $m\in\omega^{\vee}\cap M$ is not saturated and $nm$ is a saturated multiple, we have 
\[g\in\H^{0}(Y,\mathscr{O}_{Y}(\D(m)))\Leftrightarrow g^{n}\in\H^{0}(Y,\mathscr{O}_{Y}(\D(nm)))\Leftrightarrow (gs(m))^{n}\in A_{nm}. \]
Given that $A$ is normal, $g\in\H^{0}(Y,\mathscr{O}_{Y}(\D(m)))$ if and only if $gs(m)\in A_{m}$. This defines an isomorphism of $M$-graded $k$-algebras 
\[A[Y,\D]:=\bigoplus_{m\in\omega^{\vee}\cap M}\H^{0}(Y,\mathscr{O}_{Y}(\D(m)))\to \bigoplus_{m\in \omega^{\vee}\cap M}A_{m}=A.\]
Finally we have that there exists a triple $(\omega,Y,\D)$ such that 
\[X=\Spec(A)\cong \Spec(A[Y,\D])=X[Y,\D].\]
This proves the assertion.
\end{proof}

This proposition proves \cref{theorem Altmann hausen split}~\eqref{theorem Altmann hausen split part b}.

\begin{remark}\label{remark choice of section}
A pp-divisor constructed as in \cref{proposition existence ah-datum split} is called \emph{$\mathrm{GIT}$-constructed}. Any $\mathrm{GIT}$-constructed pp-divisor arises from a choice of a homomorphism $s:M\to k(Y)^{*}$ such that $s(m)$ is homogeneous of degree $m$ for any $m\in M$. Such a construction is used in the proof of \cref{lemma 9.2 AH06} and \cref{theorem semilinear automorphisms}.
\end{remark}

\subsubsection{Minimal pp-divisors}
\noindent

 Every affine normal variety endowed with an effective action of a split algebraic torus arises from a pp-divisor over a normal semi-projective variety. There are many pp-divisors encoding the same pair. For example, let $\Delta := [1, +\infty] \subset \Q$, the action 
\begin{align*} 
\G_{\mathrm{m}, k} \times \A_{k}^{2} &\to \A_{k}^{2}, \\
(\lambda, (x, y)) &\mapsto (\lambda x, y)
\end{align*}
is encoded both by $\D_{1} := \Delta \otimes \{0\}$ on $\A^{1}$ and $\D_{2} := \Delta \otimes \{0\} + \emptyset \otimes \{\infty\}$ on $\P^{1}$. However, there is a notion of \textit{minimality} for pp-divisors. Let $\D \in \mathrm{PPDiv}_{\Q}(Y, \omega)$ be a pp-divisor. Given that $\D(m)$ is semiample for every $m \in \omega^{\vee} \cap M$, we have natural morphisms 
\[\vartheta_{m}: Y \to Y_{m} := \mathrm{Proj}\left(\bigoplus_{n \geq 0} \H^{0}(Y, \mathscr{O}_{Y}(\D(nm)))\right)\]
that are contraction maps. Moreover, they are birational if $m \in \relint(\omega^{\vee}) \cap M$.

Denote $X := X(\D)$. We can prove that all the $Y_{m}$ correspond to the GIT-quotients of the semistable subvarieties for the respective linearization of the trivial bundle. Then, all the spaces $Y_{\lambda} := Y_{m}$, with $m \in \relint(\lambda)$ and $\lambda \in \Lambda$ the quasifan in \cref{theorem GIT-equivalence}, can be put into an inverse system compatible with the morphisms $\vartheta_{\lambda}: Y \to Y_{\lambda}$. Hence, we have a projective and birational morphism 
\[\vartheta: Y \to \varprojlim Y_{\lambda}.\]

The scheme $\varprojlim Y_{\lambda}$ comes with a canonical reduced component, which is the schematic image of $q:W\to \varprojlim Y_{\lambda}$ for $W$ the intersection of all semistable subvarieties. The schematic image of $\vartheta:Y\to \varprojlim Y_{\lambda}$ lies on $q(W)$. 
\begin{definition}
A pp-divisor $\D\in\mathrm{PPDiv}_{\Q}(Y,\omega)$ is said to be minimal if the morphism $\vartheta:Y\to \varprojlim Y_{\lambda}$ is the normalization of the canonical reduced component of $\varprojlim Y_{\lambda}$.
\end{definition}

In particular, a pp-divisor constructed as in \cref{proposition existence ah-datum split} is minimal.

 \begin{proposition}\label{proposition minimality stable under base change}
 Let $Y$ be a normal semi-projective variety over $k$. Let $\D$ be a pp-divisor over $Y$. Then we have that $\D$ is minimal if and only if $\D_{\sep{k}}$ is minimal. 
 \end{proposition}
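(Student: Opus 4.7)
The plan is to show that every piece of data entering the definition of minimality commutes with the (faithfully flat) base change $\Spec(\bar{k})\to\Spec(k)$, and then invoke descent to conclude the equivalence. Specifically, for each $m\in\omega^{\vee}\cap M$ we will check that the formation of the graded algebra
\[R_{m}(\D):=\bigoplus_{n\in\Z_{\geq 0}}H^{0}(Y,\mathscr{O}_{Y}(\D(nm)))\]
commutes with base change, that the resulting $Y_{m}$'s assemble into an inverse system whose limit commutes with base change, that the canonical reduced component commutes with base change, and finally that normalization commutes with base change thanks to geometric normality. Then minimality of $\D$ translates into minimality of $\D_{\bar{k}}$ and conversely.

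First I would observe that since $Y$ is geometrically integral and geometrically normal and $\bar{k}/k$ is faithfully flat, flat base change yields
\[R_{m}(\D)\otimes_{k}\bar{k}\;\cong\;R_{m}(\D_{\bar{k}}),\]
and hence $(Y_{m})_{\bar{k}}=\mathrm{Proj}(R_{m}(\D)\otimes_{k}\bar{k})\cong (Y_{\bar{k}})_{m}$ as schemes over $(Y_{0})_{\bar{k}}=(Y_{\bar{k}})_{0}$. By \cref{theorem GIT-equivalence,proposition GIT-equivalence} the quasifan $\Lambda$ on $\omega^{\vee}$ is the same over $k$ and over $\bar{k}$, so the finite inverse system $\{Y_{\lambda}\}_{\lambda\in\Lambda}$ base-changes to $\{(Y_{\bar{k}})_{\lambda}\}_{\lambda\in\Lambda}$. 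Because $\Lambda$ is finite and the structure morphisms are affine over $Y_{0}$, the inverse limit commutes with base change:
\[\bigl(\varprojlim Y_{\lambda}\bigr)_{\bar{k}}\;\cong\;\varprojlim\,(Y_{\bar{k}})_{\lambda}.\]

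Next I would handle the canonical reduced component. Recall it is the schematic closure of the image of $q\colon W\to\varprojlim Y_{\lambda}$, where $W=\bigcap_{\lambda}W_{\lambda}\subset X(\D)$. Since $X(\D)$ is geometrically integral by \cref{corollary fg algebra}, the open subvariety $W$ is geometrically integral as well, and so is $W_{\bar{k}}$. Flatness of $\Spec(\bar{k})\to\Spec(k)$ makes the formation of the schematic image stable under base change, so the canonical reduced component of $\varprojlim Y_{\lambda}$ base-changes to the canonical reduced component of $\varprojlim(Y_{\bar{k}})_{\lambda}$. Finally, since $Y$ is geometrically normal, $Y_{\bar{k}}$ is normal, and because the normalization of a geometrically normal variety commutes with separable base change (or, using \cref{Proposition inverse limit normalization}, because taking normalization is compatible with the finite inverse system), we obtain that $\vartheta_{\bar{k}}\colon Y_{\bar{k}}\to \varprojlim(Y_{\bar{k}})_{\lambda}$ is identified with the base change of $\vartheta\colon Y\to\varprojlim Y_{\lambda}$.

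Putting these together, $\vartheta$ is the normalization of the canonical reduced component if and only if $\vartheta_{\bar{k}}$ is: the forward direction is immediate from the compatibilities above, and the converse follows by faithfully flat descent, as being an isomorphism onto its image and being a normalization morphism are local properties detectable after faithfully flat base change. The main obstacle I expect is checking carefully that schematic image, canonical reduced component, and normalization all descend, which relies essentially on the geometric integrality of $W$ (hence $X(\D)$) and the geometric normality of $Y$; once these compatibilities are in place the equivalence of minimality is essentially formal.
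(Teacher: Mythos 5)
Your proposal is correct and follows essentially the same route as the paper: both arguments reduce to the observations that the quasifan $\Lambda$, the inverse system $\{Y_{\lambda}\}$, the canonical reduced component, and the normalization morphism $\vartheta$ all commute with the base change to $\bar{k}$, so that $\bar{\vartheta}$ is identified with $\vartheta_{\bar{k}}$ and minimality transfers in both directions. The paper's proof simply asserts these compatibilities (in particular $\bar{Y}'=(Y')_{\bar{k}}$) while you supply the justifications via flatness, geometric integrality of $W$, and geometric normality of $Y$; this is a welcome elaboration rather than a different argument.
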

 
 \begin{proof}
 By definition, $\D\in\PPDiv(Y,\omega)$, with $Y$ a normal semi-projective variety over $k$ and $\omega\subset N_{\Q}$ a cone. The varieties $X(\D)$ and $X(\D_{\sep{k}})$ have the same quasifan decomposition $\Lambda$ for $\omega^{\vee}$. Then, we have the following commutative diagram
 \[\xymatrix{ Y_{\sep{k}} \ar[r]^(0.4){\vartheta_{\sep{k}}} \ar[d] &  \varprojlim (Y_{\sep{k}})_{\lambda} \ar[d] \\ Y \ar[r]_(0.4){\vartheta} &  \varprojlim Y_{\lambda},}\]
 where the vertical arrows correspond to the base change to $\sep{k}$. Denote by $Y'$ (respectively $Y_{\sep{k}}'$) the canonical reduced component of $\varprojlim Y_{\lambda}$ (respectively $\varprojlim (Y_{\sep{k}})_{\lambda}$). They satisfy $(Y_{\sep{k}})'=(Y')_{\sep{k}}$. Besides, the morphism $\vartheta_{\sep{k}}:Y_{\sep{k}}\to Y_{\sep{k}}'$ is the normalization of $Y_{\sep{k}}'$ if and only if $\vartheta:Y\to Y'$ is the normalization of $Y'$. This proves the assertion.
 \end{proof}

\begin{example}\label{exampleA3}
The algebraic torus $\G_{\mathrm{m},k}^{2}$ acts on the three dimensional affine space $\mathbb{A}_{k}^{3}$. Let us consider the action given by 
\[(\lambda,\mu)\cdot (x,y,z)=(\lambda x,\mu y,\lambda\mu z).\]
This action is encoded by the pp-divisor $\D:=\Delta\otimes\{\infty\}$ over $\P_{k}^{1}$, where $\Delta$ is the polyhedron 
\begin{center}
    \begin{tikzpicture}
        \fill[gray] (0,0.7) -- (0,2) -- (2,2) -- (2,0) -- (0.7,0);
        \node[scale=0.6] at (0.7,-0.2) {$(1,0)$};
        \node[scale=0.6] at (-0.3,0.7) {$(0,1)$};
        \draw[->] (0,0) -- (2,0);
        \draw[->] (0,0) -- (0,2);
        \node[scale=0.9] at (0.6,0.6) {$\Delta$};
    \end{tikzpicture}
\end{center} 
\end{example}

\begin{example}\label{exampleSL2ppdivisor}
The algebraic group $\mathrm{SL}_{2,k}$ is a normal variety over $k$ with a $\G_{\mathrm{m},k}^{2}$-structure. Let us consider the action 
\[(\lambda,\mu)\cdot (x,y,z,w)=(\lambda x,\mu y,\mu^{-1}z,\lambda^{-1}w).\]
This action is encoded by the pp-divisor $\D:=\Delta_{1}\otimes [0] +\Delta_{2}\otimes [1]$, where the polyhedra are $\Delta_{1}:=\mathrm{cone}(0,1)$ and $\Delta_{2}:=\cone(1,0)$ as shown in the following picture.
\begin{center}
    \begin{tikzpicture}
        \node[scale=0.6] at (-0.3,0.7) {$\Delta_{1}$};
        \draw[->] (0,0) -- (2,0);
        \draw[gray,->] (0,0) -- (0,2);
        
        \node[scale=0.6] at (3.7,-0.2) {$\Delta_{2}$};
        \draw[gray,->] (3,0) -- (5,0);
        \draw[->] (3,0) -- (3,2);
    \end{tikzpicture}
\end{center} 
\end{example}

\begin{example}\cite[Example 11.1]{AH06}\label{exampleAH06 11 1}
 The affine threefold $X:=\Spec(k[x,y,z,w]/(x^{3}+y^{4}+zw))$ in $\A_{k}^{4}$ with the action of $\G_{\mathrm{m},k}^{2}$ given by 
\[(\lambda,\mu)\cdot (x,y,z,w)=(\lambda^{4} x,\lambda^{3} y,\mu z,\lambda^{12}\mu^{-1}w)\]
is encoded by the pp-divisor $\D:=\Delta_{0}\otimes \{0\}+\Delta_{1}\otimes \{1\}+\Delta_{\infty}\otimes\{\infty\}$, where 
\[\Delta_{0}=\left(\dfrac{1}{3},0\right)+\omega, \quad \Delta_{1}=\left(-\dfrac{1}{4},0\right)+\omega, \quad \Delta_{\infty}=\left(\{0\}\times[0,1]\right)+\omega\]
and $\omega=\cone((1,0),(1,12))$.

\begin{center}
    \begin{tikzpicture}
        \fill[gray] (1/3,0) -- (1/2,2) -- (2,2) -- (2,0) -- (1/3,0);
        \draw[->] (0,0) -- (2,0);
        \draw[->] (0,0) -- (0,2);
        \node[scale=0.9] at (1,0.6) {$\Delta_{0}$};
        
        \fill[gray] (11/4,0) -- (35/12,2) -- (5,2) -- (5,0);
        \node[scale=0.6] at (3.7,-0.2) {$(1,0)$};
        \node[scale=0.6] at (2.7,0.7) {$(0,1)$};
        \draw[->] (3,0) -- (5,0);
        \draw[->] (3,0) -- (3,2);
        \node[scale=0.9] at (3.6,0.6) {$\Delta_{1}$};
        
        \fill[gray] (6,0) -- (6,1) --(73/12,2) -- (8,0) -- (6.7,0);
        \node[scale=0.6] at (6.7,-0.2) {$(1,0)$};
        \node[scale=0.6] at (5.7,0.7) {$(0,1)$};
        \draw[->] (6,0) -- (8,0);
        \draw[->] (6,0) -- (6,2);
        \node[scale=0.9] at (6.6,0.6) {$\Delta_{\infty}$};
    \end{tikzpicture}
\end{center} 

\end{example}

\begin{example}\label{example A3 blow up A2}
 The affine space $\A_{k}^{3}$ with the action of $\G_{\mathrm{m},k}$ given by 
\[\lambda\cdot (x,y,z)=(\lambda x,\lambda y,\lambda^{-1} z)\]
is encoded by the pp-divisor 
\[\D:=\{1\}\otimes D_{(1,0)}+\{0\}\otimes D_{(0,1)}+[0,1]\otimes D_{(1,1)} \in \PPDiv_{\Q}(\mathrm{Bl}_{0}(\A_{k}^{2}),\omega)\]
 where $D_{(1,0)}$, $D_{(0,1)}$ and $D_{(1,1)}$ are the toric invariant divisors of $\mathrm{Bl}_{0}(\A_{k}^{2})$ associated to the rays $\cone(0,1)$ and $\cone(1,1)$, respectively, and $\omega=\cone(0)$.
\end{example}

\begin{remark}
Since all the examples above are computed by following \cite[Section 11]{AH06}, they are all minimal over the separable closure. Thus, they are minimal over the ground field by \cref{proposition minimality stable under base change}. The latter example is of complexity two, so we prove its minimality by following the construction given in \cite[Section 11]{AH06}. As a toric variety, $\A_{k}^{3}$ under the action of $\G_{\mathrm{m},k}^{3}$ coordinatewise is given by the cone 
\[\omega=\cone((1,0,0),(0,1,0),(0,0,1)).\]

The action of $\G_{\mathrm{m},k}$ on $\A_{k}^{3}$ in \cref{example A3 blow up A2} follows from the embedding $\lambda\to(\lambda,\lambda,\lambda^{-1})$ of the respective tori. This embedding, in terms of cocharacter lattices, is equivalent to the morphism $\Z\to \Z^{3}$ given by $a\mapsto(a,a-a)$. This latter morphism fits into the following exact sequence of $\Z$-modules:
\begin{equation}\label{equation: exact lattice remark minimality}\xymatrix{0 \ar[r] & \Z \ar[r]^{F} & \Z^{3} \ar@/^1pc/[l]_{s} \ar[r]^{P} \ar[r] & \Z^{2} \ar[r] & 0},\end{equation} 
where $F(a)=(a,a,-a)$, $P(a,b,c)=(a+c,b+c)$ and $s(a,b,c)=(a)$. This latter map is a section of $F$, which is not unique. Therefore, it is not canonical. Now, we look for the images of the rays of $\omega$ by $P$, which are $P(1,0,0)=(1,0)$, $P(0,1,0)=(0,1)$ and $P(0,0,1)=(1,1)$. The smallest fan in $\Z^{2}$ admitting $(1,0)$, $(0,1)$ and $(1,1)$ as rays is 
\begin{center}
    \begin{tikzpicture}
        \fill[gray] (0,0) -- (2,0) -- (2,2) -- (0,0);
        \fill[lightgray] (0,0) -- (0,2) -- (2,2) -- (0,0);
        \node[scale=0.6] at (2.3,0) {$(1,0)$};
        \node[scale=0.6] at (0,2.2) {$(0,1)$};
        \draw[->] (0,0) -- (2,0);
        \draw[->] (0,0) -- (0,2);
        \draw[->] (0,0) -- (2,2);
        \node[scale=0.9] at (1.3,0.6) {$\omega_{1}$};
        \node[scale=0.9] at (0.7,1.4) {$\omega_{2}$};
    \end{tikzpicture}
\end{center}
This fan corresponds to the GIT-quotient constructed in \cref{proposition existence ah-datum split} for the $\G_{\mathrm{m},k}$-action. Besides, this fan corresponds to $\mathrm{Blow}_{0}(\A_{k}^{2})$ and each ray corresponds to a toric invariant divisor $D_{(1,0)}$, $D_{(0,1)}$ and $D_{(1,1)}$ of $\mathrm{Blow}_{0}(\A_{k}^{2})$.

Let us now compute the polyhedra. An exact sequence of cocharacter lattice extends to an exact sequence of $\Q$-vector spaces and likewise for the morphisms. Thus, the exact sequence \eqref{equation: exact lattice remark minimality} turns to be
\[\xymatrix{0 \ar[r] & \Q \ar[r]^{F} & \Q^{3} \ar@/^1pc/[l]_{s} \ar[r]^{P} \ar[r] & \Q^{2} \ar[r] & 0}.\]
The polyhedra associated to each toric divisor are computed as 
\[\Delta_{(i,j)}:=s\left(\omega\cap P^{-1}\left(i,j\right)\right),\]
for $i,j\in\{0,1\}$. Thus, 
\[\Delta_{(1,0)}=s\left(\{(1-c,-c,c) \mid -c\geq 0 \textrm{ and } c\geq 0 \}\right)=\{1\},\]
\[\Delta_{(0,1)}=s\left(\{(-c,1-c,c) \mid -c\geq 0 \textrm{ and } c\geq 0 \}\right)=\{0\},\]
and
\[\Delta_{(1,1)}=s\left(\{(1-c,1-c,c) \mid 1-c\geq 0 \textrm{ and } c\geq 0 \}\right)=[0,1].\]
Thus, the corresponding pp-divisor 
\[\D=\{1\}\otimes D_{(1,0)}+\{0\}\otimes D_{(0,1)}+[0,1]\otimes D_{(1,1)} \in \PPDiv_{\Q}(\mathrm{Bl}_{0}(\A_{k}^{2}),\omega)\]
is minimal.
\end{remark}

\subsubsection{Affine $G$-normal varieties}
\noindent

For simplicity, let us assume that $k$ is a perfect field of characteristic $p > 0$. Consider a diagonalized finite group scheme $G \subset T$, where $T$ is a split algebraic torus over $k$. Let $X$ be an affine $G$-normal variety over $k$ (as defined in \cite[Definition 4.1]{Bri24a}). Brion \cite[Remark 4.3]{Bri24b} provides an alternative perspective on $X$ by considering the affine normal $T$-variety $X^{\#}:=(T \times X)/G$. The action of $G$ on $T$ is given by $g \cdot t = g^{-1} t$, and $G$ acts diagonally on the product. Normality of $X^{\#}$ follows from \cite[Proposition 2.1.3]{Bri26}, and it is accompanied by a $T$-equivariant morphism $\Phi:X^{\#}\to T/G$ induced by the first projection $\mathrm{pr}_{1}:T\times X\to T$. The fiber of $\Phi$ over the base point of $T/G$ is geometrically integral, which is precisely the $G$-normal curve $X$. This establishes a bijection between the set of $G$-normal varieties $X$ and the set of normal $T$-varieties $Z$ equipped with a $T$-equivariant map $\Phi:Z\to T/G$ such that the fiber over the base point is geometrically integral. Therefore, $G$-normal varieties provide a big family of examples of normal $T$-varieties.

Both $X$ and $X^{\#}$ allow for the construction of good quotients $X/G$ and $X^{\#}/T$, respectively, which are canonically isomorphic. Since an affine $G$-normal variety $X$ gives rise to an affine normal $T$-variety $X^{\#}$, we can express an affine $G$-normal variety $X$ in terms of a pp-divisor $\D\in\PPDiv_{\Q}(X/G,\omega)$ by \cref{theoremmainaltmannhausensplitintro}. Notably, the tail cone associated to a pp-divisor encoding an affine $G$-normal variety is always $\{0\}$, as the morphism $\Phi$ is $T$-equivariant, surjective, and $T/G$ has the same dimension as $T$. Let us now follow the example provided in \cite[Example 4.13]{Bri24b}.

\begin{example}\label{example: example 4.13 brion}
 Let $G:=\mu_{q}=\Spec(k[T]/(T^{q}-1))$ and $q=p^{r}$ with $r\geq 1$. Let us consider the action of $\mu_{q}$ on $\A_{k}^{n+1}$ given by $t\cdot (x_{1},\dots,x_{n},y)=(x_{1},\dots,x_{n},ty)$. Thus, the affine variety $X\subset \A_{k}^{n+1}$ given by the polynomial $y^{q}-f(x_{1},\dots,x_{n})$ is $\mu_{q}$-stable. Moreover, $X$ is $\mu_{q}$-normal whenever $f(x_{1},\dots,x_{n})$ is square-free. Hence, its respective affine normal $T$-variety is 
	\[X^{\#}=\Spec\left(\frac{k[x_{1},\dots,x_{n},z,a,b]}{(z^{q}-a f(x_{1},\dots,x_{n}),ab-1)}\right),\]
with the $T$-action given by $\lambda\cdot(x_{1},\dots,x_{n},z,a,b)=(x_{1},\dots,x_{n},\lambda  z,\lambda^{q}a,\lambda^{-q}b)$ and whose quotient by $T$ corresponds to $X^{\#}\sslash T=\Spec(k[x_{1},\dots,x_{n}])$. Besides, the $T$-equivariant morphism $\Phi$ is given by 
    \begin{align*}
		\Phi:X^{\#} &\to T/G \\
		(x_{1},\dots,x_{n},z,a,b) &\mapsto (a,b).
	\end{align*} 
Hence, the $G$-normal curve $X$ arises from the pp-divisor $\D$ in $\PPDiv_{\Q}(\A_{k}^{n},\{0\})$ given by $\left[1/q\right]\otimes \mathrm{div}(f)$.
\end{example}

It remains to translate the $T$-equivariant morphism $\Phi:X^{\#}\to T/G$ into a combinatorial form. Since $\Phi$ is surjective and $T$-equivariant, it induces an injective $M$-graded morphism of $k$-algebras that preserves degrees:
\[
\Phi^{*}: k[M^G] \longrightarrow A[X/G,\D].
\]
This problem is studied in \cite{MNT26} in the specific case where $X$ is a curve and $\mu_{d}=G\subset\G_{\mathrm{m}}$ with $d\geq 2$. In this case, it is proven that the $T$-equivariant morphism $\Phi:X^{\#}\to T/G$ corresponds to the choice of an element $\alpha \in A[X/G,\D]^{*}\cap \H^{0}(X/G,\mathscr{O}_{X/G}(\D(d)))$ known as a $\Phi$-\emph{structure}. Therefore, a $G$-normal curve $X$ corresponds to a $\Phi$-\emph{pair} $(\D,\alpha)$ over $X/G$, which is a pair of a pp-divisor $\D\in\PPDiv_{\Q}(X/G,\{0\})$ and a $\Phi$-structure $\alpha$ on $\D$. However, not every $\Phi$-pair $(\D,\alpha)$ over an affine smooth curve $Y$ gives rise to an affine $G$-normal curve. Indeed, such pairs are called \emph{geometrically integral} $\Phi$-pairs (see \cite[Definition 4.7]{MNT26}). The following corresponds to \cite[Proposition 4.2]{MNT26} and \cite[Proposition 4.8]{MNT26}.

\begin{proposition}\label{lemma invertible element}
\begin{enumerate}[a)]
	Let $G\cong\mu_{d}\subset \G_{\mathrm{m}}$, where $d\geq 2$.
	\item Let $X$ be an affine $G$-normal curve. Then there exists a geometrically integral $\Phi$-pair $(\D,\alpha)$ over $X/G$ such that $\Phi_{\alpha}^{-1}(\{1\})\cong X$.
	\item Conversely, given a geometrically integral $\Phi$-pair $(\D,\alpha)$ over an affine smooth curve $Y$, there exists an affine $G$-normal curve $X$ such that $X/G\cong \mathrm{Loc}(\D)$, $X^{\#}\cong X(\D)$ and $\Phi=\Phi_{\alpha}$.
\end{enumerate}
\end{proposition}

The following result serves as a valuable tool for comprehending the combinatorial structure of $G$-normal varieties. For instance, it is employed in the proof of \cref{lem: description contracted pp-div} presented below.

\begin{lemma}\label{lemma addition of polyhedra}
Let $N$ be a lattice and let $\Delta_{1},\Delta_{2}\in \mathrm{Pol}(N_{\Q},\{0\})$. 
If $\Delta_{1}+\Delta_{2}=\{v\}$ for some $v\in N_{\Q}$, then $\Delta_{1}=\{v_{1}\}$ and $\Delta_{2}=\{v_{2}\}$ with $v_{1}+v_{2}=v$.
\end{lemma}

\begin{proof}
Assume first that $\Delta_{1}$ is not a singleton. 
Then for any pair $v_{1},v_{1}'\in \Delta_{1}$ we have
$[v_{1},v_{1}']:=\{\lambda v_{1}+(1-\lambda)v_{1}' \mid \lambda\in[0,1]\}\subset \Delta_{1}$.
If $v_{1}\neq v_{1}'$, then for any $w\in \Delta_{2}$ we have $[v_{1},v_{1}']+w \subset \Delta_{1}+\Delta_{2}$.
However, since $\Delta_{1}+\Delta_{2}=\{v\}$, this implies $[v_{1},v_{1}']+w\subset \{v\}$, which is impossible. 
Thus $\Delta_{1}$ must be a singleton. By the same argument, $\Delta_{2}$ is also a singleton. 
Finally, if $\Delta_{1}=\{v_{1}\}$ and $\Delta_{2}=\{v_{2}\}$, then $v_{1}+v_{2}=v$.
\end{proof}

\begin{proposition}\label{lem: description contracted pp-div}
Let $X$ be an affine $G$-normal variety, $X^{\#}$ its associated normal $T$-variety, and let $\D\in\PPDiv_{\Q}(X/G,\{0\})$ such that $U^{\#}\simeq X(\D)$ as $T$-varieties. Then
\[
\D = \sum \{v_D\} \otimes D,
\]
where $D$ runs over the prime divisors of $Y$ and $v_D \in N_\Q$.
\end{proposition}

\begin{proof}
Write the pp-divisor as 
\[
\Delta = \sum \Delta_{D}\otimes D
\] 
and let $\Phi:U^{\#} \longrightarrow T/G$
be the corresponding $T$-equivariant morphism. Since the morphism $\Phi:X^{\#}\to T/G$ is $T$-equivariant, it follows that every $T$-orbit of $X^{\#}$ is of the same dimension as $T$ and, therefore, every $T$-orbit of $X^{\#}$ is closed. For any $y\in X/G$, the set-theoretic fiber $\pi^{-1}(\{y\})$ is a single $T$-orbit. On the other hand, the number of orbits in $\pi^{-1}(\{y\})$ equals the number of faces of the fiber polyhedron $\Delta_{y}$ (see \cite[Corollary 7.11]{AH06}, valid in any characteristic).  

Given that the tail cone of $\Delta$ is $\{0\}$, there exist $v_{y}^{-},v_{y}^{+}\in N_{\Q}$ such that 
\[
\Delta_{y} = \{v_{y}^{-}\lambda + v_{y}^{+}(1-\lambda) \mid \lambda \in [0,1]\cap \Q\}.
\] 
If $v_{y}^{-}\neq v_{y}^{+}$, $\Delta_{y}$ has two faces, so $\pi^{-1}(\{y\})$ would contain at least two distinct $T$-orbits, a contradiction. Thus, $v_{y}^{-} = v_{y}^{+}$, so $\Delta_{y}$ is a singleton.  

By \cref{lemma addition of polyhedra}, this implies that each $\Delta_{D}$ is a singleton for all $y\in X/G$. Hence, all polyhedra $\Delta_{D}$ in $\Delta$ are singletons, proving the assertion.

\end{proof}

\section{Functoriality and semilinear morphisms}\label{chapter semilinear morphisms}
\indent

In \cref{section: semilinear pp-div}, we present the notion of \textit{semilinear morphisms of pp-divisors}. Then we focus in \cref{section: semilinear equivariant morphisms} on how these morphisms are related to the semilinear equivariant morphisms between their respective varieties.

In order to do this, we study first the functoriality of the Altmann-Hausen construction in \cref{section: functoriality}. And for the convenience of the reader, we recall the definition of semilinear morphisms in \cref{section: semilinear varieties}.

\subsection{ Functoriality of the Altmann-Hausen construction}\label{section: functoriality}
\indent

As stated in \cref{section Polyhedral Divisors}, proper polyhedral divisors form a category. Besides, by \cref{theoremmainaltmannhausensplitintro}, there is an assignment $\D\mapsto X(\D)$ from pp-divisors to affine normal varieties endowed with an effective torus action. This assignment actually defines a functor $X:\mathfrak{PPDiv}(k)\to\mathcal{E}(k)$, where $\mathcal{E}(k)$ stands for the category of affine normal varieties endowed with an effective action of a split algebraic torus over $k$ and whose morphisms are equivariant morphisms of varieties over $k$. In order to prove this statement, we need to explain how the assignment works on morphisms.

Let $\D$ and $\D'$ be two objects in $\mathfrak{PPDiv}(k)$ and $(\psi,F,\mathfrak{f}):\D'\to\D$ be a morphism of pp-divisors over $k$. This morphism induces a morphism of modules given by 
\begin{align*}
   \H^{0}(Y,\mathscr{O}_{Y}(\D(m))) &\to\H^{0}(Y',\mathscr{O}_{Y'}(\D'(F^{*}(m)))), \\ h  &\mapsto \mathfrak{f}(m)\psi^{*}(h),
\end{align*} 
compatible with the $\H^{0}(Y,\mathscr{O}_{Y})$ and $\H^{0}(Y',\mathscr{O}_{Y'})$-module structures. Hence, all these morphisms fit together into a graded morphism 
\[A[Y,\D]=\bigoplus_{m\in \omega^{\vee}\cap M}\H^{0}(Y,\mathscr{O}_{Y}(\D(m)))\to \bigoplus_{m\in \omega'^{\vee}\cap M'}\H^{0}(Y',\mathscr{O}_{Y'}(\D'(m)))=A[Y',\D'],\]
which turns into an equivariant morphism 
\[X(\psi,F,\mathfrak{f}):=(\varphi,f):X(\D)\to X(\D'), \] 
where $\varphi:T'\to T$ is determined by $F:N'\to N$.

\begin{proposition}\label{Proposition X is faithful}
The assignment $\D\mapsto X(\D)$ defines a faithful covariant functor $X:\mathfrak{PPDiv}(k)\to\mathcal{E}(k)$.
\end{proposition}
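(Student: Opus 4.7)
The plan is to verify the three conditions implicit in the statement: well-definedness of $X$ on morphisms (which includes checking that the prescribed map is a graded algebra homomorphism), preservation of identities and composition, and injectivity on hom-sets. I would organize the proof around these three blocks, with the last being the technical heart.

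First, for well-definedness, given a morphism $(\psi,F,\mathfrak{f})$ of pp-divisors I check that the prescription $h \mapsto \mathfrak{f}(m)\psi^{*}(h)$ indeed maps $H^{0}(Y,\mathscr{O}_{Y}(\D(m)))$ into the correct graded piece of $A[Y',\D']$. Starting from $\divr(h)+\D(m) \geq 0$ and pulling back along $\psi$, the inequality defining a morphism of pp-divisors in \cref{def83}, evaluated at $m$, together with the identity $F_{*}(\D')(m)=\D'(F^{*}(m))$ following from the definition of polyhedral pushforward in \cref{def83} and the linearity of support functions in \cref{Lemma 1.4 AH06}, yields the required nonnegativity. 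Multiplicativity in $m$ reduces to the identity $\mathfrak{f}(m_{1}+m_{2}) = \mathfrak{f}(m_{1})\mathfrak{f}(m_{2})$ immediate from the evaluation of a plurifunction in \cref{defpluri}. Taking $\Spec$ then produces an equivariant morphism of varieties whose torus component is determined by $F$ via the equivalence between split tori and lattices.

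Next I handle identities and composition. The triple $(\id,\id_{N},1)$ clearly induces $h \mapsto 1\cdot h = h$. For composition, I would compute both sides of the equation $X((\psi',F',\mathfrak{f}')\circ(\psi,F,\mathfrak{f}))=X(\psi',F',\mathfrak{f}')\circ X(\psi,F,\mathfrak{f})$ on a graded element $h$ and reduce the match to the two elementary formulas $\psi^{*}(\mathfrak{f}')(m)=\psi^{*}(\mathfrak{f}'(m))$ and $F'_{*}(\mathfrak{f})(m)=\mathfrak{f}((F')^{*}(m))$, both of which are immediate from the definitions of pullback and pushforward of plurifunctions given between \cref{defpluri} and \cref{def83}.

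For faithfulness, assume two triples $(\psi_{i},F_{i},\mathfrak{f}_{i})$, $i=1,2$, induce the same equivariant morphism. The equivalence between split tori and lattices forces $F_{1}=F_{2}$ from the equality of the underlying torus homomorphisms. Restricting the induced graded algebra morphism to degree zero recovers the pullback $H^{0}(Y,\mathscr{O}_{Y}) \to H^{0}(Y',\mathscr{O}_{Y'})$ defined by $\psi_{i}$; since each $\psi_{i}$ is a dominant morphism between integral varieties, it is determined by the resulting pullback on function fields $k(Y) \hookrightarrow k(Y')$, which in turn is intrinsic to the equivariant morphism of varieties, being the induced map on $T$-invariant function fields. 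Hence $\psi_{1}=\psi_{2}$. With $\psi$ and $F$ now common, equality of the algebra maps on any nonzero $m$-homogeneous element gives $\mathfrak{f}_{1}(m)=\mathfrak{f}_{2}(m)$ in $k(Y)^{*}$ for every $m\in M$; writing both plurifunctions on a fixed $\Z$-basis of $N$ and evaluating at the dual basis of $M$ forces equality of coefficients exactly as in the proof of \cref{proposition covariant base change}, yielding $\mathfrak{f}_{1}=\mathfrak{f}_{2}$. The main obstacle is precisely this recovery of $\psi$, since $Y$ is not literally the quotient of $X(\D)$ but only a normal birational model built from the inverse limit of GIT quotients in \cref{Section from affine to pp-divisors}; the remedy is exactly the function field argument above, using that $k(Y)=k(X(\D))^{T}$ is an intrinsic invariant of the $T$-variety $X(\D)$.
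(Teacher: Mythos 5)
Your proposal is correct and follows essentially the same route as the paper: functoriality is checked by the explicit computation on graded pieces, and faithfulness is obtained by recovering $\psi$ from the induced map on $T$-invariant function fields $k(Y)=k(X(\D))^{T}$ (writing a rational function as a ratio of homogeneous elements of equal degree so that the $\mathfrak{f}(m)$ factors cancel), and then $\mathfrak{f}$ from the graded pieces. The one place where you diverge is the recovery of $F$: you deduce $F_{1}=F_{2}$ immediately from $\varphi_{1}=\varphi_{2}$ via the (anti)equivalence between split tori and lattices, which is legitimate since morphisms in $\mathcal{E}(k)$ are pairs $(\varphi,f)$; the paper instead runs a geometric argument, passing to $\bar{k}$ and producing a point whose image under the dominant morphism has trivial isotropy (via orbit monoids and \cref{proposition orbits}), which proves the stronger fact that the underlying variety morphism $f$ alone already determines $\varphi$ and hence $F$. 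Your shortcut is cleaner under the stated conventions; the paper's argument would be the one needed if the torus homomorphism were not regarded as part of the data. One small caution: your intermediate claim that the degree-zero restriction $H^{0}(Y,\mathscr{O}_{Y})\to H^{0}(Y',\mathscr{O}_{Y'})$ recovers $\psi$ is not by itself sufficient when $Y$ is not affine; but you immediately fall back on the function-field argument, which is the correct (and the paper's) justification.
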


\begin{proof}
We prove the compatibility with compositions. Let $\D$, $\D'$ and $\D''$ be objects in $\mathfrak{PPDiv}(k)$. Let $(\psi,F,\mathfrak{f}):\D'\to\D$ and $(\psi',F',\mathfrak{f}'):\D''\to\D'$ be morphisms of pp-divisors. By definition, the composition in $\mathfrak{PPDiv}(k)$ is given by
\[(\psi,F,\mathfrak{f})\circ(\psi',F',\mathfrak{f}')=(\psi\circ\psi',F\circ F',F_{*}(\mathfrak{f}')\cdot \psi'^{*}(\mathfrak{f})).\]
The equivariant morphism $X(\psi,F,\mathfrak{f})$ corresponds to the morphism of modules given by 
\begin{align*}
   \H^{0}(Y,\mathscr{O}_{Y}(\D(m))) &\to\H^{0}(Y',\mathscr{O}_{Y'}(\D'(F^{*}(m)))), \\ h  &\mapsto \mathfrak{f}(m)\psi^{*}(h),
\end{align*}
and $X(\psi',F',\mathfrak{f}')$ corresponds to
\begin{align*}
   \H^{0}(Y',\mathscr{O}_{Y'}(\D'(m))) &\to\H^{0}(Y'',\mathscr{O}_{Y''}(\D''(F'^{*}(m)))), \\ h  &\mapsto \mathfrak{f}'(m)\psi'^{*}(h).
\end{align*}
Therefore, the composition induces the following morphism on the modules 
\[\begin{array}{ccccl}
\H^{0}(Y,\mathscr{O}_{Y}(\D(m))) &\to &\H^{0}(Y',\mathscr{O}_{Y'}(\D'(F^{*}(m)))) &\to &\H^{0}(Y'',\mathscr{O}_{Y''}(\D''(F'^{*}(F^{*}(m))))) \\
h  &\mapsto & \mathfrak{f}(m)\psi^{*}(h) &\mapsto & \mathfrak{f}'(F^{*}(m))\psi'^{*}(\mathfrak{f}(m)\psi^{*}(h)) \\
& & & = & [\mathfrak{f}'(F^{*}(m))\psi'^{*}(\mathfrak{f}(m))]\psi'^{*}(\psi^{*}(h)) \\
& & & = & [F_{*}(\mathfrak{f}')\cdot \psi'^{*}(\mathfrak{f})](m)(\psi\circ\psi')^{*}(h) ,
\end{array}\]
which coincides with the morphism induced by $(\psi\circ\psi',F\circ F',F_{*}(\mathfrak{f}')\cdot \psi'^{*}(\mathfrak{f}))$. Hence, both define the same graded morphism between the graded algebras $A[Y'',\D'']$ and $A[Y,\D]$ and, therefore, 
 \begin{align*}
 X((\psi,F,\mathfrak{f})\circ(\psi',F',\mathfrak{f}')) &=X(\psi\circ\psi',F\circ F',F_{*}(\mathfrak{f}')\cdot \psi'^{*}(\mathfrak{f})) \\ &=X(\psi,F,\mathfrak{f})\circ X(\psi',F',\mathfrak{f}').
 \end{align*}  
This proves that the assignment is a covariant functor. It remains to prove that it is faithful.

Let $\D$ and $\D'$ be two objects in $\mathfrak{PPDiv}(k)$. Let $(\psi_{1},F_{1},\mathfrak{f}_{1})$ and $(\psi_{2},F_{2},\mathfrak{f}_{2})$ be two semilinear morphisms of pp-divisors from $\D'\to\D$ such that $X(\psi_{1},F_{1},\mathfrak{f}_{1})=X(\psi_{2},F_{2},\mathfrak{f}_{2})=(\varphi_{},f_{})$.

Notice that if $\psi_{1}^{*},\psi_{2}^{*}:k(Y)\to k(Y')$ are equal, then $\psi_{1}=\psi_{2}$. Given that $k(Y)=k(X)^{T}$, every rational function $f\in k(Y)$ can be written as a quotient of $g$ and $h$ in $\H^{0}(Y,\mathscr{A}_{m})$ for some $m\in M$. Hence,
\[\psi_{1}^{*}(f)=\psi_{1}^{*}\left(\frac{g}{h}\right)=\frac{\mathfrak{f}_{1}(m)\psi_{1}^{*}\left(g\right)}{\mathfrak{f}_{1}(m)\psi_{1}^{*}\left(h\right)}=\frac{\mathfrak{f}_{2}(m)\psi_{2}^{*}\left(g\right)}{\mathfrak{f}_{2}(m)\psi_{2}^{*}\left(h\right)}=\psi_{2}^{*}\left(\frac{g}{h}\right)=\psi_{2}^{*}(f),\]
where the central equality follows from the fact that both morphisms define the same morphism between the graded algebras. Thus, it follows that $\psi_{1}=\psi_{2}=:\psi$. 

Given that $(\psi,F_{1},\mathfrak{f}_{1})$ and $(\psi,F_{2},\mathfrak{f}_{2})$ define the same morphism of graded algebras, we have that $\mathfrak{f}_{1}(m)\psi^{*}(h)=\mathfrak{f}_{2}(m)\psi^{*}(h)$ for every $m\in\omega^{\vee}\cap M$ and $h\in\H^{0}(Y,\mathscr{O}_{Y}(\D(m)))$. Hence, $\mathfrak{f}_{1}=\mathfrak{f}_{2}$.

In order to prove $F_{1}=F_{2}$, it suffices to find a point $x\in X$ such that $f(x)\in X'$ has a trivial isotropy group, i.e. $T'_{f_{}(x)}=\{1_{T'}\}$. Let $x'\in X'$ such that its isotropy group is trivial, for example a general orbit whose orbit cone is $\omega_{\D'}$. By \cref{proposition orbits}, we have that $T'_{x'}=\{1_{T'}\}$ is equivalent to $M(x')=M'$, where $M(x')$ is the orbit lattice of $x'$. Let $\{m_{1},\dots,m_{r}\}\subset S(x')$ be a set of generators of the orbit monoid $S(x')$. By definition, for every $i\in\{1,\dots,r\}$, there exists $f_{m_{i}}\in A_{m_{i}}$ such that $f_{m_{i}}(x')\neq 0$. Define 
\[U:=\bigcap_{i=1}^{r}D_{f_{m_{i}}}.\]
Notice that, for every $x''\in U$, we have that $S(x')\subset S(x'')$. Then, we have that $M(x')\subset M(x'')\subset M'$. This implies that $M(x'')=M'$. Otherwise stated, all the elements of $U$ have trivial isotropy group. Finally, given that $f_{}$ is dominant and $U\subset X'$ is open, we have that there exists $x\in X$ such that $f_{}(x)$ has a trivial isotropy group. Then, the assertion holds.
\end{proof}

Let $T$ be a split algebraic torus over $k$ and $N$ be its cocharacter lattice. Recall that $\mathfrak{PPDiv}_{N}(k)$ is the full subcategory of all pp-divisors over $k$ whose tail cone is defined on $N_{\Q}$. Likewise, denote by $\mathcal{E}_{T}(k)$ the full subcategory of all affine normal $T$-varieties. By \cref{theorem Altmann hausen split}, for $\D$ an object in $\mathfrak{PPDiv}_{N}(k)$ we have that $X(\D):=\Spec(A[Y,\D])$ is an affine normal $T$-variety over $k$. Then, the functor $X:\mathfrak{PPDiv}_{}(k)\to\mathcal{E}(k)$ induces a functor
\begin{align*}
X:\mathfrak{PPDiv}_{N}(k) &\to \mathcal{E}_{T}(k),\\ 
\D &\mapsto X(\D).\end{align*}

\begin{corollary}\label{corollary X is faithful}
   The functor $X:\mathfrak{PPDiv}_{N}(k)\to \mathcal{E}_{T}(k)$ is faithful and covariant.
\end{corollary}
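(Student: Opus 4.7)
The plan is to deduce \cref{corollary X is faithful} as a direct consequence of \cref{Proposition X is faithful} by observing that we are restricting the already-established functor $X\colon\mathfrak{PPDiv}(k)\to\mathcal{E}(k)$ to full subcategories on both sides.

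First I would check that the restriction is well-defined. By definition, $\mathfrak{PPDiv}_{N}(k)$ consists of those pp-divisors whose tail cone lives in $N_{\Q}$. For such a $\D$, part \eqref{theorem Altmann hausen split part a} of \cref{theorem Altmann hausen split} guarantees that $X(\D)=\Spec(A[Y,\D])$ is a geometrically integral, geometrically normal affine $k$-variety equipped with an effective action of the split torus $T=\Spec(k[M])$, where $M=\Hom_{\Z}(N,\Z)$. In particular, $X(\D)\in\mathcal{E}_{T}(k)$. On morphisms, a triple $(\psi,F,\mathfrak{f})\colon\D'\to\D$ in $\mathfrak{PPDiv}_{N}(k)$ has $F\colon N'\to N$ an endomorphism of $N$ (both tail cones live in $N_{\Q}$), so the induced equivariant morphism $X(\psi,F,\mathfrak{f})\colon X(\D)\to X(\D')$ is a morphism of $T$-varieties, i.e.\ a morphism in $\mathcal{E}_{T}(k)$.

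Second, covariance is inherited from \cref{Proposition X is faithful} since the composition law in $\mathfrak{PPDiv}_{N}(k)$ and $\mathcal{E}_{T}(k)$ is the one in the ambient categories, and $X$ has already been shown to respect it.

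For faithfulness, I would use that $\mathfrak{PPDiv}_{N}(k)$ is a full subcategory of $\mathfrak{PPDiv}(k)$, so for any two objects $\D,\D'$ we have the equality
\[
\mathrm{Mor}_{\mathfrak{PPDiv}_{N}(k)}(\D',\D)=\mathrm{Mor}_{\mathfrak{PPDiv}(k)}(\D',\D).
\]
If two morphisms in $\mathfrak{PPDiv}_{N}(k)$ have the same image under $X$, then by \cref{Proposition X is faithful} they agree as morphisms in $\mathfrak{PPDiv}(k)$, hence they agree as morphisms in the full subcategory. This gives the required faithfulness.

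No step is expected to present any genuine obstacle: once it is noted that full subcategories preserve faithfulness and that \cref{theorem Altmann hausen split} supplies the target category, the corollary is formal. The only point worth verifying carefully is that the induced equivariant morphism is indeed a $T$-morphism (not merely an equivariant morphism for some pair of tori), which is immediate because fixing $N$ fixes $T=\Spec(k[M])$ and $F=\id_{N}$ on the common lattice gives $\varphi=\id_{T}$ on the acting torus.
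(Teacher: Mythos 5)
Your proposal is correct and matches the paper's (implicit) argument exactly: the corollary is obtained by restricting the faithful covariant functor of \cref{Proposition X is faithful} to the full subcategories $\mathfrak{PPDiv}_{N}(k)$ and $\mathcal{E}_{T}(k)$, which preserves faithfulness and covariance. The only minor quibble is your closing remark that $F=\id_{N}$: morphisms in $\mathfrak{PPDiv}_{N}(k)$ may have $F$ any lattice endomorphism of $N$ respecting tail cones, but since $\mathcal{E}_{T}(k)$ is a \emph{full} subcategory of $\mathcal{E}(k)$ its morphisms are still pairs $(\varphi,f)$ with $\varphi$ not necessarily the identity, so nothing breaks.
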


As stated in \cref{Proposition X is faithful}, the functor $X:\mathfrak{PPDiv}(k)\to\mathcal{E}(k)$ is faithful, but it is not full. For example, let $\D\in\PPDiv_{\Q}(\P_{k}^{2},\omega)$ be any pp-divisor and $\kappa:F_{1}\to \P_{k}^{2}$ a birational morphism from the Hirzebruch surface to the projective plane. By pulling back, we have $\kappa^{*}\D\in\PPDiv_{\Q}(F_{1},\omega)$. Both pp-divisors define the same normal $T$-variety, then we have the identity map $(\id_{T},\id):X(\D)\to X(\psi^{*}\D)$. However, this map does not arise from a morphism of pp-divisors, because that would imply that there exists a non-constant morphism $\tilde{\kappa}:\P_{k}^{2}\to F_{1}$ such that 
\[(\kappa,\id,\mathfrak{1})\circ(\tilde{\kappa},\id,\mathfrak{1})=(\id_{\P_{k}^{2}},\id,\mathfrak{1}),\]
which gives a contradiction. Thus, not every dominant equivariant morphism between two fixed affine normal varieties endowed with an effective action of a split algebraic torus arises from a morphism of a pair of fixed pp-divisors. Actually, the morphism above arises rather from a pair of morphisms
\[\xymatrixcolsep{4pc}\xymatrix{ \D & \kappa^{*}\D \ar[l]_-{(\kappa,\id_{N},1)} \ar[r]^-{(\id,\id_{N},1)} & \kappa^{*}\D .}\]

Let us call a morphism of pp-divisors $(\psi,F,\mathfrak{f})$ \textit{dominating} if $X(\psi,F,\mathfrak{f})$ is dominant. By \cite[Theorem 8.8]{AH06}, when $k=\bar{k}$ and $k$ is of characteristic zero, every dominant equivariant morphism of affine normal $T$-varieties arises from a pair of dominating morphism of pp-divisors, i.e. from a data 
\[\xymatrixcolsep{4pc}\xymatrix{ \D & \kappa^{*}\D \ar[l]_-{(\kappa,\id_{N},1)} \ar[r]^-{(\psi,F,\mathfrak{f})} & \kappa^{*}\D },\]
where $(\psi,F,\mathfrak{f})$ is a dominating morphism of pp-divisors and $\kappa$ is a projective birational morphism from a normal semi-projective variety.
 In the following we will prove a more general result involving \textit{semilinear} morphisms. These morphisms form a larger family than morphisms of varieties over $k$.

\subsection{ Semilinear morphism of varieties}\label{section: semilinear varieties}
\indent

Recall that $L/k$ stands for a finite Galois extension with Galois group $\Gamma_{L}$. These assumptions are kept for the subsequent sections.

Semilinear morphisms seem to be the right language to deal with Galois descent problems. These morphisms have been used, for instance, by Huruguen \cite{Hur11} and Borovoi \cite{Bor20}.

For any $\gamma\in\Gamma_{L}$, we define the \emph{twist $S^{\gamma}$ of $S$ with respect to $\gamma$} as the pullback 
\[\xymatrix{S^{\gamma} \ar[r]^{\beta_{\gamma}} \ar[d] & S \ar[d] \\ \Spec(L) \ar[r]_{\gamma^{\natural}} & \Spec(L),}\]
where $\gamma^{\natural}:=\Spec(\gamma^{-1})$ and $\beta_{\gamma}$ is the respective projection. Morphisms can also be \emph{twisted}. If $f:S_{1}\to S_{2}$ is a morphism of schemes over $L$, the \emph{twist of $f$ with respect to $\gamma$}, denoted by $f^{\gamma}:S_{1}^{\gamma}\to S_{2}^{\gamma}$, is the pullback of $f$ via $\gamma^{\natural}$.

Whenever $S$ is a group scheme over $L$, then $S^{\gamma}$ is also a group scheme and the morphism $\beta_{\gamma}$ respects the group scheme structure.
\begin{definition}
Let $Y$ and $Z$ be schemes over $L$ and $\gamma\in\Gamma_{L}$. A \textit{semilinear morphism with respect to} $\gamma$ is a morphism of schemes $h_{\gamma}:Y\to Z$ satisfying the following commutative diagram
\[ \xymatrix{ Y \ar[r]^{h_{\gamma}} \ar[d] & Z \ar[d] \\ \Spec(L) \ar[r]_{\gamma^{\natural}} & \Spec(L) ,} \]
where $\gamma^{\natural}:=\Spec(\gamma^{-1})$. Moreover, we say that $h_{\gamma}$ is a \textit{semilinear isomorphism} if $h_{\gamma}$ is an isomorphism of schemes. 
\end{definition}

We say that a semilinear morphism $h_{\gamma}$ is \textit{dominant} if $h_{\gamma}$ is dominant as a morphism of schemes.

\begin{remark}
	To give a semilinear morphism $h_{\gamma}:Y\to Z$ with respect to $\gamma$ is equivalent to giving a morphism $h_{\gamma}':Y\to Z^{\gamma}$ of schemes over $L$ such that $h_{\gamma}=h_{\gamma}'\circ\beta_{\gamma}$.
\end{remark}

The main reason to choose the twist $S^{\gamma}$ defined via $\gamma^{\natural}$ instead of $\gamma^{*}:=\Spec(\gamma)$ is to keep the coherence or \emph{covariance} in the following sense: for any pair $\gamma_{1}$ and $\gamma_{2}$ of elements in $\Gamma_{L}$ and any pair of semilinear morphisms $h_{\gamma_{1}}:S_{1}\to S_{2}$ and $g_{\gamma_{2}}:S_{2}\to S_{3}$ of schemes over $L$, the composition 
\[
\xymatrix{ 
S_{1} \ar[r]^{h_{\gamma_{1}}} \ar@/^2pc/[rr]^{g_{\gamma_{2}}\circ h_{\gamma_{1}}} \ar[d] & S_{2} \ar[r]^{g_{\gamma_{2}}} \ar[d] & S_{3} \ar[d] \\
\Spec(L) \ar[r]_{\gamma_{1}^{\natural}} \ar@/_2pc/[rr]_{(\gamma_{2}\cdot\gamma_{1})^{\natural}} & \Spec(L) \ar[r]_{\gamma_{2}^{\natural}} & \Spec(L)  
}
\] 
is indeed a semilinear morphism with respect to $\gamma_{2}\cdot \gamma_{1}$. 

Clearly, any morphism of varieties over $L$ is a semilinear morphism with respect to the neutral element of the Galois group. Then, if we denote by $\SAut(Y)$ the group of semilinear automorphisms of a variety $Y$ over $L$, there is an exact sequence
\begin{equation}\label{Sequence saut aut}
1\to \Aut(Y)\to\SAut(Y)\to \Gamma_{L}.
\end{equation}

Let $G$ and $G'$ be algebraic groups over $L$ and $\gamma\in\Gamma_{L}$. A \textit{semilinear group homomorphism with respect to} $\gamma$ is a morphism of schemes $\varphi_{\gamma}:G\to G'$ such that the respective morphism $\varphi_{\gamma}':G\to G'^{\gamma}$ is a morphism of group schemes over $L$. Moreover, we say that $\varphi_{\gamma}$ is a \textit{semilinear group isomorphism} if $\varphi_{\gamma}'$ is an isomorphism of group schemes. We denote by $\SAut_{\mathrm{gp}}(G)$ the group of such automorphisms for a fixed group-scheme $G$.

Let $G$ and $G'$ be algebraic groups over $L$, $X$ be a $G$-variety and $X'$ be a $G'$-variety. Let $\gamma\in\Gamma_{L}$. A \textit{semilinear equivariant morphism with respect to} $\gamma$ is a pair $(\varphi_{\gamma},f_{\gamma})$ such that $\varphi_{\gamma}:G\to G'$ is a semilinear group homomorphism, $f_{\gamma}:X\to X'$ is a semilinear morphism, both with respect to $\gamma$, and the following diagram of semilinear morphisms commutes 
\[\xymatrix{ G\times X \ar[r]^{\mu} \ar[d]_{(\varphi_{\gamma},f_{\gamma})} & X \ar[d]^{f_{\gamma}} \\ G'\times X' \ar[r]_{\mu'} & X‘,} \]
where $\mu$ and $\mu'$ are the respective actions of $G$ on $X$ and $G'$ on $X'$.

The group of semilinear equivariant automorphisms over $L$ is denoted by $\SAut_{G}(X)$, which naturally contains $\Aut_{G}(X)$. Define $\SAut(G;X)$ as the subgroup of $\SAut_{\mathrm{gp}}(G)\times\SAut_{G}(X)$ defined as the preimage of the diagonal inclusion $\Gamma_{L}\to\Gamma_{L}\times\Gamma_{L}$. We have then the following exact sequence
\[1\to\Aut_{\mathrm{gp}}(G)\times\Aut(X)\to\SAut(G;X)\to\Gamma_{L},\]
where the map at the right is defined as $(\varphi_{\gamma},f_{\gamma})\mapsto \gamma$.
 
\begin{definition}\label{definition semilinear equivariant action}
Let $G$ be an algebraic group over $L$ and $X$ be a $G$-variety over $L$. Let $H$ be an abstract group. A \textit{semilinear equivariant action of $H$ over $X$} is a group homomorphism $\varphi:H\to \SAut(G;X)$. If $H=\Gamma_{L}$ and $\varphi$ is a section of the exact sequence above, then $\varphi$ is a \textit{Galois semilinear equivariant action}. In the particular case when $G$ is trivial, these are called \textit{semilinear action of $H$ on $S$}, or a \textit{$H$-semilinear action on $S$}, and \textit{Galois semilinear action} respectively.
\end{definition}

\subsection{ Semilinear morphisms of pp-divisors}\label{section: semilinear pp-div}
\indent

In the \cref{section: functoriality}, we saw that there is a covariant functor $X:\mathfrak{PPDiv}(L)\to\mathcal{E}(L)$, which is faithful but not full. In this section, we consider a \emph{bigger} category. The objects remain the same, but a larger family of morphisms is being considered.

\begin{definition}\label{definition semilinear mpp-div}
 Let $\D$ and $\D'$ be in $\mathfrak{PPDiv}_{}(L)$, the category of pp-divisors over $L$. Let $\gamma\in\Gamma_{L}$. A \textit{semilinear morphism of pp-divisors with respect to $\gamma$} is a triple $(\psi_{\gamma},F,\mathfrak{f}):\D\to \D'$, where $\psi_{\gamma}:Y\to Y'$ is a semilinear dominant morphism, $F:N\to N'$ is a morphism of lattices such that $F(\mathrm{Tail}(\D))\subset \mathrm{Tail}(\D')$ and $\mathfrak{f}\in L(N',Y)^{*}$ is a plurifunction such that 
\[\psi_{\gamma}^{*}(\D')\leq F^{*}(\D)+\mathrm{div}(\mathfrak{f}).\]
\end{definition}

Let $(\psi_{\gamma},F,\mathfrak{f}):\D\to\D'$ be a semilinear morphism of pp-divisors over $L$. For every $m\in M'$, we have a morphism of modules over $\H^{0}(Y',\mathscr{O}_{Y'})$ and $\H^{0}(Y,\mathscr{O}_{Y})$ respectively, (notice that in this case it is only $k$-linear)
\begin{align*}
\H^{0}(Y',\mathscr{O}_{Y'}(\D'(m))) &\to\H^{0}(Y,\mathscr{O}_{Y}(\D(F^{*}(m)))), \\ 
h &\mapsto \mathfrak{f}(m)\psi_{\gamma}^{*}(h)
\end{align*}
that fit together into a morphism of $M$-graded $L$-algebras satisfying the following commutative diagram
\[\xymatrix{A[Y',\D'] \ar[r]& A[Y,\D]  \\ L \ar[r]_{\gamma^{-1}} \ar[u] & L \ar[u] \, ,}\]
which gives a semilinear equivariant morphism 
\[\xymatrixcolsep{12pc}\xymatrix{X(\D) \ar[r]^{X(\psi_{\gamma},F,\mathfrak{f})=(\varphi_{\gamma},f_{\gamma})}& X(\D')  \\ L \ar[r]_{\gamma^{\natural}} \ar[u] & L \ar[u] \, .}\]
Thus, semilinear morphisms of pp-divisors induce semilinear equivariant morphisms of affine normal varieties with a split torus action on $L$. As in the case of morphisms of pp-divisors, let us call \textit{dominating} those semilinear morphisms of pp-divisors inducing dominant semilinear equivariant morphisms. Denote by $\mathfrak{PPDiv}(L/k)$ the category of pp-divisors over $L$ with dominating semilinear morphisms and by $\mathcal{E}(L/k)$ the category of affine normal varieties over $L$ endowed with an effective torus action and whose morphisms are dominant semilinear equivariant morphisms of varieties over $L$. In this setting, there is also a functor $X:\mathfrak{PPDiv}(L/k)\to \mathcal{E}(L/k)$, sending semilinear morphisms of pp-divisors to semilinear equivariant morphisms.

\begin{proposition}\label{proposition faithful functor}
The assignment $\D\mapsto X(\D)$ induces a faithful covariant functor $X:\mathfrak{PPDiv}(L/k)\to \mathcal{E}(L/k)$.
\end{proposition}

\begin{proof}
The proof that the assignment is a functor is analogous to the proof of \cref{Proposition X is faithful} and the functor is covariant by construction.

Let $\D$ and $\D'$ be two objects in $\mathfrak{PPDiv}(L/k)$. Let $(\psi_{\gamma,1},F_{\gamma,1},\mathfrak{f}_{\gamma,1})$ and $(\psi_{\eta,2},F_{\eta,2},\mathfrak{f}_{\eta,2})$ be semilinear morphisms of pp-divisors from $\D'\to\D$ such that 
\[
X(\psi_{\gamma,1},F_{\gamma,1},\mathfrak{f}_{\gamma,1})=X(\psi_{\eta,2},F_{\eta,2},\mathfrak{f}_{\eta,2})=(\varphi_{\gamma},f_{\gamma}).
\] 
First, given that both define the same semilinear equivariant morphism, it follows that $\gamma=\eta$.

The proof of $\psi_{\gamma,1}=\psi_{\gamma,2}$, $F_{\gamma,1}=F_{\gamma,2}$ and $\mathfrak{f}_{\gamma,1}=\mathfrak{f}_{\gamma,2}$ is adapted straightforwardly from the proof of \cref{Proposition X is faithful}. 
\end{proof}

\subsection{ Semilinear equivariant morphisms }\label{section: semilinear equivariant morphisms}
\indent

As morphisms of pp-divisors induce equivariant morphisms of affine normal varieties endowed with effective torus actions, semilinear morphisms of pp-divisors similarly induce semilinear equivariant morphisms of affine normal varieties endowed with effective torus actions. However, not every dominant semilinear equivariant morphism of affine normal varieties arises from a semilinear morphism of pp-divisors. 

In the following, we will prove that dominant semilinear equivariant morphisms between affine normal varieties endowed with an effective torus action arise from a pair of dominating semilinear morphisms of pp-divisors. The next results are intermediary steps that will help us to achieve our goal.

\begin{proposition}\label{lemma semilinear resolution}
Let $Y$ and $Y'$ be normal semi-projective varieties over $L$ and $\gamma\in\Gamma_{L}$.  Let $\xymatrix{h_{\gamma}:Y \ar@{-->}[r] & Y'}$ be a rational semilinear morphism with respect to $\gamma$, then there exists a normal semi-projective variety $\tilde{Y}$ over $L$ such that the following triangle is commutative
\[\xymatrix{ & \tilde{Y} \ar[dl]_{\kappa} \ar[dr]^{\psi_{\gamma}}& \\ Y \ar@{-->}[rr] & & Y' ,}\]
where $\kappa$ is a birational morphism of varieties over $L$ and $\psi_{\gamma}$ is a birational semilinear morphism with respect to $\gamma$. Moreover, if there exists a regular semilinear map $h'_{\gamma}:\aff{Y}\to\aff{Y}'$ such that $h_{\gamma}\circ r_{Y}=r_{Y'}\circ h'_{\gamma}$, then $\kappa$ is projective.
\end{proposition}

\begin{proof}
Consider the diagram corresponding to the semilinear rational map 
\[\xymatrix{ Y \ar[d] \ar@{-->}[r]^{h_{\gamma}} & Y' \ar[d] \\ L \ar[r]_{\gamma^{\natural}} & L .}\]
Denote by $Y'':=\gamma^{-1*}Y'$ the variety over $L$ corresponding to the composition 
\[\xymatrix{Y' \ar[r] & L \ar[r]^{(\gamma^{-1})^{\natural}} & L.}\] 
Then, $h_{\gamma}$ is a rational morphism of varieties over $L$ between $Y$ and $Y''$. By \cref{propositiondominantresolution0}, there exists a normal semi-projective variety $\tilde{Y}$ over $L$ with birational morphisms $\kappa$ and $\psi_{\gamma}$ such that the following diagram commutes
\[\xymatrix{ & \tilde{Y} \ar[dl]_{\kappa} \ar[dr]^{\psi_{\gamma}}& \\ Y \ar@{-->}[rr] & & Y'' .}\] 
Then, we have that the following diagram commutes
\[\xymatrix{ & \tilde{Y} \ar[dl]_{\kappa} \ar[dr]^{\psi_{\gamma}}& \\ Y \ar@{-->}[rr]_{h_{\gamma}} \ar[d] & & Y' \ar[d] \\ L \ar[rr]_{\gamma^{\natural}} & & L .}\]
Given that $\kappa$ is a morphism of varieties over $L$, we have that $\psi_{\gamma}$ is semilinear with respect to $\gamma$. Under the additional assumption of the existence of the semilinear morphism $h_{\gamma}'$, it follows that $\kappa$ is projective by \cref{propositiondominantresolution}. Then, the assertion holds.
\end{proof}

The following two lemmas were proved over an algebraically closed field of characteristic zero \cite[Lemmas 9.1 and 9.2]{AH06}. Nevertheless, both hold over any field.

\begin{lemma}\label{lemma 9.1 AH06}
Let $Y$ be a normal variety over $k$. If $D$ and $D'$ in $\cadiv_{\Q}(Y)$ are semiample and $\H^{0}(Y,\mathscr{O}(nD))\subset\H^{0}(Y,\mathscr{O}(nD'))$, as subspaces of $k(Y)$, holds for infinitely many $n>0$, then $D\leq D'$.
\end{lemma}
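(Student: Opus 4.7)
The approach is to show the inequality $D\leq D'$ prime-divisor by prime-divisor, using the semiampleness of $D$ to produce, at each prime divisor $E$ of $Y$, a global section of a suitable multiple of $D$ that achieves the minimal possible pole along $E$, and then using the containment hypothesis on global sections to force $D'$ to dominate $D$ along $E$.

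First I would reduce to the integral Cartier case. Since $D$ and $D'$ are semiample $\Q$-Cartier divisors, there is some $n_{0}\in\N$ such that $n_{0}D$ and $n_{0}D'$ are both integral Cartier and their associated sheaves $\mathscr{O}_{Y}(n_{0}D)$ and $\mathscr{O}_{Y}(n_{0}D')$ are generated by global sections. Among the infinitely many $n$ for which the global-section containment holds, one can pick an $n$ that is a positive multiple of $n_{0}$; global generation passes to any positive tensor power. Thus, after replacing $D$ and $D'$ by $nD$ and $nD'$, I may assume $D$ and $D'$ are integral Cartier, that $\mathscr{O}_{Y}(D)$ is globally generated, and that $H^{0}(Y,\mathscr{O}_{Y}(D))\subset H^{0}(Y,\mathscr{O}_{Y}(D'))$; proving $D\leq D'$ in this situation yields the original inequality after dividing back by $n$.

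Next, fix a prime divisor $E\subset Y$ with generic point $\eta_{E}$. Because $\mathscr{O}_{Y}(D)$ is globally generated, there exists a global section $f\in H^{0}(Y,\mathscr{O}_{Y}(D))$ whose germ at $\eta_{E}$ generates the (rank-one free) stalk $\mathscr{O}_{Y}(D)_{\eta_{E}}$; viewing $f$ as a rational function on $Y$, this is equivalent to $\mathrm{ord}_{E}(\mathrm{div}(f))=-\mathrm{ord}_{E}(D)$. By the hypothesis, $f$ also lies in $H^{0}(Y,\mathscr{O}_{Y}(D'))$, hence $\mathrm{div}(f)+D'\geq 0$, which gives $\mathrm{ord}_{E}(\mathrm{div}(f))\geq -\mathrm{ord}_{E}(D')$. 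Combining both inequalities yields $\mathrm{ord}_{E}(D)\leq \mathrm{ord}_{E}(D')$. Since $E$ was arbitrary, we conclude $D\leq D'$ as $\Q$-Cartier divisors.

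The only delicate point in the non-algebraically closed setting is the production of the section $f$ at the (possibly non-closed) generic point $\eta_{E}$. However, the definition of semiampleness recalled in the paper, namely that the sets $Y_{f}$ cover $Y$ for sections $f$ of a suitable $\mathscr{O}_{Y}(nD)$, is precisely the stalkwise surjectivity of the evaluation map at every point of $Y$, which is exactly what the argument above requires. No further Galois-descent input is needed, and the original proof of Altmann--Hausen goes through verbatim over any field.
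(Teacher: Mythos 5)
Your proof is correct and follows essentially the same route as the paper's: semiampleness is used to produce, for each prime divisor (in the paper's version, for each point after base change), a section of some $\mathscr{O}(nD)$ achieving the minimal possible order along it, and the hypothesis $H^{0}(Y,\mathscr{O}(nD))\subset H^{0}(Y,\mathscr{O}(nD'))$ then forces $\mathrm{ord}_{E}(D)\le \mathrm{ord}_{E}(D')$. The only differences are cosmetic --- you work directly over $k$ at generic points of prime divisors, whereas the paper first passes to $\bar{k}$ and argues at points $y$ via the local parts $D_{y}$, $D'_{y}$ --- and both texts make the same implicit, equally unjustified selection of an $n$ that lies in the given infinite set while also making $\mathscr{O}(nD)$ globally generated.
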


\begin{proof}
The divisors $D$ and $D'$ can be written as 
\[D=\sum \alpha_{i}D_{i}\quad\mathrm{and}\quad D'=\sum\alpha_{i}'D',\]
respectively, where the $D_{i}$'s and the $D_{i}'$'s are prime divisors on $Y$. For any $y\in Y$, we define 
\[D_{y}=\sum_{y\in D_{i}} \alpha_{i}D_{i}\quad\mathrm{and}\quad D'_{y}=\sum_{y\in D_{i}'}\alpha_{i}'D'.\]

Given that $D$ is semiample, there exists a section $f\in\H^{0}(Y,\mathscr{O}_{Y}(nD))$, for some $n\in \N$, such that $y\in Y_{f}$. This implies that $\mathrm{div}(f)_{y}+nD_{y}=0$. Since $\H^{0}(Y,\mathscr{O}(\tilde{n}D))\subset\H^{0}(Y,\mathscr{O}(\tilde{n}D'))$ holds for infinitely many $\tilde{n}>0$ and $n$ can be chosen satisfying such a condition, then we have that $f\in\H^{0}(Y,\mathscr{O}(nD'))$. Hence, $0\leq \mathrm{div}(f)_{y}+nD'_{y}$ and, therefore, $D_{y}\leq D'_{y}$ for every $y\in Y$. This implies that $D\leq D'$.
\end{proof}

\begin{lemma}\label{lemma 9.2 AH06}
Let $T$ be a split torus over $k$. Let $\D'$ be an objects in $\mathfrak{PPDiv}(k)$, the category of pp-divisors. Let $s:M\to k(X)^{*}$ such that $s(m)$ is homogeneous of degree $m$ for any $m\in M$ and $\D$ be its respective $\emph{GIT}$-constructed pp-divisor (see \cref{remark choice of section}) such that $\D$ and $\D'$ define the same affine normal $T$-variety, then there exists a plurifunction $\mathfrak{f}\in k(N,Y')^{*}$ such that $\D'=\vartheta^{*}\D+\mathrm{div}(\mathfrak{f})$, where $\vartheta:Y'\to Y$ is the canonical morphism.
\end{lemma}

\begin{proof}
Denote \[\mathscr{A}':=\bigoplus_{m\in\omega_{\D'}^{\vee}\cap M}\mathscr{O}_{Y'}(\D'(m))\] the $\mathscr{O}_{Y'}$-algebra associated to $\D'$, $\tilde{X}':=\Spec_{Y'}(\mathscr{A}')$, $A':=\H^{0}(Y',\mathscr{A}')$ and $X':=\Spec(A')$. 

On the one hand, there is a natural map $r':\tilde{X}'\to X'$, which fits into the following commutative diagram
\[\xymatrix{ 
r'^{-1}(X'^{\mathrm{ss}}(m)) \ar[r] \ar[d]_{} & X'^{\mathrm{ss}}(m) \ar[d]^{} \\ 
Y' \ar[rd]_{\vartheta} \ar[r]^{\vartheta_{m}} & Y'_{m} \\
& Y \ar[u]_{p_{m}} .} \]

On the other hand, by \cref{proposition existence ah-datum split}, the divisor $\D(m)$ is constructed as 
\[\mathscr{O}_{Y}(\D(m)):=\dfrac{1}{s(m)}\mathscr{A}_{m}\subset k(Y)^{*},\]
where $s:M\to k(X)^{*}$ is a section of the degree map and $\mathscr{A}_{m}$ is a sheaf such that $\H^{0}(Y,\mathscr{A}_{m})=A'_{m}$, the elements of degree $m$ of $A'$.

After pulling back $\D(m)$ by $\vartheta:Y'\to Y$, we have that 
\[\H^{0}(Y',\mathcal{O}_{Y'}(\vartheta^{*}\D(m)))=\dfrac{1}{s(m)}A'_{m}\subset k(Y').\]

Given that $X'=X(\D')$, we have that $\H^{0}(Y',\mathscr{O}_{Y'}(\D'(m)))\subset k(Y')$. Hence, by forgetting the grading, we have a multiplicative map
\begin{align*}
\bigcup_{m\in\omega_{\D'}^{\vee}\cap M}\H^{0}(Y',\mathscr{O}_{Y'}(\D'(m))) &\to k(Y') \\
f_{m} &\mapsto f_{m}.
\end{align*}
This map extends to the multiplicative system of homogeneous rational functions on $X'$. This allows us to see the rational maps $s(m)\in k(X)^{*}$ as elements in $k(Y')$ and therefore we can consider $\mathrm{div}(s(m))\in\cadiv(Y')$. Thus, 
\begin{align*}
\H^{0}(Y',\mathscr{O}_{Y'}(\vartheta^{*}\D(m))) &=\dfrac{1}{s(m)}A_{m'} \\
&=\dfrac{1}{s(m)}\H^{0}(Y',\mathscr{O}_{Y'}(\D'(m))) \\
&=\H^{0}(Y',\mathscr{O}_{Y'}(\D'(m)-\mathrm{div}(s(m)))).
\end{align*}
This holds for every $nm$, for $n\in\N$. Then, by \cref{lemma 9.1 AH06}, we have that $\vartheta^{*}\D(m)=\D'+\mathrm{div}(s(m))$. Hence, defining $\mathfrak{f}\in k(N,Y')^{*}$ as the plurifunction such that $\mathrm{div}(\mathfrak{f})(m)=s(m)$, we have that $\vartheta^{*}\D=\D'+\mathrm{div}(\mathfrak{f})$. Then, the assertion holds.
\end{proof}

Now, we present one of the main results of this section.

\begin{theorem}\label{theorem semilinear automorphisms}
Let $\D$ and $\D'$ be two objects in $\mathfrak{PPDiv}(L/k)$ and $\gamma\in\Gamma_{L}$. Let $(\varphi_{\gamma},f_{\gamma}):X(\D)\to X(\D')$ be a dominant semilinear equivariant morphism. Then, there exists a normal semi-projective variety $\tilde{Y}$ over $L$, a projective birational morphism $\kappa:\tilde{Y}\to Y$ of varieties over $L$ and a semilinear morphism of pp-divisors $(\psi_{\gamma},F,\mathfrak{f}):\kappa^{*}\D\to\D'$ such that the following diagram commutes
\[\xymatrix{ & X(\kappa^{*}\D) \ar[dl]_{X(\kappa,\id_{N},1)}^{\cong} \ar[rd]^{X(\psi_{\gamma},F,\mathfrak{f})} & \\ X(\D) \ar[rr]_{(\varphi_{\gamma},f_{\gamma})} & & X(\D') .}\]
Besides, if $(\varphi_{\gamma},f_{\gamma})$ is a semilinear equivariant isomorphism and $\D'$ is minimal, then $\kappa$ can be taken as the identity and $F:N\to N'$ is an isomorphism such that $F(\omega_{\D})= \omega_{\D'}$. Moreover, if $\D$ is also minimal, then $\psi_{\gamma}:Y\to Y'$ is a semilinear equivariant isomorphism.
\end{theorem}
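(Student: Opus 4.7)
The plan is to adapt the proof strategy of \cite[Theorem 8.8]{altmannhausen} to the semilinear setting, using \cref{lemma semilinear resolution} in place of \cref{propositiondominantresolution} at the appropriate step. The overall structure parallels the split case, but every map between geometric objects now carries a $\gamma$-twist that must be tracked throughout.

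First, since $T$ and $T'$ are split over $L$, the semilinear group homomorphism $\varphi_{\gamma}:T\to T'$ induces a $\Z$-linear map $F:N\to N'$ on cocharacter lattices, whose dual is $F^{*}:M'\to M$. The semilinear equivariant morphism $f_{\gamma}$ translates into a $\gamma^{-1}$-semilinear graded ring homomorphism
\[
f_{\gamma}^{\#}:A[Y',\D']\to A[Y,\D]
\]
that sends the $m'$-homogeneous component into the $F^{*}(m')$-component. Because $H^{0}(Y',\mathscr{O}_{Y'}(\D'(m')))\neq 0$ for every $m'\in\omega_{\D'}^{\vee}\cap M'$ and $f_{\gamma}^{\#}$ is injective (as $f_{\gamma}$ is dominant), the inclusion $F^{*}(\omega_{\D'}^{\vee})\subset\omega_{\D}^{\vee}$ follows, i.e., $F(\omega_{\D})\subset\omega_{\D'}$.

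Second, I would restrict $f_{\gamma}^{\#}$ to the $T'$- and $T$-invariants, which by \cref{proposition ppdiv to variety} are $H^{0}(Y',\mathscr{O}_{Y'})$ and $H^{0}(Y,\mathscr{O}_{Y})$ respectively, and extend to function fields to obtain a $\gamma^{-1}$-semilinear field embedding $L(Y')\hookrightarrow L(Y)$, i.e., a rational semilinear morphism $h_{\gamma}:Y\dashrightarrow Y'$. Applying \cref{lemma semilinear resolution} to $h_{\gamma}$ produces the normal semiprojective $\tilde{Y}$, a projective birational morphism $\kappa:\tilde{Y}\to Y$ of $L$-varieties and a projective semilinear morphism $\psi_{\gamma}:\tilde{Y}\to Y'$. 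Since $\kappa$ is birational and $Y$, $\tilde{Y}$ are normal semiprojective, global sections are preserved, so $X(\kappa,\id_{N},1):X(\kappa^{*}\D)\to X(\D)$ is an isomorphism.

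Third, I would construct the plurifunction $\mathfrak{f}\in L(N',\tilde{Y})^{*}$ by comparing $f_{\gamma}^{\#}$ with $\psi_{\gamma}^{*}$ on each graded piece. For any $a\in H^{0}(Y',\mathscr{O}_{Y'}(\D'(m')))$, both $f_{\gamma}^{\#}(a)$ and $\psi_{\gamma}^{*}(a)$ are homogeneous elements of the same degree $F^{*}(m')$ inside the same function field, hence their quotient lies in $L(\tilde{Y})^{*}$ and, after verifying independence of the choice of $a$ via the multiplicativity of $f_{\gamma}^{\#}$ on the extended fraction field, assembles into a plurifunction $\mathfrak{f}$ satisfying $f_{\gamma}^{\#}(a)=\mathfrak{f}(m')\psi_{\gamma}^{*}(a)$. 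The required inequality $\psi_{\gamma}^{*}(\D')\leq F_{*}(\kappa^{*}\D)+\divr(\mathfrak{f})$ then follows from the semilinear analog of \cref{lemma 9.1 AH06} applied graded piece by graded piece, exactly as in the proof of \cref{lemma 9.2 AH06}. This produces the desired semilinear morphism $(\psi_{\gamma},F,\mathfrak{f}):\kappa^{*}\D\to\D'$, and the commutativity of the diagram is immediate from the construction via $f_{\gamma}^{\#}$.

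For the ``in particular'' clause, if $(\varphi_{\gamma},f_{\gamma})$ is a semilinear isomorphism, applying the previous construction to $(\varphi_{\gamma}^{-1},f_{\gamma}^{-1})$ yields an inverse $F':N'\to N$ with $F'(\omega_{\D'})\subset\omega_{\D}$, so $F$ is an isomorphism with $F(\omega_{\D})=\omega_{\D'}$. If moreover $\D'$ is minimal, the rational map $h_{\gamma}:Y\dashrightarrow Y'$ is in fact a morphism: indeed, the GIT quotient systems $\{Y_{\lambda}\}$ and $\{Y'_{\lambda}\}$ are identified by the equivariant isomorphism, so $Y$ dominates $\varprojlim Y'_{\lambda}$ and factors through its canonical reduced component; by normality of $Y$ and minimality of $\D'$ (which characterizes $Y'$ as the normalization of this canonical component), the universal property of normalization yields a morphism $Y\to Y'$, so $\kappa$ can be taken to be $\id_{Y}$. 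If $\D$ is also minimal, the symmetric argument applied to the inverse gives a semilinear morphism $Y'\to Y$ that is inverse to $\psi_{\gamma}$, so $\psi_{\gamma}$ is a semilinear isomorphism. The main obstacle I anticipate is the careful bookkeeping in the third step: verifying that the candidate plurifunction $\mathfrak{f}$ is indeed well-defined in $L(N',\tilde{Y})^{*}$ (and not merely in the geometric analog over $\bar{L}$), which requires checking independence from choices of homogeneous sections, exactly as in \cref{lemma 9.2 AH06} but now respecting the $\gamma$-semilinearity of $f_{\gamma}^{\#}$.
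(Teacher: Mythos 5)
Your proposal is correct and rests on the same key ingredients as the paper's proof — the semilinear resolution of indeterminacies (\cref{lemma semilinear resolution}), a plurifunction built as a ratio of pullbacks of homogeneous sections, and \cref{lemma 9.1 AH06} to verify the defining inequality of a (semilinear) morphism of pp-divisors — but it is organized genuinely differently. The paper argues in three stages ($\D$ and $\D'$ both minimal, only $\D'$ minimal, fully general), constructing the rational map $Y\dashrightarrow Y'$ through the GIT quotient systems of the \emph{minimal} models and then transporting the result to arbitrary bases via \cref{lemma 9.2 AH06} and fiber products of resolutions (which requires extra care because one of the maps involved is only $\gamma$-semilinear, so the fiber product must be taken after twisting). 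You instead obtain the dominant rational semilinear map $h_{\gamma}:Y\dashrightarrow Y'$ directly from the $\gamma^{-1}$-semilinear inclusion of invariant function fields $L(Y')=L(X')^{T'}\hookrightarrow L(X)^{T}=L(Y)$, which makes sense for arbitrary semiprojective bases and lets the whole construction run uniformly; the GIT description is then invoked only where it is genuinely needed, namely to show that $h_{\gamma}$ is already a morphism when $f_{\gamma}$ is an isomorphism and $\D'$ is minimal. This buys a shorter argument and avoids the fiber-product bookkeeping entirely. One imprecision to fix: $\psi_{\gamma}^{*}(a)$ is a degree-zero rational function on $\tilde{Y}$, not a homogeneous element of degree $F^{*}(m')$, so the well-definedness of $\mathfrak{f}(m'):=f_{\gamma}^{\#}(a)/\psi_{\gamma}^{*}(a)$ should be stated as the fact that $f_{\gamma}^{\#}$ and $\kappa^{*-1}\circ\psi_{\gamma}^{*}$ agree on degree-zero quotients (equivalently, work with fixed multiplicative systems $s$, $s'$ as the paper does); and when applying \cref{lemma 9.1 AH06} one needs control of \emph{all} sections of $n(\psi_{\gamma}^{*}\D'(m')-\divr(\mathfrak{f})(m'))$ on $\tilde{Y}$, not only those pulled back from $Y'$ — a point the paper elides in exactly the same way, so it is not a defect specific to your route.
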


\begin{proof}
Denote $X:=X(\D)=\Spec(A)$ and $X':=X(\D')=\Spec(A')$. Since $\aff{Y}=\Spec(A_{0})$ and $\aff{Y}'=\Spec(A'_{0})$, the semilinear equivariant morphism $(\varphi_{\gamma},f_{\gamma}):X\to X'$ induces a semilinear morphism $h_{\gamma}':\aff{Y}\to \aff{Y}'$.

 Let $F:N\to N'$ be the lattice morphism corresponding to $\varphi_{\gamma}:T\to T'$ and $F^{*}:M'\to M$ its dual morphism. Let us consider the case where the pp-divisors $\D$ and $\D'$ are $\mathrm{GIT}$-constructed and, therefore, they are minimal. Otherwise stated, they arise from homomorphisms $s:M\to L(X)$ and $s':M'\to L(X')$ such that $\mathscr{O}_{Y}(\D(m))=\frac{1}{s(m)}\mathscr{A}_{m}\subset L(Y)$ and $\mathscr{O}_{Y'}(\D'(m))=\frac{1}{s'(m)}\mathscr{A}'_{m}\subset L(Y')$, respectively, as in \cref{proposition existence ah-datum split}. 

Since $(\varphi_{\gamma},f_{\gamma}):X\to X'$ is dominant, we have that $f_{\gamma}^{-1}(X'^{ss}(m))\subset \Xss(F^{*}(m))$ is not empty for every $m\in\omega_{\D'}^{\vee}\cap M'$. Therefore, we have the following data 
\[\xymatrixcolsep{6pc}\xymatrix{ \Xss(F^{*}(m)) & f_{\gamma}^{-1}(X'^{ss}(m)) \ar[r]^{(\varphi_{\gamma},f_{\gamma})} \ar[l]_{\iota} & X'^{ss}(m), }\] where $\iota$ is the natural embedding. Now, we can take the respective quotients and we get the following commutative diagram
\[\xymatrixcolsep{5pc}\xymatrix{ \Xss(F^{*}(m)) \ar[d] & f_{\gamma}^{-1}(X'^{ss}(m)) \ar[d] \ar[r]^{(\varphi_{\gamma},f_{\gamma})} \ar[l]_{\iota} & X'^{ss}(m) \ar[d] \\ Y_{F^{*}(m)} & f_{\gamma}^{-1}(X'^{ss}(m))\sslash T \ar[l]^{} \ar[r]_{(h_{\gamma})_{m}} & Y'_{m},}\]
where $(h_{\gamma})_{m}$ is a $\gamma$-semilinear morphism. For every $m\in \omega_{\D'}^{\vee}\cap M'$, the latter $\gamma$-semilinear morphism defines a rational $\gamma$-semilinear morphism $(h_{\gamma})_{m}:Y_{F^{*}(m)}\dasharrow Y'_{m}$. Notice that $(h_{\gamma})_{m_{1}}=(h_{\gamma})_{m_{2}}$, whenever $\lambda(m_{1})^{\vee}=\lambda(m_{2})^{\vee}$ by \cref{theorem GIT-equivalence}. Thus, there are just finitely many different rational $\gamma$-semilinear morphisms $(h_{\gamma})_{m}$, one for each $\lambda'\in\Lambda'$, and we thus set them by
\[\xymatrix{(h_{\gamma})_{\lambda'}:Y_{F^{*}(\lambda')} \ar@{-->}[r] & Y'_{\lambda'},}\]
where $F^{*}(\lambda')\in\Lambda$. Moreover, these rational $\gamma$-semilinear morphisms fit into the following commutative diagram
\[\xymatrix{Y_{F^{*}(\lambda')} \ar@{-->}[r]^{(h_{\gamma})_{\lambda'}} \ar[d]_{p_{F^{*}(\lambda')0}}& Y'_{\lambda'} \ar[d]^{p_{\lambda'0}} \\  \aff{Y} \ar[r]_{h_{\gamma}'} & \aff{Y}',}\]
where the morphisms $p_{F^{*}(\lambda')0}$ and $p_{\lambda'0}$ are those in \cref{Proposition lemma 6.1 AH06}. Hence, we have a rational $\gamma$-semilinear morphism between the limits $h_{\gamma}:Y \dasharrow Y'$ that fits into the following commutative diagram
\[\xymatrix{Y \ar@{-->}[r]^{h_{\gamma}} \ar[d]_{r_{Y}}& Y' \ar[d]^{r_{Y'}} \\  \aff{Y} \ar[r]_{h_{\gamma}'} & \aff{Y}',}\]
Then, by \cref{lemma semilinear resolution}, there exists a semilinear resolution of indeterminacies 
\[\xymatrix{ & \tilde{Y} \ar[dl]_{\kappa} \ar[dr]^{\psi_{\gamma}}& \\ Y \ar@{-->}[rr] & & Y' .}\]
such that $\tilde{Y}$ is normal and semi-projective and $\psi_{\gamma}$ and $\kappa$ are birational and $\kappa$ is also projective. Then, by using the homomorphisms $s:M\to L(X)$ and $s':M'\to L(X')$, we have the following commutative diagram 
\[\xymatrix{ A_{F^{*}(m)} & & A'_{m} \ar[ll]_{f_{\gamma}^{*}} \\ \H^{0}(\tilde{Y},\mathscr{O}_{\tilde{Y}}(\kappa^{*}\D(F^{*}(m)))) \ar[u]^{\cdot s(F^{*}(m))} & &\H^{0}(Y',\mathscr{O}_{Y'}(\D'(m))) \ar[u]_{\cdot s(m)} \ar[dl]^{\psi_{\gamma}^{*}} \\ &\H^{0}(\tilde{Y},\mathscr{O}_{\tilde{Y}}(\psi_{\gamma}^{*}\D'(m))) \ar[ul]^{\cdot \frac{f_{\gamma}^{*}(s'(m))}{s(F^{*}(m))}} .& }\]
From this commutative diagram we have a group homomorphism  
\begin{align*}
M' &\to L(\tilde{Y})^{*} \\
m &\mapsto f_{\gamma}^{*}(s'(m))/s(F^{*}(m)),
\end{align*} 
which, by \eqref{defpluri part b} of \cref{defpluri}, defines a plurifunction $\mathfrak{f}\in L(N',\tilde{Y})^{*}$ such that
\[\mathfrak{f}(m)= f_{\gamma}^{*}(s'(m))/s(F^{*}(m)),\] 
for every $m\in M'$ (consider a $\Z$-basis of $M$ and take the $f^{i}$ as the image of the elements of such a base, for instance). Notice that if $(\varphi_{\gamma},f_{\gamma})$ is an isomorphism, then no resolution of indeterminacies is needed and, therefore, $\psi_{\gamma}:Y\to Y'$ is a semilinear isomorphism. We claim that the triple $(\psi_{\gamma},F,\mathfrak{f}):\kappa^{*}\D\to\D'$ is a semilinear morphism of pp-divisors with respect to $\gamma$ that fits into the commutative triangle of the statement. In order to do this, it suffices to prove that 
\[\psi_{\gamma}^{*}\D'(m)\leq \kappa^{*}\D(F^{*}(m))+\mathrm{div}(\mathfrak{f})(m),\]
for every $m\in \omega_{\D'}^{\vee}\cap M'$. Since the morphism 
\[\xymatrixcolsep{3pc}\xymatrix{\H^{0}(\tilde{Y},\mathscr{O}_{\tilde{Y}}(\psi_{\gamma}^{*}\D'(m))) \ar[r]^{\frac{f_{\gamma}^{*}(s'(m))}{s(F^{*}(m))}} &\H^{0}(\tilde{Y},\mathscr{O}_{\tilde{Y}}(\kappa^{*}\D(F^{*}(m))))}\]
defines an inclusion
\[\H^{0}(\tilde{Y},\mathscr{O}_{\tilde{Y}}(\psi_{\gamma}^{*}\D'(m)-\mathrm{div}(\mathfrak{f})(m))) \subset\H^{0}(\tilde{Y},\mathscr{O}_{\tilde{Y}}(\kappa^{*}\D(F^{*}(m)))),\]
the claim holds by \cref{lemma 9.1 AH06}. Therefore, the assertion holds for $\D$ and $\D'$ minimal pp-divisors.

Suppose now that only $\D'$ is minimal and $\D$ is not. Let $\D_{1}$ be a minimal pp-divisor such that $X(\D)\cong X(\D_{1})$, which exists by the construction made in the proof of \cref{proposition existence ah-datum split} (see \cref{remark choice of section}). On the one hand, by \cref{lemma 9.2 AH06}, there exists a plurifunction $\mathfrak{f}_{1}\in L(N,Y)$ such that $\D=\vartheta^{*}\D_{1}+\mathrm{div}(\mathfrak{f}_{1})$, where $\vartheta:Y\to Y_{1}$ is the canonical morphism. On the other hand, given that $\D_{1}$ and $\D'$ are minimal pp-divisors, the theorem holds for them. Hence, there exists a normal semi-projective variety $\tilde{Y}_{1}$ over $L$, a projective birational morphism $\kappa_{1}:\tilde{Y}_{1}\to Y_{1}$ and a semilinear morphism of pp-divisors $(\psi_{\gamma},F,\mathfrak{f}):\kappa_{1}^{*}\D_{1}\to\D'$ such that the following diagram commutes
\[\xymatrix{ 
& X(\kappa_{1}^{*}\D_{1}) \ar[dl]_{X(\kappa_{1},\id_{N},1)}^{\cong} \ar[rd]^{(\psi_{\gamma},F,\mathfrak{f})} & \\
 X(\D_{1}) \ar[rr]_{(\varphi_{\gamma},f_{\gamma})} & & X(\D') 
 .}\]
Now, consider the fiber product 
\[\xymatrix{ Y\times_{Y_{1}} \tilde{Y}_{1} \ar[r]^{\pi_{1}} \ar[d]_{\pi_{2}} & \tilde{Y}_{1} \ar[d]^{\kappa_{1}} \\
Y \ar[r]_{\vartheta} & Y_{1} .}\]
The morphisms $\vartheta$ et $\kappa_{1}$ are birational, then there exist open subvarieties of $Y$ and $\tilde{Y}_{1}$, respectively, isomorphic to open subvarieties of $Y_{1}$. Hence, there exists an open subvariety $U\subset \tilde{Y}_{1}\times_{Y_{1}}Y$ isomorphic to open subvarieties of $\tilde{Y}_{1}$ and $Y_{1}$ under the canonical projections $\pi_{1}$ and $\pi_{2}$. Let $\tilde{Y}:=\overline{U}^{\nu}$ be the normalization of the closure of $U$, $p_{1}:\tilde{Y}\to \tilde{Y}_{1}$ the restriction of $\pi_{1}$ and $\kappa_{2}:\tilde{Y}\to Y$ the restriction of $\pi_{2}$. Then, we have the following commutative diagram
\[\xymatrix{ \tilde{Y} \ar[r]^{p_{1}} \ar[d]_{\kappa_{2}} & \tilde{Y}_{1} \ar[d]^{\kappa_{1}} \ar[rd]^{\psi_{\gamma}} \\
Y \ar[r]_{\vartheta} & Y_{1} & Y'.}\]
Notice that the morphisms of the square are morphisms of varieties over $L$. Then $\psi_{\gamma}\circ p_{1}$ is $\gamma$-semilinear. 

We need to construct a morphism of pp-divisors $\kappa_{2}^{*}\D\to\D$ from the data above. From the fact that $(\psi_{\gamma},F,\mathfrak{f}):\kappa_{1}^{*}\D_{1}\to\D$ is a semilinear morphism of pp-divisors and applying $p_{1}^{*}$ we have
\begin{align*}
(\psi_{\gamma}\circ p_{1})^{*}\D' &=p_{1}^{*}\psi_{\gamma}^{*}\D' \\
&\leq p_{1}^{*}F_{*}\kappa_{1}^{*}\D_{1}+\mathrm{div}(p_{1}^{*}\mathfrak{f}) \\
&=F_{*}p_{1}^{*}\kappa_{1}^{*}\D_{1}+\mathrm{div}(p_{1}^{*}\mathfrak{f}),
\end{align*}
and by the commutativity of the diagram above and the identity $\D=\vartheta^{*}\D_{1}+\mathrm{div}(\mathfrak{f}_{1})$, 
\begin{align*}
(\psi_{\gamma}\circ p_{1})^{*}\D' &\leq F_{*}p_{1}^{*}\kappa_{1}^{*}\D_{1}+\mathrm{div}(p_{1}^{*}\mathfrak{f}) \\
&=F_{*}(\kappa_{1}p_{1})^{*}\D_{1}+\mathrm{div}(p_{1}^{*}\mathfrak{f}) \\
&=F_{*}(\vartheta\kappa_{2})^{*}\D_{1}+\mathrm{div}(p_{1}^{*}\mathfrak{f})\\
&=F_{*}\kappa_{2}^{*}\vartheta^{*}\D_{1}+\mathrm{div}(p_{1}^{*}\mathfrak{f}) \\
&=F_{*}\kappa_{2}^{*}\D-\mathrm{div}(F_{*}\kappa_{2}^{*}\mathfrak{f}_{1})+\mathrm{div}(p_{1}^{*}\mathfrak{f}).
\end{align*}
By \cref{remark inverse of polyhedral principal divisor}, there exists a plurifunction $\mathfrak{f}_{2}$ such that $\mathrm{div}(\mathfrak{f}_{2})=-\mathrm{div}(F_{*}\kappa_{2}^{*}\mathfrak{f}_{1})$. Then, if we denote $\tilde{\mathfrak{f}}=\mathfrak{f}_{2}\cdot p_{1}^{*}\mathfrak{f}$, we have 
\[(\psi_{\gamma}\circ p_{1})^{*}\D'\leq F_{*}\kappa_{2}^{*}\D+\mathrm{div}(\tilde{\mathfrak{f}}).\]
This implies that $(\psi_{\gamma}\circ p_{1},F,\tilde{\mathfrak{f}}):\kappa_{2}^{*}\D\to\D'$ is a semilinear morphism of pp-divisors that fits by construction into the commutative triangle of the statement
\[\xymatrix{
& X(\kappa_{2}^{*}\D) \ar[dl]_{X(\kappa_{2},\id_{N},\mathfrak{1})} \ar[d]^{X(p_{1},\id_{N},\mathfrak{f}_{2})} \ar@/^3pc/[rdd]^{X(\psi_{\gamma}\circ p_{1},F,\tilde{\mathfrak{f}})} & \\ 
X(\D) \ar[d]_{X(\vartheta,\id_{N},\mathfrak{f}_{1})} & X(\kappa_{1}^{*}\D_{1}) \ar[dl]^{X(\kappa_{1},\id_{N},\mathfrak{1})}_{\cong} \ar[rd]^{X(\psi_{\gamma},F,\mathfrak{f})} & \\
 X(\D_{1}) \ar[rr]_{(\varphi_{\gamma},f_{\gamma})} & & X(\D') 
 ,}\]
where $X(\vartheta,\id_{N},\mathfrak{f}_{1})$ is the identity map. Now, If $(\psi_{\gamma},f_{\gamma})$ is a semilinear isomorphism with respect to $\gamma$, then $\kappa_{1}$ can be considered as the identity map and, therefore, $\tilde{Y}_{1}\times_{Y_{1}} Y=Y$. Then, in this case $\tilde{Y}=\overline{U}=Y$, which implies that $\kappa_{2}$ is the identity. This proves the theorem in the case where $\D$ is not minimal and $\D'$ is minimal.

Suppose now that we are in the most general case. The strategy is the same as the previous case, but we have to be careful with the fiber product part. Let $\D'_{2}$ be a minimal pp-divisor such that $X(\D')=X(\D_{2}')$. On the one hand, by \cref{lemma 9.2 AH06}, there exists a plurifunction $\mathfrak{f}_{2}\in L(N',Y')$ such that $\D'=\vartheta^{*}\D'_{2}+\mathrm{div}(\mathfrak{f}_{2})$, where $\vartheta:Y'\to Y_{2}'$ is the canonical morphism. On the other hand, by what we have done so far, we know that the theorem holds for $\D$ and $\D'_{2}$. Then, there exists a normal semi-projective variety $\tilde{Y}_{2}$ over $L$, a projective birational morphism $\kappa_{2}:\tilde{Y}_{2}\to Y$ and a semilinear morphism of pp-divisors $(\psi_{\gamma},F,\mathfrak{f}):\kappa_{2}^{*}\D\to\D'_{2}$ such that 
\[\xymatrix{ & X(\kappa_{2}^{*}\D) \ar[dl]_{X(\kappa_{2},\id_{N},1)}^{\cong} \ar[rd]^{(\psi_{\gamma},F,\mathfrak{f})} & \\ X(\D) \ar[rr]_{(\varphi_{\gamma},f_{\gamma})} & & X(\D'_{2}) .}\]
In this case we have the following commutative diagram 
\[\xymatrix{ \tilde{Y}_{2} \ar[r]^{\psi_{\gamma}} \ar[d] & Y_{2}' \ar[d] & Y' \ar[l]_{\vartheta} \ar[d] \\ 
L \ar[r]_{\gamma^{\natural}} & L & L \ar[l]^{\id},}\]
then we cannot just take the fiber product because $\psi_{\gamma}$ is not a morphism of $L$-varieties. Denote by $\tilde{Y}_{2}''$ the $L$-variety given by the composition 
\[\xymatrix{\tilde{Y}_{2} \ar[r]^{\psi_{\gamma}} & L \ar[r]^{\gamma^{\natural}} & L}\] 
and by $h:\tilde{Y}_{2}''\to Y$ the corresponding morphism of varieties over $L$. Note that $\tilde{Y}_{2}=\tilde{Y}_{2}''$ as schemes. Consider the fiber product $\tilde{Y}''_{2}\times_{Y_{2}'}Y'$. By following the arguments above, let $\tilde{Y}$ be the normalization of the closure of an open subvariety of $\tilde{Y}''_{2}\times_{Y_{2}'}Y'$  isomorphic to some open subvarieties of each of the factors. Then, we have the following commutative diagram of varieties over $L$.
\[\xymatrix{ \tilde{Y} \ar[r]^{p_{1}} \ar[d]_{p_{2}} & Y'  \ar[d]^{\vartheta} \\ 
\tilde{Y}_{2}'' \ar[r]_{h} & Y_{2}' ,}\]
where the morphisms $p_{1}$ and $p_{2}$ are induced by the canonical projections of the fiber product. Then, we have the following diagram
\[\xymatrix{ & \tilde{Y} \ar[r]^{p_{1}} \ar[d]_{p_{2}} & Y'  \ar[d]^{\vartheta} \\ 
Y & \tilde{Y}_{2} \ar[l]^{\kappa_{2}} \ar[r]_{\psi_{\gamma}} & Y_{2}' ,}\]
where $p_{1}$ is a projective dominant semilinear morphism with respect to $\gamma$ and $p_{2}$ is a morphism of varieties over $L$. We denote $\kappa:=\kappa_{2}\circ p_{2}$. We claim that the triple $(p_{1},F,p_{2}^{*}\mathfrak{f}\cdot p_{1}^{*}\mathfrak{f}_{2})$ is a morphism of pp-divisors $\kappa^{*}\D\to\D'$. Indeed, since $(\psi_{\gamma},F,\mathfrak{f})$ is a semilinear morphism of pp-divisors, we have 
\begin{align*}
p_{1}^{*}\D' &= p_{1}^{*}(\vartheta^{*}\D'_{2}+\mathrm{div}(\mathfrak{f}_{2})) \\
 &=p_{1}^{*}\vartheta^{*}\D'_{2}+\mathrm{div}(p_{1}^{*}\mathfrak{f}_{2}) \\
 &=p_{2}^{*}\psi_{\gamma}^{*}\D'_{2}+\mathrm{div}(p_{1}^{*}\mathfrak{f}_{2}) \\
 &\leq p_{2}^{*} F_{*}\kappa_{2}^{*}\D+p_{2}^{*}\mathrm{div}(\mathfrak{f})+\mathrm{div}(p_{1}^{*}\mathfrak{f}_{2}) \\
 &= F_{*}p_{2}^{*}\kappa_{2}^{*}\D+\mathrm{div}(p_{2}^{*}\mathfrak{f})+\mathrm{div}(p_{1}^{*}\mathfrak{f}_{2}) \\
 &=F_{*}\kappa^{*}\D+\mathrm{div}(p_{2}^{*}\mathfrak{f}\cdot p_{1}^{*}\mathfrak{f}_{2}).
\end{align*}

The triples $(\kappa,\id,\mathfrak{1}):\kappa^{*}\D\to\D$ and $(p_{1},F,p_{2}^{*}\mathfrak{f}\cdot p_{1}^{*}\mathfrak{f}_{2}):\kappa^{*}\D\to\D'$ are the semilinear morphisms of pp-divisors that satisfy the commutative triangle of the assertion
\[\xymatrix{
& X(\kappa_{}^{*}\D) \ar[ddl]_{X(\kappa,\id_{N},\mathfrak{1})} \ar[d]^{X(p_{2},\id_{N},\mathfrak{f}_{2})} \ar@/^2pc/[rd]^{X(p_{1},F,p_{2}^{*}\mathfrak{f}\cdot p_{1}^{*}\mathfrak{f}_{2})} & \\ 
 & X(\kappa_{2}^{*}\D) \ar[dl]^{X(\kappa_{2},\id_{N},\mathfrak{1})}_{\cong} \ar[rd]^{X(\psi_{\gamma},F,\mathfrak{f})} & X(\D') \ar[d]^{X(\vartheta,\id_{N},\mathfrak{f}_{2})} \\
 X(\D) \ar[rr]_{(\varphi_{\gamma},f_{\gamma})} & & X(\D'_{2}) 
 ,}\]
where $X(\vartheta,\id_{N},\mathfrak{f}_{2})$ is the identity map. This proves the assertion.
\end{proof}

\begin{remark}
Notice that \cref{theorem semilinear automorphisms} generalizes \cite[Theorem 8.8]{AH06}. It suffices to consider the semilinear morphisms with $\gamma$ the neutral element of the Galois group.
\end{remark}

 Let $T$ be a split algebraic torus over $L$ and $N$ be its cocharacter lattice. Let $\D$ be an object in $\mathfrak{PPDiv}_{N}(L/k)$. Consider the set
\[S(\D):=\{ (\psi_{\gamma},F,\mathfrak{f}):\D\to\D \mid X(\psi_{\gamma},F,\mathfrak{f}) \textrm{ in } \SAut(T;X(\D))  \}.\]
For a general $\D$, the set $S(\D)$ has a structure of monoid, having $(\id,\id,1)$ as the neutral element, but not necessarily a group structure because of the discussion given in \cref{section: functoriality} after \cref{corollary X is faithful}. However, for a minimal pp-divisor, $S(\D)$ has a group structure by \cref{theorem semilinear automorphisms}. In such a case, we denote by $\SAut(\D):=S(\D)$ the group of semilinear automorphisms of pp-divisors of $\D$. Thus, a direct consequence of \cref{theorem semilinear automorphisms} is the following.

\begin{corollary}\label{corollary saut D saut XD}
Let $\D$ be an object in $\mathfrak{PPDiv}(L/k)$ that is minimal. Then, 
\[\SAut(\D)\cong \SAut(T;X(\D))\] as groups, where $T:=T(\D)$ is the corresponding split $L$-torus acting on $X(\D)$ and $\SAut(T;X(\D))$ stands for the semilinear equivariant automorphisms of $X(\D)$. 
\end{corollary}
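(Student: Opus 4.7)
The plan is to repackage \cref{theorem semilinear automorphisms} as the statement that $X$ restricts to a bijection between $S(\D)$ and $\SAut_{T}(X(\D))$, and then upgrade this bijection to a group isomorphism using the faithfulness and covariance of the functor $X:\mathfrak{PPDiv}(L/k)\to\mathcal{E}(L/k)$ established in \cref{proposition faithful functor}.

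First I would define the set-map $\Phi:S(\D)\to \SAut_{T}(X(\D))$ by $(\psi_{\gamma},F,\mathfrak{f})\mapsto X(\psi_{\gamma},F,\mathfrak{f})$; by construction of $S(\D)$ this lands in $\SAut_{T}(X(\D))$, and by \cref{proposition faithful functor} the map $\Phi$ is injective. For surjectivity, take an arbitrary $(\varphi_{\gamma},f_{\gamma})\in\SAut_{T}(X(\D))$ and apply \cref{theorem semilinear automorphisms} with both source and target pp-divisor equal to the minimal $\D$. The ``in particular'' clause of that theorem says that $\kappa$ may be taken to be the identity, that $F:N\to N$ is an isomorphism with $F(\omega_{\D})=\omega_{\D}$, and that $\psi_{\gamma}:Y\to Y$ is a semilinear isomorphism; the resulting triple $(\psi_{\gamma},F,\mathfrak{f}):\D\to\D$ therefore lies in $S(\D)$ and is sent by $\Phi$ to $(\varphi_{\gamma},f_{\gamma})$. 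This shows $\Phi$ is a bijection.

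Next I would promote $S(\D)$ to a group. Given $(\psi_{\gamma},F,\mathfrak{f})\in S(\D)$, its image $(\varphi_{\gamma},f_{\gamma})\in\SAut_{T}(X(\D))$ admits an inverse $(\varphi_{\gamma}^{-1},f_{\gamma}^{-1})\in\SAut_{T}(X(\D))$, which by the surjectivity just proved comes from some $(\psi',F',\mathfrak{f}')\in S(\D)$. Covariance of $X$ (proved as part of \cref{Proposition X is faithful} and inherited by $\mathfrak{PPDiv}(L/k)$ in \cref{proposition faithful functor}) gives
\[X\bigl((\psi',F',\mathfrak{f}')\circ(\psi_{\gamma},F,\mathfrak{f})\bigr)=X(\psi',F',\mathfrak{f}')\circ X(\psi_{\gamma},F,\mathfrak{f})=\mathrm{id}_{X(\D)}=X(\mathrm{id},\mathrm{id}_{N},1),\]
and similarly on the other side; faithfulness then forces $(\psi',F',\mathfrak{f}')$ to be a two-sided inverse of $(\psi_{\gamma},F,\mathfrak{f})$ in $S(\D)$. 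Hence $S(\D)=\SAut(\D)$ is a group.

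Finally I would conclude that $\Phi$ is a group homomorphism, which is immediate from covariance of $X$: for any two elements $(\psi_{\gamma},F,\mathfrak{f})$ and $(\psi'_{\eta},F',\mathfrak{f}')$ of $\SAut(\D)$,
\[\Phi\bigl((\psi_{\gamma},F,\mathfrak{f})\circ(\psi'_{\eta},F',\mathfrak{f}')\bigr)=X(\psi_{\gamma},F,\mathfrak{f})\circ X(\psi'_{\eta},F',\mathfrak{f}')=\Phi(\psi_{\gamma},F,\mathfrak{f})\circ \Phi(\psi'_{\eta},F',\mathfrak{f}').\]
Combining this with the bijectivity established above yields the claimed isomorphism $\SAut(\D)\cong\SAut_{T}(X(\D))$. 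There is no real obstacle here beyond the careful bookkeeping above: the deep input is \cref{theorem semilinear automorphisms}, and the remainder is a formal consequence of faithful functoriality together with the minimality hypothesis which ensures that the resolution $\kappa$ in that theorem can be trivialized on both sides.
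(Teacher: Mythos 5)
Your proposal is correct and follows essentially the same route the paper takes: the paper states this corollary as a direct consequence of \cref{theorem semilinear automorphisms} (surjectivity of $X$ on automorphisms via the minimality clause that trivializes $\kappa$), combined with the faithfulness and covariance of the functor from \cref{proposition faithful functor}, which is exactly the bookkeeping you carry out. Your explicit verification that $S(\D)$ acquires inverses — and hence is a group — is the same observation the paper makes in the paragraph preceding the corollary when it promotes the semigroup $S(\D)$ to the group $\SAut(\D)$.
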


A more precise statement over the semilinear equivariant automorphisms of a minimal pp-divisor is the following.

\begin{corollary}\label{corollaryequivariantautaffine}
   Let $\D$ be a minimal pp-divisor in $\mathfrak{PPDiv}(L/k)$ and $\gamma\in\Gamma_{L}$. Then the semilinear equivariant automorphisms $(\varphi_{\gamma},f_{\gamma}):X(\D)\to X(\D)$ correspond bijectively to the semilinear morphisms of pp-divisors $(\psi_{\gamma},F,\mathfrak{f})$ such that $\psi_{\gamma}^{*}(\D)=F_{*}(\D)+\divr(\mathfrak{f})$. In particular, if $\varphi_{\gamma}=\id_{T}$ we have $X(\psi_{\gamma},\id_{N},\mathfrak{f})=(\id_{T},f)$ and $\psi_{\gamma}^{*}(\D)=\D+\divr(\mathfrak{f})$.
\end{corollary}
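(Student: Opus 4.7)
The plan is to deduce the equality by applying Theorem \ref{theorem semilinear automorphisms} to both $(\varphi_\gamma,f_\gamma)$ and its inverse, and then combining the two resulting inequalities via the faithfulness of the functor $X:\mathfrak{PPDiv}(L/k)\to\mathcal{E}(L/k)$ established in Proposition \ref{proposition faithful functor}.

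First I would apply Theorem \ref{theorem semilinear automorphisms} with source and target both equal to the minimal pp-divisor $\D$ to the semilinear equivariant isomorphism $(\varphi_\gamma,f_\gamma)$. This produces a triple $(\psi_\gamma,F,\mathfrak{f}):\D\to\D$ with $X(\psi_\gamma,F,\mathfrak{f})=(\varphi_\gamma,f_\gamma)$, where $\psi_\gamma:Y\to Y$ is a semilinear isomorphism with respect to $\gamma$ and $F:N\to N$ is a lattice isomorphism preserving $\omega_\D$; by Definition \ref{definition semilinear mpp-div} we obtain
\begin{equation}\label{eq:corautineq1}
\psi_\gamma^*(\D)\leq F_*(\D)+\divr(\mathfrak{f}).
\end{equation}
The same theorem applied to the inverse semilinear automorphism $(\varphi_\gamma^{-1},f_\gamma^{-1})$, which is semilinear with respect to $\gamma^{-1}$, yields a second triple $(\tilde\psi,\tilde F,\tilde{\mathfrak{f}}):\D\to\D$ inducing $(\varphi_\gamma^{-1},f_\gamma^{-1})$ and satisfying the analogous inequality $\tilde\psi^*(\D)\leq\tilde F_*(\D)+\divr(\tilde{\mathfrak{f}})$.

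Since both compositions of $(\varphi_\gamma,f_\gamma)$ with its inverse are the identity of $X(\D)$, and the identity is the image under $X$ of $(\id,\id,1)$, the faithfulness of $X$ forces the compositions of the two triples in $\mathfrak{PPDiv}(L/k)$ to equal $(\id,\id,1)$. Unwinding the composition rule of Section \ref{Section functoriality} then gives $\tilde\psi=\psi_\gamma^{-1}$, $\tilde F=F^{-1}$, and the plurifunction identity $\tilde F_*(\mathfrak{f})\cdot\psi_\gamma^*(\tilde{\mathfrak{f}})=1$, equivalently $\psi_\gamma^*(\tilde{\mathfrak{f}})=F^{-1}_*(\mathfrak{f}^{-1})$.

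Now I would pull back the second inequality by $\psi_\gamma^*$, using that this pullback preserves the partial order, commutes with the lattice pushforward $F^{-1}_*$ on polyhedral Cartier divisors, and satisfies $\psi_\gamma^*\circ(\psi_\gamma^{-1})^*=\id$; substituting $\psi_\gamma^*(\tilde{\mathfrak{f}})=F^{-1}_*(\mathfrak{f}^{-1})$ yields $\D\leq F^{-1}_*\psi_\gamma^*(\D)-F^{-1}_*\divr(\mathfrak{f})$. Pushing forward by $F_*$ (which is inverse to $F^{-1}_*$, since $F(\omega_\D)=\omega_\D$) and rearranging produces the reverse inequality $F_*(\D)+\divr(\mathfrak{f})\leq\psi_\gamma^*(\D)$, which together with \eqref{eq:corautineq1} gives the desired equality $\psi_\gamma^*(\D)=F_*(\D)+\divr(\mathfrak{f})$. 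The ``in particular'' case is the specialization $F=\id_N$, for which $F_*(\D)=\D$. The main technical point I expect is verifying carefully the compatibilities between the semilinear pullback $\psi_\gamma^*$, the lattice pushforward $F_*$, the $\divr$ operation on plurifunctions, and the polyhedral divisor order; these are straightforward extensions of the corresponding statements for $L$-morphisms but need to be unwound with care in the semilinear context.
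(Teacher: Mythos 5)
Your argument is correct and is precisely the deduction the paper leaves implicit: Corollary \ref{corollaryequivariantautaffine} is stated without proof as a direct consequence of Theorem \ref{theorem semilinear automorphisms}, and the intended route is exactly yours — apply the theorem to the automorphism and to its inverse ($\gamma^{-1}$-semilinear), use faithfulness of $X$ together with the composition law to identify the composite triple with $(\id,\id_{N},1)$, and combine the two resulting inequalities into the equality. The only point you leave untouched is the converse half of the stated ``correspondence'' (that a triple satisfying the equality induces an automorphism), but that follows immediately from \cref{corollary saut D saut XD} once one notes that the equality lets one exhibit the inverse triple explicitly.
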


And in the toric case, since the only basis for pp-divisors turns out to be $Y=\Spec(L)$, \cref{theorem semilinear automorphisms} yields the following.

\begin{corollary}\label{corollaryequivariantaffinetoric}
    Let $X_{\omega}$ and $X_{\omega'}$ be two affine normal toric varieties over $L$, with respective cones $\omega$ and $\omega'$, and $\gamma\in\Gamma_{L}$. Let $(\varphi_{\gamma},f_{\gamma}):X_{\omega}\to X_{\omega'}$ be a semilinear equivariant isomorphism. Then, there exists a triple $(\psi_{\gamma},F,\mathfrak{f})$, where $\psi_{\gamma}=\gamma^{\natural}:\Spec(L)\to \Spec(L)$, $F:N\to N'$ is an isomorphism of lattices such that $F(\omega)=\omega'$ and $\mathfrak{f}\in N\otimes L^{*}$, such that $(\varphi_{\gamma},f_{\gamma})=X(\psi_{\gamma},F,\mathfrak{f})$.
\end{corollary}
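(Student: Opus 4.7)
The plan is to view the affine toric varieties $X_\omega$ and $X_{\omega'}$ as arising from pp-divisors and then apply \cref{theorem semilinear automorphisms}. Concretely, I will write $X_\omega = X(\D_\omega)$ and $X_{\omega'} = X(\D_{\omega'})$, where $\D_\omega$ (resp.\ $\D_{\omega'}$) is the empty pp-divisor over $Y = \Spec(L)$ with tail cone $\omega \subset N_\Q$ (resp.\ $\omega' \subset N'_\Q$); indeed the ring $A[\Spec(L), \D_\omega] = \bigoplus_{m \in \omega^\vee \cap M} L \cdot \chi^m$ is precisely the toric coordinate ring $L[\omega^\vee \cap M]$. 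Both of these pp-divisors are automatically minimal: the base $Y = \Spec(L)$ is a single closed point, so the structure morphism $\vartheta : Y \to \varprojlim Y_\lambda$ of \cref{Section from affine to pp-divisors} is trivially the normalization of the canonical reduced component.

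Having reduced to the minimal case, I would feed the semilinear equivariant isomorphism $(\varphi_\gamma, f_\gamma) : X(\D_\omega) \to X(\D_{\omega'})$ into \cref{theorem semilinear automorphisms}. The final assertion of that theorem (valid because both $\D_\omega$ and $\D_{\omega'}$ are minimal) produces a semilinear morphism of pp-divisors $(\psi_\gamma, F, \mathfrak{f}) : \D_\omega \to \D_{\omega'}$ with $X(\psi_\gamma, F, \mathfrak{f}) = (\varphi_\gamma, f_\gamma)$, where $\kappa$ is the identity, $F : N \to N'$ is a lattice isomorphism satisfying $F(\omega) = \omega'$, and $\psi_\gamma : \Spec(L) \to \Spec(L)$ is a semilinear isomorphism with respect to $\gamma$.

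It remains to identify $\psi_\gamma$. Since $\Spec(L)$ is the terminal object in $L$-schemes, the only morphism $\Spec(L) \to \Spec(L)$ compatible with the square
\[ \xymatrix{ \Spec(L) \ar[r]^{\psi_\gamma} \ar[d] & \Spec(L) \ar[d] \\ \Spec(L) \ar[r]_{\gamma^\natural} & \Spec(L) } \]
is $\gamma^\natural$ itself. Finally, the plurifunction $\mathfrak{f}$ belongs to $L(N', Y)^* = N' \otimes L(Y)^* = N' \otimes L^*$, which via the isomorphism $F$ may be rewritten in $N \otimes L^*$ as needed. Thus $(\psi_\gamma, F, \mathfrak{f})$ has the desired shape.

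There is essentially no hard step: the entire content is packaged inside \cref{theorem semilinear automorphisms}. The only genuinely new input is the observation that the \emph{trivial} pp-divisor over $\Spec(L)$ is minimal in the sense of the definition preceding \cref{exampleA3}, and the elementary fact that a $\gamma$-semilinear endomorphism of $\Spec(L)$ is forced to be $\gamma^\natural$.
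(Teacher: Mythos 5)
Your proposal is correct and follows essentially the same route as the paper, which obtains the corollary directly from \cref{theorem semilinear automorphisms} together with the observation that a toric variety is $X(\D)$ for the (trivially minimal) empty pp-divisor over the one-point base $Y=\Spec(L)$, forcing $\psi_\gamma=\gamma^{\natural}$. Your additional remarks verifying minimality of the trivial pp-divisor and locating the plurifunction in $N'\otimes L^{*}$ are exactly the details the paper leaves implicit.
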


\begin{remark}
    Notice that in the toric case the plurifunction $\mathfrak{f}$ can be identified with an $L$-point of $T$, because there is an identification $T(L)\cong N\otimes_{\Z} \G_{\mathrm{m}}(L)$. 
\end{remark}

We can always assume that the pp-divisors are defined over complete varieties, by Nagata's compactification Theorem. If we restrict the functor $X(\bullet)$ to the full subcategory of $\mathfrak{PPDiv}_{N}(L/k)$ whose objects are pp-divisors over regular complete curves, denoted by $\mathfrak{PPDiv}_{N}^{\mathrm{reg}}(L/k)$, then we get an equivalence of categories with the category of complexity one affine normal $T$-varieties. 

\begin{corollary}
    The functor $X:\mathfrak{PPDiv}_{N}(L/k)\to \mathcal{E}_{T}(L/k)$ restricts to an equivalence from the full subcategory of $\mathfrak{PPDiv}_{N}(L/k)$ whose objects are pp-divisors over regular complete curves to the full subcategory of $\mathcal{E}_{T}$ whose objects are complexity one affine normal $T$-varieties.
\end{corollary}

\section{Nonsplit affine normal $T$-varieties}\label{chapter equivariant automorphisms affine}
\indent

This section is devoted to the proof of \cref{maintheoremofpaper}, which we recall below for the convenience of the reader.

We start with \cref{section: Galois descent tools}, where we establish a parallelism between Galois semilinear actions and Galois equivariant descent data. 

In \cref{section: affine minimal}, we introduce the category $\mathfrak{PPDiv}(\Gamma_{L})$ whose objects are pairs $(\D,g)$, where $\D$ is a pp-divisor over $L$ and $g$ is a Galois semilinear action on $\D$. In \cref{section: affine minimal}, we prove that a pair $(\D,g)$ encodes the information of a normal $T$-variety $X(\D,g)$ over $k$ such that $T_{L}$ is split. Moreover, we prove that any normal $T$-variety over $k$ such that $T_{L}$ is split arises this way.

\begin{theorem}
Let $k$ be a field, $L/k$ be a finite Galois extension with Galois group $\Gamma_{L}$.
	\begin{enumerate}[a)]
            \item\label{theorem main affine minimal part i} Let $(\D_{L},g)$ be an object in $\mathfrak{PPDiv}(\Gamma_{L})$. Then, $X(\D_{L},g)$ is an affine normal variety endowed with an effective action of an algebraic torus $T$ over $k$ such that $T$ splits over $L$ and $X(\D_{L},g)_{L}\cong X(\D_{L})$ as $T_{\D_{L}}$-varieties over $L$.
            \item\label{theorem main affine minimal part ii} Let $X$ be an affine normal variety over $k$ endowed with an effective $T$-action such that $T_{L}$ is split. Then, there exists an object $(\D_{L},g)$ in $\mathfrak{PPDiv}(\Gamma_{L})$ such that $X \cong X(\D_{L},g)$ as $T$-varieties.
        \end{enumerate}
\end{theorem}
    
\subsection{ Galois descent via semilinear morphisms}\label{section: Galois descent tools}
\indent

In this section, we establish a correspondence between Galois descent data and Galois semilinear equivariant actions, a notion developed in \cref{section: semilinear varieties}. This allows us to translate the Galois descent data to the algebro-geometric combinatorial description (see \cref{section: semilinear pp-div}). This section is based on \cite[Section 14.20]{GW20} and \cite{Bor20}. A further reference on this topic is \cite[Section 6.2]{BLR90}.

\paragraph{Classical Galois descent.} Given a scheme $S$ over $k$, we can define a scheme $S\otimes_{k} L$ over $L$ under a base change. However, not every scheme over $L$ arises from a scheme over $k$ and Galois descent allows us to describe when this phenomenon holds.

Let $S$ be a scheme over $k$. Let us denote $L':=L\otimes_{k} L$ and $L'':=L\otimes_{k} L\otimes_{k} L$. These algebras comes with canonical projections $p_{i}:L'\to L$, for $i\in\{1,2\}$, and $p_{ij}:L''\to L$, for $i,j\in\{1,2,3\}$ and $i<j$. The scheme $S_{L}:=S\otimes_{k} L$ comes with a canonical map
\[h_{\mathrm{can}}:S_{L}\times_{L,p_{1}} \Spec(L')\to S_{L}\times_{L,p_{1}} \Spec(L'), \]
which satisfies the cocycle condition $p_{23}^{*} h_{\can}\circ p_{12}^{*}h_{\can}=p_{13}^{*}h_{\can}$. This piece of data is called a \emph{Galois descent datum}.

\begin{definition}
Let $Z$ be a scheme over $L$. A \emph{Galois descent datum on $Z$} is an isomorphism
\[h:Z\times_{L,p_{1}} \Spec(L') \to Z\times_{L,p_{2}} \Spec(L') \]
of schemes over $\Spec(L\otimes_{k} L)$ satisfying the cocycle condition $p_{23}^{*} h\circ p_{12}^{*}h=p_{13}^{*}h$.
\end{definition}

Let $Z_{1}$ and $Z_{2}$ be schemes over $L$. Let $h_{1}$ and $h_{2}$ be Galois descent data on $Z_{1}$ and $Z_{2}$, respectively. A morphism between the pairs $(Z_{1},h_{1})$ and $(Z_{2},h_{2})$ is a morphism $f:Z_{1}\to Z_{2}$ of schemes over $L$ that is compatible with the Galois descent data, i.e. $p_{2}^{*}f \circ h_{1}=h_{2}\circ p_{1}^{*}f$. The pairs $(Z,h)$ form a category that will be denoted by $\mathbf{Sch}_{L/k}$.

As said above, for any scheme $S$ over $k$, the scheme $S_{L}$ comes with a canonical Galois descent datum and, therefore, induces a pair $(S_{L},h_{\can})$. This construction induces a functor
	\begin{align*}
		\Phi: \mathbf{Sch}_{k} &\to \mathbf{Sch}_{L/k} \\
		S &\mapsto (S_{L},h_{\can}).
	\end{align*}

A descent datum $h$ on a scheme $Z$ over $L$ is said to be \emph{effective} if the pair $(Z,h)$ lies in the essential image of $\Phi$. We say that an open subscheme $U\subset Z$ is \emph{stable under $h$} if $h$ restricts to an isomorphism $U\times_{L,p_{1}} \Spec(L') \to U\times_{L,p_{2}} \Spec(L')$. Notice that, if $U$ is stable under $h$, the latter induces a Galois descent datum on $U$.  

The following results can be found in \cite[Theorem 14.84]{GW20}, for instance.

\begin{theorem}\label{Theorem14.70}
The functor
	\begin{align*}
		\Phi: \mathbf{Sch}_{k} &\to \mathbf{Sch}_{L/k} \\
		S &\mapsto (S_{L},h_{\can}).
	\end{align*}
is fully faithful. Moreover, if $(Z,h)$ is an object in $\mathbf{Sch}_{L/k}$ and $Z$ is covered by affine open subschemes that are stable under $h$, then $h$ is effective.
\end{theorem} 

Let us denote by $\mathbf{QPSch}_{k}$ the category of quasi-projective schemes over $k$ and by $\mathbf{QPSch}_{L/k}$ the category of quasi-projective schemes over $L$ endowed with a Galois descent datum.

\begin{corollary}\label{Proposition descent quasi-projective}
The functor
	\begin{align*}
		\Psi: \mathbf{QPSch}_{k} &\to \mathbf{QPSch}_{L/k} \\
		S &\mapsto (S_{L},h_{\can}).
	\end{align*}
is an equivalence of categories.
\end{corollary}

This point of view on Galois descent does not allow us to see a proper description of affine normal $T$-varieties for nonsplit torus actions. A better one turns to be in terms of \emph{Galois semilinear actions}.

\paragraph{From the classical to semilinear actions.}Let $S$ be a scheme over $L$ and $\gamma\in\Gamma_{L}$. The automorphism $\gamma:L\to L$ induces a morphism of schemes $\gamma^{*}:\Spec\, L\to\Spec\, L$. We denote $\gamma S:=S^{\gamma^{-1}}$, the twisting of $S$ by $\gamma^{-1}$, which is the fiber product 
\[\xymatrix{ \gamma S \ar[r]^{\alpha_{\gamma}} \ar[d] & S \ar[d] \\ \Spec\, L \ar[r]_{\gamma^{*}} & \Spec\, L \,,}\]
where $S\to \Spec\, L$ is the structural morphism. Let $S'$ be another scheme over $L$ and $f:S' \to S$ be a morphism of schemes over $L$, we denote by $\gamma f:\gamma S'\to\gamma S$ the pullback of the morphism by $\gamma^{*}$, which satisfies 
\[\alpha_{\gamma}\circ\gamma f=f\circ\alpha_{\gamma}.\]
Notice that $\gamma^{*}=(\gamma^{-1})^{\natural}$, $\alpha_{\gamma}=\beta_{\gamma^{-1}}$ and $\gamma f=f^{\gamma^{-1}}$(see the beginning of \cref{section: semilinear varieties}).

The morphisms $\alpha_{\gamma}$ satisfy 
\[\alpha_{\tau\gamma}=\alpha_{\tau}\circ \tau\alpha_{\gamma},\]
for $\gamma$ and $\tau$ in $\Gamma_{L}$.

Let $(Z,h)$ be an object in $\mathbf{Sch}_{L/k}$. The $L$-algebra isomorphism 
	\begin{align*}
		\beta:L\otimes_{k} L &\to \prod_{\gamma\in\Gamma_{L}}L \\
		  x\otimes y &\mapsto (x\gamma^{-1}(y))_{\gamma\in\Gamma_{L}},
	\end{align*}
where the $L$-algebra structure of $L\otimes_{k}L$ is given by $l\cdot (x\otimes y):=(lx)\otimes y$ and the $L$-algebra structure of $\prod_{\gamma\in\Gamma_{L}} L$ is given by $l\cdot (z_{\gamma})_{\gamma\in \Gamma_{L}}:=(lz_{\gamma})_{\gamma\in\Gamma_{L}}$, induces a family of isomorphisms $h_{\gamma}:\gamma Z\to Z$, of varieties over $L$, satisfying the cocycle condition given by the following commutative diagram
\[ \xymatrix{ 
 \tau\gamma Z \ar[d]_{ \tau h_{\gamma}} \ar[rd]^{h_{ \tau\gamma}} & \\ 
  \tau Z \ar[r]_{h_{ \tau}} & Z\, ,} \]
for every $\gamma, \tau\in\Gamma_{L}$. Moreover, given a family of isomorphisms $h_{\gamma}:\gamma Z\to Z$ of varieties over $L$, satisfying the cocycle condition above, we get a Galois descent datum $h$ on $Z$.

Let $h=\{h_{\gamma}\}_{\gamma\in\Gamma_{L}}$ be a Galois descent datum over a scheme $S$ over $L$. For every $\gamma\in\Gamma_{L}$ we define the following semilinear morphism
\[\xymatrix{ S \ar[r]^{\alpha^{-1}_{\gamma}} \ar@/^2pc/[rr]^{g_{\gamma}} \ar[d] & \gamma S \ar[d] \ar[r]^{h_{\gamma}} & S \ar[d] \\
\Spec\, L \ar[r]_{\gamma^{\natural}} & \Spec\, L \ar[r]_{\id} & \Spec\, L. }\]
This construction induces a map $g:\Gamma_{L}\to \SAut(S)$.

\begin{lemma}\label{lemma group monomorphism Gamma to SAut}
The map defined above
\begin{align*}
g:\Gamma_{L} &\to \SAut(S) \\ 
\gamma  &\mapsto g(\gamma):=g_{\gamma}
\end{align*} 
is a group homomorphism that defines a section of \eqref{Sequence saut aut}. Moreover, it is a monomorphism.
\end{lemma}

\begin{proof}
Let $\gamma$ and $\tau$ be in $\Gamma_{L}$. By definition we have
\[g_{\tau\gamma}=h_{\tau\gamma}\circ \alpha_{\tau\gamma}^{-1}=h_{\tau}\circ \tau h_{\gamma}\circ (\tau\alpha_{\gamma})^{-1}\circ \alpha_{\tau}^{-1}.\]
Given that $(\tau\alpha_{\gamma})^{-1}=\tau\alpha_{\gamma}^{-1}$, we have that
\[g_{\tau\gamma}=h_{\tau}\circ \tau h_{\gamma}\circ \tau\alpha_{\gamma}^{-1}\circ \alpha_{\tau}^{-1}= h_{\tau}\circ \tau (h_{\gamma}\circ \alpha_{\gamma}^{-1})\circ \alpha_{\tau}^{-1}.\]
Then, by the relation $\alpha_{\tau}\circ\tau f=f\circ\alpha_{\tau}$, it follows 
\[g_{\tau\gamma}= h_{\tau}\circ \tau (h_{\gamma}\circ \alpha_{\gamma}^{-1})\circ \alpha_{\tau}^{-1}=h_{\tau}\circ\alpha_{\tau}^{-1}\circ h_{\gamma}\circ\alpha_{\gamma}^{-1}=g_{\tau}\circ g_{\gamma}.\]
Finally, since $g_{\gamma}$ is $\gamma$-semilinear, $g$ defines a section. Thus, the assertion holds.
\end{proof}

As consequence of \cref{lemma group monomorphism Gamma to SAut}, a Galois descent datum induces a Galois semilinear action (see \cref{definition semilinear equivariant action}). Moreover, every Galois semilinear action arises from a unique Galois descent datum.

\begin{lemma}\label{lemma galois sa to descent data}
 Let $S$ be a scheme over $L$ and $g:\Gamma_{L}\to \SAut(S)$ be a $\Gamma_{L}$-semilinear action on $S$. Then, there exists a unique Galois descent datum $\{h_{\gamma}\}_{\gamma\in\Gamma_{L}}$  over $S$, such that $g(\gamma)=g_{\gamma}$ for every $\gamma\in\Gamma_{L}$.
\end{lemma}

\begin{proof}
For every $\gamma\in\Gamma_{L}$, define $h_{\gamma}:=g(\gamma)\circ \alpha_{\gamma}$. Recall that, for $\gamma$ and $\tau$ in $\Gamma_{L}$, we have that
\[\alpha_{\tau\gamma}=\alpha_{\tau} \circ \tau\alpha_{\gamma}.\] 
Hence,
\[h_{\tau \gamma}=g(\tau \gamma)\circ \alpha_{\tau \gamma}=g(\tau)\circ g(\gamma) \circ \alpha_{\tau}\circ \tau\alpha_{\gamma}.\]
The relation $\alpha_{\tau}\circ\tau g(\gamma)=g(\gamma)\circ\alpha_{\tau}$ implies
\[h_{\tau \gamma}=g(\tau)\circ g(\gamma) \circ \alpha_{\tau}\circ \tau\alpha_{\gamma}=g(\tau)\circ \alpha_{\tau}\circ \tau g(\gamma) \circ \tau\alpha_{\gamma}.\]
Then, given that $\tau(g(\gamma)\circ \alpha_{\gamma})=\tau g(\gamma)\circ \tau\alpha_{\gamma}$, we have
\[h_{\tau \gamma}=g(\tau)\circ \alpha_{\tau}\circ \tau g(\gamma) \circ \tau\alpha_{\gamma}=g(\tau)\circ \alpha_{\tau}\circ \tau (g(\gamma)\circ\alpha_{\gamma}) =h_{\tau}\circ \tau h_{\gamma}, \]
which is the cocycle condition. Thus, the set $\{h_{\gamma}\}_{\gamma\in\Gamma_{L}}$ forms a Galois descent system. Moreover, for every $\gamma\in\Gamma_{L}$, we have that $g_{\gamma}=h_{\gamma}\circ \alpha_{\gamma}^{-1}=g(\gamma)$. 

Let us assume now that $\{h_{\gamma}\}_{\gamma\in\Gamma_{L}}$ and $\{h_{\gamma}'\}_{\gamma\in\Gamma_{L}}$ are two Galois descent systems such that $g(\gamma)=g_{\gamma}=g_{\gamma}'$. Hence, $h_{\gamma}\circ\alpha_{\gamma}^{-1}=h_{\gamma}'\circ\alpha_{\gamma}^{-1}$ and, therefore, $h_{\gamma}=h_{\gamma}'$. This proves the uniqueness part and, therefore, the assertion.
\end{proof}

Let $Z_{1}$ and $Z_{2}$ be schemes over $L$ and $g_{1}$ and $g_{2}$ be Galois semilinear actions over $Z_{1}$ and $Z_{2}$, respectively. A \emph{morphism} between $(Z_{1},g_{1})$ and $(Z_{2},g_{2})$ is a morphism of schemes $f:Z_{1}\to Z_{2}$ such that $g_{1}\circ f=f\circ g_{2}$. Let us denote by $\mathbf{Sch}(L/k)$ the category of schemes over $L$ endowed with a Galois semilinear action. There is a natural functor $\mathcal{F}:\mathbf{Sch}_{L/k}\to \mathbf{Sch}(L/k)$ that sends $(Z,h)$ to $(Z,g)$, where $g(\gamma)=h_{\gamma}\circ \alpha_{\gamma}^{-1}$. The following result is a consequence of \cref{lemma group monomorphism Gamma to SAut} and \cref{lemma galois sa to descent data}.

\begin{proposition}\label{Proposition equivalence datum action}
The functor $\mathcal{F}:\mathbf{Sch}_{L/k}\to \mathbf{Sch}(L/k)$, defined above, is an equivalence of categories.
\end{proposition}

We say that a Galois semilinear action on a variety is \textit{effective} if its respective Galois descent datum is effective. The following results are translations of \cref{Theorem14.70} and \cref{Proposition descent quasi-projective}.

\begin{theorem}
The functor
	\begin{align*}
		\mathcal{F}\circ\Phi: \mathbf{Sch}_{k} &\to \mathbf{Sch}(L/k) \\
		S &\mapsto (S_{L},g_{\can}).
	\end{align*}
is fully faithful, where $g_{\can,\gamma}=h_{\can,\gamma}\circ\alpha_{\gamma}^{-1}$. Moreover, if $(Z,g)$ is an object in $\mathbf{Sch}(L/k)$ and $Z$ is covered by affine open subschemes that are stable under $g$, then $g$ is effective.
\end{theorem}

\begin{proposition}\label{Proposition descent quasi-projective semilinear}
The functor
	\begin{align*}
		\mathcal{F}\circ\Psi: \mathbf{QPSch}_{k} &\to \mathbf{QPSch}(L/k) \\
		S &\mapsto (S_{L},g_{\can}).
	\end{align*}
is an equivalence of categories.
\end{proposition}

 Thus, we have the following result.

\begin{proposition}\label{Proposition descent quasi-projective semilinear action}
 The functor $\mathcal{F}\circ\Psi: \mathbf{QPSch}_{k} \to \mathbf{QPSch}(L/k)$ induces an equivalence of categories between the category of quasi-projective varieties over $k$ and the category of quasi-projective varieties over $L$ endowed with a $\Gamma_{L}$-semilinear action.
\end{proposition}

\begin{proof}
The functor $\mathcal{F}\circ\Psi: \mathbf{QPSch}_{k} \to \mathbf{QPSch}(L/k)$ is an equivalence of categories, since it is a composition of $\mathcal{F}$, which is an equivalence of categories, and $\Psi$, which is also an equivalence of categories by \cref{Proposition descent quasi-projective semilinear}. By \cite[Proposition 2.7.1]{EGAIV-II}, a geometrically integral scheme $S$ over $k$ is a variety if and only if $S_{L}$ is a variety over $L$. Then, the assertion holds.
\end{proof}

\begin{definition}
Let $Z$ be a scheme over $L$. A $k$-form of $Z$ is a pair $(S,\nu)$ of a scheme $S$ over $k$ and an isomorphism $\nu:S_{L}\to Z$ of schemes over $L$.
\end{definition}

\paragraph{Equivariant descent via semilinear actions.} An \emph{algebraic group} over a field $k$ is a variety $G$ over $k$ endowed with a morphism $m:G\times G\to G$ such that there exist morphisms $e_{G}:\Spec(k)\to G$ and $\mathrm{inv}:G\to G$ fitting in the following commutative diagrams 
\[\xymatrixcolsep{2.9pc}\xymatrix@R=2pc{ G\times G\times G \ar[r]^{\small{\id_{G}\times m}} \ar[d]_{m\times\id_{G}} & G\times G \ar[d]^{m} & \Spec(k)\times G \ar[r]^{e_{G}\times\id_{G}} \ar[rd]_{\cong}& G\times G \ar[d]^{m} & G\times\Spec(k) \ar[dl]^{\cong} \ar[l]_{\id_{G}\times e_{G}} \\ G\times G \ar[r]_{m} & G & & G & \\ & G \ar[r]^{\mathrm{inv}\times\id_{G}} \ar[d] & G\times G \ar[d]^{m} & \ar[l]_{\id_{G}\times \mathrm{inv}} G \ar[d] &\\ &\Spec(k) \ar[r]_{e_{G}} & G & \ar[l]^{e_{G}} \Spec(k). & }\] 
Let $G$ be an algebraic group over $L$. Given that $G$ is quasi-projective, every Galois descent datum is effective. In this case, we are considering just the Galois descent data given by semilinear group homomorphisms, or equivalently, by \cref{Proposition descent quasi-projective semilinear action}, a Galois semilinear action $\Gamma_{L}\to\SAut_{\mathrm{gr}}(G)$. This is because we are interested in the $k$-forms that are also algebraic groups.

For a $G$-variety $X$ over $L$, an \textit{equivariant Galois descent datum} is a pair of a Galois descent datum $\{\sigma_{\gamma}\}_{\gamma\in\Gamma_{L}}$ over $G$ and a Galois descent datum $\{h_{\gamma}\}_{\gamma\in\Gamma_{L}}$ over $X$ such that the following diagram commutes
\[\xymatrix{ \gamma G\times \gamma X \ar[r]^{\gamma\mu} \ar[d]_{(\sigma_{\gamma},h_{\gamma})} & \gamma X \ar[d]^{h_{\gamma}} \\ 
 G\times X \ar[r]_{\mu} & X\,, }\]
 where $\mu:G\times X\to X$ is the action. We say that an equivariant Galois descent datum is \textit{effective} if both Galois descent data are effective with $k$-forms $G_{0}$ of $G$ and $X_{0}$ of $X$ with $X_{0}$ a $G_{0}$-variety.
 
By \cref{Proposition descent quasi-projective semilinear action}, an equivariant Galois descent datum is equivalent to an equivariant Galois semilinear action as defined in \cref{definition semilinear equivariant action}. In particular, it is a group homomorphism
\[g:\Gamma_{L}\to \SAut(G;X)\subset \SAut_{\mathrm{gr}}(G)\times\SAut(X),\]
such that the following diagram commutes
\[\xymatrix{  G\times X \ar[r]^{\mu} \ar[d]_{g(\gamma)} & X \ar[d]^{g(\gamma)} \\ 
 G\times X \ar[r]_{\mu} & X\,. }\]
 
 The Galois descent datum for $G$ is effective, and hence it always has a $k$-form $G_{0}$. Therefore, both pieces of descent data are effective when $X$ is a quasi-projective variety over $L$, which does not directly imply that the equivariant Galois descent datum is effective. However, the action also descends (see for instance: \cite[Lemma 5.4]{Bor20}).
 
\begin{proposition}\label{Proposition descend quasi-projective G-variaties}
Let $G$ be an algebraic group over $L$ and $X$ be a $G$-variety over $L$. Let $g:\Gamma_{L}\to\SAut(G;X)$ be an equivariant $\Gamma_{L}$-semilinear action on $X$ and $G_{0}$ be the $k$-form of $G$. If $X$ is quasi-projective, then the descent is effective as a $G_{0}$-variety over $k$.
\end{proposition}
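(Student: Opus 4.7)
The plan is to apply the equivalence of categories from \cref{Proposition effective descent for quasi-projective semilinear action} to descend first the variety $X$, then the action morphism $\mu:G\times_{L} X\to X$, and finally to verify the action axioms after base change. The hypothesis is just that $X$ is quasi-projective; no additional structure is needed.

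First I would observe that composing $g:\Gamma\to\SAut(G;X)$ with the projection $\SAut(G;X)\to\SAut(X)$ produces a $\Gamma$-semilinear action $\gamma\mapsto h_{\gamma}$ on $X$. Since $X$ is quasi-projective over $L$, \cref{Proposition effective descent for quasi-projective semilinear action} yields a quasi-projective $k$-variety $X_{0}$ together with an isomorphism $h:(X_{0})_{L}\xrightarrow{\sim} X$ such that, under $h$, the action $\{h_{\gamma}\}_{\gamma\in\Gamma}$ corresponds to the canonical $\Gamma$-semilinear action on $(X_{0})_{L}$. By hypothesis we already dispose of $\psi:(G_{0})_{L}\xrightarrow{\sim} G$ descending $\{\sigma_{\gamma}\}_{\gamma\in\Gamma}$. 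The product $G_{0}\times_{k}X_{0}$ is quasi-projective over $k$ and satisfies $(G_{0}\times_{k}X_{0})_{L}\cong G\times_{L}X$ via $\psi\times h$; functoriality of base change guarantees that the $\Gamma$-semilinear action induced by this $k$-model coincides with $\gamma\mapsto(\sigma_{\gamma},h_{\gamma})$.

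Next, the equivariance condition built into the definition of $\SAut(G;X)$ (\cref{definition semilinear equivariant action}) amounts exactly to the identity $h_{\gamma}\circ\mu=\mu\circ(\sigma_{\gamma},h_{\gamma})$ for every $\gamma\in\Gamma$. In other words, $\mu$ is a morphism between two quasi-projective $L$-varieties endowed with compatible $\Gamma$-semilinear actions. By the full faithfulness of the equivalence in \cref{Proposition effective descent for quasi-projective semilinear action}, $\mu$ descends uniquely to a $k$-morphism
\[\mu_{0}:G_{0}\times_{k} X_{0}\longrightarrow X_{0}\]
whose base change to $L$ recovers $\mu$ under the identifications $\psi$ and $h$.

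Finally, the two action axioms for $\mu_{0}$,
\[\mu_{0}\circ(m_{G_{0}}\times\id_{X_{0}})=\mu_{0}\circ(\id_{G_{0}}\times\mu_{0})\qquad\text{and}\qquad \mu_{0}\circ(e_{G_{0}}\times\id_{X_{0}})=\id_{X_{0}},\]
are equalities of $k$-morphisms. They can be checked after base change to $L$, where they become the corresponding axioms for $\mu$, which hold by hypothesis. Since the equivalence of \cref{Proposition effective descent for quasi-projective semilinear action} is faithful, these equalities already hold over $k$, so $(X_{0},\mu_{0})$ is genuinely a $G_{0}$-variety whose base change to $L$ recovers $(X,\mu)$ together with its Galois descent data. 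I do not anticipate a real obstacle in this argument: the whole proof is a formal application of the equivalence of categories, and the only mildly delicate point is checking that the semilinear action on the product $G\times_{L}X$ indeed coincides with the one induced by the $k$-model $G_{0}\times_{k}X_{0}$, which is a routine consequence of the functoriality of descent.
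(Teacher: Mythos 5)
Your argument is correct. Note, however, that the paper does not actually prove this proposition: it is stated with a pointer to Borovoi's work (Lemma 5.4 of \emph{Equivariant models}), so you are supplying a proof that the text omits. Your route is the standard one and it goes through: (i) the composite $\Gamma\to\SAut(G;X)\to\SAut(X)$ is a Galois semilinear action on the quasi-projective variety $X$, so \cref{Proposition effective descent for quasi-projective semilinear action} produces $X_{0}$ and $h:(X_{0})_{L}\xrightarrow{\sim}X$; (ii) since the two semilinear morphisms $\sigma_{\gamma}$ and $h_{\gamma}$ twist the structure morphism by the same $\gamma^{\natural}$, their product is a well-defined $\gamma$-semilinear automorphism of $G\times_{L}X$, and it is the canonical descent datum attached to the $k$-model $G_{0}\times_{k}X_{0}$; (iii) the commutative square in the definition of $\SAut(G;X)$ is precisely the compatibility of $\mu$ with these descent data, so $\mu$ descends to $\mu_{0}$ by the equivalence of categories; (iv) the action axioms for $\mu_{0}$ follow by faithfulness of base change. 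The only point worth making explicit in a written-up version is step (ii), namely that the fibre product of two $\gamma$-semilinear morphisms over $\Spec(L)$ exists and identifies, under $\psi\times h$, with the canonical semilinear action on $(G_{0}\times_{k}X_{0})_{L}$; you flag this as routine, and it is, but it is the one place where something must actually be verified rather than quoted.
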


Let $G$ and $G'$ be algebraic groups over $L$. Let $X$ be a $G$-variety and $X'$ be a $G'$-variety, both over $L$. Let $g$ and $g'$ be effective equivariant $\Gamma_{L}$-semilinear actions on $X$ and $X'$, respectively. Denote by $(G_{0},X_{0})$ the $k$-form of the pair $(G,X)$ and by $(G_{0}',X_{0}')$ the $k$-form of the pair $(G',X')$. An equivariant morphism $(\varphi,f):X\to X'$, satisfying $g(\gamma)\circ (\varphi,f)=(\varphi,f)\circ g'(\gamma)$ for all $\gamma\in\Gamma_{L}$, descends to an equivariant morphism $(\varphi_{0},f_{0}):X_{0}\to X_{0}'$ (see \cite[Proposition 5.6]{Bor20}). Then, we have the following result.

\begin{proposition}\label{proposition effective quasi covering}
Let $G$ be an algebraic group over $L$ and $X$ be a $G$-variety over $L$. Let $g:\Gamma_{L}\to \SAut(G;X)$ be a Galois semilinear equivariant action. If $X$ is covered by $G$-stable and $\Gamma_{L}$-stable quasi-projective open subvarieties, then the Galois semilinear equivariant action descends.
\end{proposition}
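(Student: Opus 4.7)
The plan is to reduce the assertion to the quasi-projective case established in the previous proposition by a gluing argument. Write $X=\bigcup_{i\in I}U_i$ where each $U_i$ is $G$-stable, $\Gamma$-stable (with respect to $g$), and quasi-projective over $L$. Since each $U_i$ is $G$-stable and $\Gamma$-stable, the restriction of $g$ to $U_i$ yields a Galois semilinear equivariant action $g_i:\Gamma\to\SAut(G;U_i)$. By \cref{Proposition descend quasi-projective G-variaties}, each $U_i$ descends effectively to a $G_0$-variety $(U_i)_0$ over $k$, where $G_0$ is the $k$-model of $G$ provided by the $\Gamma$-semilinear group action underlying $g$.

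Next, I would treat the intersections. For each pair $i,j\in I$, the open subvariety $U_i\cap U_j\subset X$ is again $G$-stable and $\Gamma$-stable, and it is quasi-projective as an open subscheme of the quasi-projective variety $U_i$. Hence $U_i\cap U_j$ descends to a $G_0$-variety $(U_{ij})_0$ over $k$. The open immersions $U_i\cap U_j\hookrightarrow U_i$ are $G$-equivariant and commute with $g$, so by the equivalence of categories in \cref{Proposition effective descent for quasi-projective semilinear action} (applied in the equivariant setting of \cref{Proposition descend quasi-projective G-variaties}) they descend to $G_0$-equivariant open immersions $\iota_{ij}^i:(U_{ij})_0\hookrightarrow(U_i)_0$ over $k$. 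By functoriality, the triple overlaps $U_i\cap U_j\cap U_l$ descend as well, and the descended open immersions satisfy the usual cocycle condition because the original ones do and the descent is compatible with composition.

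Now I glue. The data $\{(U_i)_0\}_{i\in I}$ together with the open immersions $\iota_{ij}^i$ satisfying the cocycle condition define a scheme $X_0$ over $k$ by the standard gluing construction, and the $G_0$-actions on each $(U_i)_0$ glue to a $G_0$-action on $X_0$ since they agree on the overlaps (again by descent from $X$). After base change to $L$, the open subvarieties $((U_i)_0)_L$ are naturally identified with the $U_i$ in a way that is compatible with $g$ and with the $G$-action, and the gluing data match on the nose; hence $(X_0)_L\cong X$ as $G$-varieties over $L$, and this isomorphism intertwines the canonical Galois semilinear equivariant action on $(X_0)_L$ coming from the $k$-structure with $g$. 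This shows that the equivariant Galois descent datum attached to $g$ admits the $G_0$-variety $X_0$ as $k$-model, i.e.\ is effective.

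The only real subtlety is ensuring the cocycle and the $G_0$-equivariance are preserved under the gluing, which is however automatic: each step of the construction is an instance of the equivalence of categories of \cref{Proposition effective descent for quasi-projective semilinear action} (in its equivariant refinement), so composition and restriction of morphisms on the $L$-side correspond to composition and restriction on the $k$-side. I do not expect a serious obstacle here; the main point is simply that quasi-projectivity of the overlaps $U_i\cap U_j$ (inherited from that of $U_i$) is what allows us to apply the local descent result at every stage of the gluing.
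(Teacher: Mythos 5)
Your proposal is correct and follows essentially the same route as the paper: restrict $g$ to the $G$-stable, $\Gamma$-stable quasi-projective pieces, descend each piece and each pairwise intersection via \cref{Proposition descend quasi-projective G-variaties}, descend the open immersions, and glue the resulting $G_{0}$-varieties over $k$. The only detail the paper adds that you elide is the final check that the glued scheme $X_{0}$ is actually a variety (separated, of finite type) over $k$, which follows from $(X_{0})_{L}\cong X$ by descent of these properties (cf.\ \cite[Proposition 2.7.1]{EGAIV-II}); conversely, you are slightly more explicit than the paper about the cocycle condition on triple overlaps.
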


\begin{proof}
Let $\mathcal{U}:=\{X_{i}\}$ be a finite $G$-stable and $\Gamma_{L}$-stable quasi-projective open covering, which can be assumed stable under intersections because the intersection of quasi-projective varieties is quasi-projective, $G$-stable and $\Gamma_{L}$-stable. Since each quasi-projective subvariety $X_{i}$ is $G$-stable and $\Gamma_{L}$-stable, the Galois semilinear equivariant action $g:\Gamma_{L}\to \SAut(G;X)$ induces Galois semilinear equivariant actions $g_{i}:\Gamma_{L}\to \SAut(G;X_{i})$. Hence, by \cref{Proposition descend quasi-projective G-variaties}, each triple $(G, X_{i},g_{i})$ has an effective descent $(G_{0,i},X_{0,i},(\psi_{i},h_{i}))$. Given that each $g_{i}$ induces the same Galois semilinear action on $G$, we have that $G_{0}=G_{0,i}$ and $\psi=\psi_{i}$ for each $X_{i}$. Then, the $k$-forms are of the form $(G_{0},X_{0,i},(\psi,h_{i}))$ for each $(G,X_{i},g_{i})$. 

Let us see that these $G_{0}$-varieties have a gluing data. For the intersection $X_{ij}:=X_{i}\cap X_{j}$, we have canonical $G$-equivariant open embeddings $\iota_{ij}: X_{ij}\to X_{i}$ and $\iota_{ji}:X_{ij}\to X_{j}$ which are compatible with the Galois semilinear equivariant actions $g_{i}$, $g_{j}$ and $g_{ij}$. These morphisms descend to $G_{0}$-equivariant open embeddings $\eta_{ij}:X_{0,ij}\to X_{0,i}$ and $\eta_{ji}:X_{0,ij}\to X_{0,j}$ that satisfy the following commutative diagram
\[\xymatrixcolsep{2cm}\xymatrix{ X_{0,i}\times_{k} L \ar[r]^{(\psi,h_{i})} & X_{i} \\ X_{0,ij}\times_{k} L \ar[r]_{(\psi,h_{ij})} \ar[u]^{\eta_{ij}\times_{k}\id_{L}} & X_{ij} \ar[u]_{\iota_{ij}}.  }\]

From the morphisms $\eta_{ij}$ and $\eta_{ji}$, we have $G_{0}$-equivariant isomorphisms $\varphi_{ij}:=\eta_{ji}\circ\eta_{ij}^{-1}:\textrm{Im}(\eta_{ij})\to \textrm{Im}(\eta_{ji})$, which satisfy: 
\[\varphi_{jl}\circ\varphi_{ij}=\varphi_{il} \quad\mathrm{ and }\quad \varphi_{ij}^{-1}(X_{0,ji}\cap X_{0,jl})=X_{0,ij}\cap X_{0,il}.\]  

Since the $\varphi_{ij}$ and the $X_{0,ij}$ form a glueing data of varieties, by \cite[\href{https://stacks.math.columbia.edu/tag/01JC}{Tag 01JC}]{stacks-project}, the following quotient space: 
\[X_{0}:=\left(\bigsqcup_{X_{0,i}\in\mathcal{U}_{0}} X_{0,i}\right)/\sim,\]
where the relation is given by $x\sim y$ if and only if for some $\varphi_{ij}$ we have $\varphi_{ij}(x)=y$, is a scheme. The canonical $G_{0}$-equivariant embeddings $X_{0,i}\to X_{0}$ fit into the following commutative diagram
\[\xymatrix{ X_{0,i} \ar[r] & X_{0} \\ X_{0,ij} \ar[r]_{\eta_{ji}} \ar[u]^{\eta_{ij}} \ar[ru] & X_{0,j} \ar[u] \,.} \] 
Also, notice that there is a canonical $G_{0}$-equivariant isomorphism
\[X_{0}\times_{k} L\cong \left(\bigsqcup_{X_{0,i}\in\mathcal{U}_{0}} X_{0,i}\times_{k} L\right)/\sim \]
where the relation is given by $x \sim y$ if and only if for some $\varphi_{ij} \times_{k} \id_{L}$ we have $\varphi_{ij} \times_{k} \id_{L}(x) = y$. Now, let us take 
\[(\psi, \tilde{h}) : \bigsqcup_{X_{0,i} \in \mathcal{U}_{0}} X_{0,i} \times_{k} L \to \bigsqcup_{X_{i} \in \mathcal{U}} X_{i},\]
the morphism induced by the $(\psi, h_{i}) : X_{0,i} \times_{k} L \to X_{i}$. Notice that if for $x$ and $y$ there exists $(\varphi_{ij} \times_{k} \id_{L})(x) = y$, then there exists $z \in X_{0,ij}$ such that $(\eta_{ij} \times_{k} \id_{L})(z) = x$ and $(\eta_{ji} \times_{k} \id_{L})(z) = y$. Thus, 
\[\tilde{h}(x) = h_{i}(x) = h_{i}((\eta_{ij} \times_{k} \id_{L})(z)) = h_{j}((\eta_{ji} \times_{k} \id_{L})(z)) = h_{j}(y) = \tilde{h}(y).\]
This implies that $(\psi, \tilde{h})$ induces a morphism $(\psi, h) : X_{0} \times_{k} L \to X$, which is indeed an equivariant isomorphism. Hence, $(X_{0}, G_{0}, (\psi, h))$ is a $k$-form for $(X, G)$. Given that $X$ is a variety over $L$, we have that $X_{0}$ is a variety over $k$ by \cite[Proposition 2.7.1]{EGAIV-II}. Thus, the Galois semilinear equivariant action is effective and the assertion holds.
\end{proof}

This allows us to prove the following result.

\begin{proposition}\label{proposition equivalence normal G-varieties}
Let $G$ be a connected algebraic group over $L$ and $(G',\psi)$ be a $k$-form of $G$. Then, the functor $\Phi: \mathbf{Sch}_{k} \to \mathbf{Sch}(L/k)$, given by $S\mapsto (S_{L},g_{\can})$, induces an equivalence of categories between the category of normal varieties with effective $G'$-actions and the category of normal varieties over $L$ with effective $G$-actions endowed with Galois semilinear equivariant actions, which are covered by $G$-stable and $\Gamma_{L}$-stable quasi-projective subvarieties. 
\end{proposition}

\begin{remark}
The reader should be warned that morphisms in these categories are given by pairs of morphisms $(\varphi,f)$, where $\varphi$ is a morphism of algebraic groups and $f$ is a morphism of varieties. In particular, even if we fix a group $G$, a morphism might not be the identity on $G$, so the latter is not a subcategory of the category of $G$-varieties with $G$-equivariant morphisms. This is actually crucial in order to let $\Gamma$ act semilinearly on it.
\end{remark}

\begin{proof}
We give the equivalence at the level of objects. The equivalence at the level of morphisms will follow from \cref{Proposition descent quasi-projective semilinear action}. 

Let $X'$ be a normal $G'$-variety over $k$. By \cite[Theorem 1]{Bri17}, $X'$ is covered by $G'$-stable quasi-projective open subvarieties over $L$. Hence, $X_{L}:=X'\times_{k} L$ is a normal $G'_{L}$-variety over $L$ covered by $G'_{L}$ and $\Gamma_{L}$-stable quasi-projective subvarieties. Then, $X_{L}$ has a compatible structure of $G$-variety under the isomorphism $\psi:G'_{L}\to G$.

The other direction is given by \cref{proposition effective quasi covering}.
\end{proof}

\subsection{ Proof of \cref{maintheoremofpaper}}\label{section: affine minimal}
\indent

In this section, before getting into the proof of \cref{maintheoremofpaper}, we define the notion of \textit{Galois semilinear actions on} pp-divisors. Also a new category is presented; the category of pairs $(\D,g)$ of a pp-divisor and a Galois semilinear action. Once the proof of the theorem is achieved, we show how to recover Gillard's Theorem (cf. \cite[Theorem A]{Gil22b}) from \cref{maintheoremofpaper}.

\begin{definition}
Let $H$ be a group. Let $\D\in\PPDiv_{\Q}(Y,\omega)$ be a pp-divisor over $L$. A $H$-\textit{semilinear action on} $\D$ is a group homomorphism $\varphi:H\to\SAut(\D)$.
\end{definition}

Let $H$ be an abstract group. A $H$-semilinear action $\varphi:H\to\SAut(\D)$ induces a $H$-semilinear equivariant action (recall \cref{definition semilinear equivariant action})
\[X(\varphi):H\to \SAut(T;X(\D)),\]
via the functor $X:\mathfrak{PPDiv}(L/k)\to \mathcal{E}(L/k)$. Given that $\D$ is a minimal pp-divisor, every $H$-semilinear equivariant action $\rho:H\to\SAut(T;X(\D))$ arises from a $H$-semilinear action on $\D$ by \cref{corollary saut D saut XD}. Actually, this defines a bijection between the set of semilinear actions over $\D$ and the set of semilinear equivariant actions over $X(\D)$.

\begin{proposition}\label{proposition bijection semilinear actions}
Let $\D$ be an object in $\mathfrak{PPDiv}(L/k)$ that is minimal. Then, the functor $X(\bullet)$ induces a bijection between the set of semilinear actions over $\D$ and the set of semilinear equivariant actions over $X(\D)$.
\end{proposition}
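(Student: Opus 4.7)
The proof amounts to observing that the required bijection is the one induced on $\Hom$-sets by the group isomorphism of \cref{corollary saut D saut XD}; essentially no new argument is required beyond what has been established.

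My plan is as follows. Given a $G$-semilinear action $\varphi : G \to \SAut(\D)$, the composition $X \circ \varphi : G \to \SAut_T(X(\D))$ is a group homomorphism because $X$ is a covariant functor (\cref{proposition faithful functor}), and its image consists of semilinear equivariant automorphisms of $X(\D)$ by construction, so it defines a $G$-semilinear equivariant action in the sense of \cref{definition semilinear equivariant action}. This yields the forward map $\varphi \mapsto X \circ \varphi$ from semilinear actions over $\D$ to semilinear equivariant actions over $X(\D)$. Injectivity follows immediately from the faithfulness of $X$ in \cref{proposition faithful functor}: if $X \circ \varphi_1 = X \circ \varphi_2$, then $X(\varphi_1(g)) = X(\varphi_2(g))$ for each $g \in G$, and faithfulness forces $\varphi_1(g) = \varphi_2(g)$.

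Surjectivity is precisely the content of \cref{corollary saut D saut XD}. Since $\D$ is minimal, the restriction of $X$ to $\SAut(\D)$ is a group isomorphism onto $\SAut_T(X(\D))$; composing any $\rho : G \to \SAut_T(X(\D))$ with the inverse isomorphism produces a $G$-semilinear action over $\D$ whose image under the forward map is $\rho$. The only place where the minimality hypothesis is really used is here, since minimality is exactly what guarantees that $\SAut(\D) = S(\D)$ is a group and that $X$ is essentially surjective onto semilinear equivariant automorphisms (via the resolution-of-indeterminacies argument of \cref{theorem semilinear automorphisms}); without it one would merely get a semigroup $S(\D)$ and would not be able to lift arbitrary $\rho$.
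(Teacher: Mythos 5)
Your proposal is correct and follows essentially the same route as the paper: both reduce the statement to the group isomorphism $\SAut(\D)\cong\SAut_{T}(X(\D))$ of \cref{corollary saut D saut XD}, obtaining the bijection by post-composition with $X$ and its inverse. Your extra decomposition into injectivity (faithfulness of $X$) and surjectivity (minimality via \cref{theorem semilinear automorphisms}) is a slightly more explicit rendering of the same argument, not a different one.
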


\begin{proof}
This is consequence of \cref{corollary saut D saut XD}, because it implies that the following commutative diagram can be always completed in a unique way
\[\xymatrix{ & H \ar[dl]_{\varphi} \ar[dr]^{\rho} & \\ \SAut(\D) \ar[rr]_{X}^{\cong} & & \SAut(T;X(\D)) . }\]
Otherwise stated, having $\varphi$ we can construct a unique $\rho$ and having $\rho$ there exists a unique $\varphi$.
\end{proof}

Let $\mathfrak{PPDiv}(\Gamma_{L})$ the category of pairs $(\D,g)$, where $\D$ is a minimal pp-divisor over $L$ and $g:\Gamma_{L}\to\SAut(\D)$ is a Galois semilinear action. A morphism in this category is a morphism of pp-divisors $(\psi,F,\mathfrak{f}):\D\to\D'$ such that 
\[g'_{\gamma}\circ (\psi,F,\mathfrak{f})=(\psi,F,\mathfrak{f})\circ g_{\gamma}\]
 for every $\gamma\in\Gamma_{L}$. Let $(\D,g)$ be an object in $\mathfrak{PPDiv}(\Gamma_{L})$. By \cref{theorem Altmann hausen split}, $X(\D)$ is a normal $T_{\D}$-variety over $L$, where $T_{\D}$ denotes its respective torus action. Moreover, by \cref{proposition bijection semilinear actions}, $X(\D)$ comes with a Galois semilinear equivariant automorphisms 
 \[X(g):\Gamma_{L}\to\SAut(T_{\D};X(\D)).\]
 Then, by \cref{proposition equivalence normal G-varieties}, there exists a normal $T$-variety $X:=X(\D,g)$ over $k$ such that $X_{L}\cong X(\D)$ as $T_{\D}$-varieties over $L$. This proves the first part of the following theorem.

\begin{theorem}\label{theorem main affine minimal}
Let $k$ be a field, $L/k$ be a finite Galois extension with Galois group $\Gamma_{L}$.
	\begin{enumerate}[a)]
            \item\label{theorem main affine minimal part i} Let $(\D_{L},g)$ be an object in $\mathfrak{PPDiv}(\Gamma_{L})$. Then, $X(\D_{L},g)$ is an affine normal variety endowed with an effective action of an algebraic torus $T$ over $k$ such that $T$ splits over $L$ and $X(\D_{L},g)_{L}\cong X(\D_{L})$ as $T_{\D_{L}}$-varieties over $L$.
            \item\label{theorem main affine minimal part ii} Let $X$ be an affine normal variety over $k$ endowed with an effective $T$-action such that $T_{L}$ is split. Then, there exists an object $(\D_{L},g)$ in $\mathfrak{PPDiv}(\Gamma_{L})$ such that $X \cong X(\D_{L},g)$ as $T$-varieties.
        \end{enumerate}
\end{theorem}

\begin{proof}
Let us prove part \eqref{theorem main affine minimal part ii}, the remaining part of the theorem. Let $X$ be a normal variety over $k$ endowed with an effective $T$-action. By \cref{proposition equivalence normal G-varieties}, as a $T$-variety over $k$, $X$ is equivalent to a pair $(X_{L},g')$, where $X_{L}$ is a normal $T_{L}$-variety, with $T_{L}$ split over $L$, and an equivariant $\Gamma_{L}$-semilinear action $g'$. By \cref{proposition existence ah-datum split}, there exists a pp-divisor $\D$ such that $X_{L}\cong X(\D)$ as $T_{L}$-varieties over $L$. This pp-divisor, by the proof of \cref{proposition existence ah-datum split}, can be chosen minimal. Now, by \cref{proposition bijection semilinear actions}, we have that the equivariant $\Gamma_{L}$-semilinear action on $X(\D_{L})$ induces a unique $\Gamma_{L}$-semilinear action $g$ on $\D_{L}$. Then, the pair $(\D,g)$ encodes the pair $(X_{L},g')$. Hence, there exists a pair $(\D,g)$ in $\mathfrak{PPDiv}(\Gamma_{L})$ such that $X\cong X(\D,g)$ as $T$-varieties.
\end{proof}

By \cref{theorem main affine minimal}, every pair $(\D,g)$ corresponds to an affine normal variety $X(\D,g)$ endowed with a torus action on $k$ that is split over $L$. This construction induces a functor 
 \begin{align*}
 X:\mathfrak{PPDiv}(\Gamma_{L}) &\to \mathcal{E}(k,L);\\
  (\D,g) &\mapsto X(\D,g),
 \end{align*}
where $\mathcal{E}(k,L)$ is the category of affine normal varieties over $k$ endowed with an effective action of an algebraic torus over $k$ that is split over $L$. This functor is the composition of the functor $(\D,g)\mapsto (X(\D),X(g))$, from the category $\mathfrak{PPDiv}(\Gamma_{L})$ to the category of affine normal varieties endowed with an effective action of a split algebraic torus over $L$ and an equivariant $\Gamma_{L}$-semilinear action, and the equivalence of categories of \cref{proposition equivalence normal G-varieties}. Given that the first functor is faithful, covariant and essentially surjective, we have the following.

\begin{proposition}\label{theorem equivalence categories descent}
The functor $X:\mathfrak{PPDiv}(\Gamma_{L}) \to \mathcal{E}(k,L)$ is covariant, faithful and essentially surjective.
\end{proposition}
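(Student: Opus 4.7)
The plan is to factor the functor $X$ as a composition of two functors and then verify each of the three properties in turn, leveraging the results established in previous sections.

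First I would identify the intermediate category. Let $\mathcal{E}'(k,L)$ denote the category whose objects are pairs $(Z,\rho)$ with $Z$ a geometrically integral geometrically normal affine variety over $L$ endowed with an effective action of a split algebraic $L$-torus and $\rho:\Gamma\to \SAut(T;Z)$ a Galois semilinear equivariant action, and whose morphisms $(Z,\rho)\to(Z',\rho')$ are equivariant morphisms of $L$-varieties that commute with the Galois semilinear equivariant actions. The assignment $(\D,g)\mapsto (X(\D),X\circ g)$ defines a covariant functor $F_{1}:\mathfrak{PPDiv}(\Gamma)\to \mathcal{E}'(k,L)$, and by \cref{proposition equivalence normal G-varieties} there is an equivalence of categories $F_{2}:\mathcal{E}'(k,L)\to\mathcal{E}(k,L)$. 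Then $X=F_{2}\circ F_{1}$, so covariance of $X$ is immediate from the covariance of both factors.

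For faithfulness, since $F_{2}$ is an equivalence it is faithful, so it suffices to check that $F_{1}$ is faithful. Let $(\D,g)$ and $(\D',g')$ be objects of $\mathfrak{PPDiv}(\Gamma)$ and let $(\psi,F,\mathfrak{f}),(\psi',F',\mathfrak{f}'):(\D,g)\to(\D',g')$ be morphisms whose images under $F_{1}$ coincide. Forgetting the semilinear datum, we obtain two morphisms of pp-divisors in $\mathfrak{PPDiv}(L/k)$ inducing the same dominant semilinear equivariant morphism; by \cref{proposition faithful functor} they must be equal, so $F_{1}$ is faithful and so is $X$.

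For essential surjectivity, I would invoke \cref{theorem main affine minimal} directly: part \eqref{theorem main affine minimal part ii} says that any object $X$ of $\mathcal{E}(k,L)$ is isomorphic (as a $T$-variety over $k$) to $X(\D_{L},g)$ for some pair $(\D_{L},g)$ in $\mathfrak{PPDiv}(\Gamma)$. Hence every object of $\mathcal{E}(k,L)$ lies in the essential image of $X$. The main conceptual step is packaging the three properties through the factorization; none of them is a new obstacle since all the ingredients (faithfulness of the split-case functor, the bijection between semilinear actions on $\D$ and semilinear equivariant actions on $X(\D)$, and the descent equivalence for normal $G$-varieties) have been established earlier in the excerpt.
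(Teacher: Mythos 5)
Your proposal is correct and follows essentially the same route as the paper: the paper also factors $X$ as the functor $(\D,g)\mapsto (X(\D),X(g))$ into the intermediate category of $L$-varieties with a $\Gamma$-semilinear equivariant action, followed by the descent equivalence of \cref{proposition equivalence normal G-varieties}, and deduces the three properties from covariance, faithfulness (via the split-case results) and essential surjectivity (via \cref{theorem main affine minimal}) of the first factor. Your write-up in fact supplies slightly more detail on the faithfulness step than the paper does.
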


\begin{remark}
Let $X$ be an object in $\mathcal{E}(k,L)$ with torus $T$. By \cref{theorem main affine minimal}, there exists a minimal pp-divisor $\D\in\PPDiv_{\Q}(Y,\omega)$ and a Galois semilinear action $g:\Gamma_{L}\to\SAut(T_{L};X(\D))$ such that $X(\D,g)\cong X$ as $T$-varieties over $k$. Notice that the Galois semilinear action $g$ induces a Galois semilinear action $\psi:\Gamma_{L}\to \SAut(Y)$. Given that $Y$ is semi-projective and hence quasi-projective, the Galois semilinear action $\psi$ is effective. Hence, there exists a semi-projective variety $Z$ over $k$ such that $Z_{L}\cong Y$. Thus, the lack of a combinatorial description for nonsplit torus actions over the ground field $k$ is a consequence of the \emph{incompleteness} of the module of characters of a nonsplit torus.
\end{remark}

\textbf{Recovering Gillard's Theorem.} Let $(\D,g)$ be an object in $\mathfrak{PPDiv}(\Gamma_{L})$ such that $\D$ is a minimal pp-divisor. Recall that for every $\gamma\in\Gamma_{L}$, $g_{\gamma}:=(\psi_{\gamma},F_{\gamma},\mathfrak{f}_{\gamma}):\D\to\D$ is a semilinear automorphism of pp-divisors and 
\[g_{\gamma_{2}\gamma_{1}}=(\psi_{\gamma_{2}\gamma_{1}},F_{\gamma_{2}\gamma_{1}},\mathfrak{f}_{\gamma_{2}\gamma_{1}})=(\psi_{\gamma_{2}}\psi_{\gamma_{1}},F_{\gamma_{2}}F_{\gamma_{1}},F_{\gamma_{2}*}(\mathfrak{f}_{\gamma_{1}})\psi_{\gamma_{1}}^{*}(\mathfrak{f}_{\gamma_{2}}))=g_{\gamma_{2}}g_{\gamma_{1}},\] 
for every $\gamma_{1},\gamma_{2}\in\Gamma_{L}$. If we define $h_{\gamma}:=\mathfrak{f}_{\gamma}\circ F_{\gamma^{-1}}^{*}$, where we view $\mathfrak{f}_{\gamma}$ as a morphism $M\to L(Y)^{*}$ and $F_{\gamma}^{*}:M\to M$ is the dual map of $F_{\gamma}$, we have 
\begin{align*}
h_{\gamma_{2}\gamma_{1}}\circ F_{\gamma_{2}\gamma_{1}}^{*} &=\mathfrak{f}_{\gamma_{2}\gamma_{1}} \\
&= F_{\gamma_{2}*}(\mathfrak{f}_{\gamma_{1}})\cdot \psi_{\gamma_{1}}^{*}(\mathfrak{f}_{\gamma_{2}}) \\
&=(\mathfrak f_{\gamma_1}\circ F_{\gamma_2}^*)\cdot \psi_{\gamma_{1}}^{*}(\mathfrak{f}_{\gamma_{2}}) \\
&= (h_{\gamma_{1}}\circ F_{\gamma_{1}}^{*}\circ F_{\gamma_{2}}^{*})\cdot\psi_{\gamma_{1}}^{*}( h_{\gamma_{2}}\circ F_{\gamma_{2}}^{*}) \\
&= (h_{\gamma_{1}}\circ F_{\gamma_{2}\gamma_{1}}^{*})\cdot \psi_{\gamma_{1}}^{*}( h_{\gamma_{2}}\circ F_{\gamma_{1}^{-1}}^{*}\circ F_{\gamma_{2}\gamma_{1}}^{*})\\
&= (h_{\gamma_{1}}\cdot \psi_{\gamma_{1}}^{*}( h_{\gamma_{2}}\circ F_{\gamma_{1}^{-1}}^{*}))\circ F_{\gamma_{2}\gamma_{1}}^{*}.
\end{align*}
Thus, the maps $h_{\gamma}:M\to L(Y)^{*}$ satisfy
\[h_{\gamma_{2}\gamma_{2}}=h_{\gamma_{1}}\cdot \psi_{\gamma_{1}}^{*}(h_{\gamma_{2}}\circ F_{\gamma_{1}^{-1}}^{*}),\] 
for every $\gamma_{1},\gamma_{2}\in\Gamma_{L}$. This condition corresponds to the condition \eqref{theoremApartb} of \cite[Theorem A]{Gil22b}. The other condition is fulfilled by \cref{corollaryequivariantautaffine}. Then, we recover Gillard's Theorem.

\begin{example}[\cref{exampleAH06 11 1} revisited]
Let $k$ be a field and $L/k$ be a quadratic extension with Galois group $\Gamma_{L}$. The affine threefold $X:=\Spec(L[x,y,z,w]/(x^{3}+y^{4}+zw))$ in $\A_{L}^{4}$ with the action of $\G_{\mathrm{m},L}^{2}$ given by 
\[(\lambda,\mu)\cdot (x,y,z,w)=(\lambda^{4} x,\lambda^{3} y,\mu z,\lambda^{12}\mu^{-1}w)\]
is encoded by the pp-divisor $\D:=\Delta_{0}\otimes \{0\}+\Delta_{1}\otimes \{1\}+\Delta_{\infty}\otimes\{\infty\}$, where 
\[\Delta_{0}=\left(0,\dfrac{1}{3}\right)+\omega, \quad \Delta_{1}=\left(-\dfrac{1}{4},0\right)+\omega, \quad \Delta_{\infty}=\left(\{0\}\times[0,1]\right)+\omega\]
and $\omega=\cone((1,0),(1,12))$.

\begin{center}
    \begin{tikzpicture}
        \fill[gray] (1/3,0) -- (1/2,2) -- (2,2) -- (2,0) -- (1/3,0);
        \node[scale=0.6] at (0.7,-0.2) {$(1,0)$};
        \node[scale=0.6] at (-0.3,0.7) {$(0,1)$};
        \draw[->] (0,0) -- (2,0);
        \draw[->] (0,0) -- (0,2);
        \node[scale=0.9] at (1,0.6) {$\Delta_{0}$};
        
        \fill[gray] (11/4,0) -- (35/12,2) -- (5,2) -- (5,0);
        \node[scale=0.6] at (3.7,-0.2) {$(1,0)$};
        \node[scale=0.6] at (2.7,0.7) {$(0,1)$};
        \draw[->] (3,0) -- (5,0);
        \draw[->] (3,0) -- (3,2);
        \node[scale=0.9] at (3.6,0.6) {$\Delta_{1}$};
        
        \fill[gray] (6,0) -- (6,1) --(73/12,2) -- (8,0) -- (6.7,0);
        \node[scale=0.6] at (6.7,-0.2) {$(1,0)$};
        \node[scale=0.6] at (5.7,0.7) {$(0,1)$};
        \draw[->] (6,0) -- (8,0);
        \draw[->] (6,0) -- (6,2);
        \node[scale=0.9] at (6.6,0.6) {$\Delta_{\infty}$};
    \end{tikzpicture}
\end{center} 

We claim that this affine normal $T$-variety has no nontrivial $k$-forms. Let $X'$ be a $k$-form of $X$ as a $T$-variety. It means that $X'$ is endowed with an effective action  of a $k$-form $T'$ of $T$. By \cref{theorem main affine minimal}, there exists a Galois semilinear action $\Gamma_{L}\to \SAut(\D)$ given by $(\psi_{\gamma},F_{\gamma},\mathfrak{f}_{\gamma})$, where $\gamma$ is the nontrivial element of $\Gamma_{L}$. Since $(\psi_{\gamma},F_{\gamma},\mathfrak{f}_{\gamma})$ is a semilinear automorphism of $\D$, it holds that $F_{\gamma}(\omega)=\omega$ and 
\[\psi_{\gamma}^{*}\D=\D+\mathrm{div}(\mathfrak{f}).\] 

Let us prove our claim. It is known that the $k$-forms of $\G_{\mathrm{m},L}^{2}$ are
\[\left(\G_{\mathrm{m},k}\right)^{2},\quad \G_{\mathrm{m},k}\times \mathrm{R}_{L/k}^{1}(\G_{\mathrm{m},L}),\quad \left(\mathrm{R}_{L/k}^{1}(\G_{\mathrm{m},L})\right)^{2}\quad\textrm{and}\quad \mathrm{R}_{L/k}(\G_{\mathrm{m},L}),\]
where $\mathrm{R}_{L/k}(\G_{\mathrm{m},L})$ is the Weil restriction and $\mathrm{R}_{L/k}^{1}(\G_{\mathrm{m},L})$ is its respective norm one subtorus. Their respective Galois descent data $\Gamma_{L}\to\SAut(\G_{\mathrm{m},L}^{2})$ are encoded by one of the following group homomorphisms $F:\Gamma_{L}\to \Aut(N)$:
\[ F_{\gamma}\in\left\{ \left(\begin{matrix} 1 & 0 \\ 0 & 1 \end{matrix}\right), \quad \left(\begin{matrix} 1 & 0 \\ 0 & -1 \end{matrix}\right), \quad \left(\begin{matrix} -1 & 0 \\ 0 & -1 \end{matrix}\right)\quad \mathrm{and} \quad \left(\begin{matrix} 0 & 1 \\ 1 & 0 \end{matrix}\right) \right\}.\]
Since the only $F_{\gamma}$ that preserves the tail cone $\omega=\cone((1,0),(1,12))$ is $F=\id_{N}$. Thus, the Galois semilinear action is given by $(\psi_{\gamma},\id_{N},\mathfrak{f})$. 

Let us now prove that $\psi_{\gamma}:\P^{1}_{L}\to \P^{1}_{L}$ is given by $\psi_{\gamma}([x:y])=[\gamma(x):\gamma(y)]$. We argue by contradiction. Let us assume that $\psi_{\gamma}([x:y])\neq [\gamma(x):\gamma(y)]$. This implies that the induced action of $\psi_{\gamma}$ on $\{\{0\},\{1\},\{\infty\}\}$ must fix a point. Since $\psi_{\gamma}^{*}\D=\D+\mathrm{div}(\mathfrak{f})$, we have that one of the following cases hold
\begin{itemize}
	\item $\Delta_{0}=\Delta_{1}+m$ for some $m\in\Z^{2}$,
	\item $\Delta_{1}=\Delta_{\infty}+m$ for some $m\in\Z^{2}$ or
	\item $\Delta_{\infty}=\Delta_{0}+m$ for some $m\in\Z^{2}$,
\end{itemize}
depending on which prime divisor is fixed by $\psi_{\gamma}$. The first case does not hold. This is because the existence of such an $m\in\Z^{2}$ is equivalent to the equation
	\[\left(0,\frac{1}{3}\right)=\left(-\frac{1}{4},0\right)+m’\]
having a solution in $\Z^{2}$. The remaining two cases do not hold either, since one of the polytope is a segment and the other one is a point. Then, $\psi_{\gamma}([x:y])=[\gamma(x):\gamma(y)]$ in $\P^{1}_{L}$ and, therefore, we have that the only possible Galois semilinear action is $(\psi_{\gamma},\id_{N},\mathfrak{1})$ with $\psi_{\gamma}([x:y])\neq [\gamma(x):\gamma(y)]$. Hence, the only $k$-form of $X$ is the affine threefold $\Spec(k[x,y,z,w]/(x^{3}+y^{4}+zw))$ in $\A_{k}^{4}$ with the action of $\G_{\mathrm{m},k}^{2}$ given by the same action.
 
\end{example}

\begin{example}[\cref{exampleA3} revisited]
Let $k$ be a field and $L/k$ be a quadratic extension with Galois group $\Gamma_{L}$. The affine space $\mathbb{A}_{L}^{3}$ endowed with the action of $\G_{\mathrm{m},L}^{2}$ given by
\[(\lambda,\mu)\cdot (x,y,z)=(\lambda x,\mu y,\lambda\mu z)\]
arises from the pp-divisor $\Delta:=\Delta\otimes\{\infty\}$ over $\mathbb{P}_{L}^{1}$, where $\Delta$ is the polyhedron 
\begin{center}
    \begin{tikzpicture}
        \fill[gray] (0,0.7) -- (0,2) -- (2,2) -- (2,0) -- (0.7,0);
        \node[scale=0.6] at (0.7,-0.2) {$(1,0)$};
        \node[scale=0.6] at (-0.3,0.7) {$(0,1)$};
        \draw[->] (0,0) -- (2,0);
        \draw[->] (0,0) -- (0,2);
        \node[scale=0.9] at (0.6,0.6) {$\Delta$};
    \end{tikzpicture}
\end{center}
 The quotient map $\mathbb{A}_{L}^{3}\dasharrow \mathbb{P}_{L}^{1}$ is given by $(x,y,z)\mapsto(z,xy)$.
 Let us consider the following Galois semilinear equivariant action on $\mathbb{A}_{L}^{3}$:
 \begin{align*}
 \mathbb{A}_{L}^{3} &\to \mathbb{A}_{L}^{3} \\
 (x,y,z) &\mapsto (\gamma(y),\gamma(x),\gamma(z)). 
 \end{align*}
In the torus, the Galois semilinear action is given by $(\lambda,\mu)\mapsto(\gamma(\mu),\gamma(\lambda))$. In terms of the pp-divisor, the Galois semilinear action is given by $(\psi_{\gamma},F,\mathfrak{f})$, with $\psi_{\gamma}([v:w])=[\gamma(w):\gamma(v)]$, $F(a,b)=(b,a)$ and $\mathfrak{f}=\mathfrak{1}$. Notice that 
\[ \Delta\otimes\{\infty\}=\psi_{\gamma}^{*}\Delta=F_{*}\Delta=\Delta\otimes\{\infty\}. \]
Then the descent as a $T$-variety is effective by \cref{theorem main affine minimal}. Now, the semilinear equivariant action on $\A_{L}^{3}$ is given by an equivariant semilinear action in $\A_{L}^{2}$ and another one over $\A_{L}^{1}$. Given that the only separable $k$-forms of $\A_{L}^{2}$ is the affine plane by \cite[Theorem 3]{Kam75}, the corresponding $k$-form of $\A_{L}^{3}$ is $\A_{k}^{3}$. For the torus action, the respective $k$-form is $\mathrm{Res}_{L/k}(\G_{\mathrm{m},L})$.
\end{example}

\subsection{ The other T-variety}\label{section: The other T-variety}
\indent

Let $\D\in\PPDiv_{\Q}(Y,\omega)$ be a pp-divisor over $L$. By \cref{proposition ppdiv to variety}, $X(\D)$ is an affine normal variety endowed with an effective action of $T_{\D}$. Also by \cref{proposition ppdiv to variety} we know there is another variety related to $\D$. Recall that from a pp-divisor $\D$ we can construct the $M$-graded sheaf 
\[\mathscr{A}(Y,\D):=\bigoplus_{m\in\omega^{\vee}\cap M}\mathscr{O}_{Y}(\D(m)).\]
The other variety associated to $\D$ is $\tilde{X}(\D):=\Spec_{Y}(\mathscr{A}(Y,\D))$, the relative spectrum of the sheaf $\mathscr{A}(Y,\D)$. This variety is a normal $T_{\D}$-variety whose affinization is $X(\D)$. Moreover, the affinization $r_{\tilde{X}}:\tilde{X}(\D)\to X(\D)$ is proper, birational and it fits into the following commutative diagram
\[\xymatrix{ \tilde{X}(\D) \ar[r]^{r_{\tilde{X}}} \ar[d]_{\sslash T}& X(\D) \ar[d]^{\sslash T} \\ Y \ar[r]_{r_{Y}} & \aff{Y}. }\]
If $Y$ is not affine, then $\tilde{X}(\D)$ and $X(\D)$ are not isomorphic. In particular, in such a case, it follows that $\tilde{X}(\D)$ is not an affine $T$-variety. Thus, somehow pp-divisors are encoding more than just affine normal $T$-varieties. For example, as $\G_{\mathrm{m},k}$-varieties, $\mathrm{Bl}_{0}(\A_{k}^{2})\cong \tilde{X}(\D)$, where $\D=[0]\times\{0\}\in\PPDiv_{\Q}(\P^{1},\{0\})$. Also, another interesting property of the variety $\tilde{X}(\D)$, in contrast to $X(\D)$, is that every orbit closure in $\tilde{X}(\D)$ is normal, whereas $X(\D)$ might have singular orbit closures (\cite[Theorem 10.1]{AH06}). Looking to this last feature, one might ask whether this variety can be used to study the resolution of singularities in positive characteristic in the case of $T$-varieties, since it has been used in characteristic zero \cite{LS13}.

In the last part of this section, we study some properties of $\tilde{X}(\D)$ such as the understanding of morphisms and Galois descent.

Let $(\psi_{\gamma},F,\mathfrak{f}):\D'\to\D$ be a semilinear morphism of pp-divisors, then by definition
\[\psi_{\gamma}^{*}\D\leq F_{*}\D'+\mathrm{div}(\mathfrak{f}).\] 
This triple gives a morphism of sheaves
\begin{align*}
\mathscr{O}_{Y}(\D(m)) &\to \mathscr{O}_{Y'}(\D'(F^{*}(m))) \\
g &\mapsto \mathfrak{f}(m)\psi_{\gamma}^{*}(g),
\end{align*}
which fits into a $M$-graded morphism of algebras 
\[\mathscr{A}(Y,\D):=\bigoplus_{m\in\omega^{\vee}\cap M}\mathscr{O}_{Y}(\D(m))\to \bigoplus_{m\in\omega'^{\vee}\cap M}\mathscr{O}_{Y'}(\D'(m))=\mathscr{A}(Y',\D'). \]
The latter morphism induces a semilinear equivariant morphism of varieties 
\[\tilde{X}(\psi_{\gamma},F,\mathfrak{f}):\tilde{X}(\D')\to\tilde{X}(\D)\] 
that fits into the following commutative diagram
\[\xymatrixcolsep{5pc}\xymatrix{ \tilde{X}(\D') \ar[r]^{\tilde{X}(\psi_{\gamma},F,\mathfrak{f})} \ar[d]_{\sslash T} & \tilde{X}(\D) \ar[d]^{\sslash T} \\ Y' \ar[r]_{\psi_{\gamma}} & Y .}\]

\begin{proposition}\label{proposition other functor}
Let $\D$ and $\D'$ be objects in $\mathfrak{PPDiv}(L/k)$ and $(\psi_{\gamma},F,\mathfrak{f}):\D'\to\D$. Then, the semilinear equivariant morphism $\tilde{X}(\psi_{\gamma},F,\mathfrak{f}):\tilde{X}(\D')\to\tilde{X}(\D)$ satisfies 
\[\tilde{X}(\psi_{\gamma},F,\mathfrak{f})_{\mathrm{aff}}=X(\psi_{\gamma},F,\mathfrak{f}).\]
Moreover, if $(\psi_{\gamma},F,\mathfrak{f}):\D'\to\D$ is a semilinear isomorphism, then $\tilde{X}(\psi_{\gamma},F,\mathfrak{f}):\tilde{X}(\D')\to\tilde{X}(\D)$ is a semilinear equivariant isomorphism.
\end{proposition}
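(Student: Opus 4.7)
The proposition consists of two assertions: that the affinization of the morphism $\tilde{X}(\psi_{\gamma},F,\mathfrak{f})$ coincides with $X(\psi_{\gamma},F,\mathfrak{f})$, and that a semilinear isomorphism of pp-divisors produces a semilinear equivariant isomorphism of the associated $\tilde{X}$-varieties. The plan is to reduce both claims to functoriality, mirroring the treatment of the functor $X$ in \cref{Proposition X is faithful}, rather than attempting any new geometric computation.

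For the first assertion, recall from the discussion preceding the statement that the morphism $\tilde{X}(\psi_{\gamma},F,\mathfrak{f})\colon\tilde{X}(\D')\to\tilde{X}(\D)$ is induced, via the relative spectrum construction, by the graded morphism of $\mathscr{O}_{Y}$-algebras $\mathscr{A}(\D)\to(\psi_{\gamma})_{*}\mathscr{A}(\D')$ whose degree-$m$ component sends $g$ to $\mathfrak{f}(m)\cdot\psi_{\gamma}^{*}(g)$. Taking global sections of this graded morphism yields an $M$-graded $L$-linear map $A[Y,\D]\to A[Y',\D']$, which, inspecting the definition, is exactly the graded algebra morphism defining $X(\psi_{\gamma},F,\mathfrak{f})$. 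Since \cref{proposition ppdiv to variety}(\ref{Proposition pp-div to variety part ii}) identifies $H^{0}(\tilde{X}(\D),\mathscr{O}_{\tilde{X}(\D)})$ with $A[Y,\D]$ (and similarly for $\D'$), and since affinization is by definition the functor $S\mapsto\Spec H^{0}(S,\mathscr{O}_{S})$, which on morphisms is nothing but the induced map on global sections, we conclude that $\tilde{X}(\psi_{\gamma},F,\mathfrak{f})_{\mathrm{aff}}=X(\psi_{\gamma},F,\mathfrak{f})$.

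For the second assertion, the cleanest route is to promote the assignment to a covariant functor $\tilde{X}\colon\mathfrak{PPDiv}(L/k)\to\mathcal{E}(L/k)$. Concretely, one checks that the identity morphism $(\id,\id_{N},1)\colon\D\to\D$ induces the identity graded morphism of $\mathscr{O}_{Y}$-algebras on $\mathscr{A}(\D)$, and that the composition law for semilinear morphisms of pp-divisors, namely $(\psi_{\gamma},F,\mathfrak{f})\circ(\psi_{\gamma'}',F',\mathfrak{f}')=(\psi_{\gamma}\circ\psi_{\gamma'}',F\circ F',F_{*}(\mathfrak{f}')\cdot\psi_{\gamma'}'^{*}(\mathfrak{f}))$, matches composition of the induced morphisms of sheaves of graded algebras. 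Both checks parallel the computations already carried out for $X$ in the proof of \cref{Proposition X is faithful}, so they introduce no new difficulty. Functoriality in place, $\tilde{X}$ automatically sends any isomorphism in $\mathfrak{PPDiv}(L/k)$ to an isomorphism in the target, and the resulting morphism is semilinear equivariant because $\tilde{X}(\psi_{\gamma},F,\mathfrak{f})$ is semilinear equivariant by construction (the semilinearity of the base being inherited from $\psi_{\gamma}$ and the equivariance from the grading). The main obstacle is purely bookkeeping, namely ensuring that the directions of pullback and pushforward and the commutation of $F_{*}$ with $\psi_{\gamma}^{*}$ are consistent with the semilinear base change; once these are set up carefully, no further geometric input is needed.
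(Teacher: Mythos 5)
Your proof is correct. The first assertion is handled exactly as in the paper: both arguments observe that the graded morphism of sheaves of $\mathscr{O}_{Y}$-algebras defining $\tilde{X}(\psi_{\gamma},F,\mathfrak{f})$ induces, on global sections, precisely the graded algebra map $A[Y,\D]\to A[Y',\D']$ underlying $X(\psi_{\gamma},F,\mathfrak{f})$, and then invoke the identification $H^{0}(\tilde{X}(\D),\mathscr{O}_{\tilde{X}(\D)})=A[Y,\D]$ from \cref{proposition ppdiv to variety} together with the fact that affinization acts on morphisms via global sections. For the second assertion you diverge slightly: you promote $\tilde{X}$ to a covariant functor on $\mathfrak{PPDiv}(L/k)$ (checking identities and composition as in \cref{Proposition X is faithful}) and conclude that isomorphisms go to isomorphisms, whereas the paper argues directly that a semilinear isomorphism of pp-divisors makes $\psi_{\gamma}\colon Y'\to Y$ a semilinear isomorphism, hence $\psi_{\gamma}^{*}$ an isomorphism on function fields, hence the induced map $\mathscr{A}(\D)\to\mathscr{A}(\D')$ an isomorphism of sheaves of algebras. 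Your route costs a little more bookkeeping up front (the compatibility of $F_{*}$, $\psi^{*}$ and the composition law), but it buys a reusable statement — functoriality of $\tilde{X}$ — and avoids the paper's somewhat terse leap from ``$\psi_{\gamma}^{*}$ is an isomorphism'' to ``the sheaf morphism is an isomorphism,'' which tacitly uses that the defining inequality of divisors becomes an equality for an invertible morphism. Both arguments are sound.
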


\begin{proof}
Let $(\psi_{\gamma},F,\mathfrak{f}):\D'\to\D$ be a semilinear morphism of pp-divisors. For every $m\in\omega^{\vee}\cap M$, the morphism of sheaves $\mathscr{O}_{Y}(\D(m))\to \mathscr{O}_{Y'}(\D'(F^{*}(m)))$, given by $g\mapsto \mathfrak{f}(m)\psi_{\gamma}^{*}(g)$, induces the morphism 
\begin{align*}
\H^{0}(Y,\mathscr{O}_{Y}(\D(m))) &\to\H^{0}(Y',\mathscr{O}_{Y'}(\D'(F^{*}(m)))) \\
h &\mapsto \mathfrak{f}(m)\psi_{\gamma}^{*}(h),
\end{align*}
between the global sections. Then, the morphism of sheaves $\mathscr{A}\to\mathscr{A}$ induces a morphism of algebras $A[Y,\D]\to A[Y',\D]$, which is the algebraic counterpart of $X(\psi_{\gamma},F,\mathfrak{f})$. Thus, we have that 
\[\tilde{X}(\psi_{\gamma},F,\mathfrak{f})_{\mathrm{aff}}=X(\psi_{\gamma},F,\mathfrak{f}).\]

If $(\psi_{\gamma},F,\mathfrak{f})$ is a semilinear isomorphism of pp-divisors, then $\psi_{\gamma}:Y'\to Y$ is a semilinear isomorphism and, therefore, $\psi_{\gamma}^{*}:L(Y)\to L(Y')$ is an automorphism. Thus, the morphism $\mathscr{A}\to\mathscr{A}'$ is an isomorphism. Hence, $\tilde{X}(\psi_{\gamma},F,\mathfrak{f})$ is a semilinear equivariant isomorphism. 
\end{proof}

\begin{proposition}\label{proposition automor X automor tildeX}
Let $\D$ be an object in $\mathfrak{PPDiv}(L/k)$, which is minimal. Then, for every semilinear equivariant automorphism $(\varphi_{\gamma},f_{\gamma}):X(\D)\to X(\D)$ there exists a semilinear equivariant automorphism $(\tilde{\varphi}_{\gamma},\tilde{f}_{\gamma}):\tilde{X}(\D)\to\tilde{X}(\D)$ such that $(\tilde{\varphi}_{\gamma},\tilde{f}_{\gamma})_{\mathrm{aff}}=(\varphi_{\gamma},f_{\gamma})$.
\end{proposition}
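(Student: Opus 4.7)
The plan is to combine \cref{corollaryequivariantautaffine} with \cref{proposition other functor} in a straightforward way, since almost all the work has already been packaged in those two statements.

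First, I would start by invoking \cref{corollaryequivariantautaffine}: since $\D$ is minimal, the given semilinear equivariant automorphism $(\varphi_{\gamma},f_{\gamma}):X(\D)\to X(\D)$ arises from a uniquely determined semilinear automorphism of pp-divisors $(\psi_{\gamma},F,\mathfrak{f}):\D\to\D$, with $\psi_{\gamma}^{*}(\D)=F_{*}(\D)+\mathrm{div}(\mathfrak{f})$, such that $X(\psi_{\gamma},F,\mathfrak{f})=(\varphi_{\gamma},f_{\gamma})$. In particular, $(\psi_{\gamma},F,\mathfrak{f})$ is a semilinear \emph{isomorphism} of pp-divisors (with inverse obtained by applying the corollary to $(\varphi_{\gamma},f_{\gamma})^{-1}$).

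Next, I would define $(\tilde{\varphi}_{\gamma},\tilde{f}_{\gamma}):=\tilde{X}(\psi_{\gamma},F,\mathfrak{f})$. By \cref{proposition other functor}, $\tilde{X}(\psi_{\gamma},F,\mathfrak{f}):\tilde{X}(\D)\to\tilde{X}(\D)$ is a semilinear equivariant isomorphism (because $(\psi_{\gamma},F,\mathfrak{f})$ is a semilinear isomorphism), and its affinization satisfies
\[
\tilde{X}(\psi_{\gamma},F,\mathfrak{f})_{\mathrm{aff}}=X(\psi_{\gamma},F,\mathfrak{f}).
\]
Combining this with the identification $X(\psi_{\gamma},F,\mathfrak{f})=(\varphi_{\gamma},f_{\gamma})$ from the previous step yields $(\tilde{\varphi}_{\gamma},\tilde{f}_{\gamma})_{\mathrm{aff}}=(\varphi_{\gamma},f_{\gamma})$, as required.

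There is essentially no obstacle here, since the minimality hypothesis is precisely what makes \cref{corollaryequivariantautaffine} (and the group isomorphism $\SAut(\D)\cong \SAut_{T}(X(\D))$ of \cref{corollary saut D saut XD}) applicable: without minimality one only has a semigroup $S(\D)$, and the preimage of $(\varphi_{\gamma},f_{\gamma})$ in the category of pp-divisors would not be controlled. The only small subtlety worth mentioning in the write-up is that the lift $(\tilde{\varphi}_{\gamma},\tilde{f}_{\gamma})$ is semilinear with respect to the \emph{same} Galois element $\gamma$, which is automatic from the definition of $\tilde{X}(\psi_{\gamma},F,\mathfrak{f})$ together with the commutative square at the end of the discussion preceding \cref{proposition other functor}.
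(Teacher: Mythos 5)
Your proposal is correct and follows essentially the same route as the paper: the paper lifts $(\varphi_{\gamma},f_{\gamma})$ to a semilinear automorphism of pp-divisors via \cref{theorem semilinear automorphisms} (of which your cited \cref{corollaryequivariantautaffine} is the immediate consequence) and then applies \cref{proposition other functor} exactly as you do. The extra remarks on invertibility and on the Galois element being preserved are harmless refinements of the same argument.
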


\begin{proof}
Let $(\varphi_{\gamma},f_{\gamma}):X(\D)\to X(\D)$ be a semilinear equivariant isomorphism. Given that $\D$ is minimal, by \cref{theorem semilinear automorphisms}, there exists a semilinear automorphism of pp-divisors $(\psi_{\gamma},F,\mathfrak{f}):\D\to\D$ such that $X(\psi_{\gamma},F,\mathfrak{f})=(\varphi_{\gamma},f_{\gamma})$. Hence, by \cref{proposition other functor}, $(\tilde{\varphi}_{\gamma},\tilde{f}_{\gamma}):=\tilde{X}(\psi_{\gamma},F,\mathfrak{f})$ satisfies $(\tilde{\varphi}_{\gamma},\tilde{f}_{\gamma})_{\mathrm{aff}}=(\varphi_{\gamma},f_{\gamma})$.
\end{proof}

Let $T$ be an algebraic torus over $k$ that splits over $L$ and $X$ be an affine normal $T$-variety over $k$. By \cref{theorem main affine minimal}, there exists a pair $(\D_{L},g)$ in $\mathfrak{PPDiv}(\Gamma_{L})$ such that $X(\D_{L},g)\cong X$ as $T$-varieties. As in \cref{remark diagram affinization split actions}, over $L$, we have the following commutative diagram 
\[\xymatrix{ \tilde{X}(\D_{L}) \ar[r]^{r_{\tilde{X}}} \ar[d]_{\sslash T_{L}}& X(\D_{L}) \ar[d]^{\sslash T_{L}} \\ Y \ar[r]_{r_{Y}} & \aff{Y}. }\]

The Galois semilinear action $g:\Gamma_{L}\to\SAut(\D_{L})$ is equivalent to a Galois semilinear equivariant action $X(g):\Gamma_{L}\to\SAut(T_{L};X_{L})$. By \cref{proposition automor X automor tildeX}, $g:\Gamma_{L}\to\SAut(\D_{L})$ induces a Galois semilinear equivariant action $\tilde{X}(g):\Gamma_{L}\to\SAut(T_{L};\tilde{X}(\D_{L}))$ such that $\tilde{X}(g)_{\mathrm{aff}}=X(g)$. Recall that the Galois semilinear action $g:\Gamma_{L}\to\SAut(\D)$ defines a Galois semilinear action $\psi:\Gamma_{L}\to \SAut(Y)$ and hence $\psi_{\mathrm{aff}}:\Gamma_{L}\to\SAut(\aff{Y})$. If we denote by $\tilde{\pi}:\tilde{X}(\D)\to Y$ and $\pi:X(\D)\to \aff{Y}$ the respective quotients, we have that $\psi_{}\circ\tilde{\pi}=\tilde{\pi}\circ \tilde{X}(g)$ and $\psi_{\mathrm{aff}}\circ\pi=\pi\circ X(g)$. Thus, the diagram \emph{has} a Galois semilinear equivariant action, i.e. the Galois semilinear actions of all the elements of the diagram are compatible with the morphisms of the diagram. Given that $X(\D)$, $\tilde{X}(\D)$, $Y$, and $\aff{Y}$ are all of them quasi-projective, by \cref{Proposition descend quasi-projective G-variaties} and \cref{Proposition descent quasi-projective semilinear action}, the diagram above descends to a diagram 
\[\xymatrix{ \tilde{X}(\D_{L},g) \ar[r]^{r_{\tilde{X}}} \ar[d]_{\sslash T}& X(\D_{L},g) \ar[d]^{\sslash T} \\ Z \ar[r]_{r_{Z}} & Z_{\mathrm{aff}}, }\]
where $Z_{L}\cong Y$.

\Addresses

\end{document}